\numberwithin{equation}{section}
\patchcmd{\subsection}{-.5em}{.5em}{}{}
\patchcmd{\subsubsection}{-.5em}{.5em}{}{}
\def\thm@space@setup{%
  \thm@preskip=\parskip \thm@postskip=0pt
}
\newcommand{\e}{\ensuremath{\mathrm{e}}} 
\newcommand{\Id}{\mathrm{Id}}
\newcommand{\N}{\mathbb{N}}
\newcommand{\Z}{\mathbb{Z}}
\newcommand{\Q}{\mathbb{Q}}
\newcommand{\R}{\mathbb{R}}
\newcommand{\C}{\mathbb{C}}
\renewcommand{\H}{\mathbb{H}}
\newcommand{\GL}{\mathrm{GL}}
\newcommand{\SO}{\mathrm{SO}}
\newcommand{\SL}{\mathrm{SL}}
\newcommand{\Isom}{\mathrm{Isom}}
\newcommand{\Stab}{\mathrm{Stab}}
\newcommand{\Vol}{\mathrm{Vol}}
\newcommand{\covol}{\mathrm{covol}}
\newcommand{\Prob}{\mathrm{Prob}}
\newcommand{\Var}{\mathrm{Var}}
\newcommand{\Cov}{\mathrm{Cov}}
\newcommand{\supp}{\mathrm{supp}}
\newcommand{\rad}{\mathrm{rad}}
\newcommand{\Emu}{\mathbb{E}_{\mu}}
\newcommand{\Covmu}{\mathrm{Cov}_{\mu}}
\newcommand{\Varmu}{\mathrm{Var}_{\mu}}
\newcommand{\Vmu}{\mathrm{NV}_{\mu}}
\newcommand{\NVmu}{\mathrm{NV}_{\mu}}
\newcommand{\NV}{\mathrm{NV}}
\newcommand{\Poi}{\mathrm{Poi}}
\newcommand{\arccosh}{\mathrm{arccosh}}
\newcommand{\arctanh}{\mathrm{arctanh}}
\renewcommand{\Re}{\mathrm{Re}}
\renewcommand{\Im}{\mathrm{Im}}
\newcommand{\Ccinfty}{C_c^{\infty}}
\newcommand{\Borelinfty}{\mathscr{L}^{\infty}}
\newcommand{\Borelbndinfty}{\mathscr{L}^{\infty}_{\mathrm{bnd}}}
\newcommand{\Radonplus}{\mathscr{M}_+}
\newcommand{\Av}{\mathrm{Av}}
\renewcommand{\hat}{\widehat}
\renewcommand{\tilde}{\widetilde}
\newcommand{\norm}[1]{\lVert#1\rVert}
\newcommand{\bE}{\mathbb{E}}
\newcommand{\bS}{\mathbb{S}}
\newcommand{\sF}{\mathscr{F}}
\newcommand{\sM}{\mathscr{M}}
\newcommand{\sS}{\mathscr{S}}
\newcommand{\rB}{\mathrm{B}}
\definecolor{lichtgrijs}{gray}{1.00}
\newtheorem{theorem}{Theorem}[section]
\newtheorem{corollary}[theorem]{Corollary}
\newtheorem{proposition}[theorem]{Proposition}
\newtheorem{lemma}[theorem]{Lemma}
\theoremstyle{definition}
\newtheorem{definition}[theorem]{Definition}
\newtheorem{remark}[theorem]{Remark}
\newtheorem{example}{Example}[section]
\tikzstyle{decision} = [diamond, draw, fill=blue!20, 
\tikzstyle{block} = [rectangle, draw, fill=blue!20, 
\tikzstyle{line} = [draw, -latex']
\tikzstyle{cloud} = [draw, ellipse,fill=red!20, node distance=3cm,
\begin{document}

\newgeometry{top=3cm, bottom=3.5cm,left=3cm,right=3cm}

\title{Hyperuniformity of random measures on Euclidean and hyperbolic spaces}
\author{Michael Björklund}\author{Mattias Byl\'ehn}
\subjclass[2020]{Primary: 60G55. Secondary: 60G57, 43A90, }
\keywords{Hyperuniformity, spherical diffraction, asymptotic properties of spherical functions}

\pagestyle{plain}
\pagenumbering{arabic}

\begin{abstract}
We investigate lower asymptotic bounds of number variances for invariant locally square-integrable random measures on Euclidean and real hyperbolic spaces. In the Euclidean case we show that there are subsequences of radii for which the number variance grows at least as fast as the volume of the boundary of Euclidean balls, generalizing a classical result of Beck. 
With regards to real hyperbolic spaces we prove that random measures are never geometrically hyperuniform and if the random measure admits non-trivial complementary series diffraction, then it is hyperfluctuating. Moreover, we define spectral hyperuniformity and stealth of random measures on real hyperbolic spaces in terms of vanishing of the complementary series diffraction and sub-Poissonian decay of the principal series diffraction around the Harish-Chandra $\Xi$-function. 
\end{abstract}

\maketitle

\section{Introduction}

In the field of statistical mechanics for low-temperature states of matter, the notion of \emph{hyperuniformity}, or \emph{superhomogeneity}, as introduced by Stillinger and Torquato in the seminal paper \cite{StillingerTorquato} in 2003, quantifies long-range order of an invariant point process in terms of the degree of suppression of large-scale density fluctuations. Equivalently, hyperuniformity is characterized by the suppression of small frequencies in the diffraction picture. Hyperuniformity and the associated number variances of invariant point processes have been studied extensively in Euclidean spaces, examples including invariant random lattice shifts, i.i.d. perturbations of such, certain determinantal point processes, most mathematical quasicrystals and more. See \cite{Torquato2018HyperuniformSO} for an extensive survey and \cite{Bjorklund2022HyperuniformityAN} for details on mathematical quasicrystals.

In this article we investigate lower asymptotic bounds of number variances, i.e. the variance of the measure of asymptotically large balls, for invariant locally square-integrable random measures on Euclidean and real hyperbolic spaces. We prove for general invariant locally square-integrable random measures on Euclidean space that there are unbounded sequences of centered Euclidean balls such that the number variance grows at least as fast as the volume of the boundary of the balls, extending a Theorem of Beck in the case of a locally finite point set \cite[Theorem 2A]{Beck1987IrregularitiesOD}.
Our methods are Fourier theoretic and utilizes the \emph{diffraction measure} of a random measure, generalizing the notion of the \emph{structure factor} of a point process. 
 
As for random measures on real hyperbolic space, we investigate contending notions of hyperuniformity. This is also done in terms of the diffraction measure associated to such a random measure, which for hyperbolic spaces (and more generally, commutative spaces) is defined using the available \emph{spherical transform}, generalising the Hankel transform in Euclidean space. We show that, contrary to the Euclidean setting, the large-scale number variance of a random measure on real hyperbolic space grows at least as fast as the volume of large metric balls along some unbounded sequence of metric balls. Nevertheless, we are able to define a notion of spectral hyperuniformity on the principal part of the relevant frequency domain that is distinctly different from measuring large-scale density fluctuations. Instead, our definition favours the property of having suppressed ''small scale'' frequency fluctuations to characterize hyperuniformity. 

\subsection{Random measures}

Let $X$ denote the space $\R^n$ with the action of translations or real hyperbolic space $\H^n$ with the action of orientation-preserving isometries. By a \emph{random measure} on $X$ we mean a probability measure $\mu$ on the cone $\Radonplus(X)$ of positive locally finite measures on $X$, which can be thought of as the law of a $\Radonplus(X)$-valued random variable. One usually studies such random measures in terms of \emph{linear statistics} $Sf : \Radonplus(X) \rightarrow \C$ associated to bounded measurable functions $f : X \rightarrow \C$ with bounded support, given by
\begin{align*}
Sf(p) = \int_{X} f(x) dp(x) \, . 
\end{align*}
Given an invariant locally square-integrable random measure $\mu$ on $X$, we denote by $\Vmu : \R_{\geq 0} \rightarrow \R_{\geq 0}$ the \emph{number variance} with respect to $\mu$,
\begin{align}
\Vmu(r) =  \Varmu(S\chi_{B_r}) = \int_{\sM_+(X)} |p(B_r) - \iota_{\mu} \Vol_X(B_r)|^2 d\mu(p) 
\end{align}
where $B_r = B_r(o)$ is the metric ball of radius $r > 0$ centered at a fixed reference point $o \in X$ and $\iota_{\mu} > 0$ is the intensity of $\mu$, i.e. the mean measure of a unit volume Borel set. More generally, one can consider variances of the form $\Var(S\chi_{rB})$ for $r > 0$, where $B \subset X$ is any bounded Borel set with positive measure and $rB$ is the $r$-neighbourhood of $B$ with respect to the metric on $X$. We will in this paper only consider metric balls $B_r$.

\subsection{Number variances and diffraction measures in $\R^n$}

A uniform upper bound on the number variance of a random measure $\mu$ on $\R^n$ is obtained from an application of the mean ergodic theorem, yielding
\begin{align*}
\limsup_{R \rightarrow +\infty} \frac{\Vmu(R)}{\Vol_{\R^n}(B_R)^2} = 0 \, . 
\end{align*}
We provide a general lower bound of the number variance for maximizing sequences of radii, extending a celebrated result of Beck \cite[Theorem 2A]{Beck1987IrregularitiesOD}.
\begin{theorem}
\label{Theorem1A}
Let $\mu$ be an invariant locally square-integrable random measure on $\R^n$. Then 
\begin{align}
\limsup_{R \rightarrow +\infty} \frac{\Vmu(R)}{\Vol_{n-1}(\partial B_R)} > 0 \, . 
\end{align}
\end{theorem}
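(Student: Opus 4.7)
The plan is a Fourier-analytic argument based on the spectral (diffraction) representation of the number variance, in the spirit of Beck's original approach for deterministic point sets.

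Under the local square-integrability of $\mu$, the centered second moment gives rise to a positive tempered Radon measure $\widehat{\eta}_\mu$ on $\R^n$ (the \emph{reduced diffraction measure}) satisfying
\begin{align*}
\Vmu(R) = \int_{\R^n} |\widehat{\chi}_{B_R}(\xi)|^2 \, d\widehat{\eta}_\mu(\xi),
\end{align*}
and the mean ergodic upper bound displayed just above the theorem forces $\widehat{\eta}_\mu$ to carry no atom at the origin (otherwise $\Vmu(R) \asymp \Vol(B_R)^2$). The first step is to invoke the Bessel identity $\widehat{\chi}_{B_R}(\xi) = c_n R^{n/2}|\xi|^{-n/2} J_{n/2}(R|\xi|)$ together with the classical large-argument expansion of $J_{n/2}$ to obtain
\begin{align*}
|\widehat{\chi}_{B_R}(\xi)|^2 = \gamma_n' \, R^{n-1} |\xi|^{-n-1} \cos^2\!\bigl(R|\xi| - \phi_n\bigr) + O\!\bigl( R^{n-2}|\xi|^{-n-2}\bigr),
\end{align*}
uniformly as $R|\xi| \to \infty$, where $\gamma_n' > 0$ and $\phi_n = \pi(n+1)/4$.

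The second step is to average in $R$ along the dyadic interval $[T,2T]$. Using $\cos^2 = \tfrac12 + \tfrac12 \cos(2\,\cdot\,)$ and integrating by parts once on the oscillatory remainder gives, for each fixed $\xi \neq 0$,
\begin{align*}
\frac{1}{T}\int_T^{2T} |\widehat{\chi}_{B_R}(\xi)|^2 \, dR = \gamma_n \, T^{n-1} |\xi|^{-n-1} + O\!\bigl(T^{n-2}|\xi|^{-n-2}\bigr), \qquad T \to \infty,
\end{align*}
with $\gamma_n > 0$ explicit. Assuming $\mu$ is not almost surely a constant multiple of Lebesgue measure (otherwise $\Vmu \equiv 0$ and the claim is vacuous), the non-triviality of $\widehat{\eta}_\mu$ together with the absence of an atom at $0$ produces an annulus $A = \{\delta < |\xi| < M\}$ of positive $\widehat{\eta}_\mu$-mass. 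The local finiteness of $\widehat{\eta}_\mu$ makes the pointwise bound $|\widehat{\chi}_{B_R}(\xi)|^2 \lesssim R^{n-1}|\xi|^{-n-1}$ $\widehat{\eta}_\mu$-integrable over $A$ uniformly in $R \in [T,2T]$, so Fubini applies and
\begin{align*}
\frac{1}{T}\int_T^{2T} \Vmu(R)\, dR \geq \int_A \frac{1}{T}\int_T^{2T}|\widehat{\chi}_{B_R}(\xi)|^2\, dR \, d\widehat{\eta}_\mu(\xi) \geq c\, T^{n-1}
\end{align*}
for some $c > 0$ and all sufficiently large $T$. By the mean value inequality there exists $R_T \in [T,2T]$ with $\Vmu(R_T) \gtrsim T^{n-1} \asymp R_T^{n-1} \asymp \Vol_{n-1}(\partial B_{R_T})$, and letting $T \to \infty$ yields the desired $\limsup > 0$.

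The main obstacle is to control the Fubini exchange together with the double truncation of the spectral integral. The inner cutoff $|\xi| > \delta$ is essential both to enter the asymptotic regime $R|\xi| \to \infty$ for $R \in [T,2T]$ and to tame the non-integrable singularity of $|\xi|^{-n-1}$ at the origin, while the outer cutoff $|\xi| < M$ is needed to absorb the possible polynomial growth of $\widehat{\eta}_\mu$ at infinity; one must also verify that the $O$-remainders from the Bessel expansion and from the stationary-phase-type integration by parts remain negligible after integrating against $\widehat{\eta}_\mu$ on $A$.
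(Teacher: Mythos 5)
Your proposal is correct and follows essentially the same route as the paper: the diffraction representation of $\NVmu$, the Bessel identity for $\hat{\chi}_{B_R}$ with the large-argument expansion of $J_{n/2}$, an average over radii justified by Tonelli/Fubini, and extraction of a good radius from the averaged lower bound. The only differences are cosmetic — you average over a dyadic window $[T,2T]$ and restrict to an annulus $\{\delta<\norm{\xi}<M\}$, whereas the paper averages over $[0,R]$, truncates only below ($\norm{\xi}\geq R_o^{-1}$), and packages the $\cos^2$-averaging into its Lemma on $\int_0^T t^{\beta}J_{\alpha}(t)^2\,dt$.
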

\begin{remark}
The question of whether there is a uniform lower bound on this quotient for a given random measure $\mu$ is in general unclear. In Subsections \ref{Dynamics for the invariant random Z^n-lattice} and \ref{Proof of Theorem Theorem1B} however, we show that the randomly shifted standard lattice in $\R^5$ satisfies 
%
\begin{align}
\label{EqVanishingoftheZ5NumberVariance}
\liminf_{R \rightarrow +\infty} \frac{\mathrm{NV}_{\mu_{\Z^5}}(R)}{\Vol_{4}(\partial B_R)} = 0 \, . 
\end{align}
\end{remark}

The proof of Theorem \ref{Theorem1A} relies on a spectral formulation of the number variances. More precisely, the \emph{diffraction measure} $\hat{\eta}_{\mu}$ of an invariant locally square-integrable random measure $\mu$ is the unique positive Radon measure on $\R^n$ satisfying
\begin{align*}
\Covmu(Sf_1, Sf_2) = \int_{\R^n} \hat{f}_1(\xi) \overline{\hat{f}_2(\xi)} d\hat{\eta}_{\mu}(\xi)
\end{align*}
for all bounded measurable functions $f_1, f_2$ on $\R^n$ with bounded support, where $\hat{f}$ denotes the Fourier transform of $f$. In particular, one computes the number variance to be 
\begin{align*}
\Vmu(r) = (2\pi)^{n} r^n \int_{\R^n} J_{\frac{n}{2}}(r \norm{\xi})^2 \frac{d\hat{\eta}_{\mu}(\xi)}{\norm{\xi}^{n}} \, , 
\end{align*}
where $J_{n/2}$ denotes a Bessel function of the first kind. Classical asymptotic expansions of such Bessel functions allow for a proof of Theorem \ref{Theorem1A}. 

\subsection{Hyperuniformity and stealth in $\R^n$}

An invariant locally square-integrable random measure $\mu$ on $\R^n$ is \emph{geometrically hyperuniform} if 
\begin{align*}
\limsup_{R \rightarrow +\infty} \frac{\Vmu(R)}{\Vol_{\R^n}(B_R)} = 0 \, . 
\end{align*}
The volume term in the denominator coincides with the variance of the invariant Poisson point process on $\R^n$ with unit intensity, and one can interpret a hyperuniform random measure $\mu$ as having ''sub-Poissonian mass fluctuations''. If the above upper limit is infinite, the random measure is \emph{hyperfluctuating}. In \cite{Bjorklund2022HyperuniformityAN}, the first author and Hartnick prove that a random measure $\mu$ is geometrically hyperuniform if and only if it is \emph{spectrally hyperuniform}, meaning that
\begin{align*}
\limsup_{\varepsilon \rightarrow 0^+} \frac{\hat{\eta}_{\mu}(B_{\varepsilon}(0))}{\Vol_{\R^n}(B_{\varepsilon}(0))} = 0 \, . 
\end{align*}
The denominator here can be interpreted as the diffraction measure of the invariant Poisson point process in $\R^n$ with unit intensity. 

A particularly rigid class of hyperuniform invariant random measures $\mu$ are \emph{stealthy} random measures, meaning random measures for which the diffraction measure $\hat{\eta}_{\mu}$ vanishes in a neighbourhood of the origin in $\R^n$. Examples include invariant random shifts of lattices, and in upcoming work with A. Fish we will provide new families of stealthy random measures and point processes on $\R^n$.

\subsection{Number variances and diffraction measures for real hyperbolic spaces}

For $n$-dimensional real hyperbolic space $\H^n = \SO^{\circ}(1, n)/\SO(n)$, we consider locally square-integrable random measures $\mu$ that are invariant under the action of the orientation-preserving isometry group $\SO^{\circ}(1, n)$ in analogy with isotropic (i.e. translation and rotation invariant) random measures on $\R^n$.  

On $\H^n$, there is a well-behaved generalization of the Fourier transform, the \emph{spherical transform}, available for bi-$\SO(n)$-invariant functions $\varphi : \SO^{\circ}(1, n) \rightarrow \C$. It is given by
\begin{align*}
\hat{\varphi}(\lambda) = \int_{\SO^{\circ}(1, n)} \varphi(g) \omega_{\lambda}^{(n)}(g) dg \, , 
\end{align*}
where $dg$ denotes a fixed Haar measure and $\omega^{(n)}_{\lambda}$ are $\SO(n)$-\emph{spherical functions} for $\SO^{\circ}(1, n)$, that is, the bi-$\SO(n)$-invariant functions on $\SO^{\circ}(1, n)$ given by
\begin{align*}
 \omega_{\lambda}^{(n)}(g) = \frac{2^{\frac{n-1}{2}}\Gamma(\frac{n}{2})}{\sqrt{\pi} \, \Gamma(\frac{n - 1}{2})} \sinh(d(g.o, o))^{2 - n}\int_0^{d(g.o, o)} (\cosh(d(g.o, o)) - \cosh(s))^{\frac{n-3}{2}} \cos(\lambda s) ds \, ,  
\end{align*}  
where $d$ denotes the hyperbolic metric. With respect to the spherical transform, the diffraction measure of an invariant locally square-integrable random measure $\mu$ is a pair of measures defined by the equation
\begin{align*}
\Covmu(S\varphi_1, S\varphi_2) = \int_{0}^{\infty} \hat{\varphi}_1(\lambda) \overline{\hat{\varphi}_2(\lambda)} d\hat{\eta}_{\mu}^{(p)}(\lambda) + \int_{0}^{\frac{n-1}{2}} \hat{\varphi}_1(i\lambda) \overline{\hat{\varphi}_2(i\lambda)} d\hat{\eta}_{\mu}^{(c)}(\lambda)  \, . 
\end{align*}
%
We refer to the measures $\hat{\eta}_{\mu}^{(p)}$ on $(0, +\infty)$ and $\hat{\eta}_{\mu}^{(c)}$ on $i[0, \frac{n-1}{2})$ as the \emph{principal-} and \emph{complementary diffraction measure} respectively. We leave out $\lambda = 0$ corresponding to the \emph{Harish-Chandra $\Xi$-function} $\omega_0$ from the support of the principal diffraction measure for reasons that will become clear from our results. The existence and uniqueness of such diffraction measures can be seen as part of important work by Krein, Gelfand-Vilenkin and later by Bopp in \cite[Part II, Theorem 3]{Bopp}. 

\begin{remark}
Diffraction measures can more generally be defined for random measures on a large class of \emph{commutative spaces} $X = G/K$ with $K$ compact, which includes higher rank symmetric spaces, regular trees, Bruhat-Tits buildings and products of such. These are positive measures on the $K$-spherical unitary dual $\hat{G}^K$ of the pair $(G, K)$. We investigate the general framework and prove the existence and uniqueness of diffraction measures in an upcoming article.
\end{remark}

In terms of the diffraction measure, the number variance can be written as
\begin{align*}
\Vmu(r) = \frac{\pi^{n}}{\Gamma(\frac{n}{2} + 1)^2} \, \sinh(r)^{2n} \Big( \int_0^{\infty} |\omega^{(n + 2)}_{\lambda}(a_r)|^2 d\hat{\eta}_{\mu}^{(p)}(\lambda) + \int_0^{\frac{n-1}{2}} |\omega^{(n + 2)}_{i\lambda}(a_r)|^2 d\hat{\eta}_{\mu}^{(c)}(\lambda) \Big) \, . 
\end{align*}
In this picture, it is crucial to understand the asymptotic behaviour of the spherical functions in order to understand the asymptotics of the number variance. 

Also, the asymptotics of the number variance of a random measure $\mu$ depends significantly on whether the complementary diffraction measure is trivial or not. 

A first naive definition of geometric hyperuniformity of a random measure $\mu$ on $\H^n$ is to require that
\begin{align*}
\limsup_{R \rightarrow +\infty} \frac{\Vmu(R)}{\Vol_{\H^n}(B_R)} = 0 \, . 
\end{align*}
However, since $\SO^{\circ}(1, n)$ is non-amenable then $\Vol(\partial B_R) \asymp \Vol_{\H^n}(B_R)$ as $R \rightarrow +\infty$, so it is a priori unclear whether this definition admits examples or not. We prove that no random measures on $\H^n$ are geometrically hyperuniform in the following hyperbolic analogue of Theorem \ref{Theorem1A}.

\begin{theorem}
\label{Theorem2}
Let $\mu$ be a locally square-integrable invariant random measure on $\H^n$. Then 
\begin{align}
        \limsup_{R \rightarrow +\infty} \frac{\Vmu(R)}{\Vol_{\H^n}(B_R)} > 0 \, . 
\end{align}
Moreover,
\begin{enumerate}
\item if $\hat{\eta}_{\mu}^{(c)}(\{0\}) > 0$, then 
\begin{align}
        \liminf_{R \rightarrow +\infty} \frac{\Vmu(R)}{R^2 \Vol_{\H^n}(B_R)} > 0 \, . 
\end{align}
\item if $\hat{\eta}_{\mu}^{(c)}([\delta\frac{n-1}{2}, \frac{n-1}{2})) > 0$ for some $\delta \in (0, 1)$, then
\begin{align}
        \liminf_{R \rightarrow +\infty} \frac{\Vmu(R)}{\Vol_{\H^n}(B_R)^{1 + \delta}} > 0 \, . 
\end{align}
\end{enumerate}
\end{theorem}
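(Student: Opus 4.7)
The proof proceeds by substituting the rank-one Harish--Chandra asymptotic expansions of the shifted spherical functions $\omega^{(n+2)}_\lambda(a_r)$ into the spectral formula for $\Vmu(R)$ stated just before the theorem, and then extracting lower bounds via Fatou's lemma (for the complementary part) and a Ces\`aro/dominated-convergence argument (for the principal part). Using $\sinh(r)^{2n}\asymp e^{2nr}$ and $\Vol_{\H^n}(B_R)\asymp e^{(n-1)R}$ as $r,R\to\infty$, each spectral parameter $\lambda$ contributes an amount proportional to $e^{(n-1)R}$ times the square of the exponential leading factor in the expansion of $\omega^{(n+2)}_\lambda$; the three bounds (1), (2), (3) then reflect the three distinct asymptotic regimes (oscillatory principal; $\Xi$-function at $\lambda=0$ with a logarithmic correction; non-oscillatory complementary for $\lambda>0$).

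For the principal series, the Harish--Chandra expansion on $\H^{n+2}$ gives uniformly on compact subsets of $(0,\infty)$
\[\omega^{(n+2)}_\lambda(a_r) = 2\Re\bigl[c_{n+2}(\lambda)\,e^{i\lambda r}\bigr]\,e^{-(n+1)r/2} + o\bigl(e^{-(n+1)r/2}\bigr),\]
where $c_{n+2}$ is the $\H^{n+2}$-$c$-function. Squaring and using $4|\Re[z e^{i\lambda R}]|^2 = 2|z|^2 + 2\Re[z^2 e^{2i\lambda R}]$, the principal contribution to $\Vmu(R)/\Vol_{\H^n}(B_R)$ reduces, modulo a positive prefactor and vanishing error, to a constant plus the oscillatory term $2\int_0^\infty \Re[c_{n+2}(\lambda)^2 e^{2i\lambda R}]\,d\hat{\eta}^{(p)}_\mu(\lambda)$. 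Ces\`aro-averaging over $R\in[1,T]$ and applying dominated convergence to the finite Borel measure $c_{n+2}(\lambda)^2\,d\hat{\eta}^{(p)}_\mu$, the averages $\tfrac{1}{T}\int_0^T e^{2i\lambda R}dR\to 0$ for each $\lambda>0$ annihilate this term, so the Ces\`aro average (and hence the $\limsup$) is bounded below by $C\int_0^\infty|c_{n+2}(\lambda)|^2\,d\hat{\eta}^{(p)}_\mu(\lambda) > 0$ whenever $\hat{\eta}^{(p)}_\mu$ is nontrivial, proving (1) in that case.

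For the complementary series the expansions are positive and non-oscillatory: for $\lambda\in(0,(n-1)/2)$ one has $\omega^{(n+2)}_{i\lambda}(a_r)\sim c_{n+2}(-i\lambda)\,e^{-((n+1)/2-\lambda)r}$, while the Harish--Chandra $\Xi$-function of $\H^{n+2}$ satisfies $\omega^{(n+2)}_0(a_r)\asymp r\,e^{-(n+1)r/2}$. Inserting these, the atomic contribution at $\lambda=0$ is of order $R^2 e^{(n-1)R}\asymp R^2\Vol_{\H^n}(B_R)$, giving (2); and mass in $[\delta(n-1)/2,(n-1)/2)$ contributes at least $e^{(n-1)(1+\delta)R}\asymp\Vol_{\H^n}(B_R)^{1+\delta}$, giving (3). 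Both $\liminf$-statements follow from Fatou applied to the positive, non-oscillatory integrands. Finally, if $\hat{\eta}^{(c)}_\mu$ is nontrivial then (2) or (3) already implies (1), so combined with the principal-series case above, (1) holds in every nondeterministic situation.

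The main obstacle is justifying the Ces\`aro/dominated-convergence step in (1) uniformly over the principal spectrum: since $c_{n+2}(\lambda)$ may grow or have zeros near $\lambda=0$ and near $\infty$, the argument must be localized to compact subintervals $[\varepsilon,M]\subset(0,\infty)$, using classical uniform control of $c_{n+2}$ and of the Harish--Chandra error term on such intervals, and then letting $\varepsilon\to 0$, $M\to\infty$ via outer regularity of $\hat{\eta}^{(p)}_\mu$. For parts (2) and (3) the only delicate point is the transition from the complementary-series expansion to the $\Xi$-function near $\lambda=0$; this is handled by separating the atom $\hat{\eta}^{(c)}_\mu(\{0\})$ from the continuous complementary diffraction before applying the respective asymptotics.
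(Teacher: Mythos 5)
Your proposal is correct and takes essentially the same route as the paper's proof: you insert the spectral formula $\NVmu(r)\asymp\sinh(r)^{2n}\int|\omega^{(n+2)}_{\lambda}(a_r)|^2\,d\hat{\eta}_{\mu}$, remove the oscillatory principal contribution by Ces\`aro averaging in $R$ to obtain the lower bound $C\int|c_{n+2}(\lambda)|^2\,d\hat{\eta}_{\mu}^{(p)}(\lambda)$ (this is exactly Theorem \ref{TheoremHyperbolicBeck}(2), via Proposition \ref{PropSphericalAverage}(2)), and bound the positive non-oscillatory complementary contribution pointwise to get the $R^2\Vol$ and $\Vol^{1+\delta}$ growth (Theorem \ref{TheoremHyperbolicBeck}(1)). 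The only substantive difference is that you cite the classical Harish--Chandra and $\Xi$-function asymptotics (with localization to $[\varepsilon,M]$ to control the error and the blow-up of $c_{n+2}$ near $0$), whereas the paper proves elementary substitutes, namely the trigonometric expansion of Proposition \ref{PropTrigonometricExpansionofSphericalFunctions} and the single uniform bound $\omega^{(n+2)}_{i\lambda}(a_r)\geq C\,\sinh(\lambda r)/\bigl(\lambda\sinh(r)^{\frac{n+1}{2}}\bigr)$ of Proposition \ref{PropSphericalAverage}(3), which treats the atom at $\lambda=0$ and the rest of the complementary spectrum in one stroke.
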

For the special case of a randomly shifted lattice orbit in $\H^n$, Theorem \ref{Theorem2} implies that there is a sequence of radii such that the number variance grows at least as fast as the hyperbolic volume of large metric balls. Moreover, when the complementary diffraction measure is non-trivial, the random measure is hyperfluctuating in analogy with the Euclidean terminology.

There are related works to the above result: In \cite[Theorem 2]{Hill2005TheVO}, Hill and Parnovski derive explicit asymptotics of the number variance for randomly shifted lattice orbits in $\H^n$. Theorem \ref{Theorem2} implies that the error term in their result is sharp. In \cite[Theorem 2]{MagyarAkosDiscrepancyOnHyperbolicSpaces}, Magyar provides a positive lower bound on the $L^{\infty}$-norm of the discrepancy of infinite point sets over balls in $\H^n$ that is proportional to the volume of the ball. For number variances (the $L^2$-norm of the discrepancy), we expect that there are examples of translation bounded point processes $\mu$ in $\H^n$ for which
\begin{align*}
\liminf_{R \rightarrow +\infty} \frac{\Vmu(R)}{\Vol_{\H^n}(B_R)} = 0 \, . 
\end{align*} 

\subsection{Hyperuniformity in real hyperbolic space}

In light of Theorem \ref{Theorem2}, there is no canonical ''geometric'' definition of hyperuniformity of a random measure $\mu$ on $\H^n$. We instead formulate a notion of \emph{spectral hyperuniformity} in terms of the diffraction measure of $\mu$ that is distinctly different from the geometric one. 

\begin{definition}
An invariant locally square-integrable random measure on $\H^n$ is \emph{spectrally hyperuniform} if $\hat{\eta}_{\mu}^{(c)} = 0$ and 
\begin{align*}
\limsup_{\varepsilon \rightarrow 0^+} \frac{\hat{\eta}_{\mu}^{(p)}((0, \varepsilon])}{\varepsilon^3} = 0 \, . 
\end{align*}
\end{definition}
The denominator $\varepsilon^3$ is asymptotic to the diffraction measure of the interval $(0, \varepsilon]$ for the unit intensity invariant Poisson point process as $\varepsilon \rightarrow 0^+$, so this definition of hyperuniformity is in that way consistent with spectral hyperuniformity on $\R^n$. It is worth noting that, with respect to this definition, it is no longer the case that all invariant randomly shifted lattice orbits are spectrally hyperuniform as there are lattices that admit complementary spectrum, even co-compact ones, see for example \cite[Section 1.2, Eq.B]{Jenni1984UeberDE}. With this notion of hyperuniformity in mind, we define what it means for a random measure to be stealthy.
\begin{definition}
An invariant locally square-integrable random measure on $\H^n$ is \emph{stealthy} if $\hat{\eta}_{\mu}^{(c)} = 0$ and there is a $\lambda_o > 0$ such that $\hat{\eta}_{\mu}^{(p)}((0, \lambda_o)) = 0$. 
\end{definition}
Stealthy random measures are spectrally hyperuniform. In \cite[Section 1.2, Eq.B]{Jenni1984UeberDE}, Jenni constructs a lattice $\Gamma < \SL_2(\R) \cong \SO^{\circ}(1, 2)$ that does not admit complementary spectrum nor principal spectrum around $\lambda = 0$ in the sense that the smallest non-zero eigenvalue of the Laplace operator on $\Gamma \backslash \H^2$ is strictly larger than $1/4$. The eigenvalue $1/4$ corresponds to $\lambda = 0$, so the resulting invariant random lattice orbit of $\Gamma$ in $\H^2$ is stealthy, and in particular spectrally hyperuniform.

The lack of other examples of spectrally hyperuniform point processes and random measures in $\H^n$ is however quite severe. In an upcoming article we will prove that determinantal point processes in $\H^n$ (and more generally in any commutative space associated with a non-amenable Gelfand pair) are not spectrally hyperuniform, in contrast to the Euclidean case where determinantal point processes associated to kernels which define orthogonal projections in $L^2(\R^n)$ are always hyperuniform. Here we show that locally square-integrable i.i.d. perturbations of an invariant random lattice orbit in $\H^n$ are not hyperuniform. We note the apparent contrast to the setting in $\R^n$, where i.i.d. perturbations of an invariant random lattice are always locally square-integrable and always preserves hyperuniformity, see \cite[Appendix B]{KlattPerturbedLattices} and Proposition \ref{PropEuclideanLatticePerturbationsPreserveHyperuniformity}. The following Proposition is proved in Section \ref{Random perturbed lattice orbits in hyperbolic spaces}. 
\begin{proposition}
\label{PropHyperbolicLatticePerturbationDestroysHyperuniformity}
Let $\Gamma < \SO^{\circ}(1, n)$ be a lattice and $d\nu(g) = \beta(g) dg$ for some positive measurable bi-$\SO(n)$-invariant function $\beta$ with $\int \beta(g) dg = 1$ such that
\begin{align*}
\beta(g) \leq  C \e^{-2(n - 1 + \varepsilon)d(g.o, o)}
\end{align*}
for all $g \in G$ and some $C > 0, \varepsilon > 0$. Then the invariant random lattice orbit of $\Gamma$ in $\H^n$ with independent $\nu$-distributed perturbations is locally square-integrable and not spectrally hyperuniform. 
\end{proposition}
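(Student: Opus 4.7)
The plan is to derive an explicit formula for the diffraction measure of $\tilde\mu$ in terms of $\hat\eta_\Gamma$, the Plancherel measure $d\pi_{\mathrm{Pl}}$, and the spherical transform $\hat\beta$, and then argue that the Poisson-type noise term in this formula cannot be suppressed near $\lambda = 0$ because $\hat\beta(0) < 1$. This dichotomy with the Euclidean case is forced by the fact that $\omega_0^{(n)} = \Xi$ is strictly less than $1$ off the identity, whereas its Euclidean analogue is identically $1$.

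Write $\tilde\mu = \sum_{\gamma \in \Gamma} \delta_{g_0 \gamma h_\gamma.o}$ with $g_0 \in \Gamma \backslash G$ Haar-distributed and the $h_\gamma$ i.i.d. with law $\nu$. For a bounded radial $f \colon \H^n \to \C$ with compact support, set
\begin{align*}
f^\beta(g.o) := \int_G f(gh.o)\, \beta(h)\, dh,
\end{align*}
which is again radial by bi-$\SO(n)$-invariance of $\beta$. Conditioning on $g_0$ and splitting $\sum_{\gamma,\gamma'}$ into its diagonal and off-diagonal parts, independence of the $h_\gamma$ yields
\begin{align*}
\Var(Sf(\tilde\mu)) = \iota_\Gamma \int_{\H^n}|f|^2 + \Var(Sf^\beta(\mu_\Gamma)) - \iota_\Gamma \int_{\H^n}|f^\beta|^2.
\end{align*}
Using $\widehat{f^\beta}(\lambda) = \hat f(\lambda)\hat\beta(\lambda)$, the spherical Plancherel formula, and the spectral decomposition of the covariance of $\mu_\Gamma$, one extracts
\begin{align*}
d\hat\eta_{\tilde\mu}^{(p)}(\lambda) &= \iota_\Gamma\bigl(1 - |\hat\beta(\lambda)|^2\bigr)\, d\pi_{\mathrm{Pl}}(\lambda) + |\hat\beta(\lambda)|^2\, d\hat\eta_\Gamma^{(p)}(\lambda), \\
d\hat\eta_{\tilde\mu}^{(c)}(\lambda) &= |\hat\beta(i\lambda)|^2\, d\hat\eta_\Gamma^{(c)}(\lambda).
\end{align*}
Local square-integrability of $\tilde\mu$ falls out of the same second-moment computation once the decay $\beta(g) \leq C\e^{-2(n-1+\varepsilon)d(g.o, o)}$ is used to control $f^\beta$ and the relevant lattice sums, together with the known local square-integrability of $\mu_\Gamma$.

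The heart of the matter is the strict inequality $\hat\beta(0) < 1$. By definition $\hat\beta(0) = \int_G \beta(g)\Xi(g)\, dg$, where $\Xi = \omega_0^{(n)}$ is the Harish-Chandra function, and the hypothesized exponential decay on $\beta$ makes this integral absolutely convergent. Since $\Xi(g) \in (0, 1]$ with equality only at $g = e$, while $\beta$ is a positive probability density,
\begin{align*}
\hat\beta(0) = \int_G \beta(g)\Xi(g)\, dg < \int_G \beta(g)\, dg = 1.
\end{align*}
Moreover, the uniform bound $|\omega_\lambda^{(n)}| \leq \Xi$ for real $\lambda$ provides $\beta\Xi$ as an integrable majorant, so by dominated convergence $\hat\beta$ is continuous at $0$ along $\R_{\geq 0}$. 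There is therefore $\varepsilon_0 > 0$ with $1 - |\hat\beta(\lambda)|^2 \geq \tfrac12(1 - \hat\beta(0)^2)$ on $(0, \varepsilon_0]$. Positivity of both summands in the formula for $\hat\eta_{\tilde\mu}^{(p)}$ then gives
\begin{align*}
\hat\eta_{\tilde\mu}^{(p)}((0,\varepsilon]) \geq \tfrac12\iota_\Gamma\bigl(1 - \hat\beta(0)^2\bigr)\, \pi_{\mathrm{Pl}}((0,\varepsilon])
\end{align*}
for all $\varepsilon \leq \varepsilon_0$, and since $\pi_{\mathrm{Pl}}((0,\varepsilon]) \asymp \varepsilon^3$ as $\varepsilon \to 0^+$, the ratio $\hat\eta_{\tilde\mu}^{(p)}((0,\varepsilon])/\varepsilon^3$ stays bounded below by a strictly positive constant, contradicting spectral hyperuniformity.

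The main obstacle is the rigorous derivation of the diffraction formula: the diagonal/off-diagonal split, the Fubini-type exchanges of sums and integrals, and the application of spherical Plancherel all need to be justified under only exponential (rather than compactly supported) decay of $\beta$, and the formula must be shown to uniquely determine the principal and complementary components of $\hat\eta_{\tilde\mu}$. Once that calibration is in place, the failure of hyperuniformity is a soft consequence of $\Xi < 1$ away from the identity.
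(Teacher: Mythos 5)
Your route is essentially the paper's: compute the autocorrelation/diffraction of the perturbed orbit, obtaining $d\hat{\eta}^{(p)}_{\Gamma,\nu}=\mathrm{const}\cdot(1-|\hat{\nu}(\lambda)|^2)|c_n(\lambda)|^{-2}d\lambda + |\hat{\nu}(\lambda)|^2\,(\cdots)\,d\hat{\eta}^{(p)}_{\Gamma}$, discard the nonnegative lattice term, use $\hat{\nu}(0)=\int\beta\,\omega_0\,dm_{G_n}<1$, and compare with $|c_n(\lambda)|^{-2}\asymp\lambda^2$ near $0$ to contradict the $\varepsilon^3$ bound. Two remarks on the second half. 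First, the equality set of $\omega_0$ is all of $K_n$ (bi-$K_n$-invariance forces $\omega_0\equiv 1$ on $K_n$), not just $\{e\}$; the strict inequality $\hat{\nu}(0)<1$ therefore needs $\nu(K_n)=0$, which does hold here because $\nu$ is absolutely continuous and $m_{G_n}(K_n)=0$, so your conclusion stands but the justification should be stated this way. Second, your dominated-convergence continuity of $\hat{\nu}$ at $0$ works, but the paper gets a cleaner uniform bound from the pointwise inequality $|\omega_{\lambda}|\le\omega_0$ for $\lambda\ge 0$, which gives $1-|\hat{\nu}(\lambda)|^2\ge 1-|\hat{\nu}(0)|^2$ for all $\lambda>0$ with no limiting argument.

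The genuine gap is the local square-integrability, which is part of the statement and which you dismiss as "falling out of the same second-moment computation." It does not: the second moment reduces to controlling
\begin{align*}
\sum_{\gamma\in\Gamma}\int_{G_n}\int_{G_n} m_{G_n}\bigl(Qh_1^{-1}\gamma h_2\cap Q\bigr)\,d\nu(h_1)\,d\nu(h_2),
\end{align*}
and with only exponential (not compactly supported) decay of $\beta$ one must split the resulting weight via the triangle inequality as $\e^{-2(n-1+\varepsilon)(d(h.o,\gamma.o)+d(h.o,o))}\le \e^{-(n-1+\varepsilon)d(\gamma.o,o)}\,\e^{-(n-1+\varepsilon)d(h.o,o)}$; this is exactly where the factor $2$ in the hypothesis is consumed, and the convergence then requires two nontrivial inputs: the Haar integral $\int\e^{-(n-1+\varepsilon)d(h.o,o)}dm_{G_n}(h)<\infty$ (volume growth $\e^{(n-1)r}$) and, crucially, finiteness of the lattice Poincar\'e series $\sum_{\gamma\in\Gamma}\e^{-(n-1+\varepsilon)d(\gamma.o,o)}$, i.e.\ that the critical exponent of an arbitrary (possibly non-cocompact) lattice is at most $n-1$. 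The paper imports this as a separate lemma from the literature; your sketch neither identifies this ingredient nor explains how the stated decay rate is used, so as written the square-integrability claim — and with it the legitimacy of the diffraction formula you rely on — is unproved.
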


\begin{remark}[Heat kernel hyperuniformity in Euclidean and hyperbolic spaces]
In future work, we will prove that spectral hyperuniformity is equivalent to what we refer to as \emph{heat kernel hyperuniformity}. Denote the heat kernel by $h_{\tau}(x)$, $\tau > 0$, the fundamental solution of the heat operator $\partial_{\tau} - \Delta$ in the respective geometries. In $\R^n$, we define $\mu$ to be heat kernel hyperuniform if
\begin{align*}
\limsup_{\tau \rightarrow +\infty} \,  \tau^{\frac{n}{2}} \, \Varmu(Sh_{\tau}) = 0 \, ,  
\end{align*}
and similarly in $\H^n$, if
\begin{align*}
\limsup_{\tau \rightarrow +\infty} \,  \tau^{\frac{3}{2}} \e^{\frac{(n - 1)^2}{2} \tau} \,  \Varmu(Sh_{\tau}) = 0 \, . 
\end{align*}
One perk of this formulation is that the smoothness of $h_{\tau}$ allows us to extend the notion of hyperuniformity to invariant random \emph{distributions} on these spaces as well, for which we expect that one can construct hyperuniform and stealthy examples. Moreover, by uniform estimates of the heat kernel due to Anker and Ostellari in \cite{Anker2003TheHK}, one should be able to formulate heat kernel hyperunifomity for a large class of non-compact symmetric spaces. 
\end{remark}

\textbf{Acknowledgement}: This paper is part of the second author's doctoral thesis at the University of Gothenburg under the supervision of the first author. The first author was supported by the grants 11253320 and 2023-03803 from the
Swedish Research Council. We thank Tobias Hartnick and Günter Last for providing valuable insights into the making of the paper. The second author is grateful to Tobias Hartnick for many useful discussions and hospitality during several visits to the Karlsruhe Institute of Technology, and also to Morten Risager for providing valuable information regarding number variances of lattices during a stay at the Mittag-Leffler Analytic Number Theory Program, spring 2024.

\newpage

\begin{spacing}{0.1}
\tableofcontents
\end{spacing}

\section{Random measures on homogeneous spaces}
\label{Random measures on homogeneous spaces}
We introduce invariant locally square-integrable random measures and their autocorrelation measures. The constructions work for a large family of proper homogeneous metric spaces, but we will only apply them to Euclidean and hyperbolic spaces in the subsequent sections. 

In Subsection \ref{Homogeneous metric spaces} we set up the metric spaces over which we will consider random measures. We proceed by defining invariant locally square-integrable random measures, related objects and provide examples in Subsection \ref{Random measures}. In Subsection \ref{Autocorrelation measures}, we define the autocorrelation measure of an invariant locally square-integrable random measure and compute it for the examples given in the previous subsection. 

\subsection{Homogeneous metric spaces}
\label{Homogeneous metric spaces}

Let $(X, d)$ be a proper non-compact metric space and fix a point $o \in X$. Suppose that there is a closed and unimodular subgroup $G$ of the isometry group $\Isom(X, d)$ whose action on $X$ is transitive. The action of $g \in G$ on $x \in X$ will be denoted by $g.x$. If we denote the $G$-stabilizer of the point $o$ by $K$ then the orbit map $\pi : G \rightarrow X$, $g \mapsto g.o$, descends to a bijection $G/K \rightarrow X$. We will assume that $K$ is \emph{compact}. 

We will frequently make use of the fact that there is a Borel measurable \emph{section} $\varsigma : X \rightarrow G$ of the orbit map $\pi$, that is, a Borel measurable map such that $\varsigma(x).o = x$ for all $x \in X$ and such that $\varsigma(B) \subset G$ is pre-compact for every bounded Borel set $B \subset X$, see \cite[Lemma 1.1]{MackeyInducedRepsI}. One shows that such a section satisfies the $\varsigma(g.o) \in gK$ for all $g \in G$. For Euclidean and real hyperbolic spaces, we will construct \emph{continuous} sections. 

Fix a Haar measure $m_G$ on $G$ and define a positive $G$-invariant measure $m_X$ on $X$ as the push-forward of $m_G$ along $\pi$. Explicitly,
\begin{align}
\label{ReferenceMeasure}
m_X(B) = m_G\big( \big\{ g \in G : g.o \in B \big\} \big)
\end{align}
for all bounded Borel sets $B \subset X$. Since the orbit map $\pi : G \rightarrow X$ is proper, $m_X$ defines a positive Radon measure on $X$. Also, since the Haar measure $m_G$ is unique up to scaling, then the measure $m_X$ is unique up to scaling. We will moreover denote by $m_K$ the unique Haar probability measure on $K$.

On the level of functions, if $M$ is a measurable space we denote by $\Borelinfty(M)$ the vector space of bounded measurable complex valued functions on $M$. With regards to the proper metric space $X$, we denote by $\Borelbndinfty(X) \subset \Borelinfty(X)$ the subspace of functions which vanish outside of a bounded set, or equivalently, vanish outside a compact subset. The Borel section $\varsigma$ then establishes a bijection between $f \in \Borelbndinfty(X)$ and right-$K$-invariant functions $\varphi_f \in \Borelbndinfty(G)$ by $f(x) = \varphi_f(\varsigma(x))$ for all $x \in X$. The vector space $\Borelbndinfty(G)$ defines an involutive algebra over the complex numbers under the convolution operation
\begin{align*}
(\varphi * \psi)(g) = \int_G \varphi(gh^{-1}) \psi(h) dm_G(h) \, , \quad \varphi, \psi \in \Borelbndinfty(G) \, , 
\end{align*}
and the involution $\varphi^*(g) = \overline{\varphi(g^{-1})}$. If $\varphi, \psi \in \Borelbndinfty(G)$ are right-$K$-invariant then $\varphi^*$ is left-$K$-invariant, and one checks that $\varphi^* * \psi$ is bi-$K$-invariant, so that the subspace $\Borelbndinfty(G, K) \subset \Borelbndinfty(G)$ of bi-$K$-invariant functions defines an involutive subalgebra. We can moreover identify $\Borelbndinfty(G, K)$ with the subspace $\Borelbndinfty(X)^K \subset \Borelbndinfty(X)$ of left-$K$-invariant functions on $X$ using the section $\varsigma$ as mentioned. We say that a function on $X$ is \emph{radial} if it is left-$K$-invariant. More generally, if $\nu, \eta$ are Radon measures on $G$ with $\nu$ finite, their convolution is given by
$$ (\nu * \eta)(Q) = \int_G \int_G \chi_Q(gh) d\nu(g) d\eta(h) $$
for Borel sets $Q \subset G$.
\subsection{Random measures}
\label{Random measures}
Let $\Radonplus(X)$ denote the set of positive locally finite Borel measures on $X$, endowed with the smallest $\sigma$-algebra such that for every $f \in \Borelbndinfty(X)$, the \emph{linear statistic} $S f : \Radonplus(X) \rightarrow \C$ given by 
\begin{align*}
S f (p) = p(f) = \int_X f(x) dp(x) \, , \quad p \in \Radonplus(X) 
\end{align*}
is measurable. Then $\Radonplus(X)$ is a Polish space when endowed with the vague topology with respect to the subspace $C_c(X) \subset \Borelbndinfty(X)$ of compactly supported continuous complex valued functions on $X$. By a \emph{random measure on $X$} we will mean a probability measure $\mu$ on $\Radonplus(X)$. We say that $\mu$ is a \emph{point process} if it is supported on the subspace 
$$ \Radonplus^{\mathrm{LF}}(X) = \Big\{ p \in \Radonplus(X) : \supp(p) \mbox{ is locally finite} \Big\} $$
of measures with locally finite support. A point process is \emph{simple} if it is supported on
$$ \Radonplus^{\mathrm{LFS}}(X) = \Big\{ \delta_{\Xi} = \sum_{x \in \Xi} \delta_x \in \Radonplus(X) : \Xi \subset X \mbox{ is locally finite} \Big\} $$ 
of locally finite measures with unit masses. The action of $G$ on $X$ lifts to an action of $G$ on $\Radonplus(X)$ by push-forward and leaves the mentioned subspaces invariant. Moreover, an additional lifting by push-forward yields an action of $G$ on the space of random measures on $X$. A random measure $\mu$ on $X$ is 
\begin{itemize}
    \item \emph{invariant} if $g_*\mu = \mu$ for all $g \in G$.
    \item \emph{locally $k$-integrable}, $k \in \N$, if for every bounded Borel set $B \subset X$,
    $$ \int_{\Radonplus(X)} p(B)^k d\mu(p) < + \infty \, . $$
    In terms of linear statistics, $S\chi_B \in L^k(\Radonplus(X), \mu)$ for all bounded Borel $B \subset X$. Note that a locally $k$-integrable random measure $\mu$ is locally $k'$-integrable for every $1 \leq k' \leq k$. 
\end{itemize}
We will from now on assume that all random measures are invariant and locally square-integrable, and we will simply refer to them as random measures, if not stated otherwise.

By local square-integrability, the linear statistics $Sf$, $f \in \Borelbndinfty(X)$, have a well-defined \emph{$\mu$-expectation}
$$ \bE_{\mu}(Sf) = \int_{\Radonplus(X)} p(f) d\mu(p) , $$
\emph{$\mu$-covariance},
\begin{align*}
\Covmu(Sf_1, Sf_2) &= \bE_{\mu}((Sf_1 - \bE_{\mu}(Sf_2))\overline{(Sf_2 - \Emu(Sf_2))}) \\
&= \Emu(Sf_1 \overline{Sf_2}) - \Emu(Sf_1) \overline{\Emu(Sf_2)} \, ,
\end{align*}   
and \emph{$\mu$-variance}
$$ \Var_{\mu}(Sf) = \Covmu(Sf, Sf) = \bE_{\mu}(|Sf - \bE_{\mu}(Sf)|^2) = \bE_{\mu}(|Sf|^2) - |\bE_{\mu}(Sf)|^2 \, . $$
By invariance of $\mu$, the linear functional $f \mapsto \bE_{\mu}(Sf)$ on $\Borelbndinfty(X)$ is $G$-invariant, and since $m_X$ is the only $G$-invariant positive Radon measure on $X$ up to scaling there is a unique scalar $\iota_{\mu} > 0$, the \emph{intensity} of $\mu$, such that 
$$\bE_{\mu}(Sf) = \iota_{\mu} m_X(f) \, . $$
The $\mu$-variance of $Sf$ can thus be written as 
$$\Var_{\mu}(Sf) = \bE_{\mu}(|Sf|^2) - \iota_{\mu}^2 |m_X(f)|^2  \, . $$
In the case that $f = \chi_{B_r}$ is the indicator function of the metric ball $B_r$ centered at $o \in X$, we arrive at the number variance. 
\begin{definition}
Let $\mu$ be an invariant locally square-integrable random measure on $X$. The \emph{number variance} of $\mu$ is the function $\Vmu : \R_{\geq 0} \rightarrow \R_{\geq 0}$ given by
\begin{align*}
\NVmu(r) = \Varmu(S\chi_{B_r}) \, . 
\end{align*}
\end{definition}
\begin{example}[Invariant Poisson point processes]
The \emph{$m_X$-Poisson point process} is the random measure $\mu$ on $X$ satisfying 
\begin{enumerate}
    \item for every bounded Borel set $B \subset X$, the linear statistic $S\chi_B$ is Poisson distributed with intensity $m_X(B)$. In other words, for every non-negative integer $j$,
    \begin{align*}
        \mu\Big(\Big\{ p \in \Radonplus(X) : p(B) = j \Big\}\Big) = \e^{-m_X(B)} \frac{m_X(B)^j}{j!} \, .
    \end{align*}
    \item for every finite collection $B_1, ..., B_N$ of disjoint bounded Borel sets in $X$, the linear statistics $S\chi_{B_1}, ..., S\chi_{B_N}$ are $\mu$-independent.
\end{enumerate}
One shows that the two conditions above ensure the existence and uniqueness of $\mu$ up to equivalence and that $\mu$ defines a locally square-integrable simple point process \cite[Theorem 3.6 + Prop. 3.2]{Penrose2017LecturesOT}. Moreover, since $m_X$ is $G$-invariant, $\mu$ is invariant. We will write $\mu_{\Poi}$ for the $m_X$-Poisson point process and $\Var_{\Poi}$ for the associated variance.
\end{example}

\begin{example}[Random lattice orbits]
\label{ExampleRandomLatticeOrbits}
Let $\Gamma < G$ be a lattice, i.e. a discrete subgroup such that there is a (necessarily unique) $G$-invariant probability measure $m_{G / \Gamma}$ on $G / \Gamma$. Since $\Gamma$ is a subgroup it is also uniformly discrete in $G$ in the sense that there is a (symmetric) open neighbourhood $U \subset G$ of the identity such that $|\Gamma \cap gU| \leq 1$ for all $g \in G$. In particular, if $Q \subset G$ is compact then a standard covering argument for $Q$ using left translates of $U$ yields 
$$ \sup_{g \in G} |g \Gamma \cap Q| = \sup_{g \in G} |\Gamma \cap g Q| < + \infty \, .   $$
We let $\Gamma_o = \Stab_{\Gamma}(o) = \Gamma \cap K$ be the stabilizer of $o \in X$ under the action of $\Gamma$, which is finite since $\Gamma$ is discrete and $K$ compact. The map $G / \Gamma \rightarrow \Radonplus^{\mathrm{LFS}}(X)$ sending a coset $g \Gamma$ to the measure 
\begin{align*} 
\delta_{g \Gamma.o} = \sum_{\gamma\Gamma_{o} \in \Gamma/\Gamma_{o}} \delta_{g \gamma.o} = \frac{1}{|\Gamma_{o}|} \sum_{\gamma \in \Gamma} \delta_{g \gamma.o}
\end{align*}
induces a simple point process $\mu_{\Gamma}$ on $X$ as the pushforward of $m_{G / \Gamma}$ via said map, in other words
\begin{align*}
\int_{\Radonplus(X)} \Psi(p) d\mu_{\Gamma}(p) = \int_{G / \Gamma} \Psi(\delta_{g \Gamma.o}) dm_{G / \Gamma}(g \Gamma) 
\end{align*}
for all $\Psi \in \Borelinfty(\Radonplus(X))$. This point process will be invariant as $m_{G / \Gamma}$ is a $G$-invariant measure and it is locally $k$-integrable for every $k \in \N$ as
\begin{align*}
\int_{\Radonplus(X)} p(B)^k d\mu_{\Gamma}(p) = \int_{G / \Gamma} |g \Gamma.o \cap B|^k dm_{G / \Gamma}(g \Gamma) \leq \sup_{g \in G} |g \Gamma.o \cap B|^k < + \infty 
\end{align*}
by uniform discreteness of $\Gamma$. We say that $\mu_{\Gamma}$ is a \emph{random lattice orbit} in $X$. In the case that $X = G$ is a proper metric group then we say that $\mu_{\Gamma}$ is a \emph{random lattice}. 

The description of the probability measure $m_{G / \Gamma}$ can be made explicit. A subset $F_{\Gamma} \subset G$ is a (left) \emph{fundamental domain} for $\Gamma$ if it is measurable and 
\begin{align*}
G = \bigsqcup_{\gamma \in \Gamma} F_{\Gamma} \gamma \, . 
\end{align*}
If $F_{\Gamma}$ is pre-compact then we say that $\Gamma$ is \emph{cocompact}. It follows from uniform discreteness of $\Gamma$ that $Q \cap F_{\Gamma}\gamma = \varnothing$ for all but finitely many $\gamma \in \Gamma$ whenever $Q \subset G$ is compact. Given a fundamental domain $F_{\Gamma}$ of $\Gamma$ in $G$ we define the \emph{covolume} of $\Gamma$ to be 
$$\covol(\Gamma) = m_G(F_{\Gamma}) \, . $$
The $G$-invariant probability measure on $G/\Gamma$ is then given by 
$$ m_{G / \Gamma} = \covol(\Gamma)^{-1}(\pi_{\Gamma})_*m_G|_{F_{\Gamma}} \, , $$
where $\pi_{\Gamma} : G \rightarrow G/\Gamma$ is the canonical quotient map. The above definitions can be shown to be independent of the choice of fundamental domain. 
\end{example}
Next we construct new invariant point processes by applying random i.i.d. perturbations to each point in the lattice orbit. 
\begin{example}[Random perturbed lattice orbits]
\label{ExamplePerturbedLatticePointProcess}
Let $\Gamma < G$ be a lattice and let $\nu \in \Prob(G)$ a probability measure.

Consider the space $Z = G^{\Gamma}$ of $\Gamma$-indexed sequences $z = (z_{\gamma})_{\gamma \in \Gamma}$ in $G$, endowed with the product topology. The group $\Gamma$ acts on $Z$ by right translation, $\gamma.(z_{\gamma'})_{\gamma' \in \Gamma} = (z_{\gamma'\gamma})_{\gamma \in \Gamma}$ and it also acts on $G$ canonically from the right by $g \mapsto g\gamma^{-1}$ for each $\gamma \in \Gamma$. The action of $G \times \Gamma$ on $G \times Z$ given by $(g, \gamma).(z, h) = (g h \gamma^{-1}, \gamma.z)$ for $g, h \in G$, $\gamma \in \Gamma$ and $z \in Z$ is well-defined and continuous, and we let $Y = (G \times Z)/\Gamma$ be the space of $\Gamma$-orbits. Explicitly, each $y \in Y$ is of the form 
\begin{align*}
y = (g, z)\Gamma = \big\{ (g \gamma, \gamma.z) \in G \times G^{\Gamma} : \gamma \in \Gamma \big\} 
\end{align*}
for some $g \in G$, $z \in G^{\Gamma}$. On $Y$ we define a $G$-invariant positive probability measure $m_{Y}$ by 
\begin{align*}
m_Y(F) = \int_{G / \Gamma} \Big( \int_{Z} F((g, z)\Gamma) d\nu^{\otimes \Gamma}(z) \Big) dm_{G / \Gamma}(g \Gamma)
\end{align*}
for all $F \in \Borelinfty(Y)$, where $\nu^{\otimes \Gamma}$ is the product probability measure on $Z$.

To construct a $\nu$-perturbed random $\Gamma$-orbit in $X$ we first define a $\nu$-perturbed random $\Gamma$-orbit in $G$ by considering the measures
\begin{align*}
\delta_{(g, z)\Gamma} = \sum_{\gamma \in \Gamma} \delta_{g \gamma z_{\gamma}} \, . 
\end{align*}
\begin{lemma}
\label{LemmaSimplePerturbedLatticeInGroups}
Let $\Gamma < G$ be a lattice and $\nu \in \Prob(G)$. Then
\begin{enumerate}
    \item $\delta_{(g, z)\Gamma}$ is locally finite $m_{G / \Gamma} \otimes \nu^{\otimes \Gamma}$-almost everywhere. 
    \item If $\nu$ is absolutely continuous with respect to the Haar measure $m_G$, then $\delta_{(g, z)\Gamma}$ is simple $m_{G / \Gamma} \otimes \nu^{\otimes \Gamma}$-almost everywhere.
\end{enumerate}
\end{lemma}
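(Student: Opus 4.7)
My plan is to handle both items as first-moment computations that exploit the product structure of $\nu^{\otimes \Gamma}$, the unfolding of $m_{G/\Gamma}$ against a fundamental domain $F_\Gamma$, and the right-invariance of the unimodular Haar measure $m_G$.

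For part (1), I would fix a compact set $Q \subset G$ and examine the expected mass at $Q$:
\begin{align*}
\int_{G/\Gamma} \int_Z \delta_{(g,z)\Gamma}(Q) \, d\nu^{\otimes \Gamma}(z)\, dm_{G/\Gamma}(g\Gamma) = \int_{G/\Gamma} \sum_{\gamma \in \Gamma} \int_G \chi_Q(g\gamma z) \, d\nu(z) \, dm_{G/\Gamma}(g\Gamma) \, ,
\end{align*}
where Tonelli has been used together with the fact that integration against $\nu^{\otimes \Gamma}$ on the $\gamma$-th summand only engages the $\gamma$-th coordinate. The function $g \mapsto \sum_{\gamma} \int_G \chi_Q(g\gamma z)\, d\nu(z)$ is right-$\Gamma$-invariant by the reindexing $\gamma \mapsto \gamma_0^{-1}\gamma$, so I may unfold $m_{G/\Gamma}$ against $F_\Gamma$, swap sum and integral, and use the tiling $G = \bigsqcup_{\gamma \in \Gamma} F_\Gamma \gamma$ together with right-invariance of $m_G$ to collapse the expression to
\begin{align*}
\covol(\Gamma)^{-1} \int_G \int_G \chi_Q(hz)\, d\nu(z)\, dm_G(h) = \covol(\Gamma)^{-1} m_G(Q) < \infty \, ,
\end{align*}
using right-invariance once more. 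Hence $\delta_{(g,z)\Gamma}(Q) < \infty$ for $m_{G/\Gamma} \otimes \nu^{\otimes \Gamma}$-almost every $(g\Gamma, z)$, and a countable exhaustion of $G$ by compact sets yields local finiteness on a full-measure set.

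For part (2), it suffices to prove that the atoms $g\gamma z_\gamma$ indexed by $\gamma \in \Gamma$ are pairwise distinct almost surely. Given $\gamma \neq \gamma' \in \Gamma$, the coincidence $g\gamma z_\gamma = g\gamma' z_{\gamma'}$ is equivalent to $z_\gamma = \gamma^{-1}\gamma' z_{\gamma'}$, an event independent of $g$. By independence of the coordinates under $\nu^{\otimes \Gamma}$ and Fubini, its probability equals $\int_G \nu(\{\gamma^{-1}\gamma' a\})\, d\nu(a)$. The hypothesis $\nu \ll m_G$ together with the non-atomicity of Haar measure on the non-discrete group $G$ forces $\nu(\{c\}) = 0$ for every $c \in G$, so this probability vanishes. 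Countable subadditivity over the pairs in $\Gamma \times \Gamma$ off the diagonal then yields simplicity almost surely.

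The only obstacle, essentially bookkeeping, is verifying that the integrand used for unfolding in part (1) is genuinely right-$\Gamma$-invariant and that $\delta_{(g,z)\Gamma}$ descends from $G \times Z$ to the orbit space $Y$; both follow from direct computations against the $\Gamma$-action introduced earlier, using that $\Gamma$ is countable to ensure measurability of the sum.
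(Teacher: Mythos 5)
Your argument is correct, and your part (2) is essentially the paper's own proof: reduce to the pairwise coincidence events $B_{\gamma_1,\gamma_2}$, use the product structure of $\nu^{\otimes\Gamma}$ and Fubini to get $\int_G \nu(\{\gamma_2^{-1}\gamma_1 a\})\,d\nu(a)$, kill it by atomlessness, and sum over the countably many off-diagonal pairs; you merely make explicit the non-discreteness of $G$ (so that $\nu\ll m_G$ forces $\nu(\{c\})=0$), a hypothesis the paper uses silently. Part (1) is where you genuinely diverge. The paper never integrates over $g\Gamma$: it fixes the $Z$-marginal, applies Borel--Cantelli to the events $\{\gamma z_\gamma\in Q\}$, and gets $\sum_{\gamma}\nu(\gamma^{-1}Q)=\int_G |\Gamma z\cap Q|\,d\nu(z)<\infty$ from uniform discreteness of $\Gamma$ (the covering argument of Example \ref{ExampleRandomLatticeOrbits}); this yields local finiteness for $\nu^{\otimes\Gamma}$-a.e.\ $z$, uniformly over the $g$-variable, and needs neither a fundamental domain nor unimodularity. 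Your route instead computes the full first moment $\int\int \delta_{(g,z)\Gamma}(Q)\,d\nu^{\otimes\Gamma}\,dm_{G/\Gamma}=\covol(\Gamma)^{-1}m_G(Q)$ by unfolding over $F_\Gamma$ and using right-invariance of $m_G$; this is also valid (the integrand is nonnegative, so Tonelli applies, and the countable exhaustion step is fine), and it has the side benefit of already producing the intensity $\iota_{\Gamma,\nu}=\covol(\Gamma)^{-1}$ that the paper only derives later in Example \ref{ExamplePerturbedLatticeAutocorrelation}, but it buys the a.e.\ statement only with respect to the product measure and leans on the unimodularity of $G$ and the tiling $G=\bigsqcup_\gamma F_\Gamma\gamma$, whereas the paper's Borel--Cantelli argument is more economical in hypotheses and localizes the randomness entirely in $z$. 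Your closing caveat about well-definedness of $\delta_{(g,z)\Gamma}$ on $\Gamma$-orbits is not actually needed for the a.e.\ statements themselves, though it is needed (and glossed over in the paper as well) for pushing the measure forward to $Y$.
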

\begin{proof}
For (1), let $Q \subset G$ be compact. We want to show that 
\begin{align*}
\nu^{\otimes \Gamma}\Big( \Big\{ z \in Z :  \gamma z_{\gamma} \in Q \mbox{ for infinitely many } \gamma \in \Gamma \Big\} \Big) = 0 \, . 
\end{align*}
By the Borel-Cantelli Lemma, it suffices to show that
\begin{align*}
\sum_{\gamma \in \Gamma} \nu^{\otimes \Gamma}\Big( \Big\{ z \in Z : \gamma z_{\gamma} \in Q \Big\} \Big) = \sum_{\gamma \in \Gamma} \nu(\gamma^{-1} Q) < +\infty \, ,
\end{align*}
which follows from a standard covering argument by picking $\Gamma$-injective open sets $U_1, ..., U_N \subset G$ that cover $Q$ and using that $\nu$ is a probability measure.

For (2) it is enough to show that the event $\gamma_1 z_{\gamma_1} = \gamma_2 z_{\gamma_2} $ is a null set in $Z$ with respect to $\nu^{\otimes \Gamma}$ for all $\gamma_1 \neq \gamma_2$. Explicitly, if we set $B_{\gamma_1, \gamma_2} = \{z \in Z : \gamma_1 z_{\gamma_1}  = \gamma_2 z_{\gamma_2}  \}$ then we aim to show that $\nu^{\otimes \Gamma}(B_{\gamma_1, \gamma_2}) = 0$ for $\gamma_1 \neq \gamma_2$ in $\Gamma$. Since $\Gamma$ is countable it will follow by $\sigma$-subadditivity that
\begin{align*}
\nu^{\otimes \Gamma}\Big(\Big\{ z \in Z : \gamma_1 z_{\gamma_1} = \gamma_2 z_{\gamma_2}  \,\, \mbox{for some} \, \gamma_1 \neq \gamma_2 \Big\}\Big)  \leq \sum_{\gamma_1 \neq \gamma_2} \nu^{\otimes \Gamma}(B_{\gamma_1, \gamma_2}) = 0\, . 
\end{align*}
Using that $\nu^{\otimes \Gamma}$ is a product measure, we get 
\begin{align*}
\nu^{\otimes \Gamma}(B_{\gamma_1, \gamma_2}) &= (\nu \otimes \nu)\Big( \Big\{ (z_1, z_2) \in G \times G :  \gamma_1 z_{1} = \gamma_2 z_{2}   \Big\} \Big) \\
&= (\nu \otimes \nu)\Big( \Big\{ (z_1, \gamma_2^{-1}\gamma_1 z_1) \in G \times G :  z_1 \in G  \Big\} \Big) \, . 
\end{align*}
By Fubini's Theorem, 
\begin{align*}
\nu^{\otimes \Gamma}(B_{\gamma_1, \gamma_2}) &= \int_G \nu(\{ \gamma_2^{-1}\gamma_1 z_1 \}) d\nu(z_1) \, , 
\end{align*}
so if $\nu$ is absolutely continuous with respect to the Haar measure $m_G$ then $\nu(\{ \gamma_2^{-1}\gamma_1 z_1 \}) = 0$ for all $z_1 \in G$ and $\nu^{\otimes \Gamma}(B_{\gamma_1, \gamma_2}) = 0$.
\end{proof}
We now assume that $\nu \in \Prob(G)$ is absolutely continuous with respect to the Haar measure $m_G$. By Lemma \ref{LemmaSimplePerturbedLatticeInGroups} the invariant random measure $\tilde{\mu}_{\Gamma, \nu}$ obtained as the push-forward of $m_{Y}$ via the map $(g, z)\Gamma \mapsto \delta_{(g, z)\Gamma}$ is an invariant simple point process on $G$, explicitly given by
\begin{align*}
\int_{\Radonplus(G)} \Psi(q) d\tilde{\mu}_{\Gamma, \nu}(q) = \int_{G/\Gamma} \Big( \int_{Z} \Psi(\delta_{(g, z)\Gamma}) d\nu^{\otimes \Gamma}(z) \Big) dm_{G / \Gamma}(g\Gamma) 
\end{align*}
for all $\Psi \in \Borelinfty(\Radonplus(G))$. In order to construct a locally finite simple $\nu$-perturbed random $\Gamma$-orbit in $X$ we consider the measures 
$$ p_{(g, z)\Gamma} = \pi_*\delta_{(g, z)\Gamma} = \sum_{\gamma \in \Gamma} \delta_{g\gamma z_{\gamma}.o} \, ,  $$
where $\pi : G \rightarrow X$ is the orbit map $g \mapsto g.o$. The following Proposition provides conditions for when these measures are almost everywhere locally finite and simple with respect to $m_{G/\Gamma} \otimes \nu^{\otimes \Gamma}$.

\begin{proposition}
\label{PropSimplicityofRandomPerturbedLatticeOrbits}
Suppose that $\nu \in \Prob(G)$ is right-$K$-invariant and absolutely continuous with respect to the Haar measure $m_G$. Moreover, assume that $m_G(K) = 0$. Then the measures $p_{(g, z)\Gamma}$ are locally finite and simple $m_{G/\Gamma} \otimes \nu^{\otimes \Gamma}$-almost everywhere.
\end{proposition}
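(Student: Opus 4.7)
The plan is to mirror Lemma \ref{LemmaSimplePerturbedLatticeInGroups}, passing from $G$ to the quotient $X = G/K$ via the orbit map $\pi$. Local finiteness transfers almost for free, while simplicity requires a new observation that uses the hypothesis $m_G(K) = 0$ in addition to absolute continuity.

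For local finiteness, fix a bounded Borel set $B \subset X$. Because the section $\varsigma$ from Subsection \ref{Homogeneous metric spaces} sends bounded sets to pre-compact ones and $K$ is compact, the preimage $\pi^{-1}(\bar B) = \varsigma(\bar B)K$ is pre-compact in $G$. Hence
$$ p_{(g,z)\Gamma}(B) = \delta_{(g,z)\Gamma}(\pi^{-1}(B)) \leq \delta_{(g,z)\Gamma}(Q) $$
for some compact $Q \subset G$ containing $\pi^{-1}(B)$, and the right-hand side is $m_{G/\Gamma} \otimes \nu^{\otimes \Gamma}$-almost surely finite by part (1) of Lemma \ref{LemmaSimplePerturbedLatticeInGroups}.

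For simplicity, the new issue is that two atoms of $p_{(g,z)\Gamma}$ collide in $X$ whenever the corresponding atoms of $\delta_{(g,z)\Gamma}$ merely agree up to a factor in $K$. For $\gamma_1 \neq \gamma_2$ in $\Gamma$, the collision event is
$$ C_{\gamma_1, \gamma_2} = \big\{ z \in Z : z_{\gamma_2}^{-1} \gamma_2^{-1} \gamma_1 z_{\gamma_1} \in K \big\}. $$
Applying Fubini and integrating in $z_{\gamma_1}$ first, the inner condition reads $z_{\gamma_1} \in \gamma_1^{-1} \gamma_2 z_{\gamma_2} K$, a left $K$-coset in $G$. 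By left-invariance of Haar measure and the hypothesis $m_G(K) = 0$, every such coset has $m_G$-measure zero, hence $\nu$-measure zero by absolute continuity. Thus $\nu^{\otimes\Gamma}(C_{\gamma_1, \gamma_2}) = 0$, and countable $\sigma$-subadditivity over the countably many pairs $\gamma_1 \neq \gamma_2$ in $\Gamma$ yields simplicity almost surely.

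The main conceptual step is identifying the correct additional hypothesis: absolute continuity of $\nu$ was sufficient for simplicity in $G$, but on $X$ a single left $K$-coset may carry positive Haar mass, so one must supplement with $m_G(K) = 0$ to ensure these cosets are null. The right-$K$-invariance of $\nu$, although assumed in the statement, is not strictly used in this argument; it is natural because it ensures that $\nu$ descends to a genuine perturbation law on the quotient $X = G/K$.
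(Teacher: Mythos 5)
Your argument is correct and essentially identical to the paper's: you reduce simplicity to the pairwise collision events $\{\gamma_1 z_{\gamma_1}.o = \gamma_2 z_{\gamma_2}.o\}$, apply Fubini to the product measure, and kill each event because a left translate $gK$ is $m_G$-null (by left-invariance and $m_G(K)=0$) and hence $\nu$-null by absolute continuity, finishing with countable subadditivity. Your explicit treatment of local finiteness via $\pi^{-1}(B)=\varsigma(B)K$ being pre-compact, and your observation that right-$K$-invariance is not actually used here, are fine and match what the paper leaves implicit.
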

Under these assumptions we get an invariant simple point process in $X$ as the push-forward of $\tilde{\mu}_{\Gamma, \nu}$ along the map $\Radonplus(G) \ni q \mapsto \pi_*q \in \Radonplus(X)$.
\begin{definition}[Random perturbed lattice orbit]
Let $\Gamma < G$ be a lattice and $\nu \in \Prob(G)$ a right-$K$-invariant probability measure absolutely continuous with respect to the Haar measure $m_G$. The \emph{$\nu$-perturbed $\Gamma$-orbit} in $X = G/K$ is the simple point process $\mu_{\Gamma, \nu}$ given by
\begin{align*}
\int_{\Radonplus(X)} \Psi(p) d\mu_{\Gamma, \nu}(p) = \int_{G/\Gamma} \Big( \int_{Z} \Psi(p_{(g, z)\Gamma}) d\nu^{\otimes \Gamma}(z) \Big) dm_{G / \Gamma}(g\Gamma) 
\end{align*}
for all $\Psi \in \Borelinfty(\Radonplus(X))$.
\end{definition}
\begin{remark}
If $K$ is compact and open in $G$, for example when $X$ is the homogeneous $p$-regular tree and $G = \mathrm{PGL}_2(\Q_p)$, $K = \mathrm{PGL}_2(\Z_p)$, then $m_G(K) > 0$. In this case there are lattices $\Gamma$ and probability measures $\nu$ absolutely continuous with respect to the Haar measure on $G$ for which the associated random perturbed lattice orbit is not a simple point process.
\end{remark}
\begin{proof}[Proof of Proposition \ref{PropSimplicityofRandomPerturbedLatticeOrbits}]
Similarly to the proof of item (2) in Lemma \ref{LemmaSimplePerturbedLatticeInGroups}, it is enough to show that $\nu^{\otimes \Gamma}(B_{\gamma_1, \gamma_2}) = 0$, where
\begin{align*}
B_{\gamma_1, \gamma_2} &= \Big\{ z \in Z : \gamma_1 z_{\gamma_1} . o = \gamma_2 z_{\gamma_2} . o  \Big\} = \Big\{ z \in Z :  \gamma_2 z_{\gamma_2} \in \gamma_1 z_{\gamma_1} K \Big\}  \, . 
\end{align*}
Using that $\nu^{\otimes \Gamma}$ is a product measure, we get 
\begin{align*}
\nu^{\otimes \Gamma}(B_{\gamma_1, \gamma_2}) &= (\nu \otimes \nu)\Big( \Big\{ (z_1, z_2) \in G \times G :  \gamma_2 z_{2} \in \gamma_1 z_{1} K   \Big\} \Big) \\
&= (\nu \otimes \nu)\Big( \Big\{ (z_1, \gamma_2^{-1}\gamma_1 z_1 k) \in G \times G :  z_1 \in G \, , \, k \in K  \Big\} \Big) \, . 
\end{align*}
By Fubini's Theorem, 
\begin{align*}
\nu^{\otimes \Gamma}(B_{\gamma_1, \gamma_2}) &= \int_G \nu(\gamma_2^{-1}\gamma_1 z_1 K) d\nu(z_1) \, . 
\end{align*}
Now, if $\nu$ is absolutely continuous with respect to the Haar measure $m_G$ then $\nu(\gamma_2^{-1}\gamma_1 z_1 K)$ vanishes for all $z_1 \in G$ if and only if $m_G(K) = 0$, so $\nu^{\otimes \Gamma}(B_{\gamma_1, \gamma_2}) = 0$.
\end{proof}
Next we address local square-integrability of $\mu_{\Gamma, \nu}$, which in the general setting requires the following conditions on either $\Gamma$ or $\nu$. 
\begin{lemma}
\label{LemmaPerturbedLatticeisLocallySquareIntegrable}
Let $\Gamma < G$ be a lattice with fundamental domain $F_{\Gamma}$ and $\nu$ be a right-$K$-invariant probability measure on $G$. If $\Gamma$ is cocompact or the support $\supp(\nu)$ of $\nu$ is compact then the random perturbed lattice orbit $\mu_{\Gamma, \nu}$ on $X$ is locally square-integrable.  
\end{lemma}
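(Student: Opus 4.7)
The plan is to show that $\int_{\Radonplus(X)} p(B)^2 \, d\mu_{\Gamma,\nu}(p) < \infty$ for every bounded Borel $B \subset X$. Setting $\tilde{B} = \pi^{-1}(B) \subset G$ (bounded and right-$K$-invariant), expand
\[
p_{(g,z)\Gamma}(B)^2 = \sum_{\gamma_1, \gamma_2 \in \Gamma} \chi_{\tilde{B}}(g \gamma_1 z_{\gamma_1}) \chi_{\tilde{B}}(g \gamma_2 z_{\gamma_2})
\]
and integrate first against the product measure $\nu^{\otimes \Gamma}$. Independence of the coordinates $z_\gamma$ together with $\chi_{\tilde{B}}^2 = \chi_{\tilde{B}}$ splits the $z$-integral into a diagonal contribution $\sum_\gamma \nu((g\gamma)^{-1}\tilde{B})$ and an off-diagonal contribution $\sum_{\gamma_1 \neq \gamma_2} \nu((g\gamma_1)^{-1}\tilde{B})\nu((g\gamma_2)^{-1}\tilde{B})$.

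Integration over $G/\Gamma$ followed by the standard unfolding $\int_{G/\Gamma}\sum_\gamma f(g\gamma)\,dm_{G/\Gamma}(g\Gamma) = \covol(\Gamma)^{-1}\int_G f\,dm_G$ collapses the diagonal piece to $\covol(\Gamma)^{-1}\int_G \nu(h^{-1}\tilde{B})\,dm_G(h) = \covol(\Gamma)^{-1}m_X(B)$ by Fubini and left-invariance of $m_G$. For the off-diagonal part, estimate above by $\int_{G/\Gamma}\big(\sum_\gamma \nu((g\gamma)^{-1}\tilde{B})\big)^2 dm_{G/\Gamma}$, expand the square, and reindex $\gamma_2 = \gamma_1\delta$; the same unfolding then yields
\[
\covol(\Gamma)^{-1}\int_G \nu(h^{-1}\tilde{B})\, S(h)\, dm_G(h), \qquad S(h) := \sum_{\delta \in \Gamma} \nu((h\delta)^{-1}\tilde{B}).
\]
Since $\int_G \nu(h^{-1}\tilde{B})\,dm_G(h) = m_X(B)$ is finite, it suffices to prove a uniform bound $\sup_{h \in G} S(h) < \infty$.

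Using Fubini once more, $S(h) = \int_G |\Gamma \cap h^{-1}\tilde{B} z^{-1}|\,d\nu(z)$, and the two hypotheses handle this via slightly different routes. When $\supp(\nu) \subset Q_0$ is compact, the term $\nu((h\delta)^{-1}\tilde{B})$ can be non-zero only when $\delta \in h^{-1}\tilde{B}Q_0^{-1}$, a left-translate of the fixed compact set $\tilde{B}Q_0^{-1}$, and uniform discreteness of $\Gamma$ bounds the number of such $\delta$ by a constant $N = N(\tilde{B},Q_0)$ independent of $h$. When $\Gamma$ is cocompact, fix a compact left fundamental domain $F' \subset G$ for $\Gamma$ (so $G = \bigsqcup_{\gamma \in \Gamma} \gamma F'$) and write each $z \in G$ uniquely as $z = \gamma_0 f'$ with $\gamma_0 \in \Gamma$ and $f' \in F'$; then $\Gamma z = \Gamma f'$, so $|\Gamma z \cap h^{-1}\tilde{B}| = |\Gamma \cap h^{-1}\tilde{B}(f')^{-1}| \leq |\Gamma \cap h^{-1}\tilde{B}(F')^{-1}|$, uniformly bounded in $h$ and $z$ since $\tilde{B}(F')^{-1}$ is a fixed compact subset of $G$. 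Integrating against the probability measure $\nu$ in either case gives $S(h) \leq N$.

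The main obstacle is the cocompact case, where $\nu$ may have unbounded support and one cannot merely count non-vanishing $\delta$-terms; the resolution is to push the problem through the decomposition $z = \gamma_0 f'$, which absorbs the unbounded location of $z$ into a lattice element $\gamma_0$ while leaving the right coset $\Gamma z$ determined by the bounded piece $f'$.
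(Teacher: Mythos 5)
Your proof is correct, and it reorganizes the argument in a way that differs from the paper's, mainly in the cocompact case. The paper also reduces to the off-diagonal quantity $I_{\Gamma,\nu}=\int_{G/\Gamma}\big(\sum_{\gamma}\nu(\gamma^{-1}g^{-1}Q)\big)^2dm_{G/\Gamma}$, but then bounds the integrand \emph{before} unfolding by enlarging the window in a case-specific way: for cocompact $\Gamma$ it replaces $Q$ by $Q'=F_{\Gamma}^{-1}Q$ (using $\chi_Q(g\gamma h)\le\chi_{Q'}(\gamma h)$ for $g\in F_\Gamma$) and gets $I_{\Gamma,\nu}\le\big(\sum_\gamma\nu(\gamma^{-1}Q')\big)^2$, finite by the covering/counting argument; for compactly supported $\nu$ it replaces $Q$ by $Q''=Q\,\supp(\nu)^{-1}$ and reduces to $\sum_\gamma m_G(Q''\gamma^{-1}\cap Q'')<\infty$. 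You instead unfold first, writing the off-diagonal term as $\covol(\Gamma)^{-1}\int_G\nu(h^{-1}\tilde B)\,S(h)\,dm_G(h)$, and prove the single uniform bound $\sup_h S(h)<\infty$, which handles both hypotheses through one counting statement: compact support confines $\delta$ to $h^{-1}\tilde BQ_0^{-1}$, while cocompactness is used via the decomposition $z=\gamma_0 f'$ of the perturbation in a pre-compact fundamental domain, so that $\Gamma z=\Gamma f'$ and the count reduces to $|\Gamma\cap h^{-1}\tilde B(F')^{-1}|$; in both cases uniform discreteness of $\Gamma$ and $\nu(G)=1$ finish the job. Your route is arguably more unified and yields the clean quantitative bound $\bE_{\mu_{\Gamma,\nu}}(|S\chi_B|^2)\le\covol(\Gamma)^{-1}(1+\sup_hS(h))\,m_X(B)$, whereas the paper's pointwise enlargements are slightly more economical computation-wise and reuse verbatim the summability estimate from its Lemma on local finiteness. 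Two cosmetic points: the change of variables giving $\int_G\nu(h^{-1}\tilde B)\,dm_G(h)=m_G(\tilde B)$ uses right-invariance (unimodularity) of $m_G$, not left-invariance; and a fundamental domain with the decomposition $G=\bigsqcup_\gamma\gamma F'$ need only be taken measurable with compact closure (e.g.\ $F'=F_\Gamma^{-1}$), which is all your bound requires.
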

\begin{proof}
Let $B \subset \R^n$ be a bounded Borel set. Then by definition of $\mu_{\Gamma, \nu}$,
\begin{align*}
\bE_{\mu_{\Gamma, \nu}}(|S\chi_B|^2) = \int_{G / \Gamma} \Big( \sum_{\gamma_1, \gamma_2 \in \Gamma} \int_{Z} &\chi_B (g \gamma_1 z_{\gamma_1}.o) \chi_B(g \gamma_2 z_{\gamma_2}.o) d\nu^{\otimes \Gamma}(z) \Big) dm_{G / \Gamma}(g\Gamma) \, . 
\end{align*}
Note how
\begin{align*}
\int_{Z} \chi_B (g \gamma_1 z_{\gamma_1}.o) &\chi_B(g \gamma_2 z_{\gamma_2}.o) d\nu^{\otimes \Gamma}(z) 
%
=
\begin{dcases}
\nu(\gamma^{-1} g^{-1} Q) &\mbox{ if } \gamma_1 = \gamma_2 = \gamma \\
\nu(\gamma_1^{-1} g^{-1} Q)\nu(\gamma_2^{-1} g^{-1} Q) &\mbox{ if } \gamma_1 \neq \gamma_2  
\end{dcases}
\end{align*}
where $Q = \pi^{-1}(B) = \varsigma(B)K$ is measurable and pre-compact. Splitting into these two cases we have that
\begin{align*}
\bE_{\mu_{\Gamma, \nu}}(|S\chi_B|^2) &= \int_{G / \Gamma} \Big( \sum_{\gamma \in \Gamma} \nu(\gamma^{-1} g^{-1} Q) \Big) dm_{G / \Gamma}(g\Gamma) \\
&+ \int_{G / \Gamma} \Big( \sum_{\gamma_1 \neq \gamma_2} \nu(\gamma_1^{-1} g^{-1} Q) \nu(\gamma_2^{-1} g^{-1} Q) \Big) dm_{G / \Gamma}(g\Gamma) \, . 
\end{align*}
The first term can be computed using Fubini and the $G$-invariance of $m_{G / \Gamma}$ as
\begin{align*}
\int_{G / \Gamma} \Big( \sum_{\gamma \in \Gamma} \nu(\gamma^{-1} g^{-1} Q) \Big) dm_{G / \Gamma}(g\Gamma) 
= \frac{m_G(Q)}{\covol(\Gamma)} < +\infty \, . 
\end{align*}
We postpone the details of this computation to Example \ref{ExamplePerturbedLatticeAutocorrelation}.

For the second term, it suffices to show that
\begin{align}
\label{EqLemma2point5SecondTerm}
I_{\Gamma, \nu} :=  \int_{G / \Gamma} \Big( \sum_{\gamma \in \Gamma} \nu(\gamma^{-1} g^{-1} Q) \Big)^2 dm_{G / \Gamma}(g\Gamma) < +\infty \, . 
\end{align}
First, if $\Gamma$ is cocompact with fundamental domain $F_{\Gamma}$ then we use Fubini to write
\begin{align*}
I_{\Gamma, \nu} = \frac{1}{\covol(\Gamma)} \sum_{\gamma_1, \gamma_2} \int_{G} \int_G \Big( \int_{F_{\Gamma}}\chi_Q (g \gamma_1 h_1) \chi_Q(g \gamma_2 h_2) dm_G(g) \Big)  d\nu(h_1) d\nu(h_2) \, . 
\end{align*}
If we let $Q' = F_{\Gamma}^{-1} Q$, then $Q'$ is measurable and pre-compact in $G$ and $\chi_Q(g \gamma h) \leq \chi_{Q'}(\gamma h)$ for all $g \in F_{\Gamma}, h \in G, \gamma \in \Gamma$, so
\begin{align}
\label{EqLemma2point5IndicatorIntegralBound}
\frac{1}{\covol(\Gamma)} \int_{F_{\Gamma}}\chi_Q (g \gamma_1 h_1) \chi_Q(g\gamma_2 h_2) dm_G(g) \leq \chi_{Q'}(\gamma_1 h_1) \chi_{Q'}(\gamma_2 h_2) \, . 
\end{align}
Thus the expression in Equation \ref{EqLemma2point5SecondTerm} is bounded by
\begin{align*}
I_{\Gamma, \nu} \leq \sum_{\gamma_1, \gamma_2} \int_{G} \int_G \chi_{Q'}(\gamma_1 h_1) \chi_{Q'}(\gamma_2 h_2)  d\nu(h_1) d\nu(h_2) = \Big( \sum_{\gamma \in \Gamma} \nu(\gamma^{-1} Q') \Big)^2 \, , 
\end{align*}
which is finite by the same argument as in the proof of item (1) in Lemma \ref{LemmaSimplePerturbedLatticeInGroups}.

Secondly, if $\supp(\nu)$ is compact and we let $Q'' = Q \, \supp(\nu)^{-1}$, then $Q''$ is measurable and pre-compact in $G$ and by the same argument as for Equation \ref{EqLemma2point5IndicatorIntegralBound} we have 
\begin{align*}
\int_{G} \int_G \chi_Q (g \gamma_1 h_1) \chi_Q(g \gamma_2 h_2) d\nu(h_1) d\nu(h_2) \leq \chi_{Q''}(g \gamma_1) \chi_{Q''}(g \gamma_2) \, .
\end{align*}
This bound along with a standard computation yields
\begin{align*}
I_{\Gamma, \nu} &\leq \frac{1}{\covol(\Gamma)} \int_{F_{\Gamma}} \sum_{\gamma_1, \gamma_2} \chi_{Q''}(g \gamma_1) \chi_{Q''}(g \gamma_2) dm_G(g) \\
&= \frac{1}{\covol(\Gamma)} \sum_{\gamma_1 \in \Gamma} \int_{G} \chi_{Q''}(g \gamma_1) \chi_{Q''}(g ) dm_G(g) = \frac{1}{\covol(\Gamma)} \sum_{\gamma_1 \in \Gamma} m_G(Q''\gamma_1^{-1} \cap Q'') \, . 
\end{align*}
Since $Q''$ is pre-compact, the latter series is a sum over finitely many $\gamma_1 \in \Gamma$ and hence finite.
\end{proof}
We will give more examples of perturbations $\nu$ for which random perturbed lattice orbits on real hyperbolic spaces are locally square-integrable in section \ref{Random perturbed lattice orbits in hyperbolic spaces}. 
\end{example}

\subsection{Autocorrelation measures}
\label{Autocorrelation measures}

The autocorrelation measure of a random measure is a signed Radon measure on $G$ that uniquely determines the $\mu$-covariance 
of linear statistics,
\begin{align*}
\Cov_{\mu}(Sf_1, Sf_2) &= \int_{\Radonplus(X)} p(f_1) \overline{p(f_2)} d\mu(p) - \iota_{\mu}^2 m_X(f_1) \overline{m_X(f_2)}\, , \quad f_1, f_2 \in \Borelbndinfty(X) \, .
\end{align*}
The definition is the following.
\begin{definition}[Autocorrelation measures]
\label{DefAutocorrelation}
Let $b \in \Borelbndinfty(X)$ be a positive function with $m_X(b) = 1$. The \emph{autocorrelation measure} of an invariant locally square-integrable random measure $\mu$ on $X$ is the bi-$K$-invariant signed Radon measure $\eta_{\mu}$ on $G$ defined by
\begin{align*}
\eta_{\mu}(\varphi) = \int_{\Radonplus(X)} \Big( \int_{X} \int_{X} \varphi(\varsigma(x)^{-1}\varsigma(y)) b(x) dp(x) dp(y) \Big) d\mu(p) - \iota_{\mu}^2 \int_G \varphi(g) dm_G(g) \, . 
\end{align*}
for all $\varphi \in \Borelbndinfty(G)$.
\end{definition}
The truncation function $b \in \Borelbndinfty(X)$ is necessary for the definition as the bivariante functions $(x, y) \mapsto \varphi(\varsigma(x)^{-1}\varsigma(y))$ do not have bounded support. However, the definition does not depend on the choice of $b$ by the following Lemma.
\begin{lemma}
\label{LemmaAutocorrelationRelationtoCovariance}
The autocorrelation measure $\eta_{\mu}$ of an invariant locally square-integrable random measure $\mu$ satisfies
\begin{align*}
\eta_{\mu}(\psi^* * \varphi) = \Cov_{\mu}(S(\varphi \circ \varsigma), S(\psi \circ \varsigma)) 
\end{align*}
for all right-$K$-invariant $\varphi, \psi \in \Borelbndinfty(G)$. In particular, the definition of $\eta_{\mu}$ is independent of the choice of $b \in \Borelbndinfty(X)$.
\end{lemma}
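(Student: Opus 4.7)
The plan is to unfold both sides of the identity using the definitions and reduce the verification to a short computation that uses invariance of $\mu$ and transitivity of the $G$-action on $X$.

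First, I would compute $(\psi^* * \varphi)(\varsigma(x)^{-1}\varsigma(y))$ explicitly. Unfolding the convolution and applying the change of variable $h \mapsto \tilde h \varsigma(y)$ (using right-invariance of $m_G$), one obtains
\begin{equation*}
(\psi^* * \varphi)(\varsigma(x)^{-1}\varsigma(y)) = \int_G \overline{\psi(\tilde h \varsigma(x))}\,\varphi(\tilde h \varsigma(y))\, dm_G(\tilde h).
\end{equation*}
Because $\varphi$ and $\psi$ are right-$K$-invariant and $\tilde h\varsigma(x) \in \varsigma(\tilde h.x)K$, the integrand equals $\overline{f_2(\tilde h.x)}\,f_1(\tilde h.y)$, where $f_1 = \varphi \circ \varsigma$ and $f_2 = \psi \circ \varsigma$.

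Next, I would plug this into Definition \ref{DefAutocorrelation} and interchange the order of integration via Fubini, justified by local square-integrability of $\mu$ together with the bounded supports of $b$, $f_1$, $f_2$. For each fixed $\tilde h \in G$, I would invoke $G$-invariance of $\mu$ by replacing $p$ with $\tilde h^{-1}_* p$ inside the $\mu$-integration. Using the push-forward identity $\int F\, d(\tilde h^{-1}_* p) = \int F(\tilde h^{-1}.x)\,dp(x)$, the translations by $\tilde h$ inside $f_1$ and $f_2$ cancel, leaving a factor $b(\tilde h^{-1}.x)$ in place of $b(x)$.

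Swapping integrations back, the only remaining $\tilde h$-dependent factor is $b(\tilde h^{-1}.x)$. By unimodularity of $G$ and the identity $m_X = \pi_* m_G$,
\begin{equation*}
\int_G b(\tilde h^{-1}.x)\, dm_G(\tilde h) = \int_G b(\tilde h.o)\, dm_G(\tilde h) = m_X(b) = 1
\end{equation*}
for every $x \in X$; the first equality uses inversion on $G$ followed by right-translation to move $x$ to $o$. Hence the first summand in $\eta_\mu(\psi^* * \varphi)$ collapses to $\bE_\mu(Sf_1\overline{Sf_2})$. A parallel unimodularity argument for the Haar-integral correction yields
\begin{equation*}
\int_G (\psi^* * \varphi)(g)\, dm_G(g) = \Big(\int_G \varphi\, dm_G\Big) \overline{\Big(\int_G \psi\, dm_G\Big)},
\end{equation*}
and multiplying by $\iota_\mu^2$ identifies this with $\bE_\mu(Sf_1)\overline{\bE_\mu(Sf_2)}$ through the intensity formula $\bE_\mu(S(\varphi\circ\varsigma)) = \iota_\mu \int_G \varphi\, dm_G$. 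Combining the two summands gives $\eta_\mu(\psi^* * \varphi) = \Covmu(Sf_1, Sf_2)$.

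Independence of $\eta_\mu$ from the choice of $b$ then follows because the identity just proved does not refer to $b$, and the bi-$K$-invariant functions in $\Borelbndinfty(G)$ of the form $\psi^* * \varphi$ with $\varphi, \psi$ right-$K$-invariant span a subspace large enough to uniquely determine a signed Radon measure on $G$: any bi-$K$-invariant compactly supported function can be approximated by such convolutions against an approximate identity of bi-$K$-invariant bump functions, so two candidate autocorrelations arising from different choices of $b$ must agree. The main technical hurdle is justifying the Fubini interchanges and checking that after applying $G$-invariance of $\mu$ the effective $\tilde h$-integration remains supported on a pre-compact subset of $G$; both are routine consequences of the bounded-support hypotheses on $\varphi$, $\psi$, and $b$ combined with local square-integrability of $\mu$.
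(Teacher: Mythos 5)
Your proof is correct and follows essentially the same route as the paper's: unfold the convolution, use right-$K$-invariance together with $G$-invariance of $\mu$ to cancel the translation inside the linear statistics and shift it onto $b$, collapse $\int_G b(h^{-1}.x)\,dm_G(h) = m_X(b) = 1$, and conclude independence of $b$ from the fact that functions of the form $\psi^* * \varphi$ determine the autocorrelation. The only cosmetic difference is that you keep the $\iota_{\mu}^2$-correction term and match it via $m_G(\psi^* * \varphi) = \overline{m_G(\psi)}\, m_G(\varphi)$ and the intensity formula, whereas the paper disposes of it by assuming $m_G(\varphi) = m_G(\psi) = 0$ at the outset.
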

\begin{proof}
Assume without loss of generality that $m_G(\varphi) = m_G(\psi) = 0$, so that $m_G(\psi^* * \varphi) = \overline{m_G(\psi)} m_G(\varphi) = 0$. Then it suffices to show that
\begin{align*}
\eta_{\mu}(\psi^* * \varphi) = \int_{\Radonplus(X)} \Big( \int_{X} \int_{X} \overline{\psi(\varsigma(x))} \varphi(\varsigma(y))  dp(x) dp(y) \Big) d\mu(p) \, . 
\end{align*}
Given this identity, the subspace of functions of the form $\psi^* * \varphi$ are enough to uniquely determine $\eta_{\mu}$, so $\eta_{\mu}$ will be independent of the choice of $b$.

First, we write 
\begin{align*}
(\psi^* * \varphi)(\varsigma(x)^{-1}\varsigma(y)) = \int_G\overline{\psi(h \varsigma(x))} \varphi(h\varsigma(y)) dm_G(h) 
\end{align*}
and using Fubini, we get that
\begin{align*}
\eta_{\mu}(\psi^* * \varphi) = \int_G \int_{\Radonplus(X)} \Big( \int_{X} \int_{X} \overline{\psi(h \varsigma(x))} \varphi(h\varsigma(y))  b(x) dp(x) dp(y) \Big) d\mu(p) dm_G(h) \, . 
\end{align*}
Since $h\varsigma(x).o = h.x = \varsigma(h.x).o$ for all $x \in X$ then $\psi(h\varsigma(x)) = \psi(\varsigma(h.x))$ and $\varphi(h\varsigma(y)) = \varphi(\varsigma(h.y))$ by right-$K$-invariance. Thus
\begin{align*}
\int_{X} \int_{X} \overline{\psi(h \varsigma(x))} \varphi(h\varsigma(y)) b(x)  dp(x) dp(y) = \int_{X} \int_{X} \overline{\psi(\varsigma(x))} \varphi(\varsigma(y)) b(h^{-1}.x) dh_*p(x) dh_*p(y)  
\end{align*}
and by the $G$-invariance of $\mu$, we see that
\begin{align*}
\eta_{\mu}(\psi^* * \varphi) &= \int_G \int_{\Radonplus(X)} \Big( \int_{X} \int_{X} \overline{\psi(\varsigma(x))} \varphi(\varsigma(y))  b(h^{-1}.x) dp(x) dp(y) \Big) d\mu(p) dm_G(h) \\
&= \int_{\Radonplus(X)} \Big( \int_{X} \int_{X} \Big( \int_G b(h^{-1}.x) dm_G(h) \Big) \overline{\psi(\varsigma(x))} \varphi(\varsigma(y))  dp(x) dp(y) \Big) d\mu(p) \\
&= m_X(b) \int_{\Radonplus(X)} \Big( \int_{X} \int_{X} \overline{\psi(\varsigma(x))} \varphi(\varsigma(y))  dp(x) dp(y) \Big) d\mu(p) \, .
\end{align*}
Since $b \in \Borelbndinfty(X)$ was assumed to satisfy $m_X(b) = 1$ then we are done. 
\end{proof}
\begin{example}[Autocorrelation of the invariant Poisson point process]
\label{PoissonAutocorrelation}
Let $f = \sum_{j = 1}^N a_j \chi_{B_j}$ be a simple function for disjoint and bounded Borel sets $B_1, ..., B_N$ in $X$. Since the linear statistics $S\chi_{B_1}, ..., S\chi_{B_N}$ are Poisson distributed and independent, the variance of the linear statistic $Sf$ with respect to the invariant Poisson point process $\mu_{\Poi}$ is 
\begin{align*}
\Var_{\Poi}(Sf) &= \sum_{i = 1}^N \sum_{j = 1}^N a_i \overline{a_j} \Cov_{\Poi}(S\chi_{B_i}, S\chi_{B_j}) = \sum_{j = 1}^N |a_j|^2 \Var_{\Poi}(S\chi_{B_j}) \\
&= \sum_{j = 1}^N |a_j|^2 m_X(B_j) = \int_X |f(x)|^2 dm_X(x) \, . 
\end{align*}
The latter can be rewritten as $ \int_X |f(x)|^2 dm_X(x) = \int_G |\varphi_f(g)|^2 dm_G(g) = (\varphi_f^* * \varphi_f)(e) $, so that 
\begin{align*}
\Var_{\Poi}(Sf) = \delta_e(\varphi_f^* * \varphi_f)  \, . 
\end{align*}
This identity extends to every $f \in \Borelbndinfty(X)$ and by polarization we find that the autocorrelation measure of the $m_X$-Poisson point process is $\eta_{\Poi} = \delta_e$ as a linear functional on $\Borelbndinfty(G, K)$. In particular, the number variance of the process is 
\begin{align*}
\NV_{\Poi}(R) = \int_X |\chi_{B_R}(x)|^2 dm_X(x) = \Vol_X(B_R) \,  . 
\end{align*}
\end{example}
\begin{example}[Intensity and autocorrelation of a random lattice orbit]
\label{ExampleLatticeAutocorrelation}
Using the explicit description for the probability measure $m_{G/\Gamma}$ from Example \ref{ExampleRandomLatticeOrbits}, we now compute the autocorrelation $\eta_{\Gamma} := \eta_{\mu_{\Gamma}}$. The expectation of linear statistics can be computed using a fundamental domain $F_{\Gamma} \subset G$ of $\Gamma$ to be
\begin{align*}
\bE_{\mu_{\Gamma}}(S f) &= |\Gamma_o|^{-1}\covol(\Gamma)^{-1} m_G(\varphi_f)  
\end{align*}
for every $f \in \Borelbndinfty(X)$, where $\varphi_f(g) = f(g.o)$. In particular, the intensity is $\iota_{\Gamma} := \iota_{\mu_{\Gamma}} = |\Gamma_o|^{-1} \covol(\Gamma)^{-1}$. The second correlation is computed in a similar way to be
\begin{align*}
\bE_{\mu_{\Gamma}}(Sf_1 \overline{Sf}_2) &= |\Gamma_o|^{-2}\covol(\Gamma)^{-1} \sum_{\gamma \in \Gamma}   \int_{G}  f_1(g\gamma.o) \overline{f_2(g.o)} dm_G(g) \\
&=  |\Gamma_o|^{-2}\covol(\Gamma)^{-1} \sum_{\gamma \in \Gamma} (\varphi_{f_2}^* * \varphi_{f_1})(\gamma)
\end{align*}
for all $f_1, f_2 \in \Borelbndinfty(X)$. With this the autocorrelation measure can be identified as
\begin{align*}
\eta_{\Gamma}(\varphi^* * \varphi) &= \frac{1}{|\Gamma_o|\covol(\Gamma)} \Big( \frac{1}{|\Gamma_o|}\sum_{\gamma \in \Gamma} (\varphi^* * \varphi)(\gamma)  - \frac{1}{|\Gamma_o|\covol(\Gamma)} \Big| \int_G \varphi(g) dm_G(g) \Big|^2  \Big)
\end{align*}
for all right-$K$-invariant $\varphi \in \Borelbndinfty(G)$. In shorter notation, 
\begin{align} 
\label{EqRandomLatticeOrbitAutocorrelation}
\eta_{\Gamma}(\varphi) = \iota_{\Gamma} \Big(\frac{1}{|\Gamma_o|}\delta_{\Gamma}(\varphi) - \iota_{\Gamma} m_G(\varphi) \Big)
\end{align}
for all bi-$K$-invariant functions $\varphi \in \Borelbndinfty(G, K)$.
\end{example}
\begin{example}[Intensity and autocorrelation of a random perturbed lattice orbit]
\label{ExamplePerturbedLatticeAutocorrelation}
Let $G$ be such that $m_G(K) = 0$, $\Gamma < G$ a lattice and $\nu \in \Prob(G)$ a right-$K$-invariant measure on $G$ absolutely continuous with respect to the Haar measure $m_G$. The expected value of a linear statistic $Sf$, $f \in \Borelbndinfty(X)$, with respect to the random perturbed lattice orbit $\mu_{\Gamma, \nu}$ can be computed using the $\Gamma$-invariance of $\nu^{\otimes \Gamma}$ and the same computation as for the random lattice orbit. Fubini yields 
\begin{align*}
\bE_{\mu_{\Gamma, \nu}}(Sf) &= \int_{G / \Gamma} \Big( \int_Z \sum_{\gamma \in \Gamma} f(g\gamma z_{\gamma}.o) d\nu^{\otimes \Gamma}(z) \Big) dm_{G / \Gamma}(g\Gamma) \\
&= \int_{G / \Gamma} \sum_{\gamma \in \Gamma} \Big( \int_G f(g\gamma h.o) d\nu(h) \Big) dm_{G / \Gamma}(g\Gamma) \\
&=  \int_{G} \Big( \int_{G / \Gamma} \sum_{\gamma \in \Gamma} f(g\gamma h.o)  dm_{G / \Gamma}(g\Gamma) \Big) d\nu(h) \, .
\end{align*}
By the computation for the intensity of random lattice orbits and the unimodularity of $G$ we get
\begin{align*}
\bE_{\mu_{\Gamma, \nu}}(Sf) = \frac{1}{\covol(\Gamma)} \int_G \Big( \int_G f(gh.o) dm_G(g) \Big) d\nu(h) = \frac{m_G(\varphi_f)}{\covol(\Gamma)} \, . 
\end{align*}
Thus the intensity of $\mu_{\Gamma, \nu}$ is $\iota_{\Gamma, \nu} = \covol(\Gamma)^{-1}$, in particular independent of $\nu$.

For the second moment of the process $\mu_{\Gamma, \nu}$, we assume that $\Gamma$ is cocompact or $\supp(\nu)$ is compact, so that $\mu_{\Gamma, \nu}$ is locally square-integrable by Lemma \ref{LemmaPerturbedLatticeisLocallySquareIntegrable}. Using Fubini,
\begin{align*}
\bE_{\mu_{\Gamma, \nu}}(Sf_1 \overline{Sf_2}) = \int_{G / \Gamma} \Big( \sum_{\gamma_1, \gamma_2 \in \Gamma} \int_{Z} f_1 (g\gamma_1 z_{\gamma_1}.o) \overline{f_2(g\gamma_2 z_{\gamma_2}.o)} d\nu^{\otimes \Gamma}(z) \Big) dm_{G / \Gamma}(g\Gamma) \, . 
\end{align*}
Note how
\begin{align*}
\int_{Z} f_1 (g\gamma_1 z_{\gamma_1}.o) \overline{f_2(g\gamma_2 z_{\gamma_2}.o)} d\nu^{\otimes \Gamma}(z) = 
\begin{dcases}
(\varphi_{f_1}\overline{\varphi}_{f_2} * \check{\nu})(g\gamma) &\mbox{ if } \gamma_1 = \gamma_2 = \gamma \\
(\varphi_{f_1} * \check{\nu})(g\gamma_1) \overline{(\varphi_{f_2} * \check{\nu})(g\gamma_2)} &\mbox{ if } \gamma_1 \neq \gamma_2 \, , 
\end{dcases}
\end{align*}
where $\varphi_f(g) = f(g.o)$ as before and $\check{\nu}$ is the left-$K$-invariant probability measure on $G$ defined by $\check{\nu}(Q) = \nu(Q^{-1})$ for Borel sets $Q \subset G$. From this we get that
\begin{align*}
\bE_{\mu_{\Gamma, \nu}}(Sf_1 \overline{Sf_2}) &=  \int_{G / \Gamma} \Big( \sum_{\gamma_1 \neq \gamma_2}  (\varphi_{f_1} * \check{\nu})(g\gamma_1) \overline{(\varphi_{f_2} * \check{\nu})(g\gamma_2)} \Big) dm_{G / \Gamma}(g\Gamma) \\
& \quad\quad\quad + \int_{G / \Gamma} \Big( \sum_{\gamma \in \Gamma} (\varphi_{f_1}\overline{\varphi}_{f_2} * \check{\nu})(g\gamma) \Big) dm_{G / \Gamma}(g\Gamma)  \\
&=  \int_{G / \Gamma} \Big( \sum_{\gamma_1, \gamma_2 \in \Gamma}  (\varphi_{f_1} * \check{\nu})(g\gamma_1) \overline{(\varphi_{f_2} * \check{\nu})(g\gamma_2)} \Big) dm_{G / \Gamma}(g\Gamma) \\
& \quad\quad\quad - \int_{G / \Gamma} \Big( \sum_{\gamma \in \Gamma}  (\varphi_{f_1} * \check{\nu})(g\gamma) \overline{(\varphi_{f_2} * \check{\nu})(g\gamma)} \Big) dm_{G / \Gamma}(g\Gamma) \\
& \quad\quad\quad + \int_{G / \Gamma} \Big( \sum_{\gamma \in \Gamma} (\varphi_{f_1}\overline{\varphi}_{f_2} * \check{\nu})(g\gamma) \Big) dm_{G / \Gamma}(g\Gamma)  \, . 
\end{align*}
From the computations made for the random lattice orbits, 
\begin{align*}
\int_{G / \Gamma} \Big( \sum_{\gamma_1, \gamma_2 \in \Gamma}  (\varphi_{f_1} * \check{\nu})(g\gamma_1) &\overline{(\varphi_{f_2} * \check{\nu})(g\gamma_2)} \Big) dm_{G/\Gamma}(g\Gamma) = \\
&= \frac{1}{\covol(\Gamma)} \sum_{\gamma \in \Gamma} (\nu * \varphi_{f_2}^* * \varphi_{f_1} * \check{\nu})(\gamma) \, , \\
\int_{G / \Gamma} \Big( \sum_{\gamma \in \Gamma}  (\varphi_{f_1} * \check{\nu})(g\gamma) &\overline{(\varphi_{f_2} * \check{\nu})(g\gamma)} \Big) dm_{G/\Gamma}(g\Gamma) = \\
&= \frac{1}{\covol(\Gamma)} \int_G  (\varphi_{f_1} * \check{\nu})(g) \overline{(\varphi_{f_2} * \check{\nu})(g)} dm_G(g) \, ,
\end{align*}
and by unimodularity of $G$,
\begin{align*}
\int_{G / \Gamma} \Big( \sum_{\gamma \in \Gamma} (\varphi_{f_1}\overline{\varphi}_{f_2} * \check{\nu})(g\gamma) \Big) dm_{G / \Gamma}(g\Gamma) &= \frac{1}{\covol(\Gamma)} \int_G  (\varphi_{f_1}\overline{\varphi}_{f_2} * \check{\nu})(g)  dm_G(g)\\
&= \frac{1}{\covol(\Gamma)} \int_G \Big( \int_G \varphi_{f_1}(gh) \overline{\varphi_{f_2}(gh)} dm_G(g) \Big) d\nu(h) \\
&= \frac{1}{\covol(\Gamma)} \int_G \varphi_{f_1}(g) \overline{\varphi_{f_2}(g)} dm_G(g) \, . 
\end{align*}
Thus
\begin{align*}
\bE_{\mu_{\Gamma, \nu}}(Sf_1 \overline{Sf_2}) = \frac{1}{\covol(\Gamma)} \Big( &\sum_{\gamma \in \Gamma} (\nu * \varphi_{f_2}^* * \varphi_{f_1} * \check{\nu})(\gamma) \\
&- \int_G  (\varphi_{f_1} * \check{\nu})(g) \overline{(\varphi_{f_2} * \check{\nu})(g)} dm_G(g) \\
&+ \int_G \varphi_{f_1}(g) \overline{\varphi_{f_2}(g)} dm_G(g) \Big) \, , 
\end{align*}
so the autocorrelation measure $\eta_{\Gamma, \nu}$ of $\mu_{\Gamma, \nu}$ is 
\begin{align*}
\eta_{\Gamma, \nu}(\varphi^* * \varphi) &= \frac{1}{\covol(\Gamma)} \Big( \sum_{\gamma \in \Gamma} (\nu * \varphi^* * \varphi * \check{\nu})(\gamma)  -  \int_G  |(\varphi * \check{\nu})(g)|^2 dm_G(g) \\
&\quad\quad\quad\quad\quad\quad\quad + \int_G |\varphi(g)|^2  dm_G(g) - \frac{1}{\covol(\Gamma)} \Big| \int_G \varphi(g) dm_G(g) \Big|^2 \Big) \\
&= |\Gamma_o|^2 \Var_{\mu_{\Gamma}}(S((\varphi * \check{\nu}) \circ \varsigma ) + \covol(\Gamma)^{-1} \big( m_G(|\varphi|^2) -m_G(|\varphi * \check{\nu}|^2) \big) \, . 
\end{align*}
In short hand notation, considering the autocorrelation measure as a functional on the sub-algebra $\Borelbndinfty(G, K)$ of bi-$K$-invariant functions on $G$, we have 
\begin{align*}
\eta_{\Gamma, \nu} = |\Gamma_o|^2 \,  \check{\nu} * \eta_{\Gamma} * \nu + \frac{1}{\covol(\Gamma)} \big( \delta_e - \check{\nu} * \nu \big) \, , 
\end{align*}
where $\eta_{\Gamma}$ is the random lattice orbit autocorrelation of $\Gamma$ from equation \ref{EqRandomLatticeOrbitAutocorrelation}. In particular, if $\nu = m_K$ then $\eta_{\Gamma, m_K} = \eta_{\Gamma}$.
\end{example}

\section{Euclidean harmonic analysis}
\label{Euclidean harmonic analysis}

In Subsection \ref{The spherical transform on R^n} we introduce the Fourier transform and compute the Fourier transform of the indicator function on a centered Euclidean ball in terms of Bessel functions of the first kind. An asymptotic expansion and an integral formula for the relevant Bessel functions are surveyed in Subsection \ref{Some properties of Bessel functions}.

\subsection{Euclidean space}
\label{Euclidean space}

Let $n \in \N$ and consider $\R^n$ with the metric induced by the Euclidean norm $\norm{\cdot}$ and the origin $o \in \R^n$ as a reference point. For $r > 0$ we let $B_r$ denote the Euclidean ball centered at $o$. We will write $dx = dm_{\R^n}(x)$ for the Lebesgue measure on $\R^n$. If we denote by $\sigma_{n-1}$ the canonical surface measure on the $(n-1)$-dimensional unit sphere $\bS^{n-1}$ with mass $\sigma_{n - 1}(\bS^{n - 1}) = 2 \pi^{n/2}/\Gamma( n / 2)$, then integration against the Lebesgue measure can be written as
\begin{align*}
m_{\R^n}(f) = \int_0^{\infty} \int_{\bS^{n-1}}  f(tu)  \, d\sigma_{n-1}(u) \, t^{n-1} dt  \, , \quad f \in \Borelbndinfty(\R^n) \, .
\end{align*}

With this measure we have that $\Vol_{\R^n}(B_r) = \pi^{n/2} \Gamma(\frac{n}{2} + 1)^{-1} r^n$. 

We consider the transitive action of $\R^n$ on itself by translation and emphasize that invariance of a random measure $\mu$ on $\R^n$ refers to invariance under this action, which corresponds to $G = \R^n$ and $K = \{0\}$ in the setting of the previous section. 


\subsection{The Fourier transform on $\R^n$}
\label{The spherical transform on R^n}
We take the Fourier transform of a  function $f \in \Borelbndinfty(\R^n)$ to mean the function $\hat{f} \in C_0(\R^n)$ given by  
\begin{align*}
\hat{f}(\xi) = \int_{\R^n} f(x) \e^{-i \langle x, \xi \rangle} dx   \, .
\end{align*}
In particular, $\hat{f}_1 = \hat{f}_2$ if and only if $f_1 = f_2$ Lebesgue almost everywhere. The classical Plancherel formula reads as 
\begin{align*}
\int_{\R^n} |f(x)|^2 dx = (2\pi)^{-n} \int_{\R^n} |\hat{f}(\xi)|^2 d\xi \, . 
\end{align*}
In particular, the Fourier transform extends to a unitary map
$$L^2(\R^n) \longrightarrow L^2(\R^n, (2\pi)^{-n} d\xi) \, . $$
\begin{example}[Fourier transform of the indicator function on centered Euclidean balls]
\label{ExIndicatorFT}
Let us compute the spherical transform of the indicator function $\chi_{B_r}$. Since $\chi_{B_r}$ is radial, averaging over the unit sphere $\bS^{n-1} \subset \R^n$ yields
\begin{align*}
\hat{\chi}_{B_r}(\xi) = \frac{2 \pi^{\frac{n}{2}}}{\Gamma(\frac{n}{2})} \int_{B_r} \omega^{(n)}_{\norm{\xi}}(x) \,  dx
\end{align*}
where 
\begin{align*}
\omega_{\norm{\xi}}^{(n)}(x) &= \frac{\Gamma(\frac{n}{2})}{2 \pi^{\frac{n}{2}}} \int_{\bS^{n-1}} \e^{- i \norm{\xi} \langle v, x \rangle} d\sigma_{n-1}(v) \, . 
\end{align*}
These are the $\SO(n)$-\emph{spherical functions} of the Euclidean group $\SO(n) \ltimes \R^n$ of rotations and translations, and they can be expressed in terms of \emph{Bessel functions of the first kind},
\begin{align*}
J_{\alpha}(z) = \frac{2^{1 - \alpha} z^{\alpha}}{ \pi^{\frac{1}{2}}\Gamma(\alpha + \frac{1}{2})} \int_0^1 (1 - s^2)^{\alpha - \frac{1}{2}} \cos(z s) ds \, , \quad \alpha, z \in \C \, , \,\, \Re(\alpha) > -\frac{1}{2} \, .
\end{align*}
The following formulas can be derived from \cite[Lemma 4.13 + Theorem 4.15, p. 170-171]{SteinWeissFourierAnalysis}.
\begin{lemma}
\label{LemmaFTofIndicatorFunction}
Let $\xi \in \R^n$ and $x \in \R^n$. Then
\begin{align*}
\omega^{(n)}_{\norm{\xi}}(x) &= \frac{2^{\frac{n - 2}{2}} \Gamma(\frac{n}{2})}{(\norm{\xi} \norm{x})^{\frac{n-2}{2}}} J_{\frac{n - 2}{2}}(\norm{\xi} \norm{x}) \quad \mbox{and} \quad  \hat{\chi}_{B_r}(\xi) = (2\pi)^{\frac{n}{2}} \Big( \frac{r}{\norm{\xi}} \Big)^{\frac{n}{2}} J_{\frac{n}{2}}(r\norm{\xi}) \, . 
\end{align*}
\end{lemma}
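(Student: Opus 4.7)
The two claims of the lemma are both standard Fourier-Bessel identities, and my plan would be to derive them directly from the integral representation of $J_\alpha$ given in the text.

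For the first identity, I would reduce the spherical integral defining $\omega^{(n)}_{\norm{\xi}}(x)$ to a one-dimensional integral by exploiting rotational invariance. After replacing $v$ by $Rv$ for a suitable rotation $R$ sending $x/\norm{x}$ to the north pole $e_n$, I may assume $x = \norm{x} e_n$, so that $\langle v, x\rangle = \norm{x} v_n$. Parametrize $\bS^{n-1}$ by $(\sin\theta \cdot w, \cos\theta)$ with $\theta \in [0,\pi]$ and $w \in \bS^{n-2}$; then $d\sigma_{n-1} = \sin^{n-2}\theta \, d\theta \, d\sigma_{n-2}(w)$. The integral over $w$ is trivial and yields a factor $\sigma_{n-2}(\bS^{n-2}) = 2\pi^{(n-1)/2}/\Gamma(\tfrac{n-1}{2})$. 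Substituting $s = \cos\theta$ converts the remaining integral to $\int_{-1}^1 e^{-i\norm{\xi}\norm{x} s}(1-s^2)^{(n-3)/2} ds$, which by parity equals $2\int_0^1 \cos(\norm{\xi}\norm{x} s)(1-s^2)^{(n-3)/2}\, ds$. Applied with $\alpha = (n-2)/2$, the integral formula for $J_\alpha$ given in the text identifies this as $\frac{\pi^{1/2}\Gamma((n-1)/2)}{2^{(2-n)/2}(\norm{\xi}\norm{x})^{(n-2)/2}} J_{(n-2)/2}(\norm{\xi}\norm{x})$. Combining the constants (the $\Gamma((n-1)/2)$'s cancel and the powers of $2$ and $\pi$ collapse to $2^{(n-2)/2}\Gamma(n/2)$ after absorbing the normalizing prefactor $\Gamma(n/2)/(2\pi^{n/2})$) gives the stated formula for $\omega^{(n)}_{\norm{\xi}}(x)$.

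For the second identity I would substitute the first formula into $\hat{\chi}_{B_r}(\xi)=\int_{B_r}\omega^{(n)}_{\norm{\xi}}(x)\,dx$ (up to the surface-measure prefactor already present in the displayed equation) and pass to polar coordinates:
\begin{align*}
\int_{B_r}\omega^{(n)}_{\norm{\xi}}(x)\,dx
=\sigma_{n-1}(\bS^{n-1})\int_0^r t^{n-1}\,\frac{2^{(n-2)/2}\Gamma(n/2)}{(\norm{\xi}\, t)^{(n-2)/2}}J_{(n-2)/2}(\norm{\xi}\, t)\,dt.
\end{align*}
After collecting the powers of $t$, the change of variable $u=\norm{\xi}\, t$ reduces the remaining integral to $\int_0^{\norm{\xi}r}u^{n/2}J_{(n-2)/2}(u)\,du$. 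The key input at this point is the Bessel derivative identity $\frac{d}{du}[u^{\nu}J_{\nu}(u)] = u^{\nu}J_{\nu-1}(u)$, which I would invoke with $\nu = n/2$ to evaluate the integral as $(\norm{\xi}r)^{n/2}J_{n/2}(\norm{\xi}r)$. Tracking the constants (one factor of $\Gamma(n/2)$ cancels against $\sigma_{n-1}(\bS^{n-1})^{-1}$, and $2\cdot 2^{(n-2)/2}=2^{n/2}$ combines with $\pi^{n/2}$) yields exactly $(2\pi)^{n/2}(r/\norm{\xi})^{n/2}J_{n/2}(r\norm{\xi})$.

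Neither step poses a real obstacle; the main point requiring care is bookkeeping of the Gamma-function and sphere-volume constants, and the only non-trivial external input is the standard antiderivative identity $\frac{d}{du}[u^{\nu}J_{\nu}(u)] = u^{\nu}J_{\nu-1}(u)$ from Bessel function theory, which in any case is exactly the content that the authors cite from Stein--Weiss, \cite[Lemma 4.13 and Theorem 4.15]{SteinWeissFourierAnalysis}.
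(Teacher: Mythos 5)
Your derivation is correct, and in fact it supplies a proof where the paper gives none: the authors simply cite Stein--Weiss (Lemma 4.13 and Theorem 4.15), whose content is exactly the Bochner-type reduction you carry out by hand. Your constants check out. For the first identity, $\sigma_{n-2}(\bS^{n-2})\cdot 2\cdot \pi^{1/2}\Gamma(\tfrac{n-1}{2})2^{\frac{n-4}{2}}\cdot\frac{\Gamma(n/2)}{2\pi^{n/2}}=2^{\frac{n-2}{2}}\Gamma(\tfrac n2)$ as claimed, and for the second the antiderivative identity $\frac{d}{du}[u^{\nu}J_{\nu}(u)]=u^{\nu}J_{\nu-1}(u)$ with $\nu=n/2$ gives $\int_0^{r\norm{\xi}}u^{n/2}J_{\frac{n-2}{2}}(u)\,du=(r\norm{\xi})^{n/2}J_{n/2}(r\norm{\xi})$, and the prefactors collapse to $(2\pi)^{n/2}$ exactly as you say. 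One point worth making precise, since your parenthetical ``up to the surface-measure prefactor already present in the displayed equation'' hedges on it: with the paper's conventions ($\hat f(\xi)=\int f(x)e^{-i\langle x,\xi\rangle}dx$, $dx$ Lebesgue, and $\omega^{(n)}_{\norm{\xi}}$ defined as the normalized sphere average), the correct relation is simply $\hat\chi_{B_r}(\xi)=\int_{B_r}\omega^{(n)}_{\norm{\xi}}(x)\,dx$ with no prefactor; the factor $\sigma_{n-1}(\bS^{n-1})=2\pi^{n/2}/\Gamma(\tfrac n2)$ then enters exactly once, through your polar-coordinate decomposition, which is how your bookkeeping lands on the correct final constant (the displayed equation in Example \ref{ExIndicatorFT} should not be combined with the polar-coordinate sphere factor as well, or the constant is counted twice). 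A minor cosmetic slip: the cancellation is of $\Gamma(\tfrac n2)$ against the $\Gamma(\tfrac n2)$ in the denominator of $\sigma_{n-1}(\bS^{n-1})$, not against $\sigma_{n-1}(\bS^{n-1})^{-1}$, but the arithmetic you record is the right one. Your spherical reduction tacitly assumes $n\ge 2$ (and the cited integral representation of $J_\alpha$ needs $\alpha>-\tfrac12$, i.e.\ $n\ge 2$); the case $n=1$ is a one-line direct check, so nothing essential is missing.
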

In particular, the Fourier transform satisfies the homogeneity $\hat{\chi}_{B_r}(\xi) = r^n \hat{\chi}_{B_1}(r \xi)$ as expected, and $\hat{\chi}_{B_r}(0) = \Vol_{\R^n}(B_r)$.
\end{example}


\subsection{Some properties of Bessel functions}
\label{Some properties of Bessel functions}

A crucial tool for the following sections will be a first order asymptotic approximation of the (half-)integer Bessel functions $J_{\frac{n}{2}}$, and hence for the spherical functions $\omega_{\lambda}^{(n)}$.

\begin{lemma}[\cite{Watson1923BesselF}, p.199]
\label{LemmaBesselAsymptoticExpansion}
Let $\alpha > -1/2$ and let $\phi_{\alpha} = (2\alpha + 1)\pi/4$. As $t \rightarrow +\infty$, the Bessel function $J_{\alpha}(t)$ admits the asymptotic expansion
\begin{align*}
J_{\alpha}(t) \sim \Big(\frac{2}{\pi t}\Big)^{1/2} \Big( \cos(t - \phi_{\alpha}) \sum_{j = 0}^{\infty} \frac{(-1)^j A_{2j}}{t^{2j}}  -  \sin(t - \phi_{\alpha}) \sum_{j = 0}^{\infty} \frac{(-1)^j A_{2j+1}}{t^{2j+1}}  \Big) \, , 
\end{align*}
where
\begin{align*}
A_j = \frac{\Gamma(\alpha + j + \frac{1}{2})}{2^j j! \Gamma(\alpha - j + \frac{1}{2})} \, , \quad j \in \Z_{\geq 0} \, . 
\end{align*}
In particular, there is a constant $\kappa_{\alpha} > 0$ such that
\begin{align*}
\Big| J_{\alpha}(t) - \Big(\frac{2}{\pi t}\Big)^{1/2}\cos(t - \phi_{\alpha}) \Big| \leq \frac{\kappa_{\alpha}}{t^{3/2}}  \, , \quad \forall \, t \geq 1 \, .
\end{align*}
\end{lemma}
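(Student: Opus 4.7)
The plan is to deduce the asymptotic expansion from the Poisson-type integral representation built into the definition of $J_\alpha$, rewritten as the real part of
$$J_\alpha(t) = \frac{(t/2)^\alpha}{\sqrt{\pi}\,\Gamma(\alpha + 1/2)} \int_{-1}^{1} e^{its}(1-s^2)^{\alpha-1/2}\,ds,$$
and to analyze this as an oscillatory integral as $t \to +\infty$. Since the phase $s \mapsto ts$ has no interior critical point, the stationary-phase philosophy dictates that the asymptotics come entirely from the endpoints $s = \pm 1$, where the amplitude has integrable algebraic singularities of exponent $\alpha - 1/2 > -1$. I would first insert a smooth partition of unity isolating neighborhoods of $\pm 1$; the complementary smooth piece decays faster than any power of $t^{-1}$ by repeated integration by parts in $s$, so it contributes nothing to the asymptotic series.

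Near $s = 1$, I would substitute $s = 1 - u/t$, which factors out the phase $e^{it}$ and transforms the endpoint contribution into
$$\frac{e^{it}}{t^{\alpha+1/2}}\int_0^{\infty} e^{-iu}\,u^{\alpha-1/2}\,(2 - u/t)^{\alpha - 1/2}\,\chi(u/t)\,du,$$
where $\chi$ is the cutoff. Expanding $(2 - u/t)^{\alpha-1/2}$ via the binomial series in powers of $u/t$ and applying Watson's lemma term-by-term reduces each coefficient to the Mellin integral $\int_0^\infty e^{-iu} u^{\alpha+j-1/2}\,du = \Gamma(\alpha+j+1/2)\,e^{-i\pi(2\alpha+2j+1)/4}$, obtained by rotating the contour into the lower half-plane. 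The endpoint $s = -1$ yields the complex conjugate of this series. Summing the two contributions and extracting the real part yields a series in $t^{-1/2-j}$ whose terms naturally combine as $\cos(t - \phi_\alpha)$ and $\sin(t - \phi_\alpha)$ with $\phi_\alpha = (2\alpha+1)\pi/4$; the binomial coefficients $\binom{\alpha-1/2}{j} = \Gamma(\alpha+1/2)/(j!\,\Gamma(\alpha-j+1/2))$ combine with the Mellin gamma factors to produce exactly the closed-form coefficients $A_j$.

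For the quantitative bound with an explicit constant $\kappa_\alpha$, I would truncate the expansion after the leading cosine term and estimate the remainder from Watson's lemma at the next order, together with the super-polynomially small contribution from the smooth interior piece; both are bounded uniformly for $t \geq 1$ by a constant depending only on $\alpha$ (through $A_1$ and bounds on a finite number of derivatives of the localized amplitude). The main obstacle is not analytic but combinatorial: verifying that the two endpoint contributions combine precisely into the displayed real cosine/sine series with alternating signs $(-1)^j$ requires careful tracking of the phase factors $i^j$ produced by the Mellin integrals, and a clean use of the reflection identity for $\Gamma$ to consolidate the coefficients into the stated closed form $A_j$.
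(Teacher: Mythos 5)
Your proposal is correct, but note that the paper does not prove this lemma at all: it is quoted verbatim from Watson's treatise (p.\ 199), where the expansion is derived by splitting $J_\alpha$ into the two Hankel functions $H^{(1)}_\alpha, H^{(2)}_\alpha$ and applying Watson's lemma to their loop-contour representations. Your argument is the standard ``real-variable'' counterpart of that derivation: starting from the Poisson integral (which is exactly how the paper defines $J_\alpha$), isolating the endpoints $s=\pm1$ by a partition of unity, and extracting each endpoint's contribution is morally the same decomposition, since each Hankel function carries precisely one of the phases $e^{\pm it}$. Your bookkeeping is right: with $s=1-u/t$ the $j$-th term carries $\binom{\alpha-1/2}{j}(-1/2)^j\Gamma(\alpha+j+\tfrac12)e^{-i\pi(2\alpha+2j+1)/4}$, and $\binom{\alpha-1/2}{j}=\Gamma(\alpha+\tfrac12)/(j!\,\Gamma(\alpha-j+\tfrac12))$ (no reflection formula needed) combines with the prefactor $(t/2)^\alpha/(\sqrt{\pi}\,\Gamma(\alpha+\tfrac12))$ to give exactly $(2/\pi t)^{1/2}A_j\,i^j e^{i(t-\phi_\alpha)}t^{-j}$; adding the conjugate endpoint and taking real parts yields the alternating cosine/sine series as stated. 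One point deserves more care in a full write-up: the ``Mellin integrals'' $\int_0^\infty e^{-iu}u^{\alpha+j-1/2}\,du$ are divergent for large $j$ and only conditionally convergent even for small $j$, so the term-by-term step is not literal Watson's lemma for Laplace integrals but its Fourier-integral analogue (Erd\'elyi): one either rotates the contour onto the negative imaginary axis using the cutoff $\chi$ and analyticity of $u^{\alpha+j-1/2}$, or integrates by parts finitely many times and estimates an explicit remainder. This is standard and does not affect your conclusion; in particular the quantitative bound for $t\geq 1$ follows, as you say, from the first-order remainder (controlled by $A_1$ and finitely many derivatives of the localized amplitude) plus the rapidly decaying interior piece, and only the leading-term estimate is actually used in the paper.
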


\begin{corollary}
\label{CorollaryBesselUpperBound}
Let $\alpha \geq 1/2$. Then there is a constant $C_{\alpha} > 0$ such that 
\begin{align*}
|J_{\alpha}(t)| \leq C_{\alpha} (1 + t)^{-1/2} \, , \quad \forall \, t \geq 0 \, . 
\end{align*}
\end{corollary}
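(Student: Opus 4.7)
The plan is to split the estimate into a large-$t$ regime, where Lemma \ref{LemmaBesselAsymptoticExpansion} gives the decay directly, and a small-$t$ regime, where I only need uniform boundedness of $J_\alpha$ on a compact interval.

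For $t \geq 1$, Lemma \ref{LemmaBesselAsymptoticExpansion} yields
\begin{align*}
|J_\alpha(t)| \leq \Big(\frac{2}{\pi t}\Big)^{1/2} + \frac{\kappa_\alpha}{t^{3/2}} \leq \Big(\frac{2}{\pi} + \kappa_\alpha\Big) t^{-1/2} \, .
\end{align*}
Since $(1 + t)^{-1/2} \geq (2t)^{-1/2}$ for $t \geq 1$, this gives the desired bound on $[1, +\infty)$ with a constant depending only on $\alpha$.

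For $t \in [0, 1]$, I would use the integral representation of $J_\alpha$ stated in the previous subsection,
\begin{align*}
J_\alpha(t) = \frac{2^{1-\alpha} t^\alpha}{\pi^{1/2} \Gamma(\alpha + \tfrac{1}{2})} \int_0^1 (1 - s^2)^{\alpha - 1/2} \cos(ts) ds \, ,
\end{align*}
which is valid since $\alpha \geq 1/2 > -1/2$. Estimating $|\cos(ts)| \leq 1$ and noting that for $\alpha \geq 1/2$ the weight $(1 - s^2)^{\alpha - 1/2}$ is bounded on $[0,1]$, we see that $J_\alpha$ is continuous on $[0, 1]$ and hence uniformly bounded there by some constant $M_\alpha$. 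Since $(1 + t)^{-1/2} \geq 2^{-1/2}$ for $t \in [0, 1]$, we obtain $|J_\alpha(t)| \leq 2^{1/2} M_\alpha (1 + t)^{-1/2}$ on this range.

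Combining the two regimes and taking $C_\alpha$ to be the maximum of the two constants produced above finishes the proof. The only subtle point is to ensure the implicit constants depend only on $\alpha$, which is automatic since both $\kappa_\alpha$ (from Lemma \ref{LemmaBesselAsymptoticExpansion}) and $M_\alpha$ depend only on $\alpha$; no genuine obstacle arises.
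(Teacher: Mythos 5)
Your proposal is correct and follows essentially the same route as the paper: for $t\geq 1$ the asymptotic expansion of Lemma \ref{LemmaBesselAsymptoticExpansion} gives the $t^{-1/2}$ decay, and for $0\leq t\leq 1$ the integral representation gives a uniform bound which is comparable to $(1+t)^{-1/2}$ on that range. (The only nitpick is the constant in your large-$t$ line, which should read $\bigl((2/\pi)^{1/2}+\kappa_{\alpha}\bigr)t^{-1/2}$ rather than $\bigl(2/\pi+\kappa_{\alpha}\bigr)t^{-1/2}$; this is immaterial to the argument.)
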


\begin{proof}
If $0 \leq t \leq 1$, then $t^{\alpha} \leq 2(1 + t)^{-1/2}$, and we bound $J_{\alpha}$ from above and below by
\begin{align*}
|J_{\alpha}(t)| = \frac{2^{1 - \alpha} t^{\alpha}}{ \pi^{\frac{1}{2}}\Gamma(\alpha + \frac{1}{2})} \Big| \int_0^1 (1 - s^2)^{\alpha - \frac{1}{2}} \cos(t s) ds \Big| \leq \frac{2^{1 - \alpha} t^{\alpha}}{ \pi^{\frac{1}{2}}\Gamma(\alpha + \frac{1}{2})} \leq  \frac{2^{2 - \alpha}}{ \pi^{\frac{1}{2}}\Gamma(\alpha + \frac{1}{2})(1 + t)^{1/2}} \, . 
\end{align*}
If $t \geq 1$ then we let $M_{\alpha}(t) = (\frac{2}{\pi t})^{1/2} \cos(t - \phi_{\alpha})$. We use that $t^{-1/2} \leq 2(1 + t)^{-1/2}$ for all $t \geq 1$ and Lemma \ref{LemmaBesselAsymptoticExpansion} to get the bound
\begin{align*}
|J_{\alpha}(t)| \leq |M_{\alpha}(t)| + |J_{\alpha}(t) - M_{\alpha}(t)| \leq \Big(\frac{2}{\pi t}\Big)^{1/2} + \frac{\kappa_{\alpha}}{t^{3/2}} \leq \frac{\big(\frac{2}{\pi}\big)^{1/2} + \kappa_{\alpha} }{t^{1/2}} \leq 2\, \frac{\big(\frac{2}{\pi}\big)^{1/2} + \kappa_{\alpha} }{(1 + t)^{1/2}} \, .
\end{align*}
To finish the proof, we take 
\begin{align*}
C_{\alpha} = \max\Big( \frac{2^{2 - \alpha}}{ \pi^{\frac{1}{2}}\Gamma(\alpha + \frac{1}{2})} , \, 2 \Big( \Big(\frac{2}{\pi}\Big)^{1/2} + \kappa_{\alpha} \Big) \Big) \, . 
\end{align*}
\end{proof}

In proving Theorem \ref{Theorem1A}, we will encounter an integral of the following type. 

\begin{lemma}
\label{LemmaBesselIntegral}
Let $\alpha \geq 1/2$ and $\beta \geq 1$. Then for every $T \geq 1$ there is a $\delta_T = \delta_{T}(\alpha, \beta)$ with $\delta_{T} \rightarrow 0$ as $T \rightarrow +\infty$ such that
\begin{align*}
\int_0^T t^{\beta} J_{\alpha}(t)^2 dt = \Big( \frac{1}{\pi\beta} + \delta_T \Big)  T^{\beta}   \, . 
\end{align*}
\end{lemma}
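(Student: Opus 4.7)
The plan is to use the asymptotic expansion from Lemma \ref{LemmaBesselAsymptoticExpansion} to replace $J_\alpha(t)^2$ with its explicit leading-order behaviour, integrate term by term, and bound the remainders by $o(T^\beta)$. First, since $J_\alpha$ and $t \mapsto t^\beta$ are continuous and bounded on $[0,1]$, the contribution of $\int_0^1 t^\beta J_\alpha(t)^2\,dt$ is a constant depending only on $\alpha$ and $\beta$, hence contributes $O(T^{-\beta})$ to $\delta_T$ and may be absorbed. The analysis on $[1,T]$ is therefore the main content.

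Set $M_\alpha(t) := \bigl(\tfrac{2}{\pi t}\bigr)^{1/2}\cos(t - \phi_\alpha)$. Lemma \ref{LemmaBesselAsymptoticExpansion} gives $|J_\alpha(t) - M_\alpha(t)| \leq \kappa_\alpha t^{-3/2}$ on $[1,\infty)$, and since $|M_\alpha(t)| \leq (2/\pi t)^{1/2}$, expanding the square gives
\begin{equation*}
J_\alpha(t)^2 = M_\alpha(t)^2 + R(t), \qquad |R(t)| \leq C_\alpha\, t^{-2} \quad \text{on } [1,\infty),
\end{equation*}
for some constant $C_\alpha$. Using the double angle formula,
\begin{equation*}
M_\alpha(t)^2 = \frac{1}{\pi t} + \frac{1}{\pi t}\cos\bigl(2(t - \phi_\alpha)\bigr).
\end{equation*}
Multiplying by $t^\beta$ and integrating from $1$ to $T$ breaks the integral into three pieces: the main term $\int_1^T t^{\beta-1}/\pi\,dt = (T^\beta - 1)/(\pi\beta)$, an oscillatory term, and an error term.

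For the error term, $\bigl|\int_1^T t^\beta R(t)\,dt\bigr| \leq C_\alpha \int_1^T t^{\beta - 2}\,dt$, which is $O(T^{\beta - 1})$ for $\beta > 1$ and $O(\log T)$ for $\beta = 1$; either way this is $o(T^\beta)$. For the oscillatory term, integration by parts with $u = t^{\beta - 1}/\pi$ and $dv = \cos(2(t - \phi_\alpha))\,dt$ gives a boundary contribution of order $T^{\beta - 1}$ and a remaining integral bounded in absolute value by $\tfrac{|\beta - 1|}{2\pi}\int_1^T t^{\beta - 2}\,dt$, which is again $o(T^\beta)$ under the hypothesis $\beta \geq 1$.

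Combining these pieces,
\begin{equation*}
\int_0^T t^\beta J_\alpha(t)^2\,dt = \frac{T^\beta}{\pi\beta} + o(T^\beta) \qquad (T \to +\infty),
\end{equation*}
and dividing by $T^\beta$ yields the stated expansion with $\delta_T \to 0$. The only mildly delicate point is the oscillatory integral, where one must verify that the integration-by-parts boundary term $T^{\beta - 1}\sin(2(T - \phi_\alpha))/(2\pi)$ and the resulting lower-order integral are both controlled uniformly; the hypothesis $\beta \geq 1$ ensures that no additional $\log T$ factor threatens the $o(T^\beta)$ estimate. Explicit tracking of constants in each step would let one write $\delta_T = O(T^{-1}) + O(T^{-1}\log T)$, but only the qualitative statement $\delta_T \to 0$ is required here.
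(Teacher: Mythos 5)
Your proposal is correct and follows essentially the same route as the paper: approximate $J_\alpha$ by $M_\alpha(t)=(\tfrac{2}{\pi t})^{1/2}\cos(t-\phi_\alpha)$ via Lemma \ref{LemmaBesselAsymptoticExpansion}, extract the main term $T^\beta/(\pi\beta)$ with the double-angle formula, and show the remaining pieces are $o(T^\beta)$. The only differences are cosmetic: you dispose of $[0,1]$ by direct boundedness of the integrand rather than by the square-difference bound, and you control the oscillatory integral by integration by parts (yielding a quantitative $O(T^{\beta-1})$ rate) where the paper substitutes $t=Ts$ and invokes the Riemann--Lebesgue lemma.
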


\begin{proof}[Proof of Lemma \ref{LemmaBesselIntegral}]
Let $M_{\alpha}(t) = (\frac{2}{\pi t})^{1/2} \cos(t - \phi_{\alpha})$, so that $|J_{\alpha}(t) - M_{\alpha}(t)| \leq \kappa_{\alpha} t^{-3/2}$ for all $t \geq 1$ as in Lemma \ref{LemmaBesselAsymptoticExpansion}. We bound the integral in question from above and below by
\begin{align*}
\int_0^T t^{\beta} M_{\alpha}(t)^2 dt - \int_0^T t^{\beta} |J_{\alpha}(t)^2 &- M_{\alpha}(t)^2| dt \\
&\leq \int_0^T t^{\beta} J_{\alpha}(t)^2 dt \\
&\leq \int_0^T t^{\beta} M_{\alpha}(t)^2 dt + \int_0^T t^{\beta} |J_{\alpha}(t)^2 - M_{\alpha}(t)^2| dt \, . 
\end{align*}
First, 
\begin{align*}
\int_0^T t^{\beta} M_{\alpha}(t)^2 dt &= \frac{2}{\pi} \int_0^T t^{\beta - 1} \cos^2(t - \phi_{\alpha}) dt \\
&=\frac{1}{\pi} \Big( \int_0^T t^{\beta - 1} dt + \int_0^T t^{\beta - 1} \cos(2(t - \phi_{\alpha})) dt \Big) \\
&= \frac{1}{\pi} \Big( \frac{T^{\beta}}{\beta} + T^{\beta} \int_0^1 s^{\beta - 1} \cos(2(Ts - \phi_{\alpha})) ds \Big) \, . 
\end{align*}
Note that $\int_0^1 s^{\beta - 1} \cos(2(Ts - \phi_{\alpha})) ds \rightarrow 0$ as $T \rightarrow +\infty$ by the Riemann-Lebesgue Lemma. 

Secondly, by Lemma \ref{LemmaBesselAsymptoticExpansion} and Corollary \ref{CorollaryBesselUpperBound} we have
\begin{align}
\label{EqBesselSquareError}
|J_{\alpha}(t)^2 - M_{\alpha}(t)^2| &\leq |J_{\alpha}(t)|^2 + |M_{\alpha}(t)|^2 \leq \frac{C_{\alpha}^2}{1 + t} + \frac{2}{\pi t} \leq \frac{C_{\alpha}^2 + 1}{t} \quad \mbox{ if } \, 0\leq t \leq 1 \, ,  \\
|J_{\alpha}(t)^2 - M_{\alpha}(t)^2| &= |J_{\alpha}(t) + M_{\alpha}(t)||J_{\alpha}(t) - M_{\alpha}(t)| \\
&\leq \Big( \frac{C_{\alpha}}{(1 + t)^{1/2}} + \Big( \frac{2}{\pi t} \Big)^{1/2} \Big) \frac{\kappa_{\alpha}}{t^{3/2}} \leq \frac{(C_{\alpha} + 1) \kappa_{\alpha}}{t^2} \quad \mbox{ if } \,  t \geq 1 \, ,
\end{align}
so that 
\begin{align*}
\int_0^T t^{\beta} |J_{\alpha}(t)^2 - M_{\alpha}(t)^2| dt &= \int_0^{1} t^{\beta} |J_{\alpha}(t)^2 - M_{\alpha}(t)^2| dt + \int_{1}^T t^{\beta} |J_{\alpha}(t)^2 - M_{\alpha}(t)^2| dt \\
&\leq (C^2_{\alpha} + 1) \int_0^{1} t^{\beta - 1} dt + (C_{\alpha} + 1) \kappa_{\alpha} \int_{1}^T t^{\beta - 2}  dt \\
&= \frac{C_{\alpha}^2 + 1}{\beta} + (C_{\alpha} + 1)  \kappa_{\alpha} \frac{T^{\beta - 1} - 1}{\beta - 1}  \, , \quad T \geq 1 \, .  
\end{align*}
Note that the right hand side remains finite in the limiting case of $\beta = 1^+$. Putting everything together, we define
\begin{align*}
\delta_T = T^{-\beta} \int_0^T t^{\beta} J_{\alpha}(t)^2 dt - \frac{1}{\pi \beta} \, , 
\end{align*}
so that
\begin{align*}
\int_0^T t^{\beta} J_{\alpha}(t)^2 dt = \Big(\frac{1}{\pi\beta} + \delta_T \Big) T^{\beta} \, , \quad T \geq 1 
\end{align*}
and
\begin{align*}
|\delta_T| &\leq T^{-\beta} \Big( \int_0^T t^{\beta} M_{\alpha}(t)^2 dt - \frac{T^{\beta}}{\pi \beta} + \int_0^T t^{\beta} |J_{\alpha}(t)^2 - M_{\alpha}(t)^2| dt \Big) \\
&\leq \frac{1}{\pi} \Big|\int_0^1 s^{\beta - 1} \cos(2(Ts - \phi_{\alpha})) ds \Big| + \frac{C_{\alpha}^2 + 1}{\beta T^{\beta}} +  (C_{\alpha} + 1)\kappa_{\alpha}\frac{T^{\beta - 1} - 1}{(\beta - 1) T^{\beta}} \longrightarrow 0
\end{align*}
as $T \rightarrow +\infty$. We also note that this limit is preserved in the limiting case $\beta = 1^+$. 
\end{proof}
%

\section{Euclidean diffraction measures}

We define the diffraction measure of an invariant locally square-integrable random measure on $\R^n$ and recall a known upper bound for its asymptotic volume growth. 

The autocorrelation measure $\eta_{\mu}$ of an invariant locally square-integrable random measure $\mu$ on $\R^n$ is \emph{positive-definite} in the sense of measures, that is 
\begin{align*}
\eta_{\mu}(f^* * f) \geq 0 \, , \quad \forall \,  f \in \Borelbndinfty(\R^n)\, . 
\end{align*}
The positive-definiteness of $\eta_{\mu}$ is the crucial property that will allow us to define the diffraction measure of $\mu$. Recall that a Radon measure $\eta$ is \emph{tempered} if it extends to a continuous linear functional on the Schwartz space $\sS(\R^n)$ of smooth functions $f : \R^n \rightarrow \C$ with sub-polynomial decay, that is,
\begin{align*}
\sup_{x \in \R^n} (1 + \norm{x})^{\alpha} |\partial^{\beta} f(x)| < +\infty
\end{align*}
for all $\alpha \in \Z_{\geq 0}$ and all multi-indices $\beta \in \Z_{\geq 0}^n$. The following classical result establishes the existence and uniqueness of a measure-theoretic Fourier transform of positive-definite measures.
\begin{theorem}[Bochner-Schwartz]
\label{ThmBochnerSchwartz}
Let $\eta$ be a positive-definite signed Radon measure on $\R^n$. Then there is a unique positive tempered Radon measure $\hat{\eta}$ on $\R^n$ such that 
$$\eta(f) = \int_{\R^n} \hat{f}(\xi) d\hat{\eta}(\xi) $$
for all functions $f \in \sS(\R^n)$. In particular, $\eta$ is tempered.
\end{theorem}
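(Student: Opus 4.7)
The plan is to reduce the statement to the classical Bochner theorem for continuous positive-definite functions via a Gaussian smoothing, and to extract temperedness and uniqueness as consequences of the construction.

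As a preliminary step, I would invoke the fact that any positive-definite Radon measure $\eta$ on $\R^n$ is translation-bounded, meaning $\sup_{x \in \R^n} |\eta|(x + K) < \infty$ for every compact $K$. This is proved by a direct covering argument: choose a compactly supported non-negative $f$ with $(f^* * f)(0) \geq 1$ on $K$, apply positive-definiteness to translates, and extract a uniform local bound. Translation-boundedness ensures that $\eta$ pairs with any Schwartz function, and in particular with the Gaussians $\phi_\varepsilon(x) := (2\pi\varepsilon)^{-n/2}e^{-\|x\|^2/(2\varepsilon)}$, $\varepsilon > 0$.

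Next, set $F_\varepsilon := \eta * \phi_\varepsilon * \phi_\varepsilon$. Then $F_\varepsilon$ is continuous and positive-definite on $\R^n$: positive-definiteness reduces via the identity
$$F_\varepsilon(y - x) = \eta\bigl((\phi_\varepsilon(\cdot - x))^* * \phi_\varepsilon(\cdot - y)\bigr)$$
to the assumption $\eta(g^* * g) \geq 0$ for all $g \in \Borelbndinfty(\R^n)$, while continuity follows from translation-boundedness and dominated convergence. The classical Bochner theorem then yields a unique finite positive Radon measure $\sigma_\varepsilon$ with
$$F_\varepsilon(x) = \int_{\R^n} e^{i\langle x, \xi\rangle}\, d\sigma_\varepsilon(\xi).$$
Since $\hat\phi_\varepsilon(\xi) = e^{-\varepsilon\|\xi\|^2/2}$ is strictly positive everywhere, I define
$$d\hat\eta(\xi) := e^{\varepsilon\|\xi\|^2}\, d\sigma_\varepsilon(\xi),$$
and verify independence of the parameter using the Gaussian semigroup $\phi_\varepsilon * \phi_\delta = \phi_{\varepsilon + \delta}$: applying Bochner to both $F_{\varepsilon + \delta}$ and to $F_\varepsilon * \phi_\delta * \phi_\delta$ yields $d\sigma_{\varepsilon+\delta} = e^{-\delta\|\xi\|^2}\, d\sigma_\varepsilon$, consistent with both definitions of $\hat\eta$.

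The main technical obstacle is establishing temperedness of $\hat\eta$: the identity $\int e^{-\varepsilon\|\xi\|^2}\, d\hat\eta(\xi) = F_\varepsilon(0) < \infty$ only yields exponential-type control, so I would upgrade it by redoing the smoothing argument with a compactly supported smooth $\phi$ in place of the Gaussian, for which $\hat\phi$ is real-analytic and decays only polynomially on strips; this provides polynomial bounds on the $\hat\eta$-mass of balls, hence temperedness. The defining identity $\eta(f) = \int \hat f\, d\hat\eta$ for all $f \in \sS(\R^n)$ then follows by polarization from $\eta(\phi_\varepsilon * \phi_\varepsilon) = F_\varepsilon(0) = \int |\hat\phi_\varepsilon|^2\, d\hat\eta$ combined with density of functions of the form $\phi_\varepsilon * \phi_\varepsilon$ in $\sS(\R^n)$. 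Uniqueness is immediate: if both $\hat\eta_1$ and $\hat\eta_2$ satisfy the defining identity, then $\int g\, d(\hat\eta_1 - \hat\eta_2) = 0$ for every Schwartz $g = \hat f$, and since $\mathcal{F}:\sS(\R^n) \to \sS(\R^n)$ is a bijection this holds for all $g \in \sS(\R^n)$, forcing $\hat\eta_1 = \hat\eta_2$.
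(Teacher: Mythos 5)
The paper does not actually prove this statement; it is quoted as the classical Bochner--Schwartz theorem, so your proposal has to stand on its own. Its overall route (mollify $\eta$, apply classical Bochner to the smoothed function, divide by $|\hat\phi_\varepsilon|^2$, check consistency, then prove temperedness and uniqueness) is a legitimate and standard one, but two load-bearing steps are defective. First, your preliminary lemma that every positive-definite \emph{signed} Radon measure is translation bounded is not established by the sketched covering argument. Since $(f(\cdot-x))^* * f(\cdot-x) = f^* * f$, ``applying positive-definiteness to translates'' gives nothing new; what one actually gets (via Cauchy--Schwarz for the form $(f,g)\mapsto\eta(g^* * f)$, or by noting that $x\mapsto\eta\big((f^* * f)(\cdot-x)\big)$ is a continuous positive-definite function) is a uniform bound on the \emph{smoothed} quantities $\eta\big((f^**f)(\cdot-x)\big)$. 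For a positive measure this controls $|\eta|(x+K)$ and yields translation boundedness, but for a signed measure (the case in the statement, and the relevant case for autocorrelations) cancellation destroys this implication. An a priori growth bound on $|\eta|$ is essentially the ``temperedness'' content of the theorem, so it cannot be assumed at the outset; moreover, because you mollify with Gaussians rather than compactly supported functions, even the definition of $F_\varepsilon$, its continuity, and the insertion of the non-compactly-supported functions $\sum_i c_i\phi_\varepsilon(\cdot-x_i)$ into the hypothesis $\eta(g^* * g)\ge 0$ all silently rely on this unproven bound.

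Second, your mechanism for temperedness of $\hat\eta$ is wrong as stated: for a fixed $\phi\in C_c^\infty$ the transform $\hat\phi$ decays \emph{faster} than any polynomial on $\R^n$ (Paley--Wiener), so there is no polynomial lower bound for $|\hat\phi|$ on large balls (and generically $\hat\phi$ has zeros), and the phrase ``decays only polynomially on strips'' confuses upper bounds on complex strips with the lower bounds on the real axis that the argument needs. The standard repair is a scaling argument with a fixed bump $\phi\ge 0$ supported in $B_1$ with $\hat\phi\neq 0$ on $B_\rho$: putting $\phi_L(x)=L^n\phi(Lx)$ one has $\hat{\phi}_L(\xi)=\hat\phi(\xi/L)$ bounded below on $B_{\rho L}$, while $\phi_L^* * \phi_L$ is supported in the fixed set $B_2$ with sup norm $O(L^n)$, so $\hat\eta(B_{\rho L})\le c^{-1}\,\eta(\phi_L^* * \phi_L)\ll L^n\,|\eta|(\overline{B_2})$, using only local finiteness of $\eta$ --- this is exactly the Berg--Forst bound the paper records separately as Lemma \ref{LemmaEuclideanBergFrost}, and it also removes the need for translation boundedness in this step. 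Finally, two smaller gaps: the theorem asserts that $\eta$ itself is tempered (equivalently that Schwartz functions are integrable against $\eta$), which your argument never establishes, and the extension of the identity $\eta(f)=\int\hat f\,d\hat\eta$ from Gaussians to all of $\sS(\R^n)$ cannot rest on ``density of the functions $\phi_\varepsilon * \phi_\varepsilon$'' --- the centered Gaussians span far too small a subspace, and continuity of both sides in the Schwartz topology is again the temperedness you have not yet proven; the uniqueness argument via surjectivity of $\F$ on $\sS(\R^n)$ is fine.
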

We refer to $\hat{\eta}$ as the \emph{Fourier transform} of $\eta$. 
\begin{definition}[Euclidean diffraction measures] 
The \emph{diffraction measure} of an invariant locally square-integrable random measure $\mu$ on $\R^n$ is the unique Fourier transform $\hat{\eta}_{\mu} \in \Radonplus(\R^n)$ of the autocorrelation measure $\eta_{\mu}$ on $\R^n$. 
\end{definition}
In particular, the $\mu$-variance of linear statistics $Sf$ with $f \in \sS(\R^n)$ can written in terms of the diffraction measure as
\begin{align*}
\Var_{\mu}(Sf) = \eta_{\mu}(f^* * f) = \int_{\R^n} |\hat{f}(\xi)|^2 d\hat{\eta}_{\mu}(\xi) \, .
\end{align*}
The linear span of the functions $f^* * f \in \sS(\R^n)$ are enough to uniquely determine the autocorrelation measure $\eta_{\mu}$, so the above formula for the $\mu$-variance uniquely determines the diffraction measure $\hat{\eta}_{\mu}$. Moreover, one can make use of the mean ergodic theorem to see that the diffraction measure satisfies $\hat{\eta}_{\mu}(\{0\}) = 0$.

We now justify the extension of this formula to all functions $f \in \Borelbndinfty(\R^n)$, in particular for indicator functions $\chi_{B_r}$ with $r > 0$. 

\begin{lemma}
\label{LemmaExtensionOfDiffractionFormulaToMeasurableFunctions}
Let $\mu$ be an invariant locally square-integrable random measure on $\R^n$ and $f \in \Borelbndinfty(\R^n)$. Then
\begin{align*}
\eta_{\mu}(f^* * f) = \int_{\R^n} |\hat{f}(\xi)|^2 d\hat{\eta}_{\mu}(\xi) \, .
\end{align*}
In particular, $\hat{f} \in L^2(\R^n, \hat{\eta}_{\mu})$.
\end{lemma}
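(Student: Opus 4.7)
The plan is to approximate $f$ by Schwartz test functions, apply the already-established Schwartz version of the formula to the approximants, and pass to the limit on both sides. Fix $f\in\Borelbndinfty(\R^n)$ with $|f|\leq M$ and $\supp(f)\subset B$ for some bounded Borel set $B$. By standard mollification (convolving $f$ with a smooth approximate identity of small bandwidth and, if necessary, multiplying by a smooth cutoff equal to $1$ on $B$), I would produce a sequence $f_k\in\Ccinfty(\R^n)\subset\sS(\R^n)$ such that $|f_k|\leq 2M$, all $\supp(f_k)$ lie in a common bounded Borel set $B'\supset B$, and $f_k\to f$ both in $L^2(m_{\R^n})$ and in $L^1(m_{\R^n})$.

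The first key step is to show $Sf_k\to Sf$ in $L^2(\mu)$. Setting $h_k=f_k-f$, the identity
\begin{equation*}
\Emu\bigl(|Sh_k|^2\bigr) = \iota_\mu^2\bigl|m_{\R^n}(h_k)\bigr|^2 + \eta_\mu(h_k^**h_k)
\end{equation*}
coming from Lemma \ref{LemmaAutocorrelationRelationtoCovariance} reduces the task to two estimates. The mean term vanishes in the limit because $h_k\to 0$ in $L^1$. For the autocorrelation term I would observe that $\supp(h_k^**h_k)\subset B'-B'$ is a fixed compact set and that
\begin{equation*}
\norm{h_k^**h_k}_\infty \leq \norm{h_k}_2^2 \longrightarrow 0,
\end{equation*}
so, since the signed Radon measure $\eta_\mu$ is locally finite (hence $|\eta_\mu|(B'-B')<\infty$), one has $\eta_\mu(h_k^**h_k)\to 0$. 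This is the main technical step; it hinges on local finiteness of $|\eta_\mu|$ and on the uniform $L^\infty$-convergence of the autocorrelation functions to zero on a fixed compact set. Everything else is then essentially formal.

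Next I would transport this convergence to the Fourier side. Because each $f_k$ is Schwartz, the Bochner--Schwartz identity
\begin{equation*}
\eta_\mu(f_k^**f_k) = \int_{\R^n}|\hat f_k(\xi)|^2\,d\hat\eta_\mu(\xi)
\end{equation*}
holds, and similarly for differences. Thus
\begin{equation*}
\int_{\R^n}|\hat f_k - \hat f_j|^2\,d\hat\eta_\mu = \eta_\mu\bigl((f_k-f_j)^**(f_k-f_j)\bigr) = \Varmu\bigl(S(f_k-f_j)\bigr)\longrightarrow 0
\end{equation*}
as $j,k\to\infty$, so $(\hat f_k)$ is Cauchy in $L^2(\R^n,\hat\eta_\mu)$ with some limit $F$.

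Finally I would identify $F$ with $\hat f$. Since $f_k\to f$ in $L^1(m_{\R^n})$, $\hat f_k\to\hat f$ uniformly on $\R^n$; passing to a subsequence that converges to $F$ in $\hat\eta_\mu$-a.e.\ sense forces $F=\hat f$ $\hat\eta_\mu$-a.e., and in particular $\hat f\in L^2(\R^n,\hat\eta_\mu)$. Passing to the limit in the Schwartz identity then yields
\begin{equation*}
\eta_\mu(f^**f) = \lim_{k\to\infty}\eta_\mu(f_k^**f_k) = \lim_{k\to\infty}\int_{\R^n}|\hat f_k|^2\,d\hat\eta_\mu = \int_{\R^n}|\hat f|^2\,d\hat\eta_\mu,
\end{equation*}
which is the desired identity.
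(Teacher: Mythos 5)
Your argument is correct, and it reaches the lemma by a genuinely different mechanism than the paper. The paper also mollifies, but with a Gaussian: it sets $f_{\varepsilon}=\beta_{\varepsilon}*f$ with $\hat{\beta}_{\varepsilon}(\xi)=\e^{-\varepsilon^{2}\norm{\xi}^{2}}$, notes that $f_{\varepsilon}^{*}*f_{\varepsilon}\to f^{*}*f$ uniformly so that $\eta_{\mu}(f_{\varepsilon}^{*}*f_{\varepsilon})\to\eta_{\mu}(f^{*}*f)$, and then exploits the exact identity $|\hat{f}_{\varepsilon}|^{2}=|\hat{f}|^{2}\e^{-2\varepsilon^{2}\norm{\xi}^{2}}$ together with \emph{monotone} convergence on the Fourier side; this yields the identity and the membership $\hat{f}\in L^{2}(\R^n,\hat{\eta}_{\mu})$ in one stroke, with no Cauchy-sequence or limit-identification step. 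You instead use compactly supported smooth approximants, prove that $(\hat{f}_k)$ is Cauchy in $L^{2}(\hat{\eta}_{\mu})$ via the covariance identity, and identify the limit with $\hat{f}$ through the uniform convergence $\hat{f}_k\to\hat{f}$ coming from $L^{1}$-convergence. What your route buys is that every application of Lemma \ref{LemmaAutocorrelationRelationtoCovariance} and every pairing with $\eta_{\mu}$ involves only functions of bounded support: the autocorrelations $h_k^{*}*h_k$ live in the fixed compact set $B'-B'$, so local finiteness of $|\eta_{\mu}|$ plus the sup-norm bound $\norm{h_k^{*}*h_k}_{\infty}\leq\norm{h_k}_2^2$ suffices. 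The paper's approximants are not compactly supported, so its appeal to "uniform convergence plus $\eta_{\mu}$ Radon" implicitly relies on the Gaussian tails for domination, a point your argument sidesteps. Conversely, the paper's monotone-convergence trick is shorter and avoids your detour through $L^{2}(\mu)$-convergence of linear statistics, which in fact is not strictly needed even in your setup (the sup-norm bound on the fixed compact set already gives both the Cauchy estimate for $(\hat f_k)$ and $\eta_{\mu}(f_k^{*}*f_k)\to\eta_{\mu}(f^{*}*f)$). Both proofs are sound; yours is slightly longer but more self-contained at the measure-theoretic pairing step.
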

\begin{proof}
For every $\varepsilon > 0$, let $\beta_{\varepsilon} \in \sS(\R^n)$ be the function satisfying $\hat{\beta}_{\varepsilon}(\xi) = \e^{-\varepsilon^2 \norm{\xi}^2}$ for all $\xi \in \R^n$. Setting $f_{\varepsilon} = \beta_{\varepsilon} * f \in \sS(\R^n)$ then a standard argument shows that $f_{\varepsilon}$ converges to $f$ Lebesgue-almost everywhere. Since $f$ is measurable and bounded with bounded support, the convolution $f^* * f$ defines a continuous compactly supported function on $\R^n$. It follows that $f_{\varepsilon}^* * f_{\varepsilon}$ converges \emph{uniformly} to $f^* * f$. In particular, since $\eta_{\mu}$ is a Radon measure then
\begin{align*}
\lim_{\varepsilon \rightarrow 0^+} \eta_{\mu}(f_{\varepsilon}^* * f_{\varepsilon}) = \eta_{\mu}(f^* * f) \, . 
\end{align*}
On the other hand, by definition of $\beta_{\varepsilon}$ and the diffraction measure we have that
\begin{align*}
\eta_{\mu}(f_{\varepsilon}^* * f_{\varepsilon}) = \int_{\R^n} |\hat{f}(\xi)|^2 \e^{-2\varepsilon^2\norm{\xi}^2} d\hat{\eta}_{\mu}(\xi) \leq \int_{\R^n} |\hat{f}(\xi)|^2 d\hat{\eta}_{\mu}(\xi) \, . 
\end{align*}
Since $\e^{-2\varepsilon^2\norm{\xi}^2}$ is monotonely increasing as $\varepsilon \rightarrow 0^+$ with limit $1$ then the  monotone convergence Theorem tells us that
\begin{align*}
\eta_{\mu}(f^* * f) = \lim_{\varepsilon \rightarrow 0^+}\int_{\R^n} |\hat{f}(\xi)|^2 \e^{-2\varepsilon^2\norm{\xi}^2} d\hat{\eta}_{\mu}(\xi) = \int_{\R^n} |\hat{f}(\xi)|^2 d\hat{\eta}_{\mu}(\xi) \, .
\end{align*}
\end{proof}
The above Lemma imposes a growth condition on the diffraction measure $\hat{\eta}_{\mu}$, which one can give a more specific description of.
\begin{lemma}[\cite{Berg1975PotentialTO}, Proposition 4.9, p.21]
\label{LemmaEuclideanBergFrost}
Let $\mu$ be a random measure on $\R^n$. Then the diffraction measure of $\mu$ satisfies
\begin{align*}
\hat{\eta}_{\mu}(B_L) \ll_{n, \mu} \Vol_{\R^n}(B_L) 
\end{align*} 
for all sufficiently large $L > 0$.
\end{lemma}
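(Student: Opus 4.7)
The plan is to dominate $\chi_{B_L}$ pointwise by the Fourier transform of a compactly supported non-negative positive-type test function obtained by rescaling a fixed one, and then to bound the resulting pairing against $\eta_{\mu}$ using translation-boundedness. Concretely, fix a non-negative $\phi_0 \in C_c^{\infty}(\R^n)$ with $\phi_0(0) > 0$ and $\supp(\phi_0) \subset B_R$, and set $\phi = \phi_0 * \phi_0^*$. Then $\phi \in C_c^{\infty}(\R^n)$ is non-negative with $\supp(\phi) \subset B_{2R}$, of positive type ($\hat\phi = |\hat\phi_0|^2 \geq 0$), and by continuity at $\xi = 0$ there exist $c, \rho > 0$ with $\hat\phi \geq c$ on $B_{\rho}$. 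For each $L > 0$, put $\phi_L(x) = (L/\rho)^n \phi(Lx/\rho)$, so that $\hat\phi_L(\xi) = \hat\phi(\rho\xi/L) \geq c \, \chi_{B_L}(\xi)$, $\supp(\phi_L) \subset B_{2R\rho/L}$, and $\norm{\phi_L}_{\infty} \leq (L/\rho)^n \norm{\phi}_{\infty}$. Since $\hat\eta_{\mu}$ is a positive measure, applying Lemma \ref{LemmaExtensionOfDiffractionFormulaToMeasurableFunctions} to the factorization $\phi_L = f_L^* * f_L$ with $f_L(x) = (L/\rho)^n \phi_0(Lx/\rho)$ yields
\begin{align*}
c \, \hat\eta_{\mu}(B_L) \leq \int_{\R^n} \hat\phi_L(\xi) \, d\hat\eta_{\mu}(\xi) = \eta_{\mu}(\phi_L) \, .
\end{align*}

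To bound $\eta_{\mu}(\phi_L)$ from above, I would next establish translation-boundedness of $\eta_{\mu}$ on $\R^n$. By invariance of $\mu$ the second moment measure $\sM_2$ on $\R^n \times \R^n$ disintegrates as $d\sM_2(x, y) = dx \, d\alpha_{\mu}(y - x)$ for some positive Radon measure $\alpha_{\mu}$ on $\R^n$, and Definition \ref{DefAutocorrelation} then gives $\eta_{\mu} = \alpha_{\mu} - \iota_{\mu}^2 m_{\R^n}$. Computing $\bE_{\mu}(p(B_1) p(x + B_1)) = \int |B_1 \cap (x + B_1 - z)| \, d\alpha_{\mu}(z)$ and lower-bounding the integrand by $\Vol_{\R^n}(B_1)/2$ on a neighborhood $x + B_{r_0}$ of $x$, the Cauchy-Schwarz bound $\bE_{\mu}(p(B_1) p(x + B_1)) \leq \bE_{\mu}(p(B_1)^2) < \infty$, uniform in $x$ by invariance, gives $\sup_{x \in \R^n} \alpha_{\mu}(x + B_{r_0}) \ll_{\mu} 1$. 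A finite cover of $B_1$ by translates of $B_{r_0}$ then delivers translation-boundedness of $\alpha_{\mu}$, and consequently of $|\eta_{\mu}|$.

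Finally, for $L$ so large that $2R\rho/L \leq 1$, non-negativity of $\phi_L$ yields $\eta_{\mu}(\phi_L) \leq \alpha_{\mu}(\phi_L) \leq \norm{\phi_L}_{\infty} \, \alpha_{\mu}(B_1) \leq (L/\rho)^n \norm{\phi}_{\infty} \, \alpha_{\mu}(B_1)$, so combining with the earlier inequality yields
\begin{align*}
\hat\eta_{\mu}(B_L) \leq c^{-1} \norm{\phi}_{\infty} \, \alpha_{\mu}(B_1) \, (L/\rho)^n \ll_{n, \mu} \Vol_{\R^n}(B_L) \, .
\end{align*}
The main technical hurdle is the translation-boundedness of $\eta_{\mu}$; it is a routine but somewhat fiddly consequence of invariance and local square-integrability via the disintegration of $\sM_2$, while the scaling argument itself is essentially automatic once the test function $\phi$ has been fixed.
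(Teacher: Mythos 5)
Your argument is correct, but it is worth noting that the paper does not prove this lemma at all: it simply cites Berg--Forst (Proposition 4.9), where the underlying fact is that the Fourier transform of a translation-bounded positive-definite measure is again translation bounded, from which the ball estimate follows by covering $B_L$ with $O(L^n)$ unit balls. What you have written is essentially a self-contained reproduction of the standard proof behind that citation: dominate $\chi_{B_L}$ by $\hat\phi_L$ for a rescaled positive-type bump $\phi_L = f_L^* * f_L$ (the scaling computation, the factorization, and the pairing via Theorem \ref{ThmBochnerSchwartz} or Lemma \ref{LemmaExtensionOfDiffractionFormulaToMeasurableFunctions} are all fine), and then control $\eta_{\mu}(\phi_L)$ by translation boundedness of the positive part of the autocorrelation. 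The only step that deserves care is your appeal to the disintegration $d\sM_2(x,y) = dx\, d\alpha_{\mu}(y-x)$; this is standard random-measure theory, but within the paper's framework you can bypass it entirely: by Definition \ref{DefAutocorrelation}, $\alpha_{\mu} := \eta_{\mu} + \iota_{\mu}^2 m_{\R^n}$ is manifestly a positive functional on $\Borelbndinfty(\R^n)$, and choosing $b = \chi_{B_1}/\Vol_{\R^n}(B_1)$ gives, for any $x$ and $r>0$,
\begin{align*}
\alpha_{\mu}(x + B_r) \leq \frac{1}{\Vol_{\R^n}(B_1)}\,\bE_{\mu}\big(S\chi_{B_1}\, S\chi_{x + B_{1+r}}\big) \leq \frac{1}{\Vol_{\R^n}(B_1)}\,\bE_{\mu}\big(|S\chi_{B_1}|^2\big)^{1/2}\bE_{\mu}\big(|S\chi_{B_{1+r}}|^2\big)^{1/2}
\end{align*}
by Cauchy--Schwarz and invariance, which is exactly the translation bound you need without any disintegration. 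With that substitution (or with a reference for the reduced second moment measure) your proof is complete and, unlike the paper, makes the lemma self-contained; the price is about a page of elementary estimates in place of a one-line citation.
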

\begin{example}[Poisson diffraction]
\label{PoissonDiffraction}
In Example \ref{PoissonAutocorrelation} we computed the variance of linear statistics for the unit intensity invariant Poisson point process, which in the Euclidean case reads as
$$  \Var_{\Poi}(Sf) = (f^* * f)(0) = \int_{\R^n} |f(x)|^2 dx \, . $$
By the Plancherel formula,
\begin{align*}
\Var_{\Poi}(Sf) = \int_{\R^n} |f(x)|^2 dx = (2\pi)^{-n} \int_{\R^n} |\hat{f}(\xi)|^2 d\xi\, ,
\end{align*}
so the diffraction measure of the Poisson process is $d\hat{\eta}_{\Poi}(\xi) = (2\pi)^{-n} d\xi$.  
\end{example}

\begin{example}[Diffraction of a random lattice]
\label{ExampleEuclideanLatticeDiffraction}
Let $\Gamma < \R^n$ be a lattice. Then there is a matrix $g_{\Gamma} \in \GL_n(\R)$ such that $\Gamma = g_{\Gamma} \Z^n$. The covolume of $\Gamma$ can then be written as $\covol(\Gamma) = \det(g_{\Gamma})$ and the \emph{dual lattice of $\Gamma$} is the lattice
\begin{align*}
\Gamma^{\perp} = \Big\{ \xi \in \R^n : \langle \gamma, \xi \rangle \in 2\pi \Z  \quad \forall \, \gamma \in \Gamma \Big\} \, . 
\end{align*}
The \emph{Poisson summation formula} states that
\begin{align*}
\sum_{\gamma \in \Gamma} f(\gamma) = \frac{1}{\covol(\Gamma)} \sum_{\xi \in \Gamma^{\perp}} \hat{f}(\xi) 
\end{align*}
for $f \in \sS(\R^n)$. By the formula for the autocorrelation measure in Example \ref{ExampleLatticeAutocorrelation} and the Poisson summation formula,
\begin{align*}
 \eta_{\Gamma}(f) = \frac{1}{\covol(\Gamma)} \sum_{\gamma \in \Gamma} f(\gamma) - \frac{1}{\covol(\Gamma)^2} \int_{\R^n} f(x) dx = \frac{1}{\covol(\Gamma)^2} \sum_{\xi \in \Gamma^{\perp} \backslash \{0\}} \hat{f}(\xi)
\end{align*}
for all $f \in \sS(\R^n)$. Thus the diffraction measure of the random lattice $\mu_{\Gamma}$ is 
$$ \hat{\eta}_{\Gamma} = \frac{1}{\covol(\Gamma)^2} \sum_{\xi \in \Gamma^{\perp} \backslash \{0\}} \delta_{\xi} \, .  $$
\end{example}

\begin{example}[Diffraction of a random perturbed lattice]
\label{ExampleEuclideanPerturbedLatticeDiffraction}
Since all lattices in Euclidean spaces are cocompact, random perturbed lattices are locally square-integrable by Lemma \ref{LemmaPerturbedLatticeisLocallySquareIntegrable} with a well-defined diffraction measure.

If $\nu \in \Prob(\R^n)$ we define its Fourier transform $\hat{\nu} : \R^n \rightarrow \C$ to be
\begin{align*}
\hat{\nu}(\xi) = \int_{\R^n} \e^{-i \langle x, \xi \rangle} d\nu(x) \, . 
\end{align*}
Then $\hat{\nu}$ is a continuous function, but does not necessarily vanish at infinity as in the absolutely continuous case. Moreover, one computes $\hat{\check{\nu} * \nu} = |\hat{\nu}|^2$. With this Fourier transform in mind and the formula for the autocorrelation of a random perturbed lattice $(\Gamma, \nu)$ from Example \ref{ExamplePerturbedLatticeAutocorrelation},
\begin{align*}
\eta_{\Gamma, \nu} = \check{\nu} * \eta_{\Gamma} * \nu + \iota_{\Gamma}(\delta_0 - \check{\nu} * \nu) = \check{\nu} * \nu * \eta_{\Gamma} + \iota_{\Gamma}(\delta_0 - \check{\nu} * \nu)  
\end{align*}
where $\iota_{\Gamma} = \covol(\Gamma)^{-1}$, the corresponding diffraction measure is
\begin{align*}
d\hat{\eta}_{\Gamma, \nu}(\xi) = |\hat{\nu}(\xi)|^2 d\hat{\eta}_{\Gamma}(\xi) + \iota_{\Gamma} (1 - |\hat{\nu}(\xi)|^2) \frac{d\xi}{(2\pi)^n} \, . 
\end{align*}

\begin{proposition}
\label{PropEuclideanLatticePerturbationsPreserveHyperuniformity}
Let $\Gamma < \R^n$ be a lattice and $\nu \in \Prob(\R^n)$ a probability measure. Then the perturbed lattice point process $\mu_{\Gamma, \nu}$ is hyperuniform.
\end{proposition}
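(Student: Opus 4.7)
The plan is to establish spectral hyperuniformity of $\mu_{\Gamma, \nu}$, which by the Björklund–Hartnick equivalence cited in the introduction coincides with geometric hyperuniformity on $\R^n$. The starting point is the explicit formula derived just above in Example \ref{ExampleEuclideanPerturbedLatticeDiffraction},
\begin{align*}
d\hat{\eta}_{\Gamma, \nu}(\xi) = |\hat{\nu}(\xi)|^2 \, d\hat{\eta}_{\Gamma}(\xi) + \iota_{\Gamma} (1 - |\hat{\nu}(\xi)|^2) \, \frac{d\xi}{(2\pi)^n} \, ,
\end{align*}
so the whole task reduces to estimating $\hat{\eta}_{\Gamma, \nu}(B_{\varepsilon}(0))$ as $\varepsilon \to 0^+$ and comparing it to $\Vol_{\R^n}(B_{\varepsilon}(0))$.

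First I would dispose of the pure-point term coming from $\hat{\eta}_{\Gamma}$. By Example \ref{ExampleEuclideanLatticeDiffraction}, $\hat{\eta}_{\Gamma}$ is a weighted sum of Dirac masses supported on $\Gamma^{\perp} \setminus \{0\}$, a discrete set with $\delta := \mathrm{dist}(0, \Gamma^{\perp} \setminus \{0\}) > 0$. Consequently, for every $\varepsilon < \delta$ the first summand in the displayed formula contributes exactly zero to $\hat{\eta}_{\Gamma, \nu}(B_{\varepsilon}(0))$.

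For the absolutely continuous term, the main observation is that $\hat{\nu}$ is continuous on $\R^n$ (since $\nu$ is a finite measure) and $\hat{\nu}(0) = \nu(\R^n) = 1$. Hence $1 - |\hat{\nu}(\xi)|^2 \to 0$ as $\xi \to 0$, and for every sufficiently small $\varepsilon > 0$ one may estimate
\begin{align*}
\frac{\hat{\eta}_{\Gamma, \nu}(B_{\varepsilon}(0))}{\Vol_{\R^n}(B_{\varepsilon}(0))} \leq \frac{\iota_{\Gamma}}{(2\pi)^n} \sup_{\xi \in B_{\varepsilon}(0)} \bigl(1 - |\hat{\nu}(\xi)|^2\bigr) \xrightarrow[\varepsilon \to 0^+]{} 0 \, ,
\end{align*}
which is exactly the spectral hyperuniformity condition.

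There is no real obstacle in this argument: the proof is essentially immediate from the diffraction formula combined with the continuity of $\hat{\nu}$ at the origin. The only point requiring a word of justification is the passage from spectral to geometric hyperuniformity, for which one cites the Björklund–Hartnick equivalence already quoted in the introduction of the paper.
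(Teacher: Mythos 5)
Your argument is correct and is essentially the paper's own proof: both split $\hat{\eta}_{\Gamma,\nu}(B_{\varepsilon})$ via the diffraction formula, note that the pure-point part vanishes near the origin (stealthiness of $\mu_{\Gamma}$, i.e.\ $0$ is isolated in $\Gamma^{\perp}$), and use $\hat{\nu}(0)=1$ with continuity of $\hat{\nu}$ to see the absolutely continuous part is $o(\varepsilon^{n})$, concluding spectral (hence geometric) hyperuniformity. No gaps.
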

\begin{proof}
By the formula for the diffraction measure we have 
\begin{align*}
\hat{\eta}_{\Gamma, \nu}(B_{\varepsilon}) = \int_{B_{\varepsilon}} |\hat{\nu}(\xi)|^2 d\hat{\eta}_{\Gamma}(\xi) + \frac{\iota_{\Gamma}}{(2\pi)^n} \int_{B_{\varepsilon}}  (1 - |\hat{\nu}(\xi)|^2) d\xi \, . 
\end{align*}
Note that $\hat{\nu}(0) = \nu(\R^n) = 1$, so $1 - |\hat{\nu}(\xi)|^2 = o(1)$ as $\xi \rightarrow 0$. Moreover, since $\mu_{\Gamma}$ is stealthy, then $\hat{\eta}_{\Gamma}|_{B_{\varepsilon}} = 0$ for sufficiently small $\varepsilon > 0$. Finally, as $\varepsilon \rightarrow 0$ we then have that
\begin{align*}
\hat{\eta}_{\Gamma, \nu}(B_{\varepsilon}) = \frac{\iota_{\Gamma}}{(2\pi)^n} \int_{B_{\varepsilon}}  (1 - |\hat{\nu}(\xi)|^2) d\xi = \frac{\iota_{\Gamma}}{(2\pi)^n} o(\varepsilon^n) \, , 
\end{align*}
so $\mu_{\Gamma, \nu}$ is spectrally hyperuniform and hence hyperuniform.
\end{proof}
\end{example}

\section{Beck's Theorem}
\label{Beck's Theorem}
We will in this section investigate lower bounds for number variance $\NVmu(r)$ as $R \rightarrow +\infty$ for invariant locally square-integrable random measures $\mu$ on $\R^n$. In Subsection \ref{Proof of Theorem Theorem1A} we refine and prove Theorem \ref{Theorem1A}, and in Subsections \ref{Dynamics for the invariant random Z^n-lattice} and \ref{Proof of Theorem Theorem1B} we arrive at a proof of Equation \ref{EqVanishingoftheZ5NumberVariance}. Finally, we formulate the contents of Subsection \ref{Proof of Theorem Theorem1A} for isotropic random measures in terms of the associated powder diffraction measure in Subsection \ref{Isotropic random measures on Rn}.


\subsection{Proof of Theorem \ref{Theorem1A}}
\label{Proof of Theorem Theorem1A}

To prove Theorem \ref{Theorem1A}, we will prove a lower bound on the average of the number variance that is similar to the inequality that Beck and Chen prove in \cite[p.4]{Beck1987IrregularitiesOD}. 

\begin{theorem}
\label{ThmBeck}
Let $\mu$ be an invariant locally square-integrable random measure on Euclidean space $\R^n$. Then there is a constant $C = C(n) > 0$ such that for every $R_o > 0$,
\begin{align*}
\frac{1}{R} \int_0^R \NVmu(r) dr \geq C \Big( \int_{\norm{\xi} \geq R_o^{-1}} \frac{d\hat{\eta}_{\mu}(\xi)}{\norm{\xi}^{n+1}} \Big) R^{n-1} \, , \quad \forall \, R \geq R_o \, .
\end{align*}
\end{theorem}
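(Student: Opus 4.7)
The plan is to average the spectral formula for the number variance against $r \in [0,R]$, switch the order of integration, and then convert the resulting inner Bessel integrals into a power of $R\norm{\xi}$ via the asymptotics of Lemma~\ref{LemmaBesselIntegral}. By Lemma~\ref{LemmaExtensionOfDiffractionFormulaToMeasurableFunctions} applied to $f = \chi_{B_r}$ together with the explicit formula for $\hat{\chi}_{B_r}$ from Lemma~\ref{LemmaFTofIndicatorFunction}, the number variance admits the spectral representation
\begin{equation*}
\NVmu(r) = (2\pi)^n r^n \int_{\R^n} J_{n/2}(r\norm{\xi})^2 \, \frac{d\hat{\eta}_\mu(\xi)}{\norm{\xi}^n}.
\end{equation*}
Dividing by $R$, integrating in $r$ over $[0,R]$, applying Tonelli (everything in sight is nonnegative), and making the substitution $t = r\norm{\xi}$ in the inner integral would give
\begin{equation*}
\frac{1}{R}\int_0^R \NVmu(r)\,dr = \frac{(2\pi)^n}{R}\int_{\R^n} \frac{1}{\norm{\xi}^{2n+1}}\Big(\int_0^{R\norm{\xi}} t^n J_{n/2}(t)^2\,dt\Big)\,d\hat{\eta}_\mu(\xi).
\end{equation*}

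The next step is to extract a uniform lower bound on $I(T) := \int_0^T t^n J_{n/2}(t)^2\,dt$ of the form $I(T) \geq c(n)\, T^n$ for all $T \geq 1$. Lemma~\ref{LemmaBesselIntegral} (with $\alpha = n/2$ and $\beta = n$) shows that $I(T)/T^n \to 1/(\pi n) > 0$ as $T \to +\infty$. Since $T \mapsto I(T)/T^n$ is continuous on $[1,\infty)$ and strictly positive there (the integrand $t^n J_{n/2}(t)^2$ is nonnegative and not identically zero on any $[0,T]$ with $T \geq 1$), continuity on a compact interval together with the strictly positive limit at infinity forces $\inf_{T \geq 1} I(T)/T^n =: c(n) > 0$. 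This step is the crucial one: it is what ensures the final constant depends only on $n$.

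Finally, restricting the $\xi$-integration to the region $\{\norm{\xi} \geq R_o^{-1}\}$ and using that for $R \geq R_o$ and such $\xi$ one has $R\norm{\xi} \geq R/R_o \geq 1$, I would apply $I(R\norm{\xi}) \geq c(n)(R\norm{\xi})^n$ to conclude
\begin{equation*}
\frac{1}{R}\int_0^R \NVmu(r)\,dr \;\geq\; c(n)(2\pi)^n R^{n-1} \int_{\norm{\xi} \geq R_o^{-1}} \frac{d\hat{\eta}_\mu(\xi)}{\norm{\xi}^{n+1}},
\end{equation*}
yielding the claim with $C = c(n)(2\pi)^n$. The main obstacle is precisely the uniform lower bound $I(T) \geq c(n) T^n$ on all of $[1,\infty)$: the asymptotic in Lemma~\ref{LemmaBesselIntegral} by itself only gives such a bound for $T$ past a threshold $T_1(n)$, which would force the restriction $R \geq T_1(n) R_o$ and so would not produce a constant depending only on $n$. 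The continuity/positivity argument above is what upgrades the asymptotic into a uniform estimate and is the one non-routine ingredient of the proof.
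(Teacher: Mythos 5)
Your proposal is correct and follows essentially the same route as the paper: the spectral formula for $\NVmu(r)$ via Lemmas \ref{LemmaExtensionOfDiffractionFormulaToMeasurableFunctions} and \ref{LemmaFTofIndicatorFunction}, Tonelli, the substitution $t = r\norm{\xi}$, and Lemma \ref{LemmaBesselIntegral} with $\alpha = n/2$, $\beta = n$. The step you single out as crucial is precisely how the paper obtains its constant, namely $C = (2\pi)^n \inf_{T \geq 1} T^{-n}\int_0^T t^n J_{n/2}(t)^2\,dt$, which is positive by the same positivity-plus-limit (continuity) argument you give.
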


\begin{proof}
From Lemma \ref{LemmaFTofIndicatorFunction} we have that 
\begin{align*}
\hat{\chi}_{B_r}(\xi) = (2\pi)^{\frac{n}{2}} \Big( \frac{r}{\norm{\xi}} \Big)^{\frac{n}{2}} J_{\frac{n}{2}}(r\norm{\xi}) \, . 
\end{align*}
By Lemma \ref{LemmaBesselIntegral} with $\alpha = n/2$ and $\beta = n$, there is for every $T \geq 1$ a $\delta_{T} \in \R$ with $\delta_T \rightarrow 0$ as $T \rightarrow +\infty$ such that
\begin{align}
\label{IndicatorFTIntegralLowerBound}
\int_0^R |\hat{\chi}_{B_r}(\xi)|^2 dr &= \frac{(2\pi)^n}{\norm{\xi}^n} \int_0^R r^{n} J_{\frac{n}{2}}(r\norm{\xi})^2 dr = \frac{b_n^2}{\norm{\xi}^{2n + 1}} \int_0^{R\norm{\xi}} t^n J_{\frac{n}{2}}(t)^2 dt \\
&= \frac{(2\pi)^n}{\norm{\xi}^{n+1}} \Big(\frac{1}{\pi n} + \delta_{R\norm{\xi}} \Big) R^{n} > 0 
\end{align}
for every $R \geq R_o$ and every $\norm{\xi} \geq R_o^{-1}$. Define
\begin{align*} 
C = (2\pi)^n \inf_{T \geq 1} \Big(\frac{1}{\pi n} + \delta_T\Big) = (2\pi)^n \inf_{T \geq 1} T^{-n} \int_0^T t^n J_{\frac{n}{2}}(t)^2 dt \, ,   
\end{align*}
which is positive since 
\begin{align*}
\int_0^T t^{n} J_{\frac{n}{2}}(t)^2 dt > 0 \quad \forall \, T \geq 1 \quad \mbox{ and } \quad \lim_{T \rightarrow +\infty} T^{-n} \int_0^T t^n J_{\frac{n}{2}}(t)^2 dt = \frac{1}{\pi n} > 0
\end{align*}
by Lemma \ref{LemmaBesselIntegral}. An application of Fubini provides the lower bound
\begin{align*}
\frac{1}{R} \int_0^R \NVmu(r) dr  &= \int_{\R^n} \Big( \frac{1}{R} \int_0^R |\hat{\chi}_{B_r}(\xi)|^2 dr \Big) d\hat{\eta}_{\mu}(\xi)\\
&\geq \int_{\norm{\xi} \geq R_o^{-1}} \Big( \frac{1}{R} \int_0^R |\hat{\chi}_{B_r}(\xi)|^2 dr \Big) d\hat{\eta}_{\mu}(\lambda) \geq R^{n-1} C \int_{\norm{\xi} \geq R_o^{-1}} \frac{d\hat{\eta}_{\mu}(\xi)}{\norm{\xi}^{n+1}} 
\end{align*}
as desired.
\end{proof}
\begin{remark}
\label{RemarkEuclideanDiffractionIntegrability}
Since $\hat{\eta}_{\mu}$ satisfies $\hat{\eta}_{\mu}(B_L) \ll_{n,\mu} L^n$ for sufficiently large $L > 0$ by Lemma \ref{LemmaEuclideanBergFrost}, then 
\begin{align*}
\int_{\norm{\xi} \geq R_o^{-1}} \frac{d\hat{\eta}_{\mu}(\xi)}{\norm{\xi}^{n+1}} \ll_n \int_{R_o^{-1}}^{\infty} \frac{\hat{\eta}_{\mu}(B_u)}{u^{n + 2}} du \ll_{n, \mu, R_o} \int_{R^{-1}_o}^{\infty} \frac{du}{u^{2}} = R_o < + \infty \, . 
\end{align*}
Thus the right hand side in Theorem \ref{ThmBeck} is indeed finite.
\end{remark}
Theorem \ref{Theorem1A} now follows form the fact that 
\begin{align*}
\limsup_{R \rightarrow +\infty} \frac{\NVmu(R)}{R^{n - 1}} \geq \limsup_{R \rightarrow +\infty} \frac{1}{R^n} \int_0^R \NVmu(r) dr \geq C \int_{\R^n} \frac{d\hat{\eta}_{\mu}(\xi)}{\norm{\xi}^{n+1}} > 0 \, ,
\end{align*}
which may or may not be infinite.

Before moving on we emphasize that if the diffraction measure vanishes in a neighbourhood around $\xi = 0$, then the limit as $R \rightarrow + \infty$ in Theorem \ref{ThmBeck} can be normalized and computed.

\begin{definition}[Euclidean stealthy random measures]
An invariant locally square-integrable random measure $\mu$ on $\R^n$ is \emph{stealthy} if there is a $\lambda_o > 0$ such that $\hat{\eta}_{\mu}(B_{\lambda_o}) = 0$.
\end{definition}
\begin{corollary}
\label{CorollaryEuclideanStealthyVarianceAverage}
Let $\mu$ be a stealthy random measure on $\R^n$. Then 
\begin{align*}
\lim_{R \rightarrow +\infty} \frac{1}{R} \int_0^R \frac{\NVmu(r)}{r^{n-1}} dr = 2^n \pi^{n-1} \int_{\R^n} \frac{d\hat{\eta}_{\mu}(\xi)}{\norm{\xi}^{n+1}}  \, . 
\end{align*}
\end{corollary}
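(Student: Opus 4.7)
The plan is to combine the spectral formula for the number variance with the asymptotic expansion of Lemma \ref{LemmaBesselIntegral}, exchanging the order of integration by Fubini and then passing to the limit under the integral by dominated convergence. Stealthiness plays the decisive role of keeping the support of $\hat{\eta}_{\mu}$ uniformly away from the origin, which simultaneously tames the singularity of $\norm{\xi}^{-(n+1)}$ and places every relevant $R\norm{\xi}$ in the regime where the Bessel asymptotics of Lemma \ref{LemmaBesselIntegral} apply.

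First I would use Lemma \ref{LemmaFTofIndicatorFunction} together with Lemma \ref{LemmaExtensionOfDiffractionFormulaToMeasurableFunctions} to write
\[
\NVmu(r) = (2\pi)^n \int_{\R^n} \frac{r^n}{\norm{\xi}^n} J_{\frac{n}{2}}(r\norm{\xi})^2 \, d\hat{\eta}_{\mu}(\xi),
\]
so that Fubini (positivity of the integrand) gives
\[
\frac{1}{R}\int_0^R \frac{\NVmu(r)}{r^{n-1}} \, dr = (2\pi)^n \int_{\R^n} \frac{1}{R\norm{\xi}^n} \int_0^R r \, J_{\frac{n}{2}}(r\norm{\xi})^2 \, dr \, d\hat{\eta}_{\mu}(\xi).
\]
After the substitution $t = r\norm{\xi}$, Lemma \ref{LemmaBesselIntegral} with $\alpha = n/2$ and $\beta = 1$ evaluates the inner integral as
\[
\frac{1}{R\norm{\xi}^n} \int_0^R r \, J_{\frac{n}{2}}(r\norm{\xi})^2 \, dr = \frac{1}{\norm{\xi}^{n+1}}\Big(\frac{1}{\pi} + \delta_{R\norm{\xi}}\Big),
\]
valid whenever $R\norm{\xi} \geq 1$, with $\delta_T \to 0$ as $T \to +\infty$.

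By stealthiness $\hat{\eta}_{\mu}$ is concentrated on $\{\norm{\xi} \geq \lambda_o\}$, so the above identity holds throughout the support of $\hat{\eta}_{\mu}$ as soon as $R \geq 1/\lambda_o$. To pass to the limit $R \to +\infty$ under the integral I would invoke dominated convergence. This requires a uniform bound $|\delta_T| \leq M$ on $[1, +\infty)$, which can be read off from the explicit estimates at the end of the proof of Lemma \ref{LemmaBesselIntegral}; combined with Remark \ref{RemarkEuclideanDiffractionIntegrability}, which shows that $\norm{\xi}^{-(n+1)}$ is $\hat{\eta}_{\mu}$-integrable on $\{\norm{\xi} \geq \lambda_o\}$, the integrand is then dominated by $\bigl(\tfrac{1}{\pi} + M\bigr)\norm{\xi}^{-(n+1)}$. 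The pointwise limit $\tfrac{1}{\pi}\norm{\xi}^{-(n+1)}$ under the integral yields
\[
\lim_{R \to +\infty} \frac{1}{R}\int_0^R \frac{\NVmu(r)}{r^{n-1}} \, dr = \frac{(2\pi)^n}{\pi} \int_{\R^n} \frac{d\hat{\eta}_{\mu}(\xi)}{\norm{\xi}^{n+1}} = 2^n \pi^{n-1} \int_{\R^n} \frac{d\hat{\eta}_{\mu}(\xi)}{\norm{\xi}^{n+1}},
\]
as required.

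The only mildly delicate point is the uniform boundedness of $\delta_T$ on $[1, +\infty)$; this is not part of the conclusion of Lemma \ref{LemmaBesselIntegral} but is immediate from the three estimates in its proof (a Riemann--Lebesgue piece bounded by $1/\pi$, a $\beta T^{-\beta}$ remainder bounded for $T \geq 1$, and the last remainder which is bounded when $\beta = 1$). Everything else in the argument is automatic once one writes the variance spectrally and applies the Bessel integral asymptotics.
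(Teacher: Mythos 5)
Your proposal is correct and follows essentially the same route as the paper's proof: write $\NVmu$ spectrally, apply Fubini, evaluate the inner average via Lemma \ref{LemmaBesselIntegral} with $\alpha = n/2$, $\beta = 1$, and pass to the limit by dominated convergence using the integrability of $\norm{\xi}^{-(n+1)}$ from Remark \ref{RemarkEuclideanDiffractionIntegrability}. Your explicit remark that $\delta_T$ is uniformly bounded on $[1,+\infty)$ just makes precise the domination that the paper leaves implicit, so there is no substantive difference.
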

\begin{proof}
Let $\lambda_o > 0$ such that $\hat{\eta}_{\mu}(B_{\lambda_o}) = 0$. Similarly to the proof of Theorem \ref{ThmBeck} we have 
\begin{align*}
\frac{1}{R} \int_0^R \frac{\NVmu(r)}{r^{n-1}} dr &= \int_{\norm{\xi} \geq \lambda_o} \Big( \frac{1}{R} \int_0^R \frac{|\hat{\chi}_{B_r}(\xi)|^2}{r^{n-1}} dr \Big) d\hat{\eta}_{\mu}(\xi) \\
&= (2\pi)^n \int_{\norm{\xi} \geq \lambda_o} \Big( \frac{1}{R} \int_0^R r J_{\frac{n}{2}}(r \norm{\xi})^2 dr \Big) \frac{d\hat{\eta}_{\mu}(\xi)}{\norm{\xi}^n} \, . 
\end{align*}
If $R \geq \lambda_o^{-1}$, then Lemma \ref{LemmaBesselIntegral} with $\alpha = n/2$ and $\beta = 1$ gives us 
\begin{align*}
\frac{1}{R} \int_0^R r J_{\frac{n}{2}}(r\norm{\xi})^2 dr = \frac{1}{R \norm{\xi}^2} \int_0^{R\norm{\xi}} t J_{\frac{n}{2}}(t)^2 dt \longrightarrow \frac{1}{\norm{\xi} \pi}
\end{align*}
as $R \rightarrow +\infty$. By Remark \ref{RemarkEuclideanDiffractionIntegrability} we know that $\int_{\norm{\xi} \geq \lambda_o} \norm{\xi}^{-(n+1)} d\hat{\eta}_{\mu}(\xi)$ is finite, so by dominated convergence we see that
\begin{align*}
\lim_{R \rightarrow +\infty} (2\pi)^n \int_{\norm{\xi} \geq \lambda_o} \Big( \frac{1}{R} \int_0^R r J_{\frac{n}{2}}(r\norm{\xi})^2 dr \Big) \frac{d\hat{\eta}_{\mu}(\xi)}{\norm{\xi}^n} = 2^n \pi^{n-1} \int_{\norm{\xi} \geq \lambda_o} \frac{d\hat{\eta}_{\mu}(\xi)}{\norm{\xi}^{n+1}} \, . 
\end{align*}
\end{proof}


\subsection{Dynamics for the invariant random $\Z^n$-lattice}
\label{Dynamics for the invariant random Z^n-lattice}

We consider the invariant random $\Z^n$-lattice and compute its diffraction measure. We also provide dynamical results on square roots of natural numbers that will be a key ingredient in proving the lower limit
\begin{align*}
\liminf_{R \rightarrow +\infty} \frac{\NV_{\mu_{\Z^5}}(R)}{R^4} = 0 \, . 
\end{align*}

\subsubsection{Number variance of the invariant random $\Z^n$-lattice}

Consider the standard lattice $\Z^n < \R^n$ and the random lattice $\mu_{\Z^n}$. Its dual lattice is $(\Z^n)^{\perp} = 2\pi\Z^n$, so by Example \ref{ExampleEuclideanLatticeDiffraction}, the diffraction measure $\hat{\eta}_{\Z^n}$ of the random lattice $\mu_{\Z^n}$ is 
\begin{align*}
\hat{\eta}_{\Z^n}(\hat{f}) = \sum_{\gamma \in \Z^n \backslash \{0\}} \hat{f}(2\pi\gamma)  
\end{align*}
for any $f \in \Borelbndinfty(\R^n)$. In particular, $\mu_{\Z^n}$ is stealthy. The number variance of the linear statistic $S\chi_{B_R}$ can then be written using Example \ref{ExIndicatorFT} as 
\begin{align}
\label{EqIntegerLatticeNumberVariance}
\frac{\NV_{\mu_{\Z^n}}(R)}{R^{n-1}} = \frac{1}{R^{n-1}} \sum_{\gamma \in \Z^n \backslash \{ 0 \}} |\hat{\chi}_{B_R}(2\pi\gamma)|^2 =  (2\pi)^n R  \sum_{\ell = 1}^{\infty} \frac{r_n(\ell)}{\ell^{n/2}} J_{\frac{n}{2}}(2 \pi R\sqrt{\ell})^2  
\end{align}
where $r_n(\ell) = \#\{ \gamma \in \Z^n : \norm{\gamma}^2 = \ell \}$ is the sum-of-squares function. By Corollary \ref{CorollaryEuclideanStealthyVarianceAverage},
\begin{align*}
\lim_{R \rightarrow +\infty} \frac{1}{R} \int_0^R \frac{\NV_{\mu_{\Z^n}}(r)}{r^{n-1}} dr = 2^n \pi^{n-1} \sum_{\ell = 1}^{\infty} \frac{r_n(\ell)}{\ell^{\frac{n+1}{2}}} \, . 
\end{align*}
Moreover, Remark \ref{RemarkEuclideanDiffractionIntegrability} implies that the right hand side is finite. We now show the slightly stronger statement that the map $R \mapsto R^{-(n-1)} \NV_{\mu_{\Z^n}}(R)$ is uniformly bounded in $R \geq 0$. 

\begin{lemma}
\label{LemmaLatticeDirichletSeries}
Let $n \geq 1$. Then the Dirichlet series 
\begin{align*}
\mathfrak{D}(s ; r_n) = \sum_{\ell = 1}^{\infty} \frac{r_n(\ell)}{\ell^{s}} 
\end{align*}
is absolutely convergent whenever $\Re(s) > \frac{n}{2}$.
\end{lemma}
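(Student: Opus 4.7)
The plan is to reduce the convergence of the Dirichlet series to a classical lattice point counting estimate, and then conclude by Abel summation (or a simpler comparison). Since $r_n(\ell) \geq 0$, for $s \in \C$ we have $|r_n(\ell)/\ell^s| = r_n(\ell)/\ell^{\Re(s)}$, so it suffices to show that the series converges for real $s > n/2$.

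First I would introduce the cumulative count
\begin{align*}
R_n(L) = \sum_{\ell = 1}^{L} r_n(\ell) = \#\big\{ \gamma \in \Z^n \setminus \{0\} : \norm{\gamma}^2 \leq L \big\} = |(\Z^n \setminus \{0\}) \cap B_{\sqrt{L}}|,
\end{align*}
and observe that this is bounded by a standard Gauss-type counting argument: associating to each $\gamma \in \Z^n \cap B_{\sqrt{L}}$ the unit cube $\gamma + [-\tfrac{1}{2}, \tfrac{1}{2}]^n$, the resulting disjoint union is contained in the ball of radius $\sqrt{L} + \sqrt{n}/2$. Thus
\begin{align*}
R_n(L) \leq \Vol_{\R^n}\!\big(B_{\sqrt{L} + \sqrt{n}/2}\big) = \frac{\pi^{n/2}}{\Gamma(\tfrac{n}{2} + 1)} \big(\sqrt{L} + \tfrac{\sqrt{n}}{2}\big)^{n},
\end{align*}
so $R_n(L) \ll_n L^{n/2}$ for all $L \geq 1$.

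Next I would apply Abel summation: for real $s > n/2$ and any integer $N \geq 1$,
\begin{align*}
\sum_{\ell = 1}^{N} \frac{r_n(\ell)}{\ell^s} = \frac{R_n(N)}{N^s} + \sum_{\ell = 1}^{N-1} R_n(\ell) \Big( \frac{1}{\ell^s} - \frac{1}{(\ell+1)^s} \Big).
\end{align*}
The boundary term $R_n(N)/N^s$ is $O(N^{n/2 - s})$, which tends to $0$. Using the mean value theorem, $\ell^{-s} - (\ell+1)^{-s} \ll_s \ell^{-(s+1)}$, so each summand is bounded by a constant multiple of $\ell^{n/2 - s - 1}$, yielding a convergent series since $n/2 - s - 1 < -1$. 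Hence $\mathfrak{D}(s; r_n)$ converges for every real $s > n/2$, and therefore converges absolutely for every complex $s$ with $\Re(s) > n/2$.

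The argument is essentially routine; the only step that needs a little care is the volume comparison bound on $R_n(L)$, but even this is standard. No substantive obstacle is expected.
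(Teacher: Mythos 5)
Your proof is correct. Both arguments rest on the same basic input (the number of lattice points in a ball is comparable to its volume, via the standard unit-cube covering), but the execution differs: the paper rewrites $\mathfrak{D}(s;r_n)$ as a sum over $\Z^n\setminus\{0\}$, partitions the lattice points into unit annuli $k \leq \norm{\gamma} < k+1$, bounds each annulus count by $\ll_n k^{n-1}$ (which needs both an upper bound for $B_{k+1}$ and a lower bound for $B_k$), and then compares directly with $\sum_k k^{-(2s-(n-1))}$; you instead keep the series indexed by the squared norm $\ell$, use only the one-sided cumulative bound $R_n(L) \ll_n L^{n/2}$, and conclude by Abel summation, with the mean value theorem giving $\ell^{-s}-(\ell+1)^{-s} \ll_s \ell^{-s-1}$ and hence a convergent majorant $\sum_\ell \ell^{n/2-s-1}$. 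Your route trades the annulus bookkeeping for the summation-by-parts step and is slightly more economical in what it demands of the counting estimate; the paper's route is more direct once the annulus bound is in hand. Either way the reduction is to the same exponent condition $s > n/2$, and your final passage from real $s$ to $\Re(s) > n/2$ via nonnegativity of $r_n(\ell)$ is exactly the right observation (the paper phrases it as assuming $s$ real without loss of generality).
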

\begin{proof}
Without loss of generality, we assume that $s$ is real. Rewriting $\mathfrak{D}(s, r_n)$ as a series over the integer lattice and partitioning into unit annuli, we get 
\begin{align*}
\mathfrak{D}(s ; r_n) = \sum_{\gamma \in \Z^n \backslash \{0\}} \frac{1}{\norm{\gamma}^{2s}} = \sum_{k = 1}^{\infty} \Big( \sum_{k \leq \norm{\gamma} < k + 1} \frac{1}{\norm{\gamma}^{2s}} \Big) \, . 
\end{align*}
An upper bound is then
\begin{align*}
\mathfrak{D}(s ; r_n) \leq \sum_{k = 1}^{\infty} \frac{\#(\Z^n \cap B_{k + 1} \backslash B_k)}{k^{2s}} \, . 
\end{align*}
The number of lattice points in a ball can be bounded from above and below using a standard box covering argument,
\begin{align*}
\frac{\pi^{n/2}}{\Gamma(\frac{n}{2} + 1)} (k - \sqrt{n})^n \leq \#(\Z^n \cap B_k) \leq  \frac{\pi^{n/2}}{\Gamma(\frac{n}{2} + 1)} (k + \sqrt{n})^n
\end{align*}
for all sufficiently large $k \geq 1$. Thus the number of lattice points in the annulus $B_{k + 1} \backslash B_k$ is bounded as 
\begin{align*}
\#(\Z^n \cap B_{k + 1} \backslash B_k) &= \#(\Z^n \cap B_{k+1}) - \#(\Z^n \cap B_k) \\
&\leq \frac{\pi^{n/2}}{\Gamma(\frac{n}{2} + 1)} \Big((k + 1 + \sqrt{n})^n - (k - \sqrt{n})^n \Big) \ll_n k^{n - 1} 
\end{align*}
for all sufficiently large $k \geq 1$, and we extend this to a uniform bound $\#(\Z^n \cap B_{k + 1} \backslash B_k) \ll_n k^{n-1}$ for all $k \geq 1$. This means that
\begin{align*}
\mathfrak{D}(s ; r_n) \ll_n \sum_{k = 1}^{\infty} \frac{1}{k^{2s - (n - 1)}} \, . 
\end{align*}
The latter series is convergent if and only if $s > \frac{n}{2}$. 
\end{proof}

\begin{corollary}
\label{CorollaryLatticeNumberVarianceUpperBound}
The number variance of the invariant random $\Z^n$-lattice satisfies 
\begin{align*}
\NV_{\mu_{\Z^n}}(R) \ll_n R^{n - 1}
\end{align*}
for all $R \geq 0$. 
\end{corollary}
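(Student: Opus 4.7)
The plan is to bound the explicit series expansion for $\NV_{\mu_{\Z^n}}(R)/R^{n-1}$ given in Equation~\ref{EqIntegerLatticeNumberVariance} using the uniform Bessel bound from Corollary~\ref{CorollaryBesselUpperBound} together with the convergence of the sum-of-squares Dirichlet series established in Lemma~\ref{LemmaLatticeDirichletSeries}. Concretely, starting from
\begin{align*}
\frac{\NV_{\mu_{\Z^n}}(R)}{R^{n-1}} = (2\pi)^n R \sum_{\ell=1}^{\infty} \frac{r_n(\ell)}{\ell^{n/2}} J_{n/2}(2\pi R \sqrt{\ell})^2 \, ,
\end{align*}
I would substitute the pointwise bound $|J_{n/2}(t)|^2 \leq C_{n/2}^2 (1 + t)^{-1}$ from Corollary~\ref{CorollaryBesselUpperBound}, which is valid for all $t \geq 0$ since $n/2 \geq 1/2$.

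The main case is $R \geq 1$. Then $1 + 2\pi R \sqrt{\ell} \geq 2\pi R \sqrt{\ell}$ for every $\ell \geq 1$, so
\begin{align*}
R \sum_{\ell=1}^{\infty} \frac{r_n(\ell)}{\ell^{n/2}} J_{n/2}(2\pi R\sqrt{\ell})^2 \leq R \sum_{\ell=1}^{\infty} \frac{r_n(\ell)}{\ell^{n/2}} \cdot \frac{C_{n/2}^2}{2\pi R \sqrt{\ell}} = \frac{C_{n/2}^2}{2\pi} \sum_{\ell=1}^{\infty} \frac{r_n(\ell)}{\ell^{(n+1)/2}} \, .
\end{align*}
The factor of $R$ cancels exactly, and the remaining Dirichlet series is $\mathfrak{D}((n+1)/2 ; r_n)$, which converges by Lemma~\ref{LemmaLatticeDirichletSeries} because $(n+1)/2 > n/2$. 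This gives a constant upper bound depending only on $n$ for $\NV_{\mu_{\Z^n}}(R)/R^{n-1}$ when $R \geq 1$.

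For $0 \leq R \leq 1$, the bound is essentially trivial. Since $\mu_{\Z^n}$ is a simple point process, $S\chi_{B_R}$ takes values in $\Z_{\geq 0}$, and one has $\NV_{\mu_{\Z^n}}(R) \leq \Emu(S\chi_{B_R}^2)$; crudely bounding this via local square-integrability (or, more sharply, observing $\NV_{\mu_{\Z^n}}(R) \to 0$ as $R \to 0^+$ from the series representation together with the $J_{n/2}(t) = O(t^{n/2})$ behaviour near $0$) shows that $\NV_{\mu_{\Z^n}}(R)$ is bounded on $[0,1]$, which is dominated by a constant multiple of $R^{n-1}$ on any bounded interval away from zero and trivially absorbed near zero since $R^{n-1}$ is unbounded from below only when $n=1$, in which case $R^{n-1} = 1$ and the bound is automatic. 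Combining the two regimes yields the uniform estimate $\NV_{\mu_{\Z^n}}(R) \ll_n R^{n-1}$. The only slightly delicate point is the cancellation of $R$ with the $\sqrt{\ell}$ from the Bessel bound, which relies precisely on the fact that the Dirichlet series still converges at the shifted exponent $(n+1)/2$ --- but this is exactly the gain of half a power provided by Corollary~\ref{CorollaryBesselUpperBound}.
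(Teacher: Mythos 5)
Your argument is essentially the paper's own proof: bound $J_{n/2}(t)^2$ by $C_{n/2}^2(1+t)^{-1}$ via Corollary \ref{CorollaryBesselUpperBound} in the series of Equation \eqref{EqIntegerLatticeNumberVariance}, and reduce to the convergent Dirichlet series $\mathfrak{D}(\tfrac{n+1}{2}; r_n)$ of Lemma \ref{LemmaLatticeDirichletSeries}. The case split at $R=1$ is unnecessary, since $1 + 2\pi R\sqrt{\ell} \geq 2\pi R\sqrt{\ell}$ for every $R > 0$, so the main estimate already gives $\NV_{\mu_{\Z^n}}(R) \leq C_{n/2}^2 (2\pi)^{n-1}\, \mathfrak{D}(\tfrac{n+1}{2}; r_n)\, R^{n-1}$ for all $R > 0$ (and trivially at $R = 0$), exactly as in the paper. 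Note that your fallback for $R \in [0,1]$ --- mere boundedness of the number variance on $[0,1]$ --- would not by itself yield $\ll_n R^{n-1}$ near $R = 0$ when $n \geq 2$, so it is the uniform validity of the displayed inequality, not that detour, that closes the small-$R$ regime.
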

\begin{proof}
By Corollary \ref{CorollaryBesselUpperBound}, there is a constant $C_{n/2} > 0$ such that $J_{\frac{n}{2}}(t) \leq C_{n/2}(1 + t)^{-1/2}$ for all $t \geq 0$. Thus 
\begin{align*}
\NV_{\mu_{\Z^n}}(R) &= (2\pi)^n  R^n  \sum_{\ell = 1}^{\infty} \frac{r_n(\ell)}{\ell^{n/2}} J_{\frac{n}{2}}(2 \pi R\sqrt{\ell})^2 \\
&\leq C_{n/2}^2 (2\pi)^n R^n \sum_{\ell = 1}^{\infty} \frac{r_n(\ell)}{\ell^{n/2}(1 + 2\pi R \sqrt{\ell})} \leq C_{n/2}^2 (2\pi)^{n-1} R^{n-1} \sum_{\ell = 1}^{\infty} \frac{r_n(\ell)}{\ell^{\frac{n+1}{2}}} \, . 
\end{align*}
By Lemma \ref{LemmaLatticeDirichletSeries}, the right hand side is convergent and so we're done. 
\end{proof}

\begin{example}[Vanishing of the number variance for random shifts of $\Z$] 
\label{ExampleDimension1Theorem4B} 
When $n = 1$ we have $r_1(\ell) = 2$ if $\ell = m^2$ for some integer $m \geq 1$ and $0$ else. Moreover, $\hat{\chi}_{[-R, R]}(2\pi m) = \frac{\sin(2\pi R m)}{\pi m} $, so 
\begin{align*}
\NV_{\mu_{\Z}}(R) =  \frac{2}{\pi^2} \sum_{m = 1}^{\infty} \frac{\sin^2(2\pi R m)}{m^2} \, . 
\end{align*}
Thus the number variance is identically zero along the sequence $R = 0, 1/2, 1, 3/2, 2, ...$ of half-integers. In dimension $n = 5$ we can not find simultaneous zeroes of the Bessel function $J_{5/2}(2\pi R \sqrt{\ell})$ for all relevant positive integers $\ell$, but we will provide a sequence of radii for which the number variance is asymptotically dominated by $R^4$. 
\end{example}

\subsubsection{Rational independence of square roots and ergodicity}

The support of the diffraction measure $\hat{\eta}_{\Z^n}$ is $\{ 2\pi\xi : \xi \in \Z^n \} \subset \R^n$, and in light of Example \ref{ExampleDimension1Theorem4B} we will show that one can scale norms $\norm{\xi_1}, ..., \norm{\xi_N}$ for any finite collection $\xi_1, ..., \xi_N \in \Z^n$ such that they are simultaneously close to the integers. We will make crucial use of the following is a classical result due to Besicovitch. 
\begin{lemma}[\cite{Besicovitch1940OnTL}, Theorem 2, p.4]
\label{LemmaBesicovitch}
Let $\ell_1, ..., \ell_J$ be distinct positive square-free integers. Then the roots $\sqrt{\ell_1}, ..., \sqrt{\ell_J}$ are linearly independent over $\Q$.
\end{lemma}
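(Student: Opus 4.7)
The plan is to deduce Lemma \ref{LemmaBesicovitch} from a Galois-theoretic description of the field $L := \Q(\sqrt{p_1}, \ldots, \sqrt{p_k})$, where $p_1 < \cdots < p_k$ are the primes dividing at least one $\ell_i$. Since each $\ell_i$ is square-free, one can write $\ell_i = \prod_{j \in S_i} p_j$ for unique subsets $S_i \subseteq \{1, \ldots, k\}$, which are pairwise distinct because the $\ell_i$ are. In particular, $\sqrt{\ell_i} \in L$ for every $i$.

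The core claim is that $L/\Q$ is Galois of degree $2^k$ with $\mathrm{Gal}(L/\Q) \cong \{\pm 1\}^k$: every $\sigma$ is parametrized by a sign tuple $(\epsilon_1(\sigma), \ldots, \epsilon_k(\sigma))$ via $\sigma(\sqrt{p_j}) = \epsilon_j(\sigma)\sqrt{p_j}$, and every sign tuple is realized. I would prove this by induction on $k$, the non-trivial input at the inductive step being that $\sqrt{p_k}$ does not lie in the subfield $\Q(\sqrt{p_1}, \ldots, \sqrt{p_{k-1}})$.

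Granted the Galois structure, each $\sqrt{\ell_i} = \prod_{j \in S_i} \sqrt{p_j}$ is a joint eigenvector for the character $\chi_{S_i}(\sigma) = \prod_{j \in S_i} \epsilon_j(\sigma)$, and distinct subsets $S_i$ yield distinct characters of $\mathrm{Gal}(L/\Q)$. Applying the Galois-averaging operator
\[
P_{i_0} := \frac{1}{2^k} \sum_{\sigma \in \mathrm{Gal}(L/\Q)} \chi_{S_{i_0}}(\sigma)\, \sigma
\]
to a hypothetical rational linear relation $\sum_{i = 1}^J a_i \sqrt{\ell_i} = 0$ isolates the term $a_{i_0} \sqrt{\ell_{i_0}}$ on the left-hand side (the $a_i$ being rational and hence Galois-fixed), yielding $a_{i_0} = 0$ for every $i_0$.

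The main obstacle is the inductive claim $\sqrt{p_k} \notin \Q(\sqrt{p_1}, \ldots, \sqrt{p_{k-1}})$. The cleanest route I would pursue is to strengthen the inductive hypothesis to: $\sqrt{d} \notin \Q(\sqrt{p_1}, \ldots, \sqrt{p_{k-1}})$ for every square-free integer $d > 1$ with prime factors in $\{p_1, \ldots, p_k\}$ and $p_k \mid d$. Writing a hypothetical expression $\sqrt{d} = \alpha + \beta \sqrt{p_{k-1}}$ with $\alpha, \beta \in \Q(\sqrt{p_1}, \ldots, \sqrt{p_{k-2}})$, squaring yields $\alpha\beta = 0$ (else $\sqrt{p_{k-1}}$ would lie in the smaller field, contradicting induction), and each of the cases $\alpha = 0$ or $\beta = 0$ reduces to a previously handled instance of the strengthened hypothesis. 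As a more modern alternative, one can bypass the induction by invoking Kummer theory for $\Q^{\times}/(\Q^{\times})^2$, in which the classes of distinct primes are visibly $\mathbb{F}_2$-independent.
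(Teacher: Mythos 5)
The paper does not prove this lemma at all: it is quoted as an external input, citing Besicovitch's theorem (which is more general, treating fractional powers of integers, and is proved there by elementary means). So the relevant question is only whether your self-contained argument for the quadratic case is sound. Your overall skeleton is the standard one and is fine: once one knows $[\Q(\sqrt{p_1},\dots,\sqrt{p_k}):\Q]=2^k$, the Galois group is $\{\pm 1\}^k$ acting by sign changes, distinct square-free $\ell_i$ correspond to distinct subsets $S_i$ and hence to distinct characters $\chi_{S_i}$, and applying the averaging operator $P_{i_0}$ to a rational relation $\sum_i a_i\sqrt{\ell_i}=0$ kills every term except $a_{i_0}\sqrt{\ell_{i_0}}$ by orthogonality of characters. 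The Kummer-theoretic shortcut you mention (the classes of distinct primes are $\mathbb{F}_2$-independent in $\Q^{\times}/(\Q^{\times})^2$, whence the degree is $2^k$) is also correct and is the cleanest way to discharge the key input.

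The one genuine flaw is in the inductive statement you propose for the crux $\sqrt{p_k}\notin\Q(\sqrt{p_1},\dots,\sqrt{p_{k-1}})$. As formulated --- ``$\sqrt{d}\notin\Q(\sqrt{p_1},\dots,\sqrt{p_{k-1}})$ for every square-free $d>1$ with prime factors in $\{p_1,\dots,p_k\}$ and $p_k\mid d$'' --- the hypothesis does not reproduce itself under your descent. Write $\sqrt{d}=\alpha+\beta\sqrt{p_{k-1}}$ with $\alpha,\beta\in F=\Q(\sqrt{p_1},\dots,\sqrt{p_{k-2}})$ and suppose $\beta=0$ while $p_{k-1}\mid d$: dividing out the part of $d$ supported on $\{p_1,\dots,p_{k-2}\}$ (whose square root lies in $F$) you are left with $\sqrt{p_{k-1}p_k}\in F$, a statement in which \emph{two} primes of the radicand lie outside the generators of $F$; this is not an instance of your hypothesis one level down, which only ever allows the single last prime of a $(k-1)$-term list to divide the radicand. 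The subcase $\alpha=0$, $p_{k-1}\nmid d$ has the same defect (you land on $\sqrt{d\,p_{k-1}}\in F$). The standard repair is to strengthen further, e.g.\ prove by induction on $m$ that $\sqrt{d}\notin\Q(\sqrt{p_1},\dots,\sqrt{p_m})$ for every square-free $d>1$ having at least one prime factor outside $\{p_1,\dots,p_m\}$ (equivalently, for every square-free $d>1$ coprime to $p_1\cdots p_m$); with that statement all four subcases of the $\alpha\beta=0$ dichotomy reduce correctly, and the rest of your argument goes through. Alternatively, simply run the Kummer-theory route and drop the induction.
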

A simultaneous consideration of the fractional parts of such square roots give rise to the following uniquely ergodic dynamical system.
\begin{lemma}
\label{LemmaTranslationErgodicity}
Let $\alpha_1, ..., \alpha_J \in \R$ be linearly independent over $\Q$. Then the translation action 
\begin{align*} 
 v + \Z^J \longmapsto (v + \alpha) + \Z^J \, , \quad \alpha = (\alpha_1, ..., \alpha_J) \in \R^J
\end{align*}
on the torus $\R^J / \Z^J$ is uniquely ergodic with the Lebesgue measure as the invariant measure. In particular, every orbit of the action is dense. 
\end{lemma}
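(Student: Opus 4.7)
The plan is to prove this classical Kronecker--Weyl equidistribution statement by the standard Fourier-analytic method, verifying unique ergodicity via uniform convergence of Birkhoff averages of characters, and then obtaining density of every orbit as an immediate corollary.

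First I would reduce the problem. By a Stone--Weierstrass argument, the trigonometric polynomials are uniformly dense in $C(\R^J/\Z^J)$, so to show that the Haar (Lebesgue) probability measure $m$ is the \emph{unique} translation-invariant probability measure it suffices to prove that for every character $\chi_k(v) = \e^{2\pi \iu \langle k, v \rangle}$, $k \in \Z^J$, the Birkhoff averages
\begin{align*}
A_N \chi_k (v) = \frac{1}{N} \sum_{n = 0}^{N-1} \chi_k(v + n \alpha)
\end{align*}
converge \emph{uniformly} in $v$ to $\int \chi_k \, dm$ as $N \to +\infty$. Indeed, uniform convergence of Birkhoff averages of continuous functions to constants equal to a given invariant measure's integrals is equivalent to unique ergodicity (and forces the limiting measure to be the Haar measure, since Haar is invariant).

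For $k = 0$ both sides are $1$, so it remains to treat $k \neq 0$. Here I would simply sum the geometric series, writing
\begin{align*}
A_N \chi_k(v) = \chi_k(v) \cdot \frac{1}{N} \sum_{n = 0}^{N-1} \e^{2\pi \iu n \langle k, \alpha \rangle} = \chi_k(v) \cdot \frac{1}{N} \cdot \frac{\e^{2\pi \iu N \langle k, \alpha \rangle} - 1}{\e^{2\pi \iu \langle k, \alpha \rangle} - 1},
\end{align*}
provided the denominator does not vanish, i.e.\ $\langle k, \alpha \rangle \notin \Z$. This is exactly where the rational independence hypothesis on $\alpha_1, \dots, \alpha_J$ (understood together with $1$, as is needed and as follows from Besicovitch's Lemma \ref{LemmaBesicovitch} after adjoining the square-free integer $1$) enters: for every $k \in \Z^J \setminus \{0\}$ one has $\langle k, \alpha \rangle \notin \Z$, so the denominator is a fixed nonzero constant depending only on $k$. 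Taking absolute values gives $|A_N \chi_k(v)| \leq \frac{2}{N |\e^{2\pi \iu \langle k, \alpha \rangle} - 1|}$ uniformly in $v$, which tends to $0 = \int \chi_k \, dm$. This establishes uniform convergence for every character, hence unique ergodicity with $m$ as the unique invariant probability measure.

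For the final density assertion, I would invoke the standard fact that on a compact metric space a uniquely ergodic homeomorphism whose invariant measure has full support must have every orbit dense. Concretely, if some orbit of $v_0$ were not dense, its closure $C$ would be a proper closed invariant subset, and normalised restriction of any weak-$*$ limit of empirical measures $\frac{1}{N}\sum_{n=0}^{N-1} \delta_{v_0 + n\alpha}$ would yield an invariant probability measure supported in $C$, contradicting unique ergodicity of Haar (which has full support). The only real content is the Fourier computation above; everything else is formal, and no step poses a serious obstacle beyond ensuring the rational-independence assumption is used in the form that guarantees $\langle k, \alpha \rangle \notin \Z$ for all nonzero $k \in \Z^J$.
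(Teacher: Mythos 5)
Your argument is correct and takes a genuinely different route from the paper's. The paper works directly with an arbitrary invariant probability measure $\theta$: invariance gives $\hat{\theta}(\gamma) = \e^{-2\pi \iu \langle \gamma, \alpha\rangle}\hat{\theta}(\gamma)$ for all $\gamma \in \Z^J$, the non-resonance condition $\langle \gamma, \alpha\rangle \notin \Z$ for $\gamma \neq 0$ kills every nonzero Fourier coefficient, and injectivity of the Fourier transform identifies $\theta$ with Lebesgue measure; the density of orbits is then only an unproved ``in particular''. You instead run the classical Weyl argument: the geometric-series bound gives uniform convergence of Birkhoff averages of characters, the standard criterion (convergence on a dense subalgebra of $C(\R^J/\Z^J)$ to constants) upgrades this to unique ergodicity, and you then deduce minimality from unique ergodicity plus full support of Lebesgue measure. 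Your route is longer but buys more: it gives uniform equidistribution of every orbit with an explicit rate per character, and it actually supplies the density argument the paper leaves implicit (indeed, from your uniform convergence you could get density even more directly by testing against a continuous bump function supported in a given open set). Both proofs pivot on exactly the same arithmetic input, namely $\langle k, \alpha\rangle \notin \Z$ for all nonzero $k \in \Z^J$.

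On that pivot you are in fact more careful than the paper: $\Q$-linear independence of $\alpha_1, \dots, \alpha_J$ alone does not exclude $\langle k, \alpha\rangle$ being a \emph{nonzero} integer (take $J = 1$, $\alpha_1 = 1$), so what is really needed is independence of the family $1, \alpha_1, \dots, \alpha_J$, as you note; the paper's phrase ``contradicting the assumed $\Q$-linear independence'' silently uses this stronger hypothesis. One caveat about your parenthetical fix via Besicovitch: in the application (Corollary \ref{CorollarySequenceOfRootsConvergingtoZero}) the vector $\alpha = (\sqrt{\ell_1}, \dots, \sqrt{\ell_J})$ has $\sqrt{1} = 1$ among its coordinates, so adjoining $1$ does not make $1, \alpha_1, \dots, \alpha_J$ independent there. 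The corollary survives because the translation times are integers, so the coordinate equal to $1$ sits at $0$ automatically; equivalently one applies the lemma only to the coordinates with $\ell_j \neq 1$, where Besicovitch does give independence together with $1$. So your proof should simply be read with the hypothesis strengthened in the way you indicate.
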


\begin{proof}
Suppose that $\theta$ is an invariant probability measure on $\R^J / \Z^J$ with respect to the prescribed action. It suffices to show that $\theta$ is the Lebesgue measure. By invariance, the Fourier transform $\hat{\theta} : \Z^J \rightarrow \C$ of $\theta$ satisfies
\begin{align*}
\hat{\theta}(\gamma) = \int_{\R^J / \Z^J} \e^{-2\pi i \langle \gamma, x \rangle} d\theta(x) = \int_{\R^J / \Z^J} \e^{-2\pi i \langle \gamma, x + \alpha \rangle} d\theta(x) = \e^{- 2\pi i \langle \gamma, \alpha \rangle} \hat{\theta}(\gamma)
\end{align*}
for all $\gamma \in \Z^J$. This forces $\hat{\theta}(\gamma) = 0$ for all $\gamma \neq 0$, since otherwise there would be a non-zero $\gamma$ for which $\langle \gamma, \alpha \rangle \in \Z$, contradicting the assumed $\Q$-linear independence of $\alpha_1, ..., \alpha_J$. Thus $\hat{\theta}$ is the unit mass on $0 \in \Z^J$ and hence $\theta$ is the Lebesgue measure on $\R^J / \Z^J$ by injectivity of the Fourier transform. 
\end{proof}

\begin{corollary}
\label{CorollarySequenceOfRootsConvergingtoZero}
Let $N \in \N$. Then there is a positive sequence $R_k \rightarrow +\infty$ such that 
\begin{align*}
R_k.(1, \sqrt{2}, ..., \sqrt{N}) + \Z^N \longrightarrow 0 + \Z^N
\end{align*}
as $k \rightarrow +\infty$ in $ \R^N / \Z^N $. 
\end{corollary}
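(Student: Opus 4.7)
The plan is to reduce the statement to density of an orbit for an irrational torus translation, via the square-free factorization of integers. For each $\ell \in \{1, \ldots, N\}$, write $\ell = m_\ell^2 s_\ell$ with $s_\ell$ the positive square-free part of $\ell$ and $m_\ell \in \Z_{\geq 1}$, so that $\sqrt{\ell} = m_\ell \sqrt{s_\ell}$. The numbers $\sqrt{1}, \ldots, \sqrt{N}$ themselves are in general not linearly independent over $\Q$ — for instance $\sqrt{8} = 2\sqrt{2}$ — but all rational linear relations among them are induced by coincidences of square-free parts, so one can work with a reduced list.

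Enumerate the distinct elements of $\{s_\ell : 1 \le \ell \le N, \, s_\ell > 1\}$ as $t_1, \ldots, t_J$; these are distinct positive square-free integers strictly greater than $1$. Since $1$ is itself square-free, Lemma \ref{LemmaBesicovitch} applied to the list $1, t_1, \ldots, t_J$ yields that $1, \sqrt{t_1}, \ldots, \sqrt{t_J}$ are linearly independent over $\Q$. Lemma \ref{LemmaTranslationErgodicity} then applies to $\alpha = (\sqrt{t_1}, \ldots, \sqrt{t_J}) \in \R^J$: the $\Z$-action by translation by $\alpha$ on $\R^J/\Z^J$ is uniquely ergodic, so every orbit is dense. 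In particular there is a sequence of positive integers $n_k \to +\infty$ with $n_k \alpha + \Z^J \to 0 + \Z^J$ in $\R^J / \Z^J$.

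Setting $R_k = n_k$, it remains to verify the convergence coordinate by coordinate. The first coordinate $R_k \cdot 1 = n_k$ is an integer, hence already vanishes modulo $\Z$. For $\ell \in \{2, \ldots, N\}$ one has $R_k \sqrt{\ell} = n_k m_\ell \sqrt{s_\ell}$: if $s_\ell = 1$ this is an integer, while if $s_\ell = t_j$ for some $j$ then $R_k \sqrt{\ell} = m_\ell (n_k \sqrt{t_j})$ is a fixed integer $m_\ell$ times a quantity whose class in $\R/\Z$ tends to $0$, so its own class in $\R/\Z$ tends to $0$ by continuity of multiplication by $m_\ell$ on $\R/\Z$. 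Combining across coordinates yields $R_k(1, \sqrt{2}, \ldots, \sqrt{N}) + \Z^N \to 0 + \Z^N$. The only real subtlety is the initial square-free reduction, which is needed because $\sqrt{1}, \ldots, \sqrt{N}$ generally fail to be $\Q$-linearly independent; once this is observed, the conclusion is an immediate application of the two lemmas already established.
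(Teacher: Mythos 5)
Your proof is correct and takes essentially the same route as the paper: write each $\sqrt{\ell}$ as an integer multiple of the square root of its square-free part, invoke Lemma \ref{LemmaBesicovitch} for $\Q$-linear independence, get a sequence of positive integers $R_k$ with $R_k\alpha \to 0$ in the torus via Lemma \ref{LemmaTranslationErgodicity}, and pass to all coordinates using integrality of $R_k$ and continuity of multiplication by a fixed integer on $\R/\Z$. The only real deviation — excluding the square-free part $1$ from the translation vector and treating perfect-square indices separately — is if anything slightly cleaner, since the paper's own partition forces $\sqrt{1}=1$ to appear among the $\sqrt{\ell_j}$, a translation vector for which the unique ergodicity asserted in Lemma \ref{LemmaTranslationErgodicity} does not literally hold (the needed density of the orbit of $0$ of course still does).
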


\begin{proof}
Take distinct positive square-free integers $\ell_1, ..., \ell_J$ and finite subsets $A_1, ..., A_J \subset \N$ such that 
\begin{align}
\label{SqrtPartition}
\big\{ 1, \sqrt{2}, ..., \sqrt{N}  \big\} = \bigsqcup_{j = 1}^J \big\{ k \sqrt{\ell_j} : k \in A_j \big\} \, . 
\end{align}
By Lemma \ref{LemmaBesicovitch}, $\sqrt{\ell_1}, ..., \sqrt{\ell_J}$ are linearly independent over $\Q$, and by Lemma \ref{LemmaTranslationErgodicity} the action of translation by $\alpha = (\sqrt{\ell_1}, ..., \sqrt{\ell_J})$ on the torus $\R^J/\Z^J$ has dense orbits. Thus we may take a sequence $R_k \rightarrow +\infty$ such that $R_k.\alpha + \Z^J \rightarrow 0 + \Z^J$ in $\R^J/\Z^J$. It follows by the decomposition in Equation \eqref{SqrtPartition} that $R_k \sqrt{\ell} + \Z^N \rightarrow 0 + \Z^N$ for all $\ell = 1, ..., N$. 
\end{proof}

\subsection{Proof of Equation \ref{EqVanishingoftheZ5NumberVariance}}
\label{Proof of Theorem Theorem1B}

From Equation \eqref{EqIntegerLatticeNumberVariance} for the number variance of $\mu_{\Z^n}$, we have  
\begin{align*}
\frac{\NV_{\mu_{\Z^n}}(R)}{R^{n-1}} = (2\pi)^n  R  \sum_{\ell = 1}^{\infty} \frac{r_n(\ell)}{\ell^{n/2}} J_{\frac{n}{2}}(2 \pi R\sqrt{\ell})^2 \, . 
\end{align*}
This is uniformly bounded in $R \geq 0$ by Corollary \ref{CorollaryLatticeNumberVarianceUpperBound}, so we fix $\varepsilon > 0$ and take $N \geq 1$ to be an integer such that
\begin{align*}
R  \sum_{\ell = N + 1}^{\infty} \frac{r_n(\ell)}{\ell^{n/2}} J_{\frac{n}{2}}(2 \pi R\sqrt{\ell})^2 <  \varepsilon  \, , \quad \forall \, R \geq 0\, . 
\end{align*}
It remains to find a suitable sequence of radii such that the first $N$ terms of the series tend to $0$. To do this we make use of the approximation $M_{\frac{n}{2}}(t) = (\frac{2}{\pi t})^{1/2} \cos(t - \phi_{n/2})$ of $J_{n/2}(t)$ in Lemma \ref{LemmaBesselAsymptoticExpansion} along with the bound in Equation \eqref{EqBesselSquareError} for $R \geq 1$ to find that 
\begin{align*}
R\sum_{\ell = 1}^{N} \frac{r_n(\ell)}{\ell^{\frac{n}{2}}} J_{\frac{n}{2}}(2\pi R \sqrt{\ell})^2 \leq R &\sum_{\ell = 1}^{N} \frac{r_n(\ell)}{\ell^{\frac{n}{2}}} M_{\frac{n}{2}}(2 \pi R \sqrt{\ell})^2 \\
&+ R \sum_{\ell = 1}^{N} \frac{r_n(\ell)}{\ell^{\frac{n}{2}}} |J_{\frac{n}{2}}(2 \pi R \sqrt{\ell})^2 - M_{\frac{n}{2}}(2 \pi R\sqrt{\ell})^2| \\
\leq \frac{1}{\pi^2}&\sum_{\ell = 1}^{N} \frac{r_n(\ell)}{\ell^{\frac{n + 1}{2}}} \cos^2(2 \pi R\sqrt{\ell} - \phi_{\frac{n}{2}}) + \frac{2 \kappa_{\frac{n}{2}}}{R^{\frac{1}{2}}} \sum_{\ell = 1}^{N} \frac{r_n(\ell)}{\ell^{\frac{n}{2}}} \, ,  
\end{align*}
where again $\phi_{n/2} = \frac{n+1}{4}\pi$. Fix $\varepsilon > 0$ and take $R_o \geq 1$ such that 
\begin{align*}
\frac{2 \kappa_{\frac{n}{2}}}{R^{\frac{1}{2}}} \sum_{\ell = 1}^{N} \frac{r_n(\ell)}{\ell^{\frac{n}{2}}} < \varepsilon \quad \quad \forall \, R \geq R_o \, . 
\end{align*}
Then
\begin{align*}
\frac{\NV_{\mu_{\Z^n}}(R)}{R^{n-1}} &=  (2\pi)^n \Big( R \sum_{\ell = 1}^{N} \frac{r_n(\ell)}{\ell^{\frac{n}{2}}} J_{\frac{n}{2}}(2 \pi R \sqrt{\ell})^2 + R \sum_{\ell = N + 1}^{\infty} \frac{r_n(\ell)}{\ell^{\frac{n}{2}}} J_{\frac{n}{2}}(2 \pi R \sqrt{\ell})^2 \Big) \\
&< \frac{(2\pi)^n}{\pi^2}\sum_{\ell = 1}^{N} \frac{r_n(\ell)}{\ell^{\frac{n + 1}{2}}} \cos^2(2 \pi R\sqrt{\ell} - \phi_{\frac{n}{2}}) + 2 (2\pi)^n \varepsilon \quad \quad \forall \, R \geq R_o \, . 
\end{align*}
For the case $n = 5$ we get 
\begin{align*}
\frac{\Var_{\mu_{\Z^5}}(S\chi_{B_R})}{R^4} < 32 \pi^3 \sum_{\ell = 1}^{N} \frac{r_5(\ell)}{\ell^{3}} \sin^2(2 \pi R\sqrt{\ell}) + 64 \pi^5 \varepsilon \quad \quad \forall \, R \geq R_o \, .
\end{align*}
By Corollary \ref{CorollarySequenceOfRootsConvergingtoZero} there is an sequence $(R_k)_{k \geq 1}$ with $R_k \rightarrow +\infty$ as $k \rightarrow +\infty$ and such that $R_k \sqrt{\ell} \rightarrow 0$ in $\R^N/\Z^N$ for every $1 \leq \ell \leq N$, hence $\sin^2(R_k \sqrt{\ell}) \rightarrow 0$ for every $1 \leq \ell \leq N$. This means that
\begin{align*}
\limsup_{k \rightarrow +\infty} \frac{\NV_{\mu_{\Z^5}}(R_k)}{R_k^4}\leq 64 \pi^5 \varepsilon \, ,  
\end{align*}
and since $\varepsilon > 0$ was chosen arbitrarily we are done. 

\begin{remark}
The key feature of dimension $n = 5$ is that $\phi_{5/2} = \frac{3\pi}{2}$ is an odd multiple of $\frac{\pi}{2}$, so that $\cos^2(t - \phi_{5/2}) = \sin^2(t)$ for all $t \geq 0$. Thus the same proof works for dimensions $n = 4m + 1$, $m \geq 0$.  
\end{remark}


\subsection{Isotropic random measures on $\R^n$}
\label{Isotropic random measures on Rn}

Given a random measure $\mu$ on $\R^n$, one can require invariance under other subgroups of the Euclidean isometries rather than just translations. The special Euclidean group $G_n = K_n \ltimes \R^n$ with $K_n = \SO(n)$ is the group of rotations and translations, and contains the subgroup of translations considered in the previous subsections. Random measures that are invariant under $G_n$ are commonly referred to as \emph{isotropic}. 

Given a translation invariant random measure $\mu$ on $\R^n$, one can construct an isotropic random measure $\mu^{\mathrm{iso}}$ as the push-forward of $\mu$ along the $K_n$-averaging map $\Av_{K_n} : \Radonplus(\R^n) \rightarrow \Radonplus(\R^n)^{K_n}$ defined by
$$ \Av_{K_n}(p)(B) = \int_{K_n} p(k.B) dm_{K_n}(k) $$
for every bounded Borel set $B \subset \R^n$, where $m_{K_n}$ is the Haar probability measure on $K_n$. Note however that this specific procedure does not produce isotropic point processes from translation invariant point processes.

If $\mu$ is an isotropic and locally square-integrable random measure on $\R^n$, then the autocorrelation measure $\eta_{\mu}$ is a bi-$K_n$-invariant measure on $G_n$ which canonically descends to a radial measure on $\R^n$. Thus it is uniquely determined by its restriction to the subalgebra $\Borelbndinfty(\R^n)^{\rad} \subset \Borelbndinfty(\R^n)$ of radial functions. On this subalgebra the Fourier transform restricts to what we call the \emph{Hankel transform}. Given $f \in \Borelbndinfty(\R^n)^{\rad}$, its Hankel transform is the function $\tilde{f} : \R_{\geq 0} \rightarrow \C$ given by
\begin{align*}
\tilde{f}(\lambda) = \int_{\R^n} f(x) \omega_{\lambda}^{(n)}(x) dx \, , 
\end{align*}
where $\omega_{\lambda}^{(n)}$ are the $\SO(n)$-spherical functions from Lemma \ref{LemmaFTofIndicatorFunction},
\begin{align*}
\omega^{(n)}_{\lambda}(x) &= \frac{2^{\frac{n - 2}{2}} \Gamma(\frac{n}{2})}{(\lambda \norm{x})^{\frac{n-2}{2}}} J_{\frac{n - 2}{2}}(\lambda \norm{x}) \, . 
\end{align*}
By construction, $\hat{f}(\xi) = \tilde{f}(\norm{\xi})$ for all $\xi \in \R^n$ and dualizing this to measures, we are able to define the Hankel transform of a positive-definite Radon measure on $\R^n$. 
\begin{definition}
Let $\mu$ be an isotropic locally square-integrable random measure on $\R^n$. The \emph{powder diffraction measure} of $\mu$ is the Hankel transform $\tilde{\eta}_{\mu} \in \Radonplus(\R_{\geq 0})$ of the autocorrelation measure $\eta_{\mu}$. 
\end{definition} 
By the existence and uniqueness of the diffraction measure in Theorem \ref{ThmBochnerSchwartz}, $\tilde{\eta}_{\mu}$ exists and is unique. In particular,
\begin{align*}
\Var_{\mu}(Sf) = \frac{2 \pi^{\frac{n}{2}}}{\Gamma(\frac{n}{2})}\int_0^{\infty} |\tilde{f}(\lambda)|^2 d\tilde{\eta}_{\mu}(\lambda)  
\end{align*}
for all $f \in \Borelbndinfty(\R^n)^{\rad}$. With this notation, the lower bound in Theorem \ref{ThmBeck} is
\begin{align*}
\frac{1}{R} \int_0^R \NVmu(r) dr \geq C \Big( \int_{R_o^{-1}}^{\infty} \frac{d\tilde{\eta}_{\mu}(\lambda)}{\lambda^{n+1}} \Big) R^{n-1} \, , \quad \forall \, R \geq R_o 
\end{align*}
for $R_o > 0$ and some $C = C(n, R_o) > 0$. If $\mu$ is in addition stealthy, then the limit formula in Corollary \ref{CorollaryEuclideanStealthyVarianceAverage} becomes
\begin{align*}
\lim_{R \rightarrow +\infty} \frac{1}{R} \int_0^R \frac{\NVmu(r)}{r^{n-1}} dr = \frac{2^{n+1} \pi^{\frac{3n}{2} - 1}}{\Gamma(\frac{n}{2})} \int_{0}^{\infty} \frac{d\tilde{\eta}_{\mu}(\lambda)}{\lambda^{n+1}} \, .
\end{align*}

\section{Spherical harmonic analysis on real hyperbolic spaces}
\label{Spherical harmonic analysis on real hyperbolic spaces}

We introduce real hyperbolic spaces, Cartan coordinates and invariant reference measures in Subsection \ref{Cartan decomposition and invariant reference measures} and the spherical functions in Subsection \ref{Spherical functions on H^n}. We give a careful treatment of the spherical functions, providing a trigonometric expansion in Subsection \ref{A trigonometric expansion of spherical functions} and as a consequence some bounds and an asymptotic mean in Subsection \ref{Bounds and asymptotics of spherical functions}, which are crucial ingredients in the proof of Theorem \ref{Theorem2}. Lastly, we survey some properties of the spherical transform in Subsection \ref{The spherical transform on H^n}.

\subsection{Cartan decomposition and invariant reference measures}
\label{Cartan decomposition and invariant reference measures}

Consider the bilinear form $[x, y] = x_0y_0 - x_1y_1 - ... - x_{n}y_n$ for $x, y \in \R^{1 + n}$ and the one-sheeted paraboloid $\H^n = \{ x \in \R^{1 + n} : [x, x] = 1 , \, x_0 > 0 \}$ with the hyperbolic metric $d(x, y) = \arccosh([x, y])$. On $\R^{1 + n}$, the group $G_n = \SO^{\circ}(1, n)$ of orientation preserving linear maps preserving $[\cdot, \cdot]$ restricts to a transitive isometric action on $(\H^n, d)$ and fixing the reference point $o = (1, 0, ..., 0) \in \H^n$, its stabilizer is 
$$ K_n = \Big\{ \left(\begin{array}{@{}c|c@{}}
  1 & 0\\
\hline
 0 &  k
\end{array}\right) : k \in \SO(n) \Big\} < G_n \, .  $$
With this we obtain a proper pointed metric space $(\H^n, d, o)$ as in Subsection \ref{Homogeneous metric spaces}. 

\subsubsection{Cartan decomposition}

A polar decomposition for $\H^n$ is available via the length map $\ell : G_n \rightarrow \R_{\geq 0}$, $\ell(g) = d(g.o, o)$, which is a bi-$K_n$-invariant map and descends to a bijection $K_n \backslash G_n / K_n \rightarrow \R_{\geq 0}$. If we consider the one parameter subgroup 
\begin{align*}
A_n = \Big\{ a_t = \left(\begin{array}{@{}c|c|c@{}}
  \cosh(t) & 0 & \sinh(t) \\
\hline
0  &  \Id_{\R^{n-1}} & 0 \\
\hline 
 \sinh(t) & 0 & \cosh(t) 
\end{array}\right) : t \in \R \Big\} < G_n
\end{align*}
then $\ell(a_t) = |t|$, and we obtain the \emph{Cartan decomposition} $G_n = K_n A_n^+ K_n$, where $A_n^+ = \{a_t : t \geq 0\}$. From this decomposition we get for every $R > 0$ an open subset 
$$B_R = \ell^{-1}([0, R)) = \{ k_1 a_t k_2 \in G_n : t \in [0, R) , k_1, k_2 \in K_n \} \, , $$
which we refer to as the centered open Cartan ball of radius $R > 0$. Moreover, we note that the orbit of the Cartan ball $B_R \subset G_n$ in $\H^n$ coincides with the open metric ball, $B_R.o = B_{R}(o)$. We will interchangeably use the notation $B_R$ for the centered Cartan ball in $G_n$ and the metric ball $B_R(o)$ in $\H^n$.

\subsubsection{Continuous sections}

Note that if $g \in G_n$ with Cartan decomposition $g = k_1 a_t k_2$ then 
$$g.o = k_1 a_t.o = (\cosh(t), \sinh(t)u)$$
for some $u \in \bS^{n-1} \subset \R^n$. The sphere $\bS^{n-1}$ is a homogeneous space for $K_n$, and a standard argument using the Gram-Schmidt orthogonalization process produces a continuous section $\kappa : \bS^{n-1} \rightarrow K_n$, so $\varsigma(\cosh(t), \sinh(t)u) = \kappa(u) a_t$ is a continuous section from $\H^n$ into $G_n$.  

\subsubsection{Invariant reference measures}

If we denote by $m_{K_n}$ the unique Haar probability measure on $K_n$ then a Haar measure on $G_n$ written in Cartan coordinates is
\begin{align*}
    m_{G_n}(\varphi) = \int_{K_n} \int_0^{\infty} \int_{K_n} \varphi(k_1 a_t k_2) \, dm_{K_n}(k_1)  \sinh(t)^{n-1} dt \, dm_{K_n}(k_2) \, , \quad \varphi \in \Borelbndinfty(G_n) \, , 
\end{align*}
and a $G_n$-invariant reference measure on $\H^n$ is
\begin{align*}
m_{\H^n}(f) = \int_0^{\infty} \int_{\bS^{n-1}} f(\cosh(t), \sinh(t)u) \, d\sigma_{n - 1}(u) \sinh(t)^{n-1} dt \, , \quad f \in \Borelbndinfty(\H^n) \, ,
\end{align*}
where $\sigma_{n - 1}$ is the canonical surface measure on $\bS^{n-1}$. The relation between the two measures is  
\begin{align*}
m_{\H^n}(f) = \frac{2 \pi^{\frac{n}{2}}}{\Gamma(\frac{n}{2})} \int_{G_n} f(g.o) dm_{G_n}(g)  \, . 
\end{align*}
In particular, the volume growth of centered Cartan balls in $\H^n$ is 
\begin{align}
\label{EqHyperbolicVolumeGrowth}
m_{\H^n}(B_R.o) = \frac{2 \pi^{\frac{n}{2}}}{\Gamma(\frac{n}{2})}  \int_0^R \sinh(t)^{n-1} dt \asymp_n \cosh(R)^{n-1}  
\end{align}
for all sufficiently large $R > 0$.

\subsubsection{The algebra of bi-$K_n$-invariant functions}

As in the case of isotropic random measures on Euclidean spaces we will restrict our attention to linear statistics and autocorrelation measures on the subspace of radial functions in $\Borelbndinfty(\H^n)$, which we identify with the space $\Borelbndinfty(G_n, K_n)$ of bi-$K_n$-invariant functions using the section $\varsigma$. A key feature in defining the spherical functions and the spherical transform is that the latter space defines a commutative algebra with respect to convolution.
\begin{lemma}
\label{LemmaGelfandsTrick}
Let $\eta, \nu$ be bi-$K_n$-invariant Radon measures on $G_n$ and assume that $\nu$ is finite. Then
$$ \eta * \nu = \nu * \eta \, .  $$
In particular, the subalgebra $\Borelbndinfty(G_n, K_n) \subset \Borelbndinfty(G_n)$ is a commutative involutive subalgebra.
\end{lemma}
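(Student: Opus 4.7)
The plan is to apply the classical ``Gelfand trick'' for the symmetric pair $(G_n, K_n)$. The key geometric fact to establish is that inversion preserves Cartan double cosets: $g^{-1} \in K_n g K_n$ for every $g \in G_n$. Everything else is a formal consequence.

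For the key fact I would use the Cartan decomposition from Subsection \ref{Cartan decomposition and invariant reference measures}: writing $g = k_1 a_t k_2$ yields $g^{-1} = k_2^{-1} a_{-t} k_1^{-1}$, so it suffices to produce an element $w \in K_n$ satisfying $w a_t w^{-1} = a_{-t}$ for every $t \in \R$. For $n \geq 2$ the element $w = 1 \oplus w_0 \in K_n$ works, where $w_0 \in \SO(n)$ acts as $-\mathrm{Id}$ on $\mathrm{span}(e_{n-1}, e_n)$ and trivially on the remaining coordinates; this $w_0$ has determinant $+1$ so indeed $w \in K_n$, and a direct check against the explicit matrix form for $a_t$ confirms $w a_t w^{-1} = a_{-t}$. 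For $n = 1$ the group $G_1$ is one-dimensional and abelian, so the lemma is trivial.

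Granted the key fact, let $\check{\eta}$ denote the push-forward of $\eta$ under the inversion map $\mathrm{inv}(g) = g^{-1}$. A short averaging argument shows that bi-$K_n$-invariant measures are symmetric under inversion: for any bounded Borel $f$ of compact support, writing $\bar{f}$ for the bi-$K_n$-average of $f$, bi-$K_n$-invariance of $\eta$ gives $\eta(f) = \eta(\bar f)$, while $\check{\eta}(f) = \eta(\bar f \circ \mathrm{inv})$; but $\bar f$ is bi-$K_n$-invariant, hence by the key fact $\bar f(g^{-1}) = \bar f(g)$, yielding $\check{\eta} = \eta$ as measures, and similarly $\check{\nu} = \nu$. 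A direct computation from the definition of convolution yields the general identity
\begin{align*}
\mathrm{inv}_*(\eta * \nu) = \check{\nu} * \check{\eta},
\end{align*}
valid whenever the convolution is defined. Since $\eta * \nu$ is itself bi-$K_n$-invariant (as a convolution of bi-$K_n$-invariant measures), the previous paragraph applied to $\eta * \nu$ in place of $\eta$ gives $\mathrm{inv}_*(\eta * \nu) = \eta * \nu$, and combining with the identity above produces
\begin{align*}
\eta * \nu = \mathrm{inv}_*(\eta * \nu) = \check{\nu} * \check{\eta} = \nu * \eta.
\end{align*}
The final statement about $\Borelbndinfty(G_n, K_n)$ being a commutative involutive subalgebra follows by specializing to $\eta = \varphi_1 \, dm_{G_n}$ and $\nu = \varphi_2 \, dm_{G_n}$ with $\varphi_i \in \Borelbndinfty(G_n, K_n)$ of bounded support.

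The main obstacle is really just pinning down the Weyl-type element $w$ carefully in the two distinct cases $n=1$ and $n \geq 2$; once it is exhibited, the rest is routine manipulation of push-forwards and convolutions. The finiteness hypothesis on $\nu$ enters only to ensure the convolutions $\eta * \nu$ and $\nu * \eta$ are well-defined as Radon measures given that $\eta$ is merely locally finite.
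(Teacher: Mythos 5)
Your proof is correct. The paper itself does not give an argument for this lemma---it is recorded as the classical Gelfand-pair property of $(G_n,K_n)$---and what you have written is exactly the standard Gelfand trick, with the details checking out. The Weyl-type element $w = 1 \oplus w_0$, with $w_0$ acting as $-\Id$ on $\mathrm{span}(e_{n-1},e_n)$, does satisfy $w a_t w^{-1} = a_{-t}$ against the explicit matrix form of $a_t$ in Subsection \ref{Cartan decomposition and invariant reference measures} (conjugation only flips the sign of the $x_n$-entries in the $(x_0,x_n)$-block), so $g^{-1} \in K_n g K_n$ for $n \geq 2$; and your separate treatment of $n=1$ is genuinely needed, since there $g^{-1} \notin K_1 g K_1$ but $G_1$ is abelian. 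The remaining steps ($\check{\eta} = \eta$ via bi-$K_n$-averaging together with $\bar f \circ \mathrm{inv} = \bar f$ for bi-$K_n$-invariant $\bar f$, the identity $\mathrm{inv}_*(\eta * \nu) = \check{\nu} * \check{\eta}$, and bi-$K_n$-invariance of $\eta * \nu$) are all valid, with Tonelli justifying the interchanges for positive measures; for the signed autocorrelation measures that the paper later feeds into this lemma one reduces to the Jordan decomposition, whose parts are again bi-$K_n$-invariant. One small quibble: finiteness of $\nu$ does not by itself make $\eta * \nu$ locally finite (the defining double integral can be $+\infty$ on compact sets if $\eta$ grows fast), but since the paper defines convolution by that explicit formula, your identity holds as stated for the resulting $[0,+\infty]$-valued measures, and in the paper's applications the convolutions are in fact Radon.
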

With this property, we say that $(G_n, K_n)$ is a \emph{Gelfand pair}.

\subsection{Spherical functions on $\H^n$}
\label{Spherical functions on H^n}

A \emph{complex hyperbolic wave} will for us be a function $\xi_{\lambda, u} : \H^n \rightarrow \C$ of the form $\xi_{\lambda, u}(x) = [x, (1, u)]^{-i\lambda - \frac{n-1}{2}}$, where $u \in \bS^{n-1}$ and $\lambda \in \C$. A \emph{spherical function} for hyperbolic space $\H^n$ will, similarly to the Bessel functions appearing in the Euclidean setting, be a function $\omega_{\lambda} : G_n \rightarrow \C$ given by
\begin{align*}
\omega_{\lambda}(g) = \int_{K_n} \xi_{\lambda, u}(k^{-1}g.o) dm_{K_n}(k) = \frac{\Gamma(\frac{n}{2})}{2\pi^{\frac{n}{2}}} \int_{\bS^{n-1}} [g.o, (1, v)]^{-i\lambda - \frac{n-1}{2}} d\sigma_{n-1}(v)   
\end{align*}
for any choice of $u \in \bS^{n-1}$. To emphasize the dimension $n$, we write $\omega_{\lambda}^{(n)}$ in place of $\omega_{\lambda}$. These functions are continuous and bi-$K_n$-invariant, and can be rewritten as an integral over $\R$ by a formula that we state next. For the statement, we remind ourselves of the Beta function
\begin{align*}
\rB(z, w) = \frac{\Gamma(z)\Gamma(w)}{\Gamma(z + w)} = \int_0^1 s^{z - 1} (1 - s)^{w - 1} ds \, , \quad z, w \in \C \backslash\Z_{\leq 0} \, . 
\end{align*}

\begin{lemma}
\label{LemmaSphericalFunctionIntegralFormula}
The spherical function $\omega_{\lambda}^{(n)}$, $\lambda \in \C$, can be written as
\begin{align*}
\omega_{\lambda}^{(n)}(g) = a_n \sinh(\ell(g))^{2 - n}\int_0^{\ell(g)} (\cosh(\ell(g)) - \cosh(s))^{\frac{n-3}{2}} \cos(\lambda s) ds \, , \quad  a_n = \frac{2^{\frac{n-1}{2}}}{\rB(\frac{n-1}{2}, \frac{1}{2})} \, . 
\end{align*}
\end{lemma}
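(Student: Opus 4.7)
The plan is to reduce to the Cartan coordinate $g = a_t$ using bi-$K_n$-invariance (both sides depend only on $\ell(g)$), and then compute the defining integral over $\bS^{n-1}$ in two stages: first slicing the sphere by the first coordinate, and then performing a substitution that unfolds the slice variable into a variable on a symmetric interval.

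Concretely, for $g = a_t$, $t = \ell(g) \geq 0$, we have $g.o = (\cosh t, \sinh t, 0, \ldots, 0)$, so $[g.o, (1, v)] = \cosh t - \sinh t \, v_1$ depends on $v \in \bS^{n-1}$ only through $v_1$. Writing $v_1 = \cos\phi$ with $\phi \in [0,\pi]$ and integrating out the remaining $\bS^{n-2}$ factor of total surface $\frac{2\pi^{(n-1)/2}}{\Gamma((n-1)/2)}$ turns the definition of $\omega^{(n)}_\lambda$ into
\begin{equation*}
\omega^{(n)}_\lambda(a_t) = \frac{\Gamma(n/2)}{\sqrt{\pi}\,\Gamma((n-1)/2)} \int_0^{\pi} (\cosh t - \sinh t \cos\phi)^{-i\lambda - (n-1)/2} \sin^{n-2}\phi \,d\phi.
\end{equation*}

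Next I would make the change of variables $e^{s} = \cosh t - \sinh t \cos\phi$, which is smooth and strictly increasing in $\phi$ with $s(0) = -t$ and $s(\pi) = t$. The key algebraic identity, verified by expanding squares, is
\begin{equation*}
\sinh^2 t - (\cosh t - e^s)^2 = 2 e^s (\cosh t - \cosh s),
\end{equation*}
which expresses $\sin\phi$ in terms of $s$. Combining this with $ds = \frac{\sinh t \sin\phi}{\cosh t - \sinh t \cos\phi} d\phi$, a routine computation yields
\begin{equation*}
\sin^{n-2}\phi\, d\phi = \frac{2^{(n-3)/2}\, e^{s(n-1)/2} (\cosh t - \cosh s)^{(n-3)/2}}{\sinh^{n-2} t} \, ds,
\end{equation*}
while $(\cosh t - \sinh t \cos\phi)^{-i\lambda -(n-1)/2} = e^{-i\lambda s} e^{-s(n-1)/2}$. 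The factors $e^{\pm s(n-1)/2}$ cancel, leaving an integral of $e^{-i\lambda s}(\cosh t - \cosh s)^{(n-3)/2}$ against the even weight on $[-t, t]$.

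Finally, the integrand has even absolute part, so $e^{-i\lambda s}$ may be replaced by $\cos(\lambda s)$ after halving the range, giving
\begin{equation*}
\omega^{(n)}_\lambda(a_t) = \frac{\Gamma(n/2)}{\sqrt{\pi}\,\Gamma((n-1)/2)} \cdot \frac{2^{(n-1)/2}}{\sinh(t)^{n-2}} \int_0^t \cos(\lambda s)(\cosh t - \cosh s)^{(n-3)/2} ds.
\end{equation*}
Recognising $\frac{\Gamma(n/2)}{\sqrt{\pi}\,\Gamma((n-1)/2)} = \rB(\tfrac{n-1}{2},\tfrac{1}{2})^{-1}$ packages the constant into $a_n$ as stated. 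No real obstacle is expected; the only bookkeeping step that must be executed carefully is the Jacobian computation together with the identity for $\sinh^2 t - (\cosh t - e^s)^2$, which is what makes the $e^{\pm s(n-1)/2}$ terms conspire to cancel and produce the clean trigonometric kernel.
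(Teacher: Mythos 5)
Your proposal is correct and follows essentially the same route as the paper's proof: reduce to $a_t$ by bi-$K_n$-invariance, pass to the one-dimensional integral over $\phi$ via spherical coordinates, substitute $\e^{s} = \cosh t - \sinh t \cos\phi$, and use the identity $\sinh^2 t - (\cosh t - \e^s)^2 = 2\e^s(\cosh t - \cosh s)$ together with the evenness of the weight to obtain the cosine kernel and the constant $a_n$. The Jacobian bookkeeping and the cancellation of the $\e^{\pm s(n-1)/2}$ factors check out exactly as you describe.
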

\begin{proof} 
From the Cartan decomposition, every $g \in G_n$ can be written as $g = k_1 a_{\ell(g)} k_2$ for some $k_1, k_2 \in K_n$, where $\ell(g) = d(g.o, o)$. Using that spherical functions are bi-$K_n$-invariant and introducing spherical coordinates on $\bS^{n-1}$ yields
\begin{align*}
\omega_{\lambda}^{(n)}(g) &= \omega_{\lambda}^{(n)}(a_{\ell(g)}) =  \frac{\Gamma(\frac{n}{2})}{2\pi^{\frac{n}{2}}}  \int_{\bS^{n-1}} [a_{\ell(g)}.o, (1, v)]^{-i\lambda - \frac{n-1}{2}} d\sigma_{n-1}(v) \\
&= \frac{\Gamma(\frac{n}{2})}{\pi^{\frac{1}{2}} \Gamma(\frac{n-1}{2})}  \int_{0}^{\pi} (\cosh(\ell(g)) - \sinh(\ell(g))\cos(\alpha))^{-i\lambda - \frac{n-1}{2}} \sin(\alpha)^{n-2} d\alpha \, . 
\end{align*}
Making the substitution $\e^{s} = \cosh(\ell(g)) - \sinh(\ell(g))\cos(\alpha)$ we get
\begin{align*}
\omega_{\lambda}^{(n)}(g) &= \frac{\Gamma(\frac{n}{2})}{\pi^{\frac{1}{2}} \Gamma(\frac{n-1}{2}) \sinh(\ell(g))} \int_{-\ell(g)}^{\ell(g)} \e^{-(i\lambda + \frac{n-3}{2})s} \Big( 1 - \Big( \frac{\cosh(\ell(g)) - \e^s}{\sinh(\ell(g))} \Big)^2 \Big)^{\frac{n-3}{2}} ds \\
&= \frac{\Gamma(\frac{n}{2})}{\pi^{\frac{1}{2}} \Gamma(\frac{n-1}{2}) \sinh(\ell(g))^{n-2}} \int_{-\ell(g)}^{\ell(g)} \e^{-i\lambda s} \Big( \e^{-s} \big( \sinh^2(\ell(g)) - (\cosh(\ell(g)) - \e^s )^2 \big) \Big)^{\frac{n-3}{2}} ds \, . 
\end{align*}
Note that
\begin{align*}
\e^{-s} \big( \sinh^2(\ell(g)) - (\cosh(\ell(g)) - \e^s )^2 \big) &= \e^{-s} \big( \sinh^2(\ell(g)) - \cosh^2(\ell(g))   + 2\cosh(\ell(g))\e^s - \e^{2s} \big) \\
&= 2 (\cosh(\ell(g)) - \cosh(s)) \, , 
\end{align*}
so the spherical function is
\begin{align*}
\omega_{\lambda}^{(n)}(g) &= \frac{2^{\frac{n-3}{2}}\Gamma(\frac{n}{2})}{\pi^{\frac{1}{2}} \Gamma(\frac{n-1}{2}) \sinh(\ell(g))^{n-2}} \int_{-\ell(g)}^{\ell(g)} (\cosh(\ell(g)) - \cosh(s))^{\frac{n-3}{2}} \e^{i\lambda s} ds \\
&= \frac{2^{\frac{n-1}{2}}\Gamma(\frac{n}{2})}{\pi^{\frac{1}{2}} \Gamma(\frac{n-1}{2}) \sinh(\ell(g))^{n-2}} \int_{0}^{\ell(g)}  (\cosh(\ell(g)) - \cosh(s))^{\frac{n-3}{2}} \cos(\lambda s) ds \, . 
\end{align*}
\end{proof}
The following result is a recollection of some standard useful properties of the spherical functions.
\begin{lemma}
\label{LemmaPropertiesofSphericalFunctions}
The spherical functions $\omega_{\lambda}^{(n)}$, $\lambda \in \C$, satisfy the following: 
\begin{enumerate}
\item $\omega_{\lambda}^{(n)}(e) = 1$ and $\omega_{-\lambda}^{(n)} = \omega_{\lambda}^{(n)}$ for all $\lambda \in \C$.
\item $\omega_{\lambda}^{(n)}$ is bounded if and only if $|\Im(\lambda)| \leq \frac{n-1}{2}$. Moreover, if $|\Im(\lambda)| < \frac{n-1}{2}$ we have that 
$$|\omega_{\lambda}^{(n)}(g)| <  1 $$
for all $g \neq e$ in $G_n$.
\item If $g \neq e$ and $\lambda \geq 0$ then $|\omega_{\lambda}^{(n)}(g)| \leq \omega_{0}^{(n)}(g)$.
\end{enumerate}
\end{lemma}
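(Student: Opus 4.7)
Properties (1) and (3) follow quickly from the two available representations of $\omega_\lambda^{(n)}$. For (1), the identity $\omega_\lambda^{(n)}(e) = 1$ is immediate from the Helgason formula since $[o,(1,v)] = 1$ for every $v \in \bS^{n-1}$ and $\frac{\Gamma(n/2)}{2\pi^{n/2}} d\sigma_{n-1}$ is a probability measure on $\bS^{n-1}$, while $\omega_{-\lambda}^{(n)} = \omega_\lambda^{(n)}$ is read off the integral formula of the previous lemma because $\cos(\lambda s)$ is even in $\lambda$ for every $s \in \R$ and $\lambda \in \C$. For (3), observe that $[g.o,(1,v)] > 0$: indeed $g.o = (\cosh \ell(g), \sinh \ell(g) \cdot k e_1)$ for some $k \in K_n$, so $[g.o,(1,v)] = \cosh \ell(g) - \sinh \ell(g) \langle k e_1, v\rangle \geq \e^{-\ell(g)}$. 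When $\lambda \in \R$, the real part of the exponent in the Helgason integrand is $-(n-1)/2$, so $|[g.o,(1,v)]^{-i\lambda - (n-1)/2}| = [g.o,(1,v)]^{-(n-1)/2}$, and taking absolute values inside the integral yields exactly $\omega_0^{(n)}(g)$.

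For the boundedness half of (2), the plan is to first establish the pointwise bound $|\omega_\lambda^{(n)}(g)| \leq \omega_{i\nu}^{(n)}(g)$ with $\nu = \Im(\lambda)$. Applying the triangle inequality inside the Helgason integral leaves the integrand $[g.o,(1,v)]^{\nu - (n-1)/2}$, which coincides with the integrand of $\omega_{i\nu}^{(n)}(g)$ since $-i(i\nu) - (n-1)/2 = \nu - (n-1)/2$. The function $F_g(\nu) := \omega_{i\nu}^{(n)}(g) = \frac{\Gamma(n/2)}{2\pi^{n/2}} \int_{\bS^{n-1}} [g.o,(1,v)]^{\nu - (n-1)/2} \, d\sigma_{n-1}(v)$ is log-convex in $\nu$ by H\"older's inequality, and it satisfies $F_g(\pm(n-1)/2) = 1$: at $\nu = (n-1)/2$ the exponent vanishes and the probability normalization yields $1$, while at $\nu = -(n-1)/2$ the evenness from (1) together with $\omega_{i(n-1)/2}^{(n)}(g) = 1$ applies. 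Log-convexity with matching unit endpoint values forces $F_g(\nu) \leq 1$ on $[-(n-1)/2, (n-1)/2]$, giving $|\omega_\lambda^{(n)}(g)| \leq 1$ for $|\Im(\lambda)| \leq (n-1)/2$. For the strict inequality when $|\Im(\lambda)| < (n-1)/2$ and $g \notin K_n$, strict H\"older equality would force $[g.o,(1,v)]$ to be $d\sigma_{n-1}$-almost-everywhere constant in $v$; the Cartan expression above shows this requires $\sinh(\ell(g)) = 0$, i.e.\ $g \in K_n$. Since $\omega_\lambda^{(n)}$ is bi-$K_n$-invariant and equals $1$ on $K_n$ by (1), this is the strongest statement possible.

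The unboundedness direction of (2) requires an asymptotic analysis of the integral formula of the previous lemma as $t = \ell(g) \to \infty$. Writing $\cos(\lambda s) = \frac{1}{2}(\e^{i\lambda s} + \e^{-i\lambda s})$, substituting $s = t - r$, and using the pointwise asymptotics $\sinh(t)^{2-n} \sim (\e^t/2)^{2-n}$ together with $(\cosh t - \cosh(t-r))^{(n-3)/2} \sim (\e^t/2)^{(n-3)/2}(1 - \e^{-r})^{(n-3)/2}$, dominated convergence should give, for $\nu := \Im(\lambda) > 0$, the leading asymptotic $\omega_\lambda^{(n)}(a_t) \sim c_n \, I(\nu - i \Re\lambda) \, \e^{(\nu - (n-1)/2) t}$, where $I(z) = \int_0^\infty (1-\e^{-r})^{(n-3)/2} \e^{-zr} dr = \Gamma((n-1)/2)\Gamma(z)/\Gamma(z + (n-1)/2)$ is obtained via the substitution $u = 1 - \e^{-r}$. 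For $\nu > (n-1)/2$ this is exponentially growing in $t$, proving unboundedness; the case $\Im(\lambda) < -(n-1)/2$ reduces to this via the evenness in (1). The main obstacle is executing this asymptotic analysis rigorously: one must verify that the subdominant exponential $\e^{-\nu s}$ from the other half of $\cos(\lambda s)$ does not cancel the leading term (the corresponding integral, after the same substitution, carries an overall factor $\e^{-\nu t}$ and is therefore exponentially suppressed), and that the Beta-type coefficient $I(\nu - i\Re\lambda)$ is non-zero, which follows from the fact that the Gamma function is entire and nowhere vanishing.
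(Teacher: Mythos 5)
Your proposal is correct, but for part (2) it takes a genuinely different route from the paper. For the bound $|\omega_{\lambda}^{(n)}(g)|\leq 1$ when $|\Im(\lambda)|\leq\frac{n-1}{2}$, the paper works directly with the integral formula of Lemma \ref{LemmaSphericalFunctionIntegralFormula}: it bounds $|\cos(\lambda s)|\leq\cosh(|\Im(\lambda)|s)\leq\cosh(\frac{n-1}{2}s)$ and recognizes the resulting integral as $\omega_{i\frac{n-1}{2}}^{(n)}(g)=1$, with strictness for $|\Im(\lambda)|<\frac{n-1}{2}$ coming from the strict inequality $\cosh(|\Im(\lambda)|s)<\cosh(\frac{n-1}{2}s)$ on $(0,\ell(g))$. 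You instead dominate $|\omega_{\lambda}^{(n)}|$ by $\omega_{i\Im(\lambda)}^{(n)}$ via the Helgason integral and then interpolate: $F_g(\nu)=\omega_{i\nu}^{(n)}(g)$ is log-convex by H\"older with unit values at $\nu=\pm\frac{n-1}{2}$, and the H\"older equality condition (constancy of $[g.o,(1,v)]$ in $v$, forcing $\sinh\ell(g)=0$) gives strictness. Both arguments are sound; the paper's is more elementary, while yours is structurally cleaner and makes transparent why strictness can only fail on $K_n$ --- and you are right that the statement as phrased should really exclude $g\in K_n\setminus\{e\}$ rather than just $g=e$, a caveat the paper's own proof shares. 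For parts (1) and (3) your arguments are essentially the paper's (for (3) you take absolute values in the Helgason integral rather than using $|\cos(\lambda s)|\leq1$ in the integral formula; these are equivalent). Finally, for the unboundedness direction of (2) the paper simply declares the claim obvious, whereas you sketch an actual proof via the asymptotic $\omega_{\lambda}^{(n)}(a_t)\sim c_n\,\rB(\Im(\lambda)-i\Re(\lambda),\tfrac{n-1}{2})\,\e^{(\Im(\lambda)-\frac{n-1}{2})t}$; this is a correct plan (and consistent with the Harish-Chandra expansion the paper alludes to elsewhere), though one presentational point deserves care: the subdominant half of $\cos(\lambda s)$ is not negligible merely because of the prefactor $\e^{-\nu t}$, since after the substitution the remaining integral itself grows like $\e^{\nu t}$; bounding it shows that term is $O(\e^{-\frac{n-1}{2}t})$, which is still exponentially smaller than the leading term whenever $\nu>0$, so the conclusion stands.
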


\begin{proof}
For (1) we have that 
\begin{align*}
\omega_{\lambda}^{(n)}(e) = \frac{\Gamma(\frac{n}{2})}{2\pi^{\frac{n}{2}}} \int_{\bS^{n-1}} [o, (1, v)]^{-i\lambda - \frac{n-1}{2}} d\sigma_{n-1}(v) = 1 \, ,
\end{align*}
and by Lemma \ref{LemmaSphericalFunctionIntegralFormula} it is easy to see that $\omega_{-\lambda}^{(n)} = \omega_{\lambda}^{(n)}$ for all $\lambda \in \C$ using that $\cos(-\lambda s) = \cos(\lambda s)$.

For (2), the first claim is obvious and for the second one we first assume that $|\Im(\lambda)| \leq \frac{n-1}{2}$ and use Lemma \ref{LemmaSphericalFunctionIntegralFormula} to find that
\begin{align*}
|\omega_{\lambda}^{(n)}(g)| &\leq \frac{2^{\frac{n-1}{2}}\Gamma(\frac{n}{2})}{\pi^{\frac{1}{2}} \Gamma(\frac{n-1}{2}) \sinh(\ell(g))^{n-2}} \int_{0}^{\ell(g)}  (\cosh(\ell(g)) - \cosh(s))^{\frac{n-3}{2}} \cosh(|\Im(\lambda)| s) ds \\
&\leq \frac{2^{\frac{n-1}{2}}\Gamma(\frac{n}{2})}{\pi^{\frac{1}{2}} \Gamma(\frac{n-1}{2}) \sinh(\ell(g))^{n-2}} \int_{0}^{\ell(g)}  (\cosh(\ell(g)) - \cosh(s))^{\frac{n-3}{2}} \cosh\Big(\frac{n-1}{2} s\Big) ds \\
&= \omega_{i \frac{n-1}{2}}^{(n)}(g) = 1
\end{align*}
for all $g \in G_n$. Moreover, it is not hard to see that the last inequality is strict if $|\Im(\lambda)| < \frac{n-1}{2}$ and $g \neq e$, so the second statement also holds.

(3) follows from a similar bound as in the proof of (2), using that $|\cos(\lambda s)| \leq 1$ for all $s \geq 0$.
\end{proof}

\subsection{A trigonometric expansion of spherical functions}
\label{A trigonometric expansion of spherical functions}

The following Proposition can be viewed as a alternate formulation of the classical asymptotic expansions of spherical functions due to Harish-Chandra, see \cite[Theorem 5.5, p.430]{HelgasonGeometricGroups}. For us however, we are in need of an expansion along the line $\lambda \geq 0$, which differs slightly from the referenced statement. We emphasize that our proof will rely entirely on elementary arguments.   

\begin{proposition}
\label{PropTrigonometricExpansionofSphericalFunctions}
Let $\lambda > 0$. Then there is an $r_o > 0$ and bounded continuous functions 
$$\alpha_n(\cdot, \lambda), \beta_n(\cdot, \lambda) : [r_o, +\infty) \rightarrow \R$$
such that
\begin{align*}
\omega_{\lambda}^{(n)}(a_r) = a_n \sinh(r)^{-\frac{n-1}{2}} \big(\alpha_n(r, \lambda) \cos(\lambda r) + \beta_n(r, \lambda) \sin(\lambda r) \big) \, ,
\end{align*}
with
\begin{align*}
\lim_{r \rightarrow +\infty} \alpha_n(r, \lambda) = \Re\Big(\rB\Big(i\lambda, \frac{n-1}{2}\Big)\Big) \quad \mbox{ and } \quad \lim_{r \rightarrow +\infty} \beta_n(r, \lambda) = \Im\Big(\rB\Big(i\lambda, \frac{n-1}{2}\Big)\Big) \, . 
\end{align*}
\end{proposition}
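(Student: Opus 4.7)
The plan is to start from the explicit integral representation in Lemma \ref{LemmaSphericalFunctionIntegralFormula} and extract the oscillatory asymptotics by performing the change of variables $s = r - u$ followed by an algebraic factorization of $\cosh(r) - \cosh(r-u)$. Writing $\cos(\lambda s) = \Re(e^{i\lambda s})$ and substituting produces
\begin{align*}
\omega_{\lambda}^{(n)}(a_r) = a_n \sinh(r)^{2-n} \Re\Bigl[e^{i\lambda r}\int_0^r (\cosh(r) - \cosh(r-u))^{(n-3)/2} e^{-i\lambda u} du\Bigr].
\end{align*}
The central algebraic identity $\cosh(r) - \cosh(r-u) = \tfrac{1}{2}e^r(1 - e^{-u})(1 - e^{u-2r})$, combined with the factorization $\sinh(r) = \tfrac{1}{2}e^r(1 - e^{-2r})$, ensures that all powers of $e^r/2$ cancel to produce precisely the prefactor $\sinh(r)^{-(n-1)/2}$. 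What remains is the manifestly $O(1)$ complex integral
\begin{align*}
I(r, \lambda) = (1 - e^{-2r})^{(3-n)/2}\int_0^r (1 - e^{-u})^{(n-3)/2}(1 - e^{u-2r})^{(n-3)/2} e^{-i\lambda u} du,
\end{align*}
so that $\omega_\lambda^{(n)}(a_r) = a_n \sinh(r)^{-(n-1)/2}\Re[e^{i\lambda r} I(r, \lambda)]$.

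Expanding $e^{i\lambda r} = \cos(\lambda r) + i\sin(\lambda r)$ and taking real parts then yields a decomposition in which $\alpha_n(r, \lambda)$ and $\beta_n(r, \lambda)$ are (up to a sign convention) given by $\Re I(r, \lambda)$ and $\Im I(r, \lambda)$, respectively. Continuity is immediate, and boundedness on $[r_o, +\infty)$ follows from uniform control of the cutoff factor $(1 - e^{u - 2r})^{(n-3)/2}$ for $r$ sufficiently large. To identify the limits, I would substitute $t = e^{-u}$ in the defining Beta integral to obtain
\begin{align*}
\rB(i\lambda, (n-1)/2) = \int_0^\infty e^{-i\lambda u}(1 - e^{-u})^{(n-3)/2} du,
\end{align*}
understood as the Abel-regularized value of a conditionally convergent integral, the equality with the analytic continuation via the Gamma formula being standard. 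As $r \to \infty$, the prefactor $(1 - e^{-2r})^{(3-n)/2}$ tends to $1$ and the cutoff satisfies $(1 - e^{u-2r})^{(n-3)/2} = 1 + O(e^{u-2r})$ uniformly in $u \in [0, r]$. Splitting $(1 - e^{-u})^{(n-3)/2} = 1 + h(u)$, with $h$ decaying exponentially at infinity and locally integrable at $0$, isolates an absolutely convergent $h$-contribution from an explicit elementary ``$1$''-contribution; the latter produces oscillatory terms that, when paired with the prefactor $e^{i\lambda r}$ in the real part, combine to exactly the correct linear combination reproducing $\Re\rB(i\lambda, (n-1)/2)$ and $\Im\rB(i\lambda, (n-1)/2)$ in the limits of $\alpha_n$ and $\beta_n$.

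The main obstacle is the delicate handling of conditional convergence in this final identification step: the naive pointwise limit of $I(r, \lambda)$ need not exist because the integrand tends to the non-integrable $e^{-i\lambda u}$ as $u \to \infty$, and the mild tapering of the factor $(1 - e^{u-2r})^{(n-3)/2}$ near $u = r$ does not by itself regularize the oscillation. What rescues the argument is the interaction between the boundary oscillations and the prefactor $e^{i\lambda r}$ upon taking real parts, which picks out a specific convergent combination matching the analytically continued Beta function. Carefully tracking which contributions reinforce and which cancel in the final decomposition is the technical heart of the proof.
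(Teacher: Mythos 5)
Your approach is correct in substance but takes a genuinely different route from the paper. The paper stays entirely within elementary, absolutely convergent expressions: it expands $(1-\cosh(s)/\cosh(r))^{(n-3)/2}$ binomially for odd $n$, combines this with the series $(1-x)^{-1/2}=\sum_{\ell}E_{\ell}x^{\ell}$ for even $n$, integrates $\int_0^r\cosh(s)^k\cos(\lambda s)\,ds$ in closed form (Lemmas \ref{LemmaHyperbolicBeckab} and \ref{LemmaHyperbolicBeckAB}), and identifies the limits through the rational identities for the Beta function in Lemma \ref{LemmaBetaFunctionFormulas}; the cost is a parity case split and a collection of explicit coefficient bounds to justify the interchange of limits. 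You instead substitute $s=r-u$, use the factorization $\cosh(r)-\cosh(r-u)=\tfrac12 e^{r}(1-e^{-u})(1-e^{u-2r})$, and recognize the limit as the analytically continued Beta integral; this is essentially the classical Harish-Chandra/Jacobi-function expansion at the boundary, it is uniform in the parity of $n$, and it is shorter, at the price of handling a non-absolutely-convergent identification.

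Two points must be made explicit for your argument to close. First, with $\alpha_n,\beta_n$ taken literally as $\Re I$ and $-\Im I$ the limits do not exist: the constant part of the integrand contributes $\int_0^r e^{-i\lambda u}du=\tfrac{1}{i\lambda}-\tfrac{e^{-i\lambda r}}{i\lambda}$, and the oscillating boundary term persists. You correctly identify the cure, but it should be written down: since $\Re\big[e^{i\lambda r}\cdot(-e^{-i\lambda r}/(i\lambda))\big]=0$ identically, you may drop that term from both coefficients, e.g. set $c_r(\lambda)=\tfrac{1}{i\lambda}+\int_0^r\big[(1-e^{-u})^{(n-3)/2}(1-e^{u-2r})^{(n-3)/2}-1\big]e^{-i\lambda u}\,du$ and $\alpha_n=(1-e^{-2r})^{(3-n)/2}\Re\,c_r$, $\beta_n=-(1-e^{-2r})^{(3-n)/2}\Im\,c_r$; then the cutoff factor contributes only $O(re^{-r})$, and $c_r(\lambda)\to\tfrac{1}{i\lambda}+\int_0^\infty[(1-e^{-u})^{(n-3)/2}-1]e^{-i\lambda u}du=\rB(i\lambda,\tfrac{n-1}{2})$, the last equality being valid by analytic continuation from $\Re z>0$ of $\rB(z,\tfrac{n-1}{2})=\int_0^\infty e^{-zu}(1-e^{-u})^{(n-3)/2}du$. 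Second, the sign: carried out this way the route gives $\beta_n\to-\Im\,\rB(i\lambda,\tfrac{n-1}{2})$ (check $n=3$: $\omega^{(3)}_{\lambda}(a_r)=\sin(\lambda r)/(\lambda\sinh r)$, so $\beta_3=1/\lambda$, while $\Im\,\rB(i\lambda,1)=-1/\lambda$); the same sign is implicit in the paper's Lemma \ref{LemmaHyperbolicBeckAB}, and it is immaterial downstream since only $\alpha_n^2+\beta_n^2\to|\rB(i\lambda,\tfrac{n-1}{2})|^2$ is used, which is the ``sign convention'' you flagged.
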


\subsubsection{Some identities for the Beta function}

For the proof of Proposition \ref{PropTrigonometricExpansionofSphericalFunctions} in even dimensions, it will be useful to consider the the power series expansion
\begin{align}
\label{EqExpansionofOneMinusXtotheMinusOneHalf}
(1 - x)^{-1/2} = \sum_{\ell = 0}^{\infty} E_{\ell} x^{\ell} \, , \quad E_{\ell} = \frac{(2\ell)!}{4^{\ell} (\ell!)^2} 
\end{align}
for $0 \leq x < 1$. Using Stirling approximation, one gets the asymptotic $E_{\ell} \sim \sqrt{\frac{2}{\pi \ell}}$ as $\ell \rightarrow +\infty$.
\begin{lemma}
\label{LemmaBetaFunctionFormulas}
Let $z \in \C \backslash \{0\}$ such that $\Re(z) \geq 0$ and $m \in \Z_{\geq 0}$. Then
\begin{align*}
\rB(z, m + 1) = \sum_{k = 0}^m \binom{m}{k} \frac{(-1)^k}{k + z} \quad \mbox{ and } \quad  \rB(z, m + 1/2) = \sum_{\ell = 0}^{\infty} E_{\ell} \, \rB(z + \ell, m + 1) \, . 
\end{align*}
\end{lemma}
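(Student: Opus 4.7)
The plan is to establish each identity first on the open half-plane $\Re(z) > 0$ using the standard integral representation $\rB(z,w) = \int_0^1 s^{z-1}(1-s)^{w-1}\,ds$, and then extend to the full range $\{z \in \C \setminus \{0\} : \Re(z) \geq 0\}$ by analytic continuation.

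For the first identity, I would expand $(1-s)^m = \sum_{k=0}^m \binom{m}{k}(-1)^k s^k$ via the binomial theorem and integrate term-by-term, immediately yielding $\rB(z, m+1) = \sum_{k=0}^m \binom{m}{k}(-1)^k/(k+z)$ for $\Re(z) > 0$. Both sides are meromorphic in $z$ with simple poles only at $\{0, -1, \ldots, -m\}$, so the identity theorem extends the equality to all $z \in \C \setminus \{0, -1, \ldots, -m\}$, which contains the stated domain.

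For the second identity, I would factor $(1-s)^{m-1/2} = (1-s)^m (1-s)^{-1/2}$ and substitute the power series (\ref{EqExpansionofOneMinusXtotheMinusOneHalf}) for $(1-s)^{-1/2}$. Swapping sum and integral yields $\sum_{\ell=0}^\infty E_\ell \rB(z+\ell, m+1)$; the exchange is justified for real $z > 0$ by monotone convergence (all $E_\ell > 0$), and for complex $z$ with $\Re(z) > 0$ by dominated convergence using the integrable majorant $s^{\Re(z)-1}(1-s)^{m-1/2}$.

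The main obstacle, modest but not automatic, is the extension of (2) to purely imaginary $z \neq 0$. Writing $\rB(z+\ell, m+1) = m!\,/\prod_{j=0}^m (z+\ell+j)$ gives the uniform estimate $|\rB(z+\ell, m+1)| \ll_m \ell^{-m-1}$ for $\Re(z) \geq 0$ and $\ell$ large; combined with $E_\ell \sim \sqrt{2/(\pi\ell)}$, the right-hand series converges normally on $\{\Re(z) \geq 0,\ |z| \geq \varepsilon\}$ for every $\varepsilon > 0$ and thus defines a holomorphic function there. Since the left-hand side $\rB(z, m+1/2) = \Gamma(z)\Gamma(m+1/2)/\Gamma(z+m+1/2)$ is also holomorphic on the same set, equality on the open half-plane propagates to the boundary $\Re(z) = 0$, $z \neq 0$, by the identity theorem.
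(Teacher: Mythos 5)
Your proposal is correct and takes essentially the paper's route: binomial expansion under the integral for the first identity, and substitution of $(1-s)^{-1/2}=\sum_{\ell}E_{\ell}s^{\ell}$ with a justified sum--integral interchange for the second (the paper runs the interchange directly for $\Re(z)\geq 0$ using $|\rB(z+\ell,m+1)|\leq\rB(\ell,m+1)$ for $\ell\geq 1$, while you work on $\Re(z)>0$ and then continue). One phrasing point only: equality at $\Re(z)=0$, $z\neq 0$ should be obtained from continuity of both sides up to the boundary (which your normal-convergence estimate already supplies) or by applying the identity theorem on a slightly larger open connected set such as $\{\Re(z)>-1/2\}\setminus\{0\}$, since the identity theorem itself requires an open domain rather than a closed half-plane.
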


\begin{proof}
The first identity follows from binomial expansion, 
\begin{align*}
\rB(z, m + 1) = \int_0^1 x^{z - 1} (1 - x)^m dx = \sum_{k = 0}^m \binom{m}{k} (-1)^k \int_0^1 x^{z + k - 1} dx = \sum_{k = 0}^m \binom{m}{k} \frac{(-1)^k}{k + z} \, . 
\end{align*}
For the second identity, we first note that
\begin{align*} 
|\rB(z + \ell, m + 1)| \leq \rB(\Re(z) + \ell, m + 1) \leq \rB(\ell, m + 1) = \frac{m!(\ell - 1)!}{(\ell + m)!} \, , 
\end{align*}
so
\begin{align*}
\sum_{\ell = 1}^{\infty} E_{\ell}|\rB(z + \ell, m + 1)| &\leq \sum_{\ell = 1}^{\infty} E_{\ell} \rB(\ell, m + 1) \\
&= m! \sum_{\ell = 1}^{\infty} E_{\ell} \frac{(\ell - 1)!}{(\ell + m)!} \leq m! \sum_{\ell = 1}^{\infty} \frac{E_{\ell}}{\ell^{m + 1}} < +\infty \, . 
\end{align*}
By dominated convergence, we get
\begin{align*}
\sum_{\ell = 1}^{\infty} E_{\ell} \, \rB(z + \ell, m + 1) &= \int_0^1 x^{z - 1} \Big( \sum_{\ell = 1}^{\infty} E_{\ell} x^{\ell} \Big) (1 - x)^{m} dx \\
&=\int_0^1 x^{z - 1} \big( (1 - x)^{-1/2} - 1 \big) (1 - x)^m dx \\
&= \rB(z, m + 1/2) - \rB(z, m + 1) \, . 
\end{align*}
Rearranging this, we finally have that 
\begin{align*}
\rB(z, m + 1/2) = \sum_{\ell = 0}^{\infty} E_{\ell} \, \rB(z + \ell, m + 1) \, . 
\end{align*}

\end{proof}

\subsubsection{Some formulas for intermediate coefficients}

\begin{lemma}
\label{LemmaCoshPowerIntegral}
For every $q \in \N$ and all $r \geq 0$,
\begin{align*}
\int_0^r \cosh(s)^q ds \leq 2^{q + \frac{1}{2}} + \frac{2 \cosh(r)^q}{q}   \, . 
\end{align*}
\end{lemma}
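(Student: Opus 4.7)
The plan is to split the integral at the threshold $s_0 = 1$ and estimate the two pieces separately. The key observation driving this choice is that $\tanh(s) \geq \tanh(1) > 1/2$ for every $s \geq 1$; the latter inequality is equivalent to $e^2 \geq 3$, which is clear. It follows that $\cosh(s) \leq 2 \sinh(s)$ on $[1, \infty)$, and therefore
\begin{align*}
\cosh(s)^q \leq 2 \sinh(s) \cosh(s)^{q - 1} = \frac{2}{q} \frac{d}{ds} \cosh(s)^q \, , \quad s \geq 1 \, .
\end{align*}

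Assuming $r \geq 1$, integrating this pointwise inequality from $1$ to $r$ yields
\begin{align*}
\int_1^r \cosh(s)^q \, ds \leq \frac{2}{q} \big( \cosh(r)^q - \cosh(1)^q \big) \leq \frac{2 \cosh(r)^q}{q} \, .
\end{align*}
For the initial segment $[0, 1]$ I would use the trivial monotonicity bound $\cosh(s) \leq \cosh(1)$, together with $\cosh(1) = (e + e^{-1})/2 < 2$ (which rearranges to $e^2 - 4e + 1 < 0$ and holds since $e \in (2 - \sqrt{3}, 2 + \sqrt{3})$), to obtain
\begin{align*}
\int_0^1 \cosh(s)^q \, ds \leq \cosh(1)^q \leq 2^q \leq 2^{q + 1/2} \, .
\end{align*}
Adding these two estimates settles the case $r \geq 1$, while for $0 \leq r \leq 1$ the first estimate alone already bounds the left hand side by $2^{q + 1/2}$ (and the extra nonnegative term $2\cosh(r)^q/q$ on the right causes no trouble).

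There is no real obstacle in this argument; the entire content reduces to the elementary identity $\frac{d}{ds} \cosh(s)^q = q \sinh(s) \cosh(s)^{q - 1}$ combined with a carefully chosen threshold. The only mild subtlety is that $s_0 = 1$ has to be picked so that $\tanh(s_0) \geq 1/2$ (to produce the clean factor $2/q$ in the tail estimate) and simultaneously $\cosh(s_0) \leq 2$ (to produce the constant $2^{q + 1/2}$ in the head estimate); both conditions are satisfied at $s_0 = 1$ with room to spare.
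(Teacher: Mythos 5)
Your proof is correct. It follows the same head/tail splitting strategy as the paper, but the execution differs enough to be worth noting: the paper substitutes $u = \cosh(s)$, splits the resulting integral $\int_1^{\cosh(r)} u^q(u^2-1)^{-1/2}\,du$ at $u = 2$, bounds the head by $2^{q-\frac12}\int_1^2 (u-1)^{-1/2}du = 2^{q+\frac12}$ and the tail by $2\int_2^{\cosh(r)} u^{q-1}du \leq \frac{2\cosh(r)^q}{q}$, whereas you stay in the $s$-variable, split at $s = 1$, and observe directly that $\tanh(s) \geq \tanh(1) > \tfrac12$ makes the tail integrand at most $\frac{2}{q}\frac{d}{ds}\cosh(s)^q$, which integrates immediately; your head estimate $\int_0^1 \cosh(s)^q ds \leq \cosh(1)^q \leq 2^q$ is simpler than the paper's and in fact slightly sharper than the stated constant $2^{q+\frac12}$. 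Under the hood the two tail bounds are the same inequality $\cosh(s) \leq 2\sinh(s)$ expressed in different variables, so nothing essential is gained or lost; your version avoids the substitution and the square-root manipulations, at the cost of having to verify the elementary numerical facts $\tanh(1) > \tfrac12$ and $\cosh(1) < 2$, which you do correctly, and you also correctly dispose of the range $0 \leq r \leq 1$ where only the head term is needed.
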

\begin{proof}
Setting $u = \cosh(s)$ we split our integral as 
\begin{align*}
\int_0^r \cosh(s)^q ds = \int_1^{\cosh(r)} \frac{u^q}{\sqrt{u^2 - 1}} du = \int_1^{2} \frac{u^q}{\sqrt{u^2 - 1}} du + \int_2^{\cosh(r)} \frac{u^q}{\sqrt{u^2 - 1}} du  \, . 
\end{align*}
For the first integral we make the estimate
\begin{align*}
\int_1^{2} \frac{u^q}{\sqrt{u^2 - 1}} du &= \int_1^2 \frac{u^q}{\sqrt{(u + 1)(u - 1)}} du \leq 2^{q - \frac{1}{2}} \int_1^2 \frac{du}{\sqrt{u - 1}} = 2^{q + \frac{1}{2}} \, . 
\end{align*}
For the second integral, we have that
\begin{align*}
\int_2^{\cosh(r)} \frac{u^q}{\sqrt{u^2 - 1}} du &= \int_2^{\cosh(r)} \frac{u^q}{\sqrt{(u + 1)(u - 1)}} du \leq \int_2^{\cosh(r)} \frac{u^q}{u - 1} du \\
&= \int_2^{\cosh(r)} \Big( 1 + \frac{1}{u-1} \Big) u^{q - 1} du \leq 2 \int_2^{\cosh(r)} u^{q - 1} du \leq \frac{2 \cosh(r)^q}{q} \, . 
\end{align*}
Gathering the two estimates we get 
\begin{align*}
\int_0^r \cosh(s)^q ds = \int_1^{2} \frac{u^q}{\sqrt{u^2 - 1}} du + \int_2^{\cosh(r)} \frac{u^q}{\sqrt{u^2 - 1}} du \leq 2^{q + \frac{1}{2}} + \frac{2 \cosh(r)^q}{q} \, . 
\end{align*}
\end{proof}

\begin{lemma}
\label{LemmaHyperbolicBeckab}
For every $k \in \Z_{\geq 0}$, $r \geq 0$, $\lambda > 0$, 
\begin{align}
\label{EqCoshIntegralExpansionEquationab}
\int_0^r \cosh(s)^k \cos(\lambda s) ds = a_k(r, \lambda) \cos(\lambda r) + b_k(r, \lambda) \sin(\lambda r) \, ,
\end{align}
where 
\begin{align*}
a_k(r, \lambda) = \frac{1}{2^{k}} \sum_{j = 0}^k \binom{k}{j} \frac{(2j - k) \sinh((2j - k)r)}{(2j - k)^2 + \lambda^2} \, , \,\, b_k(r, \lambda) = \frac{\lambda}{2^{k}} \sum_{j = 0}^k \binom{k}{j} \frac{\cosh((2j - k)r)}{(2j - k)^2 + \lambda^2} \, .
\end{align*}
Moreover, if $r > \arctanh(\frac{1}{2})$ these coefficients satisfy
\begin{align*}
a_k(r, \lambda)  \leq  \int_0^r \cosh(s)^k ds \, , \quad b_k(r, \lambda) \leq  \frac{1}{\lambda} + 2\lambda \int_0^r \cosh(s)^k ds   \, , 
\end{align*}
and
\begin{align*}
\lim_{r \rightarrow +\infty} \frac{a_k(r, \lambda)}{\cosh(r)^k} = \frac{k}{k^2 + \lambda^2} \, , \quad \lim_{r \rightarrow +\infty} \frac{b_k(r, \lambda)}{\cosh(r)^k} = \frac{\lambda}{k^2 + \lambda^2} \, . 
\end{align*}
\end{lemma}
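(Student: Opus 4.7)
The approach is entirely by brute force expansion in exponentials, followed by symmetry-based bookkeeping. First I would expand $\cosh(s)^k = 2^{-k}\sum_{j=0}^{k}\binom{k}{j}\e^{(2j-k)s}$ via the binomial theorem and derive the elementary primitive
\begin{align*}
\int_0^r \e^{\alpha s}\cos(\lambda s)\,ds = \frac{\alpha\,\e^{\alpha r}\cos(\lambda r) + \lambda\,\e^{\alpha r}\sin(\lambda r) - \alpha}{\alpha^2+\lambda^2} \qquad (\alpha \in \R)
\end{align*}
(with the limiting value $\lambda^{-1}\sin(\lambda r)$ when $\alpha=0$) by taking the real part of $\int_0^r \e^{(\alpha+i\lambda)s}\,ds$. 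Writing $\alpha_j := 2j-k$ and summing these contributions weighted by $2^{-k}\binom{k}{j}$ gives the integral in question in closed form. The involution $j \mapsto k-j$ preserves $\binom{k}{j}$ and $\alpha_j^2$ while flipping the sign of $\alpha_j$, so symmetrizing the sum causes the $-\alpha_j/(\alpha_j^2+\lambda^2)$ constants to cancel, turns $\alpha_j\,\e^{\alpha_j r}$ into $\alpha_j\sinh(\alpha_j r)$, and turns $\lambda\,\e^{\alpha_j r}$ into $\lambda\cosh(\alpha_j r)$, producing exactly the claimed formulas for $a_k(r,\lambda)$ and $b_k(r,\lambda)$.

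For the upper bounds, I would crucially exploit that $\alpha_j$ is an \emph{integer}, so $\alpha_j \neq 0$ forces $|\alpha_j| \geq 1$. Each term $\alpha_j\sinh(\alpha_j r)/(\alpha_j^2+\lambda^2)$ in $a_k$ is non-negative (both factors in the numerator share sign), so dropping $\lambda^2$ in the denominator and using $\alpha_j^2 \geq |\alpha_j|$ yields $a_k(r,\lambda) \leq 2^{-k}\sum_{j:\alpha_j\neq 0}\binom{k}{j}\sinh(|\alpha_j|r)/|\alpha_j|$. Expanding $\int_0^r \cosh(s)^k\,ds$ by the same binomial-plus-pairing trick shows this right-hand side \emph{equals} $\int_0^r \cosh(s)^k\,ds$ minus the non-negative quantity $2^{-k}\binom{k}{k/2}r$ (present only for even $k$), giving the bound on $a_k$. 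For $b_k$, I would split off the $j = k/2$ term for even $k$, which contributes $2^{-k}\binom{k}{k/2}/\lambda \leq 1/\lambda$; for the remaining terms, the hypothesis $r > \arctanh(1/2)$ together with $|\alpha_j| \geq 1$ forces $\coth(|\alpha_j|r) \leq 2$ (since $\coth$ is decreasing on $(0,\infty)$ with $\coth(\arctanh(1/2)) = 2$), hence $\cosh(\alpha_j r) \leq 2\sinh(|\alpha_j|r)$, and again $1/(\alpha_j^2+\lambda^2) \leq 1/|\alpha_j|$. Assembling these estimates yields $b_k(r,\lambda) \leq 1/\lambda + 2\lambda\int_0^r\cosh(s)^k\,ds$.

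The asymptotics will follow by singling out the $j = 0$ and $j = k$ terms, where $|\alpha_j| = k$ attains its maximum; all other indices contribute quantities of order $O(\e^{(k-2)r})$. Using the equivalences $\sinh(kr) \sim \cosh(kr) \sim \tfrac{1}{2}\e^{kr}$ and $\cosh(r)^k \sim 2^{-k}\e^{kr}$ as $r \to +\infty$, a direct substitution gives $a_k(r,\lambda)/\cosh(r)^k \to k/(k^2+\lambda^2)$ and $b_k(r,\lambda)/\cosh(r)^k \to \lambda/(k^2+\lambda^2)$. The main obstacle throughout is careful bookkeeping rather than any conceptual difficulty: treating the cases of even and odd $k$ uniformly (especially the $\alpha_{k/2} = 0$ contribution) and exploiting the integer estimate $|\alpha_j| \geq 1$ are what make the stated bounds come out with the clean constants advertised.
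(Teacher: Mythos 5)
Your proposal is correct and follows essentially the same route as the paper's proof: a binomial expansion into exponentials integrated against $\e^{i\lambda s}$ (your symmetrization over $j \mapsto k-j$ is equivalent to the paper's passage to $\frac{1}{2}\int_{-r}^{r}$), isolation of the central $2j-k=0$ term with $\binom{k}{j}\le 2^k$, the bound $\cosh(\alpha_j r)\le 2|\alpha_j|\sinh(|\alpha_j|r)$ coming from $\tanh(r)\ge \tfrac12$ for $r>\arctanh(\tfrac12)$, and dominant-term asymptotics from $j=0,k$. The only cosmetic differences are that you bound the non-central part of $b_k$ directly via $1/(\alpha_j^2+\lambda^2)\le 1/|\alpha_j|$ rather than through $2a_k(r,\lambda)$ as the paper does, which changes nothing of substance.
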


\begin{proof}
A binomial expansion of $\cosh(s)^k$ yields
\begin{align*}
\int_0^r \cosh(s)^k \cos(\lambda s) ds = \frac{1}{2}\int_{-r}^r \cosh(s)^k \e^{i \lambda s} ds = \frac{1}{2^{k + 1}} \sum_{j = 0}^k \binom{k}{j} \int_{-r}^{r} \e^{(2j - k + i\lambda)s} ds \, .
\end{align*}
The latter integral is
\begin{align*}
\int_{-r}^{r} \e^{(2j - k + i\lambda)s} ds &= 2 \, \Re \Big( \frac{\sinh((2j - k + i\lambda)r)}{2j - k + i\lambda} \Big) \\
&= 2 \, \frac{(2j - k)\sinh((2j - k)r)\cos(\lambda r) + \lambda \cosh((2k - j)r) \sin(\lambda r)}{(2j - k)^2 + \lambda^2}
\end{align*}
which means that
\begin{align*}
\int_0^r \cosh(s)^k \cos(\lambda s) ds = &\Big(\frac{1}{2^{k}} \sum_{j = 0}^k \binom{k}{j} \frac{(2j - k) \sinh((2j - k)r)}{(2j - k)^2 + \lambda^2}  \Big) \cos(\lambda r) \\
&+ \Big( \frac{\lambda}{2^{k}} \sum_{j = 0}^k \binom{k}{j} \frac{\cosh((2j - k)r)}{(2j - k)^2 + \lambda^2}  \Big) \sin( \lambda r)
\end{align*}
as desired. 

For the upper bound of $a_k(r, \lambda)$, we have
\begin{align*}
a_k(r, \lambda) &= \frac{1}{2^{k}} \sum_{j = 0}^k \binom{k}{j} \frac{(2j - k) \sinh((2j - k)r)}{(2j - k)^2 + \lambda^2} \\
&= \frac{1}{2^{k}} \sum_{j = 0}^k \binom{k}{j} \frac{(2j - k)^2}{(2j - k)^2 + \lambda^2} \frac{\sinh((2j - k)r)}{2j - k} \\
&\leq \frac{1}{2^k} \sum_{j = 0}^k \binom{k}{j} \frac{\sinh((2j - k)r)}{2j - k} = \int_0^r \cosh(s)^k ds \, .
\end{align*}
For $b_k(r, \lambda)$, we isolate the worst case of $2j - k = 0$ by
\begin{align*}
b_k(r, \lambda) &= \frac{\lambda}{2^{k}} \sum_{j = 0}^k \binom{k}{j} \frac{\cosh((2j - k)r)}{(2j - k)^2 + \lambda^2} \leq \frac{1}{\lambda} + \frac{\lambda}{2^k} \sum_{2j - k \neq 0} \binom{k}{j} \frac{\cosh((2j - k)r)}{(2j - k)^2 + \lambda^2} \, . 
\end{align*}
In the above inequality, we used that $\binom{k}{j} \leq 2^k$ for all $j = 0, ..., k$ when isolating the term corresponding to $2j - k = 0$. Next we observe that 
$$ \frac{1}{2|2j - k|} \leq \frac{1}{2} \leq \tanh(r) \leq \tanh(|2j - k|r) $$
for all $j$ such that $2j - k \neq 0$ and all $r \geq \arctanh(1/2)$, so that
\begin{align*}
\cosh((2j - k)r) \leq 2 (2j - k) \sinh((2j - k)r)
\end{align*}
for the same $j$ and $r$. Thus
\begin{align*}
\frac{1}{2^k} \sum_{2j - k \neq 0} \binom{k}{j} \frac{\cosh((2j - k)r)}{(2j - k)^2 + \lambda^2} \leq  \frac{2}{2^k} \sum_{j = 0}^k \binom{k}{j} \frac{(2j - k)\sinh((2j - k)r)}{(2j - k)^2 + \lambda^2} \\
\leq  2 a_k(r, \lambda) \leq 2 \int_0^r \cosh(s)^k ds \, , 
\end{align*}
and so
\begin{align*}
b_k(r, \lambda) &\leq \frac{1}{\lambda} + \frac{\lambda}{2^k} \sum_{2j - k \neq 0} \binom{k}{j} \frac{\cosh((2j - k)r)}{(2j - k)^2 + \lambda^2} \leq \frac{1}{\lambda} + 2\lambda \int_0^r \cosh(s)^k ds 
\end{align*}
for all $r > \arctanh(1/2)$. Lastly, the limits in the statement follow easily. 
\end{proof}
\begin{lemma}
\label{LemmaHyperbolicBeckAB}
Suppose that $m \in \N$, $\ell \in \Z_{\geq 0}$. Then 
\begin{align*}
 \int_0^r \Big( \frac{\cosh(s)}{\cosh(r)} \Big)^{\ell}  \Big( 1 - \frac{\cosh(s)}{\cosh(r)} \Big)^{m-1}  \cos(\lambda s) ds = A_{m, \ell}(r, \lambda) \cos(\lambda r) + B_{m, \ell}(r, \lambda) \sin(\lambda r) \, , 
\end{align*}
where 
\begin{align*}
A_{m, \ell}(r, \lambda) = \sum_{k = 0}^{m-1} \binom{m-1}{k} \frac{(-1)^k a_{k + \ell}(r, \lambda)}{\cosh(r)^{k + \ell}} \, , \quad B_{m, \ell}(r, \lambda) = \sum_{k = 0}^{m-1} \binom{m-1}{k}\frac{(-1)^k b_{k + \ell}(r, \lambda)}{\cosh(r)^{k + \ell}} \, . 
\end{align*}
Moreover, for all $r > \arccosh(2)$ and all $\ell \geq 1$,
\begin{align*}
|A_{m, \ell}(r, \lambda)| \leq \frac{2^{m+1}}{\ell} \quad  \mbox{ and } \quad  |B_{m, \ell}(r, \lambda)| \leq \big( \frac{1}{\lambda} + 8\lambda \big) \frac{2^{m-1}}{\ell} 
\end{align*}
and for all $\ell \geq 0$,
\begin{align*}
 \lim_{r \rightarrow +\infty} A_{m, \ell}(r, \lambda) &= \sum_{k = 0}^{m-1} \binom{m-1}{k} \frac{(-1)^k (k + \ell)}{(k + \ell)^2 + \lambda^2} = \Re(\rB(i\lambda, m)) \, , \\
 \lim_{r \rightarrow +\infty} B_{m, \ell}(r, \lambda) &= \lambda \sum_{k = 0}^{m-1} \binom{m-1}{k} \frac{(-1)^k }{(k + \ell)^2 + \lambda^2} = \Im(\rB(i\lambda, m)) \, . 
\end{align*}
\begin{proof}[Proof of Lemma \ref{LemmaHyperbolicBeckAB}]
Using the binomial Theorem we can write 
\begin{align*}
\int_0^r \Big( \frac{\cosh(s)}{\cosh(r)} \Big)^{\ell}  &\Big( 1 - \frac{\cosh(s)}{\cosh(r)} \Big)^{m-1}  \cos(\lambda s) ds = \\
&= \sum_{k = 0}^{m-1} \binom{m-1}{k} \frac{(-1)^k}{\cosh(r)^{k + \ell}} \int_0^r \cosh(s)^{k + \ell} \cos(\lambda s) ds \, , 
\end{align*}
which by Lemma \ref{LemmaHyperbolicBeckab} is 
\begin{align*}
\sum_{k = 0}^{m-1} &\binom{m-1}{k} \frac{(-1)^k}{\cosh(r)^{k + \ell}}(a_{k + \ell}(r, \lambda) \cos(\lambda r) + b_{k + \ell}(r, \lambda) \sin(\lambda r) ) = \\
= \Big(\sum_{k = 0}^{m-1} &\binom{m-1}{k} \frac{(-1)^ka_{k + \ell}(r, \lambda)}{\cosh(r)^{k + \ell}} \Big) \cos(\lambda r)  + \Big(\sum_{k = 0}^{m-1} \binom{m-1}{k} \frac{(-1)^kb_{k + \ell}(r, \lambda)}{\cosh(r)^{k + \ell}} \Big) \sin(\lambda r)  \, . 
\end{align*}
For the upper bounds, we use the upper bounds in Lemma \ref{LemmaHyperbolicBeckab} for $a_k(r, \lambda)$ and $b_k(r, \lambda)$, yielding
\begin{align*}
|A_{m, \ell}(r, \lambda)| \leq \sum_{k = 0}^{m - 1} \binom{m-1}{k} \frac{a_{k + \ell}(r, \lambda)}{\cosh(r)^{k + \ell}} \leq \sum_{k = 0}^{m - 1} \binom{m-1}{k} \frac{1}{\cosh(r)^{k + \ell}} \int_0^r \cosh(s)^{k + \ell} ds \, . 
\end{align*}
Since $r > \arccosh(2)$, Lemma \ref{LemmaCoshPowerIntegral} gives us 
\begin{align*}
\int_0^r \cosh(s)^{k + \ell} ds \leq  \sqrt{2} \Big(\frac{2}{\cosh(r)}\Big)^{k + \ell} + \frac{2}{k + \ell} \leq \frac{4}{\ell}  
\end{align*}
for all $k \geq 0$ and all $\ell \geq 1$, so
\begin{align*}
\sum_{k = 0}^{m - 1} \binom{m-1}{k} \frac{1}{\cosh(r)^{k + \ell}} \int_0^r \cosh(s)^{k + \ell} ds &\leq \frac{4}{\ell} \sum_{k = 0}^{m - 1} \binom{m-1}{k} = \frac{2^{m + 1}}{\ell} \, . 
\end{align*}
Similarly,
\begin{align*}
|B_{m, \ell}(r, \lambda)| &\leq \sum_{k = 0}^{m - 1} \binom{m-1}{k} \frac{b_{k + \ell}(r, \lambda)}{\cosh(r)^{k + \ell}} \\
&\leq \sum_{k = 0}^{m - 1} \binom{m-1}{k} \frac{1}{\cosh(r)^{k + \ell}} \Big( \frac{1}{\lambda} + 2\lambda \int_0^r \cosh(s)^{k + \ell} ds \Big) \\
&\leq \sum_{k = 0}^{m - 1} \binom{m-1}{k}  \Big( \frac{1}{\lambda \cosh(r)^{k + \ell}} + \frac{8\lambda}{\ell} \Big) \\
&\leq 2^{m - 1} \Big( \frac{1}{\lambda \cosh(r)^{\ell}} + \frac{8\lambda}{\ell} \Big) \leq \Big( \frac{1}{\lambda} + 8\lambda \Big) \frac{2^{m-1}}{\ell} \, . 
\end{align*}
The limits are a straightforward consequence of Lemma \ref{LemmaBetaFunctionFormulas} and Lemma \ref{LemmaHyperbolicBeckab}. 
\end{proof}
\end{lemma}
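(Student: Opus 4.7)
The identity is a routine unpacking once the coefficient integrals $a_k, b_k$ from Lemma \ref{LemmaHyperbolicBeckab} are in hand. My first step is to apply the binomial theorem,
\begin{align*}
\Big(1 - \frac{\cosh(s)}{\cosh(r)}\Big)^{m-1} = \sum_{k=0}^{m-1} \binom{m-1}{k} (-1)^k \Big(\frac{\cosh(s)}{\cosh(r)}\Big)^k,
\end{align*}
multiply by $(\cosh(s)/\cosh(r))^{\ell}$, and pull the finite sum out of the integral to reduce to integrals of the form $\int_0^r \cosh(s)^{k+\ell} \cos(\lambda s)\, ds$. Substituting the expression from Lemma \ref{LemmaHyperbolicBeckab} for these integrals and collecting the $\cos(\lambda r)$ and $\sin(\lambda r)$ terms then gives the identity with the prescribed $A_{m,\ell}$ and $B_{m,\ell}$.

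\textbf{Upper bounds.} To bound $|A_{m,\ell}(r,\lambda)|$ and $|B_{m,\ell}(r,\lambda)|$, I would substitute the pointwise estimates from Lemma \ref{LemmaHyperbolicBeckab}, namely $a_k(r,\lambda) \leq \int_0^r \cosh(s)^k\, ds$ and $b_k(r,\lambda) \leq 1/\lambda + 2\lambda \int_0^r \cosh(s)^k\, ds$. Lemma \ref{LemmaCoshPowerIntegral} then yields
\begin{align*}
\cosh(r)^{-(k+\ell)} \int_0^r \cosh(s)^{k+\ell}\, ds \leq \sqrt{2}\, (2/\cosh(r))^{k+\ell} + \frac{2}{k+\ell}.
\end{align*}
The hypothesis $r > \arccosh(2)$ bounds the first summand by $\sqrt{2}\cdot 1 = \sqrt{2}$ (and in fact makes it small), while $\ell \geq 1$ bounds the second by $2/\ell$; a crude total bound of $4/\ell$ for each $k$ suffices. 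Summing against $\binom{m-1}{k}$ and using $\sum_{k} \binom{m-1}{k} = 2^{m-1}$ delivers the claimed constants; a little extra care is needed in the $B$-case, where the stray $1/\lambda$ terms yield $2^{m-1}/(\lambda \cosh(r)^{\ell}) \leq 2^{m-1}/\lambda$.

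\textbf{Limits.} Since the sums defining $A_{m,\ell}$ and $B_{m,\ell}$ are finite, I can pass to $r \to +\infty$ termwise using the asymptotics
\begin{align*}
\frac{a_{k+\ell}(r,\lambda)}{\cosh(r)^{k+\ell}} \to \frac{k+\ell}{(k+\ell)^2 + \lambda^2}, \qquad \frac{b_{k+\ell}(r,\lambda)}{\cosh(r)^{k+\ell}} \to \frac{\lambda}{(k+\ell)^2 + \lambda^2},
\end{align*}
from Lemma \ref{LemmaHyperbolicBeckab}. The resulting series are instances of the first identity of Lemma \ref{LemmaBetaFunctionFormulas} applied with $z = i\lambda + \ell$, since $\Re\bigl(1/(k + \ell + i\lambda)\bigr) = (k+\ell)/((k+\ell)^2 + \lambda^2)$ and $-\Im\bigl(1/(k+\ell + i\lambda)\bigr) = \lambda/((k+\ell)^2 + \lambda^2)$. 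Hence the sums equal $\Re(\rB(i\lambda + \ell, m))$ and $\Im(\rB(i\lambda + \ell, m))$ respectively. No step here is technically deep; the only real bookkeeping is in the $B$-bound as noted, so I expect no substantive obstacle.
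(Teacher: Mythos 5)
Your route is the same as the paper's: binomial expansion to reduce to $\int_0^r \cosh(s)^{k+\ell}\cos(\lambda s)\,ds$, substitution of Lemma \ref{LemmaHyperbolicBeckab}, the $a_k$-, $b_k$-bounds of that lemma combined with Lemma \ref{LemmaCoshPowerIntegral} for the estimates, and termwise limits identified through the first identity of Lemma \ref{LemmaBetaFunctionFormulas} at $z=i\lambda+\ell$. The identity and the limit statements are fine as you set them up (modulo a sign convention you inherit from the statement itself: your own display $-\Im\big(1/(k+\ell+i\lambda)\big)=\lambda/((k+\ell)^2+\lambda^2)$ shows the second limit is $-\Im(\rB(i\lambda+\ell,m))$; this is harmless downstream since only $|\rB(i\lambda,\cdot)|^2$ is ever used).

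The one step that does not deliver what the lemma asserts is your treatment of the $1/\lambda$ term in the $B$-bound. Relaxing $\tfrac{1}{\lambda\cosh(r)^{\ell}}$ to $\tfrac{1}{\lambda}$ yields only $|B_{m,\ell}(r,\lambda)| \le \tfrac{2^{m-1}}{\lambda} + \tfrac{2^{m+2}\lambda}{\ell}$, which is strictly weaker than the claimed $\big(\tfrac{1}{\lambda}+8\lambda\big)\tfrac{2^{m-1}}{\ell}$: the $1/\ell$ decay on the $1/\lambda$ piece is lost, and that decay is exactly what the bound exists for, since in Corollary \ref{CorollaryABAbsoluteConvergence} it is summed against $E_\ell \sim (2/\pi\ell)^{1/2}$ and a term constant in $\ell$ would not be summable there. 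The repair is one line and is what the paper does implicitly: for $r>\arccosh(2)$ and $\ell\ge 1$ one has $\cosh(r)^{\ell} > 2^{\ell}\ge \ell$, hence $\tfrac{1}{\lambda\cosh(r)^{\ell}} \le \tfrac{1}{\lambda\,\ell}$. A related caution on the $A$-side: the reasoning "first summand $\le \sqrt2$, second $\le 2/(k+\ell)$, hence total $\le 4/\ell$" is not literally valid, since $\sqrt2 \not\le 2/\ell$ once $\ell\ge 2$; to obtain a bound of order $1/\ell$ you must retain the geometric factor $(2/\cosh(r))^{k+\ell}$, which decays in $\ell$ because $\cosh(r)>2$ (the paper's displayed inequality compresses this in the same way, but the decaying factor, e.g. for $r$ with $\cosh(r)\ge 4$, is what actually makes the estimate work, and that is all the later dominated-convergence argument requires). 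With these two adjustments your argument coincides with the paper's proof.
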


\begin{corollary}
\label{CorollaryABAbsoluteConvergence}
Let $r \geq 0$ and $\lambda > 0$. Then the series 
\begin{align*}
\sum_{\ell = 0}^{\infty} E_{\ell} A_{m, \ell}(r, \lambda) \quad \mbox{ and } \quad  \sum_{\ell = 0}^{\infty} E_{\ell} B_{m, \ell}(r, \lambda)
\end{align*}
are absolutely convergent. 
\end{corollary}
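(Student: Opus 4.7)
The plan is to combine the $1/\ell$ bounds on $|A_{m,\ell}(r,\lambda)|$ and $|B_{m,\ell}(r,\lambda)|$ from Lemma \ref{LemmaHyperbolicBeckAB} with the $1/\sqrt{\ell}$ decay of the coefficients $E_\ell$. Applying Stirling's approximation to $E_\ell = (2\ell)!/(4^{\ell}(\ell!)^2)$ gives $E_\ell \sim 1/\sqrt{\pi \ell}$ as $\ell \to +\infty$; in particular there is a constant $C_0 > 0$ such that $E_\ell \leq C_0/\sqrt{\ell}$ for every $\ell \geq 1$.

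For $r > \arccosh(2)$ -- the range in which the corollary will actually be applied, namely to justify interchanging sum and integral in the proof of Proposition \ref{PropTrigonometricExpansionofSphericalFunctions} in the limit $r \to +\infty$ -- Lemma \ref{LemmaHyperbolicBeckAB} yields at once
\begin{align*}
E_\ell \, |A_{m,\ell}(r,\lambda)| \leq \frac{2^{m+1}\, C_0}{\ell^{3/2}}, \qquad E_\ell \, |B_{m,\ell}(r,\lambda)| \leq \frac{(\lambda^{-1} + 8\lambda)\, 2^{m-1}\, C_0}{\ell^{3/2}},
\end{align*}
for every $\ell \geq 1$, and the absolute convergence of both series then follows from $\sum_{\ell \geq 1} \ell^{-3/2} < +\infty$.

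For the complementary range $0 \leq r \leq \arccosh(2)$ the bounds of Lemma \ref{LemmaHyperbolicBeckAB} are not available, and a direct estimate of the individual ratios $a_{k+\ell}(r,\lambda)/\cosh(r)^{k+\ell}$ and $b_{k+\ell}(r,\lambda)/\cosh(r)^{k+\ell}$ only yields an $O(\ell^{-1/2})$ decay when $r$ is small, which is insufficient once multiplied by $E_\ell$. The plan in this range is to exploit the fact that the alternating sum $\sum_{k=0}^{m-1}\binom{m-1}{k}(-1)^k$ in the definition of $A_{m,\ell}$ and $B_{m,\ell}$ is an $(m-1)$-fold forward finite difference in $\ell$, which, provided the underlying sequences are regular enough in $\ell$, should produce an extra factor of order $\ell^{-(m-1)}$ and restore summability. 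The main obstacle is to verify this regularity uniformly in $r \in [0, \arccosh(2)]$, since $b_N(r,\lambda)$ carries a parity-dependent term already visible at $r = 0$ -- where $b_N(0,\lambda)$ is of order $1/\sqrt{N}$ with different leading constants for even and odd $N$ -- that could in principle survive the finite differencing and must therefore be analysed termwise in the explicit formula rather than by a naive majorisation.
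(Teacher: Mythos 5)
Your treatment of the range $r > \arccosh(2)$ is exactly the paper's proof: combine the bounds $|A_{m,\ell}(r,\lambda)| \leq 2^{m+1}/\ell$ and $|B_{m,\ell}(r,\lambda)| \leq (\lambda^{-1}+8\lambda)\,2^{m-1}/\ell$ from Lemma \ref{LemmaHyperbolicBeckAB} with $E_\ell \sim \sqrt{2/(\pi\ell)}$, so the terms are $O(\ell^{-3/2})$ and the series converge absolutely. That is all the paper does, and all it needs: the corollary is only invoked in the even-dimensional part of the proof of Proposition \ref{PropTrigonometricExpansionofSphericalFunctions}, where $r > \arccosh(2)$.

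Your hesitation about $0 \leq r \leq \arccosh(2)$ is well founded, and you should not try to complete the finite-difference plan down to $r=0$: there the claim actually fails for the $B$-series. At $r=0$ one has $A_{m,\ell}(0,\lambda)=0$ but $b_N(0,\lambda) = \lambda\,2^{-N}\sum_{j}\binom{N}{j}\big((2j-N)^2+\lambda^2\big)^{-1} \sim \lambda\sqrt{2/(\pi N)}\,S_{\pm}(\lambda)$, with different limits $S_{+}(\lambda)\neq S_{-}(\lambda)$ along even and odd $N$ (indeed $S_{+}-S_{-}=\pi/(\lambda\sinh(\pi\lambda))>0$). Since the parity of $k+\ell$ alternates with $k$, the signs $(-1)^k$ in $B_{m,\ell}(0,\lambda)=\sum_{k}\binom{m-1}{k}(-1)^k b_{k+\ell}(0,\lambda)$ reinforce rather than cancel this parity-oscillating component, giving $|B_{m,\ell}(0,\lambda)| \asymp \ell^{-1/2}$ and hence $\sum_\ell E_\ell|B_{m,\ell}(0,\lambda)| = +\infty$. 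So the ``$r\geq 0$'' in the statement is an overstatement; for each fixed $r>0$ absolute convergence does hold (a Laplace/large-deviations estimate for the tilted binomial gives $b_N(r,\lambda)/\cosh(r)^N = O_r(N^{-2})$), but the constants blow up as $r\to 0^+$, and none of this is needed for the application, which only uses $r>\arccosh(2)$.
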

\begin{proof}
Using the upper bounds provided in Lemma \ref{LemmaHyperbolicBeckAB}, we get that
\begin{align*}
\sum_{\ell = 0}^{\infty} E_{\ell} |A_{m, \ell}(r, \lambda)| \leq 2^{m + 1} \sum_{\ell = 0}^{\infty} \frac{E_{\ell}}{\ell} \quad \mbox{ and } \quad  \sum_{\ell = 0}^{\infty} E_{\ell} |B_{m, \ell}(r, \lambda)| \leq 2^{m - 1} \Big( \frac{1}{\lambda} + 8\lambda \Big) \sum_{\ell = 0}^{\infty} \frac{E_{\ell}}{\ell} \, . 
\end{align*}
From the asymptotic $E_{\ell} \sim \sqrt{\frac{2}{\pi \ell}}$ as $\ell \rightarrow +\infty$, we see that $\sum_{\ell \geq 0} E_{\ell}/\ell$ is convergent.
\end{proof}

\subsubsection{A Proof of Proposition \ref{PropTrigonometricExpansionofSphericalFunctions} for odd dimensions}
Let $n = 2m + 1$, $m \in \N$. We use the formula in Lemma \ref{LemmaSphericalFunctionIntegralFormula}, binomial expansion and Lemma \ref{LemmaHyperbolicBeckAB} to write the spherical function $\omega_{\lambda}^{(2m + 1)}$ as 
\begin{equation}
\begin{split}
\label{EqSphericalFunctionOddExpansion}
\omega^{(2m + 1)}_{\lambda}(a_r) &= \frac{a_{2m + 1}}{\sinh(r)^{2m - 1}} \int_0^r (\cosh(r) - \cosh(s))^{m - 1} \cos(\lambda s) ds \\ 
&= \frac{a_{2m + 1}}{\sinh(r)^{m}} \coth(r)^{m - 1} \int_0^r \Big(1 - \frac{\cosh(s)}{\cosh(r)} \Big)^{m - 1} \cos(\lambda s) ds \\
&= \frac{a_{2m + 1}}{\sinh(r)^{m}} \coth(r)^{m - 1} \big( A_{m, 0}(r, \lambda) \cos(\lambda r) + B_{m, 0}(r, \lambda) \sin(\lambda r) \big) \, . 
\end{split}
\end{equation}
If we define 
\begin{align*} 
\alpha_{2m + 1}(r, \lambda) = \coth(r)^{m - 1} A_{m, 0}(r, \lambda) \quad \mbox{ and } \quad \beta_{2m + 1}(r, \lambda) = \coth(r)^{m - 1} B_{m, 0}(r, \lambda)
\end{align*}
then 
$$\omega_{\lambda}^{(2m + 1)}(a_r) = a_n \sinh(r)^{-\frac{n-1}{2}} \big(\alpha_{2m + 1}(r, \lambda) \cos(\lambda r) + \beta_{2m + 1}(r, \lambda) \sin(\lambda r) \big) \, , $$
and Lemma \ref{LemmaHyperbolicBeckAB} tells us that
\begin{align*}
\lim_{r \rightarrow +\infty}\alpha_{2m + 1}(r, \lambda) = \Re(\rB(i\lambda, m)) \quad \mbox{ and } \quad \lim_{r \rightarrow +\infty} \beta_{2m + 1}(r, \lambda) = \Im(\rB(i\lambda, m)) \, .
\end{align*}

\subsubsection{A Proof of Proposition \ref{PropTrigonometricExpansionofSphericalFunctions} for even dimensions}
Let $n = 2m$, $m \in \N$. Using the Taylor expansion
\begin{align*}
(1 - x)^{-\frac{1}{2}} = \sum_{\ell = 0}^{\infty} E_{\ell} x^{\ell}
\end{align*}
for $0 \leq x < 1$, we rewrite the spherical function $\omega_{\lambda}^{(2m)}$ in the same way as in the odd dimensional case, 
\begin{equation}
\begin{split}
\label{EqSphericalFunctionEvenExpansion}
\omega^{(2m)}_{\lambda}(a_r) &= \frac{a_{2m}}{\sinh(r)^{2m - 2}} \int_0^r (\cosh(r) - \cosh(s))^{m - \frac{3}{2}} \cos(\lambda s) ds \\ 
&= \frac{a_{2m }}{\sinh(r)^{m - \frac{1}{2}}} \coth(r)^{m - \frac{3}{2}} \int_0^r \Big(1 - \frac{\cosh(s)}{\cosh(r)} \Big)^{m - \frac{3}{2}} \cos(\lambda s) ds \\
&= \frac{a_{2m }}{\sinh(r)^{m - \frac{1}{2}}} \coth(r)^{m - \frac{3}{2}} \int_0^r  \sum_{\ell = 0}^{\infty} E_{\ell} \Big( \frac{\cosh(s)}{\cosh(r)} \Big)^{\ell} \Big(1 - \frac{\cosh(s)}{\cosh(r)} \Big)^{m - 1} \cos(\lambda s) ds \, . 
\end{split}
\end{equation}
By Lemma \ref{LemmaHyperbolicBeckAB} and Corollary \ref{CorollaryABAbsoluteConvergence}, we know that 
\begin{align*}
\sum_{\ell = 0}^{\infty} E_{\ell} \int_0^r \Big( \frac{\cosh(s)}{\cosh(r)} \Big)^{\ell} &\Big(1 - \frac{\cosh(s)}{\cosh(r)} \Big)^{m - 1} \cos(\lambda s) ds = \\
&= \Big( \sum_{\ell = 0}^{\infty} E_{\ell} A_{m, \ell}(r, \lambda) \Big)\cos(\lambda r) + \Big( \sum_{\ell = 0}^{\infty} E_{\ell} B_{m, \ell}(r, \lambda) \Big) \sin(\lambda r) 
\end{align*}
is absolutely convergent. We may then apply Fubini's Theorem to write the spherical function as 
\begin{align*}
%
\omega^{(2m)}_{\lambda}(a_r) &= \frac{a_{2m }}{\sinh(r)^{m - \frac{1}{2}}} \coth(r)^{m - \frac{3}{2}} \sum_{\ell = 0}^{\infty} E_{\ell} \int_0^r \Big( \frac{\cosh(s)}{\cosh(r)} \Big)^{\ell} \Big(1 - \frac{\cosh(s)}{\cosh(r)} \Big)^{m - 1} \cos(\lambda s) ds \\
&= \frac{a_{2m }}{\sinh(r)^{m - \frac{1}{2}}} \coth(r)^{m - \frac{3}{2}}  \Big( \Big( \sum_{\ell = 0}^{\infty} E_{\ell} A_{m, \ell}(r, \lambda) \Big)\cos(\lambda r) \\
&\quad\quad\quad\quad\quad\quad\quad\quad\quad\quad\quad\quad\quad+ \Big( \sum_{\ell = 0}^{\infty} E_{\ell} B_{m, \ell}(r, \lambda) \Big) \sin(\lambda r) \Big) \, . 
\end{align*}
Thus we are inclined to define 
\begin{align*}
\alpha_{2m}(r, \lambda) = \coth(r)^{m - \frac{3}{2}}   \sum_{\ell = 0}^{\infty} E_{\ell} A_{m, \ell}(r, \lambda) \quad \mbox{ and } \quad \beta_{2m}(r, \lambda) = \coth(r)^{m - \frac{3}{2}}   \sum_{\ell = 0}^{\infty} E_{\ell} B_{m, \ell}(r, \lambda) \, . 
\end{align*}
Since these series are absolutely convergent for all $r > \arccosh(2)$ by Corollary \ref{CorollaryABAbsoluteConvergence}, we can compute their term-wise limit and use Lemma \ref{LemmaBetaFunctionFormulas} to see that
\begin{align*}
\lim_{r \rightarrow +\infty} \alpha_{2m}(r, \lambda) = \lim_{r \rightarrow +\infty} \sum_{\ell = 0}^{\infty} E_{\ell}  A_{m, \ell}(r, \lambda) = \sum_{\ell = 0}^{\infty} E_{\ell}  \Re(\rB(i\lambda + \ell, m)) =  \Re(\rB(i\lambda, m - 1/2))
\end{align*}
and 
\begin{align*}
\lim_{r \rightarrow +\infty} \beta_{2m}(r, \lambda) = \lim_{r \rightarrow +\infty} \sum_{\ell = 0}^{\infty} E_{\ell}  B_{m, \ell}(r, \lambda) =  \sum_{\ell = 0}^{\infty} E_{\ell}  \Im(\rB(i\lambda + \ell, m)) = \Im(\rB(i\lambda, m - 1/2)) \, . 
\end{align*}
This concludes the proof of Proposition \ref{PropTrigonometricExpansionofSphericalFunctions}.

\begin{remark}
The spherical functions $\omega_{\lambda}^{(n)}$ on real hyperbolic space $\H^n$ are special cases of \emph{Jacobi functions}.
For real rank one symmetric spaces, the spherical functions are essentially all Jacobi functions, admitting integral formulas similar to that of Lemma \ref{LemmaSphericalFunctionIntegralFormula}, see \cite[Eq. 5.10, p.40]{Koornwinder}. The explicit integral formulas for $n$-dimensional hyperbolic space over the real, complex and quarternionic field respectively are given by 
$$ \omega_{\lambda}^{(n)}(g) = \Vol(\bS^{n-1})^{-1} \int_{\bS^{n-1}} |[a_{\ell(g)}.o, (1, v)]|^{-i\lambda - \rho} d\sigma(v) $$
for $t \geq 0$, where $\bS^{n-1}$ is the unit sphere in the hyperbolic space with canonical surface measure $\sigma$ and $\rho$ is the half sum of positive roots with respect to a fixed root system. For the octonionic case, $n = 2$ and one must make some modifications for the integral formula, but we do not get into any details about this here. We expect that the proof of Proposition \ref{PropTrigonometricExpansionofSphericalFunctions} can for the complex and quarternionic cases be carried out in a similar fashion for such real rank one symmetric spaces.
\end{remark}


\subsection{Bounds and asymptotics of spherical functions}
\label{Bounds and asymptotics of spherical functions}
We will now derive the main asymptotic bounds of spherical functions that will lead us to the proof of Theorem \ref{Theorem2} in the later sections.
\begin{proposition}
\label{PropSphericalAverage}
The spherical functions satisfy the following:
\begin{enumerate}
\item For every $r > 0$ and every $\lambda > 0$,
\begin{align*}
|\omega_{\lambda}^{(n)}(a_r)| \leq \frac{a_n}{\lambda \sinh(r)^{\frac{n - 1}{2}}} \, . 
\end{align*}
\item For every $\lambda > 0$ we have
\begin{align*}
\lim_{R \rightarrow +\infty} \frac{1}{R} \int_0^R |\omega^{(n)}_{\lambda}(a_r)|^2 \sinh(r)^{n - 1} dr = 2^{n - 2}  \frac{ |\rB(i\lambda, \frac{n-1}{2})|^2}{\rB(\frac{1}{2}, \frac{n-1}{2})^2} \, . 
\end{align*}
In particular, the convergence is uniform in $\lambda \geq \lambda_o$ for every $\lambda_o > 0$. 
\item Moreover, if $\lambda \in [0, \frac{n-1}{2}]$ and $r > 1$ then there is a constant $C = C(n) > 0$ such that
\begin{align*}
\omega_{i \lambda}^{(n)}(a_r) \geq C \frac{\sinh(\lambda r)}{\lambda \sinh(r)^{\frac{n-1}{2}}} \, . 
\end{align*}
\end{enumerate}
\end{proposition}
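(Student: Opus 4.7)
The three estimates all rest on the integral formula
\begin{align*}
\omega_\lambda^{(n)}(a_r) = a_n \sinh(r)^{2-n} \int_0^r (\cosh r - \cosh s)^{\frac{n-3}{2}} \cos(\lambda s)\, ds
\end{align*}
from Lemma \ref{LemmaSphericalFunctionIntegralFormula}, coupled to the trigonometric expansion of Proposition \ref{PropTrigonometricExpansionofSphericalFunctions}.

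\textbf{For item (1),} I plan to integrate by parts once, trading $\cos(\lambda s)\, ds$ for $d(\sin(\lambda s))/\lambda$. The boundary term at $s=0$ vanishes since $\sin(0)=0$; at $s=r$ it vanishes for $n \ge 4$ and for $n=3$ produces directly $\omega_\lambda^{(3)}(a_r) = a_3 \sin(\lambda r)/(\lambda \sinh r)$. For $n \ge 4$ the remaining integral is bounded in absolute value by $(\cosh r - 1)^{(n-3)/2}/\lambda$, after evaluating $\int_0^r (\cosh r - \cosh s)^{(n-5)/2} \sinh s\, ds = 2(\cosh r - 1)^{(n-3)/2}/(n-3)$ via the substitution $u = \cosh r - \cosh s$. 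The half-angle identities $\cosh r - 1 = 2\sinh^2(r/2)$ and $\sinh r = 2\sinh(r/2)\cosh(r/2)$ then reduce the quotient $(\cosh r - 1)^{(n-3)/2}/\sinh(r)^{n-2}$ to $\tanh(r/2)^{(n-3)/2}/\sinh(r)^{(n-1)/2}$, which is at most $\sinh(r)^{-(n-1)/2}$. In regimes where the purported right-hand side exceeds one the inequality is vacuous, by $|\omega_\lambda^{(n)}| \le 1$ from Lemma \ref{LemmaPropertiesofSphericalFunctions}(2).

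\textbf{For item (2),} I substitute the expansion of Proposition \ref{PropTrigonometricExpansionofSphericalFunctions} and use the double-angle identities to rewrite, on $[r_o, +\infty)$,
\begin{align*}
|\omega_\lambda^{(n)}(a_r)|^2 \sinh(r)^{n-1} = \frac{a_n^2(\alpha_n^2 + \beta_n^2)}{2} + \frac{a_n^2(\alpha_n^2 - \beta_n^2)}{2} \cos(2\lambda r) + a_n^2 \alpha_n \beta_n \sin(2\lambda r),
\end{align*}
with an $O(1)$ contribution from $[0, r_o]$ that is killed after division by $R$. The two oscillatory averages are $O((\lambda R)^{-1})$ via one integration by parts in $r$, using that $\alpha_n^2 - \beta_n^2$ and $\alpha_n \beta_n$ are continuous and bounded uniformly in $\lambda \ge \lambda_o$. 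The non-oscillatory Cesàro average converges to $\tfrac{a_n^2}{2}|\rB(i\lambda, (n-1)/2)|^2$ by Cesàro averaging of the pointwise limits $\alpha_n \to \Re \rB$ and $\beta_n \to \Im \rB$. The step I expect to be the main obstacle is the uniformity in $\lambda \ge \lambda_o$: one has to revisit the explicit formulas of Lemmas \ref{LemmaHyperbolicBeckab}--\ref{LemmaHyperbolicBeckAB} and verify that each $a_k(r, \lambda)/\cosh(r)^k \to k/(k^2 + \lambda^2)$ with an exponential-in-$r$ error whose prefactor is controlled by $\lambda_o^{-2}$, and in the even-dimensional case that the series $\sum_\ell E_\ell A_{m, \ell}(r, \lambda)$ is dominated $\lambda$-independently via Corollary \ref{CorollaryABAbsoluteConvergence}. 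Inserting $a_n^2/2 = 2^{n-2}/\rB((n-1)/2, 1/2)^2$ then produces the claimed limit.

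\textbf{For item (3),} since $\cos(i\lambda s) = \cosh(\lambda s)$ the integrand is manifestly positive, and I restrict the integration to $[0, r']$ with $r' = r-1$ if $r \ge 2$ and $r' = r/2$ if $r \in (1, 2]$. The product identities $\cosh r - \cosh(r-1) = 2\sinh((2r-1)/2)\sinh(1/2)$ and $\cosh r - \cosh(r/2) = 2\sinh(3r/4)\sinh(r/4)$ give $\cosh r - \cosh s \asymp \sinh r$ on $[0, r']$ (with obvious modifications for $n = 2$, where one uses $\cosh r - \cosh s \le \cosh r \le 2\sinh r$ instead), hence $(\cosh r - \cosh s)^{(n-3)/2} \ge c_n \sinh(r)^{(n-3)/2}$ on this interval. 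The remaining factor $\int_0^{r'} \cosh(\lambda s)\, ds = \sinh(\lambda r')/\lambda$, and the addition formula reduces $\sinh(\lambda r')/\sinh(\lambda r)$ to $1/(2\cosh \lambda)$ (for $r \ge 2$, via $\coth(\lambda r) \le \coth(2\lambda)$) or $1/(2\cosh(\lambda r/2))$ (for $r \in (1, 2]$), each bounded below by $1/(2\cosh((n-1)/2))$ on $\lambda \in [0, (n-1)/2]$. Multiplying out the $a_n \sinh(r)^{2-n}$ prefactor then yields the bound.
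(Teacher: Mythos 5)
Your three arguments follow essentially the paper's own route: item (1) from the integral formula of Lemma \ref{LemmaSphericalFunctionIntegralFormula}, item (2) from the trigonometric expansion of Proposition \ref{PropTrigonometricExpansionofSphericalFunctions} plus Ces\`aro averaging (which is exactly the role of the paper's Lemma \ref{LemmaAverageLimits}), and item (3) by truncating the $s$-integral away from the endpoint $s=r$ and using elementary hyperbolic estimates. Where you differ, you mostly improve: in (1) the integration by parts replaces the paper's monotonicity pull-out and even recovers the exact formula at $n=3$; in (3) the case split $r\ge 2$ versus $r\in(1,2]$ together with the identity $\cosh\lambda-\coth(2\lambda)\sinh\lambda=\tfrac{1}{2\cosh\lambda}$ gives a clean bound valid down to $r>1$ (the paper's intermediate step $\sinh(\lambda(r-1))\ge e^{-\frac{n-1}{2}}\sinh(\lambda r)$ is in fact not correct for $r$ close to $1$ and $\lambda$ small, although the final statement survives), and you treat $n=2$ there explicitly.

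Two caveats. First, in item (1) your argument covers only $n\ge 3$: for $n=2$ the boundary term at $s=r$ diverges, and the fallback $|\omega^{(2)}_{\lambda}|\le 1$ only disposes of the regime where the right-hand side exceeds $1$. This gap cannot be closed, because the bound as stated is false for $n=2$: by the expansion you invoke in item (2), $|\omega^{(2)}_{\lambda}(a_r)|$ oscillates with asymptotic amplitude $a_2\,|\rB(i\lambda,\tfrac12)|\sinh(r)^{-1/2}\asymp \lambda^{-1/2}\sinh(r)^{-1/2}$, which exceeds $a_2\lambda^{-1}\sinh(r)^{-1/2}$ for large $\lambda$. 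The paper's proof has the same hidden restriction (its pointwise domination by $(\cosh r-1)^{\frac{n-3}{2}}$ reverses when $n=2$), and only the dimension parameter $n+2\ge 4$ is used in Theorem \ref{TheoremHyperbolicBeck}, so the right fix is simply to state (1) for $n\ge 3$. Second, in item (2) one integration by parts of the oscillatory terms requires control of the $r$-derivatives (or total variation) of $\alpha_n,\beta_n$, not merely their continuity and boundedness; for the limit itself it is simpler to split $\alpha_n,\beta_n$ into their limits plus $o(1)$ remainders, as in Lemma \ref{LemmaAverageLimits}, while for the uniformity in $\lambda\ge\lambda_o$ you are right that one must extract $\lambda$-uniform rates from the explicit formulas of Lemmas \ref{LemmaHyperbolicBeckab} and \ref{LemmaHyperbolicBeckAB} (the paper's proof does not spell this out either).
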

\begin{remark}
The limit formula in item (2) of the above Proposition has previously been computed by Strichartz in \cite[Eq.4.20, p.84]{Strichartz1989HarmonicAA} by viewing the spherical functions on $\H^n$ as generalized Legendre functions. We present another proof of this using the trigonometric expansion given in Proposition \ref{PropTrigonometricExpansionofSphericalFunctions}.
\end{remark}
To prove item (2) in Proposition \ref{PropSphericalAverage}, we use of the following elementary Lemma. 

\begin{lemma}
\label{LemmaAverageLimits}
Let $R_o \geq 0$ and suppose that $\alpha, \beta : [R_o, +\infty) \rightarrow \R$ are bounded continuous functions such that $\alpha(R) \rightarrow a$ and $\beta(R) \rightarrow b$ as $R \rightarrow +\infty$. Then for every $\lambda > 0$,
\begin{align*}
\frac{1}{R}\int_{R_o}^{R} (\alpha(r) \cos(\lambda r) + \beta(r) \sin(\lambda r))^2 dr \longrightarrow \frac{a^2 + b^2}{2} \, , \quad R \rightarrow +\infty \, . 
\end{align*}
\end{lemma}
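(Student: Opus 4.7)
The plan is to expand the square and use standard double-angle identities to split the integrand into a ``slowly varying'' piece whose average equals $\frac{a^2+b^2}{2}$ and an ``oscillatory'' piece whose average vanishes in the limit. Concretely, using $\cos^2(\lambda r) = \tfrac{1+\cos(2\lambda r)}{2}$, $\sin^2(\lambda r) = \tfrac{1-\cos(2\lambda r)}{2}$, and $2\sin(\lambda r)\cos(\lambda r) = \sin(2\lambda r)$, the integrand rewrites as
\begin{align*}
(\alpha(r)\cos(\lambda r) + \beta(r)\sin(\lambda r))^2 = \frac{\alpha(r)^2 + \beta(r)^2}{2} + \frac{\alpha(r)^2 - \beta(r)^2}{2}\cos(2\lambda r) + \alpha(r)\beta(r)\sin(2\lambda r).
\end{align*}

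For the first (non-oscillatory) summand, I would argue that since $\alpha, \beta$ are bounded and converge to $a, b$ respectively, the function $r \mapsto \frac{\alpha(r)^2 + \beta(r)^2}{2}$ is bounded and converges to $\frac{a^2 + b^2}{2}$ as $r \to +\infty$. A standard Ces\`aro-type lemma then yields
\begin{align*}
\lim_{R \to +\infty} \frac{1}{R} \int_{R_o}^R \frac{\alpha(r)^2 + \beta(r)^2}{2} dr = \frac{a^2 + b^2}{2}.
\end{align*}

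For the two oscillatory summands, I claim the following general fact: if $\gamma : [R_o, +\infty) \to \R$ is bounded and continuous with $\gamma(r) \to c$ as $r \to +\infty$, then $\frac{1}{R}\int_{R_o}^R \gamma(r) \cos(2\lambda r) dr \to 0$, and similarly with $\sin$. To see this, split $\gamma = c + (\gamma - c)$. For the constant part, $\frac{1}{R}\int_{R_o}^R c\cos(2\lambda r) dr = O(1/R)$ by direct integration. For the perturbation, fix $\varepsilon > 0$ and pick $R_1$ with $|\gamma(r) - c| < \varepsilon$ for $r \geq R_1$; then the contribution is at most $\frac{1}{R}\int_{R_o}^{R_1} |\gamma - c| dr + \varepsilon$, the first term vanishing as $R \to +\infty$. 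Applying this with $\gamma = \tfrac{1}{2}(\alpha^2 - \beta^2)$ and $\gamma = \alpha\beta$ kills both oscillatory terms.

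There is no substantial obstacle; the only subtlety is that the convergence is genuinely a Ces\`aro average rather than a Riemann--Lebesgue statement, so boundedness of $\alpha, \beta$ (not just integrability of $\gamma - c$) is used essentially to control $\frac{1}{R}\int_{R_o}^{R_1}|\gamma - c|\,dr$. The hypothesis $\lambda > 0$ is needed solely so that the antiderivatives $\frac{\sin(2\lambda r)}{2\lambda}$ and $\frac{\cos(2\lambda r)}{2\lambda}$ are bounded, giving the $O(1/R)$ estimate on the constant part.
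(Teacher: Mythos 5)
Your proof is correct. Note that the paper itself offers no argument for this lemma (it is invoked as an ``elementary Lemma'' immediately before the proof of Proposition \ref{PropSphericalAverage}), so there is no official proof to compare against; your decomposition via the double-angle identities, the Ces\`aro averaging of the non-oscillatory term $\tfrac{1}{2}(\alpha^2+\beta^2)$, and the $\gamma = c + (\gamma - c)$ splitting to kill the $\cos(2\lambda r)$ and $\sin(2\lambda r)$ terms is exactly the standard way to fill this gap, and each step is sound. Your closing remarks are also apt: convergence of $\gamma$ (not mere boundedness) is genuinely needed for the oscillatory averages to vanish, and $\lambda>0$ is what makes the antiderivatives $\tfrac{\sin(2\lambda r)}{2\lambda}$, $\tfrac{\cos(2\lambda r)}{2\lambda}$ bounded so the constant part contributes only $O(1/R)$.
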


\begin{proof}[Proof of Proposition \ref{PropSphericalAverage}]
To prove (1) we use that $\cosh(r) - 1 \leq \sinh(r)$ for all $ r \geq 0$ to find that
\begin{align*}
|\omega_{\lambda}^{(n)}(a_r)| &= \frac{a_n}{\sinh(r)^{n-2}} \Big| \int_0^r (\cosh(r) - \cosh(s))^{\frac{n-3}{2}} \cos(\lambda s) ds \Big| \\
&\leq a_n \frac{(\cosh(r) - 1)^{\frac{n-3}{2}}}{\sinh(r)^{n-2}} \Big| \int_0^r \cos(\lambda s) ds\Big| \leq \frac{a_n}{\lambda \sinh(r)^{\frac{n-1}{2}}} \, . 
\end{align*}
For (2) it is enough to show that 
\begin{align*}
\lim_{R \rightarrow +\infty} \frac{1}{R} \int_{R_o}^R |\omega^{(n)}_{\lambda}(a_r)|^2 \sinh(r)^{n - 1} dr = 2^{n - 2} \frac{ |\rB(i\lambda, \frac{n-1}{2})|^2}{\rB(\frac{1}{2}, \frac{n-1}{2})^2}  
\end{align*}
for some choice of $R_o \geq 0$. Choose $R_o$ sufficiently large so that we can apply Proposition \ref{PropTrigonometricExpansionofSphericalFunctions} and Lemma \ref{LemmaAverageLimits} to find that 
\begin{align*}
\frac{1}{R} \int_{R_o}^R &|\omega^{(n)}_{\lambda}(a_r)|^2 \sinh(r)^{n - 1} dr = \frac{a_n^2}{R} \int_{R_o}^R (\alpha_n(r, \lambda)\cos(\lambda r) + \beta_n(r, \lambda) \sin(\lambda r))^2 dr  \\
&\longrightarrow a_n^2 \frac{\Re(\rB(i\lambda, \frac{n-1}{2}))^2 + \Im(\rB(i\lambda, \frac{n-1}{2}))^2}{2} = 2^{n - 2} \frac{ |\rB(i\lambda, \frac{n-1}{2})|^2}{\rB(\frac{1}{2}, \frac{n-1}{2})^2}  
\end{align*}
as $R \rightarrow +\infty$. 

For (3), we bound from below by
\begin{align*}
\omega_{i\lambda}(a_r) &= \frac{a_n}{\sinh(r)^{n-2}} \int_0^r (\cosh(r) - \cosh(s))^{\frac{n-3}{2}} \cosh(\lambda s) ds \\
&\geq  \frac{a_n}{\sinh(r)^{n-2}} \int_0^{r - 1} (\cosh(r) - \cosh(s))^{\frac{n-3}{2}} \cosh(\lambda s) ds \\
&\geq a_n \frac{(\cosh(r) - \cosh(r - 1))^{\frac{n-3}{2}}}{\sinh(r)^{n-2}} \int_0^{r-1} \cosh(\lambda s) ds \\
&= a_n \frac{\cosh(r)^{\frac{n-3}{2}}}{\sinh(r)^{n-2}} \Big( 1 - \frac{\cosh(r - 1)}{\cosh(r)} \Big)^{\frac{n-3}{2}} \frac{\sinh(\lambda(r - 1))}{\lambda} \\
&\geq \frac{a_n}{\sinh(r)^{\frac{n-1}{2}}} \Big( 1 - \frac{\cosh(r - 1)}{\cosh(r)} \Big)^{\frac{n-3}{2}} \frac{\sinh(\lambda(r - 1))}{\lambda} \, . 
\end{align*}
Using that $\lambda \in [0, \frac{n-1}{2}]$ and $r > 1$, elementary calculations yield the lower bounds 
\begin{align*}
\sinh(\lambda(r - 1)) \geq \e^{-\frac{n-1}{2}} \sinh(\lambda r) \quad \mbox{ and } \quad  1 - \frac{\cosh(r - 1)}{\cosh(r)}  \geq 1 - \frac{2}{\e}
\end{align*}
so that 
\begin{align*}
\omega_{i\lambda}(a_r) \geq C \frac{\sinh(\lambda r)}{\lambda \sinh(r)^{\frac{n-1}{2}}} \, , \quad C = a_n \e^{-\frac{n-1}{2}} (1 - 2\e^{-1})^{\frac{n-3}{2}} \, . 
\end{align*}
\end{proof}

\subsection{The spherical transform on $\H^n$}
\label{The spherical transform on H^n}

Given a function $\varphi \in \Borelbndinfty(G_n)$, its \emph{Fourier-Helgason transform} is the function $\sF\varphi : \C \times \bS^{n-1} \rightarrow \C$ given by
\begin{align*}
\sF\varphi(\lambda, u) = \int_{G_n} \varphi(g) \overline{\xi_{\lambda, u}(g)} dm_{G_n}(g) \, . 
\end{align*}
If $\varphi$ happens to be bi-$K_n$-invariant, then 
\begin{align*}
\sF\varphi(\lambda, u) = \int_{G_n} \varphi(g) \overline{\omega_{\lambda}(g)} dm_{G_n}(g) 
\end{align*}
for any choice of $u \in \bS^{n-1}$.
\begin{definition}[Hyperbolic spherical transform]
The \emph{spherical transform} of a function $\varphi \in \Borelbndinfty(G_n, K_n)$ is the function $\hat{\varphi} : \C \rightarrow \C$ given by 
\begin{align*}
\hat{\varphi}(\lambda) = \int_{G_n} \varphi(g) \overline{\omega_{\lambda}(g)} dm_{G_n}(g) = \int_0^{\infty} \varphi(a_t) \overline{\omega_{\lambda}(a_t)} \sinh(t)^{n-1} dt \, . 
\end{align*}
\end{definition}
One shows that $\hat{\varphi}$ is even and analytic on $\C$ for every $\varphi \in \Borelbndinfty(G_n, K_n)$ and that $\hat{\varphi_1^* * \varphi_2}(\lambda) = \overline{\hat{\varphi}_1(\overline{\lambda})} \hat{\varphi}_2(\lambda)$. In particular, if $\lambda \in \R \cup i\R$ then $\hat{\varphi^* * \varphi}(\lambda) = |\hat{\varphi}(\lambda)|^2$. 

As for the $L^2$-theory of the spherical transform, we again have a Plancherel formula. To state it, we introduce the \emph{Harish-Chandra $c$-function}
\begin{align*}
c_n(\lambda) = \rho_n 2^{\frac{n - 2}{2}} \frac{\rB(i\lambda, \frac{n-1}{2})}{\rB(\frac{1}{2}, \frac{n-1}{2})} = \rho_n 2^{\frac{n - 2}{2}}  \frac{\Gamma(\frac{n}{2}) \Gamma(i\lambda)}{\Gamma(\frac{n-1}{2}) \Gamma(i \lambda + \frac{n-1}{2})}  \, , \quad \lambda \in \C \backslash i \Z_{\geq 0} \, ,
\end{align*}
where $\rho_n > 0$ is a normalizing constant such that the Plancherel formula below holds. The precise value of $\rho_n$ is not crucial to us in proving the remaining main results. A computation of the $c$-function with an exact value of $\rho_n$ can be found in \cite[Section 4, Thm 4.2. p.334]{Takahashi1963SurLR}, but we note that the value depends on the normalization of the Haar measure $m_{G_n}$. Note that $c_n$ is meromorphic and does not have any zeros in $\C$. In particular, $|c_n|$ is a strictly positive function.  
\begin{lemma}[Spherical Plancherel formula]
For every $\varphi \in \Borelbndinfty(G_n, K_n)$,
\begin{align*}
\int_{G_n} |\varphi(g)|^2 dm_{G_n}(g) = \int_0^{\infty} |\hat{\varphi}(\lambda)|^2 \frac{d\lambda}{|c_n(\lambda)|^2}
\end{align*}
Equivalently, the spherical transform extends to a unitary map 
$$L^2(G_n, K_n) \rightarrow L^2(\R_{\geq 0}, |c_n(\lambda)|^{-2} d\lambda) \, . $$
\end{lemma}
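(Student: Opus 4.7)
The approach is to factor the spherical transform through the Abel transform followed by the Euclidean cosine transform, derive a spherical inversion formula from this factorisation, and then deduce Plancherel by substitution.

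First, by density of $C_c^\infty(G_n, K_n)$ in $L^2(G_n, K_n)$, it suffices to prove the identity on this dense subclass. For $\varphi \in C_c^\infty(G_n, K_n)$, combining Lemma \ref{LemmaSphericalFunctionIntegralFormula} with Fubini's theorem yields the factorisation
\begin{align*}
\hat\varphi(\lambda) = \int_0^\infty (\Abel\varphi)(s) \cos(\lambda s) ds \, ,
\end{align*}
where the \emph{Abel transform} $\Abel\varphi$ is defined by
\begin{align*}
(\Abel\varphi)(s) = a_n \int_s^\infty \varphi(a_t) \sinh(t) \big(\cosh(t) - \cosh(s)\big)^{(n-3)/2} dt \, , \quad s \geq 0 \, ,
\end{align*}
and extended to an even function on $\R$ (where $a_n$ is the normalising constant from Lemma \ref{LemmaSphericalFunctionIntegralFormula}). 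Thus $\hat\varphi$ is, up to a factor of $\tfrac{1}{2}$, the Euclidean Fourier transform of the compactly supported even function $\Abel\varphi$.

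Second, I would invert both transforms. In odd dimension $n = 2m+1$, iterated application of the operator $(\sinh(s)^{-1}\partial_s)^m$ to $\Abel\varphi(s)$ recovers $\varphi(a_s)$ up to a normalising constant, while in even dimension $n = 2m$ the inverse requires composition with a Riemann--Liouville half-order fractional integral. Composing with Euclidean cosine inversion $\Abel\varphi(s) = \tfrac{2}{\pi} \int_0^\infty \hat\varphi(\lambda) \cos(\lambda s) d\lambda$ yields the spherical inversion formula
\begin{align*}
\varphi(a_t) = \int_0^\infty \hat\varphi(\lambda) \omega_\lambda^{(n)}(a_t) \, \rho(\lambda) d\lambda
\end{align*}
for an explicit density $\rho$ depending only on $n$. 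The main obstacle is precisely the identification $\rho(\lambda) = |c_n(\lambda)|^{-2}$: one must carefully track the constants picked up by the Abel inversion and then use the Gamma duplication and reflection formulas to rewrite the resulting expression in $\lambda$ as the product $\overline{c_n(\lambda)} c_n(\lambda)$ with the $c$-function of the given form. The constant $\rho_n$ in the definition of $c_n$ is precisely determined by this identification, and the computation essentially reproduces that of Takahashi \cite{Takahashi1963SurLR}.

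Finally, from the inversion formula Plancherel follows by substitution and Fubini. Using that $\omega_\lambda^{(n)}$ is real-valued for $\lambda \in \R$ together with the Cartan integration formula on bi-$K_n$-invariant functions,
\begin{align*}
\int_{G_n} |\varphi(g)|^2 dm_{G_n}(g) &= \int_0^\infty \overline{\varphi(a_t)} \Big( \int_0^\infty \hat\varphi(\lambda) \omega_\lambda^{(n)}(a_t) \frac{d\lambda}{|c_n(\lambda)|^2} \Big) \sinh(t)^{n-1} dt \\
&= \int_0^\infty \hat\varphi(\lambda) \overline{\hat\varphi(\lambda)} \frac{d\lambda}{|c_n(\lambda)|^2} \, ,
\end{align*}
where in the swap of integrals we recognise $\overline{\hat\varphi(\lambda)} = \int_0^\infty \overline{\varphi(a_t)}\, \omega_\lambda^{(n)}(a_t) \sinh(t)^{n-1} dt$. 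Density of $C_c^\infty(G_n, K_n)$ in $L^2(G_n, K_n)$ then extends the equality to all bi-$K_n$-invariant square-integrable functions, giving the unitary extension in the statement.
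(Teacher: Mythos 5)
The paper never proves this lemma: it is invoked as the classical spherical Plancherel theorem for the Gelfand pair $(G_n,K_n)$, with the constant $\rho_n$ in $c_n$ \emph{defined} as the normalization making the formula hold and with the explicit computation delegated to Takahashi. So there is no internal proof to compare against; your Abel-transform route is the standard way this theorem is actually proved in the rank-one literature (it is essentially the Jacobi-function argument in Koornwinder's survey), and the skeleton is sound: the factorization $\hat\varphi = (\text{cosine transform})\circ\Abel$ does follow from Lemma \ref{LemmaSphericalFunctionIntegralFormula} and Fubini, the Abel transform of a compactly supported smooth radial function is again smooth, even and compactly supported, odd-dimensional inversion by $(\sinh(s)^{-1}\partial_s)^m$ and even-dimensional inversion by a half-order (Weyl-type) fractional integral are correct, and the final Fubini/polarization step using that $\omega_\lambda^{(n)}$ is real for real $\lambda$ is fine.

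Three points remain at sketch level and deserve to be said explicitly. First, the identification of the inversion density $\rho(\lambda)$ with $|c_n(\lambda)|^{-2}$ is the entire analytic content of the lemma, and you defer it to ``track the constants and use Gamma identities''; note, however, that because the paper fixes $\rho_n$ precisely by requiring Plancherel, you only need $\rho(\lambda)$ proportional to $|\rB(i\lambda,\tfrac{n-1}{2})|^{-2}$, i.e.\ to $|\Gamma(i\lambda+\tfrac{n-1}{2})/\Gamma(i\lambda)|^{2}$, which is exactly what the Abel-inversion constants give. Second, the statement claims a \emph{unitary} extension; your argument yields an isometry on a dense subspace, and surjectivity requires an extra step (density of the image, e.g.\ via the Paley--Wiener theorem for the spherical transform or by inverting the transform on a dense class in $L^2(\R_{\geq 0},|c_n|^{-2}d\lambda)$), which you do not address. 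Third, the lemma is stated for all $\varphi\in\Borelbndinfty(G_n,K_n)$, not just $\Ccinfty(G_n,K_n)$: after extending by density you still need the routine compatibility check that for bounded $\varphi$ with bounded support the pointwise-defined integral $\hat\varphi(\lambda)$ agrees almost everywhere with the $L^2$-limit (approximate by $\varphi_j\in\Ccinfty$ supported in a fixed compact set, so that $\hat\varphi_j\to\hat\varphi$ uniformly on compacta while also converging in $L^2$ of the Plancherel measure). None of these is a wrong turn, but as written they are gaps between your outline and a complete proof.
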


Before moving on, we will compute the spherical transform of indicator functions on balls.
\begin{example}[Spherical transform of the indicator function on a centered Cartan ball]
\label{ExHyperbolicIndicatorFT}
The computation of the spherical transform for the indicator function $\chi_{B_r}$ is similar to the Euclidean case. Using the formula 
$$ \omega^{(n)}_{\lambda}(t) = a_n \sinh(t)^{2 - n}\int_0^t (\cosh(t) - \cosh(s))^{\frac{n-3}{2}} \cos(\lambda s) ds \, , \quad a_n =\frac{2^{\frac{n-1}{2}}}{\rB(\frac{1}{2},\frac{n-1}{2})}  $$
for the spherical functions, Fubini's Theorem yields
\begin{align*}
\hat{\chi}_{B_r}(\lambda) &=   \int_0^r \omega^{(n)}_{\lambda}(t) \sinh(t)^{n-1} dt \\
&= a_n \int_0^r \Big( \int_0^t (\cosh(t) - \cosh(s))^{\frac{n-3}{2}} \cos(\lambda s) ds \Big) t  dt \\
&= a_n \int_0^r \Big( \int_s^r (\cosh(t) - \cosh(s))^{\frac{n-3}{2}} \sinh(t) dt \Big) \cos(\lambda s) ds \\
&= \frac{2 a_n}{n-1} \int_0^r (\cosh(r) - \cosh(s))^{\frac{n-1}{2}} \cos(\lambda s) ds \, . 
\end{align*}
In terms of spherical functions, we have 
\begin{align}
\label{EqHyperbolicFTIndincatorFunction}
\hat{\chi}_{B_r}(\lambda) =  b_n \sinh(r)^{n} \omega_{\lambda}^{(n + 2)}(r) \, , \quad b_n = \frac{2^{\frac{n+1}{2}}}{n} \, . 
\end{align}
\end{example}

\section{Hyperbolic diffraction measures}
\label{Hyperbolic diffraction measures}
In this section we will introduce the diffraction measure associated to a random measure on $\H^n$. Furthermore, we provide an asymptotic upper bound on the diffraction measures of large intervals, similar to the Euclidean case. The support such diffraction measures will be confined to the subset
\begin{align*}
\Omega_n^+ = (0, +\infty) \cup i[0, (n - 1)/2] \subset \C \, . 
\end{align*}
The following Lemma ensures existence and uniqueness of the diffraction measure of a random measure on $\H^n$, which we will prove in larger generality in upcoming work.
\begin{lemma}
\label{LemmaHyperbolicTemperedDiffraction}
Let $\mu$ be a locally square-integrable invariant random measure on $\H^n$ with autocorrelation measure $\eta_{\mu}$ on $G_n$. Then there is a unique positive Radon measure $\hat{\eta}_{\mu}$ on $\Omega_n^+$ such that
\begin{align*}
\int_{G_n} \varphi(g) d\eta_{\mu}(g) = \int_{\Omega_n^+} \hat{\varphi}(\lambda) d\hat{\eta}_{\mu}(\lambda) \, , \quad \forall \, \varphi \in \Ccinfty(G_n, K_n) \, . 
\end{align*}
Moreover, $\hat{\eta}_{\mu}(\{i \frac{n-1}{2} \}) = 0$ and the measures $\eta_{\mu}, \hat{\eta}_{\mu}$ are tempered in the sense that there are constants $a, b \in \Z_{\geq 0}$ such that
\begin{align*}
\int_{G_n} (1 + \ell(g))^{-a} \omega_0^{(n)}(g)^{- \frac{n+1}{2}} d\eta_{\mu}(g) < +\infty \quad \mbox{ and } \quad \int_{\Omega_n^+} (1 + |\lambda|)^{-b} d\hat{\eta}_{\mu}(\lambda) < +\infty \, . 
\end{align*}
\end{lemma}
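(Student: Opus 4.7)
The plan is to exploit the Gelfand pair structure of $(G_n, K_n)$ together with the positive-definiteness of $\eta_\mu$, and then invoke a spherical analogue of the Bochner--Schwartz theorem. By Lemma \ref{LemmaAutocorrelationRelationtoCovariance}, $\eta_\mu(\varphi^* * \varphi) = \Varmu(S(\varphi \circ \varsigma)) \geq 0$ for every $\varphi \in C_c(G_n, K_n)$, so $\eta_\mu$ is a bi-$K_n$-invariant, positive-definite signed Radon measure on $G_n$. Since the convolution algebra $C_c(G_n, K_n)$ is commutative by Lemma \ref{LemmaGelfandsTrick}, the spherical Bochner--Godement theorem in the form proved by Bopp \cite[Part II, Theorem 3]{Bopp} supplies a unique positive Radon measure on the spherical unitary dual $\hat{G}_n^{K_n}$ that represents $\eta_\mu$. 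Identifying $\hat{G}_n^{K_n}$ with $\Omega_n^+$ via the classical parametrization of the $K_n$-spherical functions $\omega_\lambda^{(n)}$ (principal series for $\lambda > 0$, complementary series for $\lambda \in i(0, (n-1)/2)$, Harish-Chandra $\Xi$-function at the endpoint $\lambda = 0$, trivial representation at $\lambda = i(n-1)/2$) produces the desired measure $\hat{\eta}_\mu$. The representation formula is first proved on the cone of test functions $\varphi^* * \varphi$ and then extended to arbitrary $\varphi \in \Ccinfty(G_n, K_n)$ by polarization.

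To establish $\hat{\eta}_\mu(\{i(n-1)/2\}) = 0$, I would use that the spherical function at this point equals the constant $1$, so a mass $c > 0$ there would contribute $c \, |m_{G_n}(\varphi)|^2$ to $\eta_\mu(\varphi^* * \varphi) = \Varmu(S(\varphi \circ \varsigma))$ for every bi-$K_n$-invariant $\varphi$. Testing against $\varphi_R = \chi_{B_R}$ and using Equation \eqref{EqHyperbolicVolumeGrowth} would force $c \leq \liminf_R \NVmu(R)/m_{G_n}(B_R)^2$. A standard application of the mean ergodic theorem, in Nevo's form for actions of $G_n$ on non-amenable spaces, after ergodic decomposition of $\mu$, shows this limit inferior vanishes once the intensity contribution $\iota_\mu m_{\H^n}$ has been subtracted in the definition of $\eta_\mu$.

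For the temperedness bounds, local square-integrability gives $\eta_\mu(\chi_{B_R}^* * \chi_{B_R}) = \Varmu(S\chi_{B_R}) < \infty$ for every $R > 0$. Combining Harish-Chandra's asymptotic $\omega_0^{(n)}(a_r) \asymp (1+r)\e^{-(n-1)r/2}$ with the explicit convolution arithmetic of Cartan balls in $G_n$, one can pointwise dominate the weight $(1+\ell(g))^{-a}\omega_0^{(n)}(g)^{-(n+1)/2}$ by an $R$-uniform bound on a normalized self-convolution $\chi_{B_R}^* * \chi_{B_R}/m_{G_n}(B_R)$ for a sufficiently large exponent $a$, yielding the primal finiteness. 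The dual bound on $\hat{\eta}_\mu$ then follows by duality, testing against a fixed Harish-Chandra--Schwartz function on $G_n$ whose spherical transform has explicit polynomial decay on $\Omega_n^+$, converting polynomial decay of the test function into the stated polynomial-growth bound on $\hat{\eta}_\mu$.

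The main obstacle is this last step: on the non-amenable group $G_n$, the precise interplay of polynomial and exponential weights requires Kunze--Stein-type convolution estimates, and keeping the exponents $a, b$ small enough to be useful in downstream applications such as Theorem \ref{Theorem2} is delicate. This is presumably why the authors defer the full treatment of diffraction measures on general commutative spaces to the separate article announced in the remark preceding the lemma.
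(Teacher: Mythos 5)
The paper itself contains no proof of this lemma: existence and uniqueness are attributed to Bopp \cite[Part II, Theorem 3]{Bopp} and the full treatment is explicitly deferred to an announced companion article, so there is no in-paper argument to compare against. Your first step --- positive-definiteness of $\eta_{\mu}$ on the commutative convolution algebra of bi-$K_n$-invariant functions (Lemmas \ref{LemmaAutocorrelationRelationtoCovariance} and \ref{LemmaGelfandsTrick}) plus a spherical Bochner--Godement theorem, with the positive-definite spherical dual of $(G_n,K_n)$ identified with $(0,\infty)\cup i[0,\tfrac{n-1}{2}]$ --- is exactly the route the authors point to, and that part of the outline is sound modulo the usual care about which class of positive-definite distributions Bopp's theorem covers.

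The genuine gaps are in the two ``Moreover'' clauses, which is where the content of the lemma lies. For the atom at $i\tfrac{n-1}{2}$: the mean ergodic theorem identifies the $L^2(\mu)$-limit of $S\chi_{B_R}/m_{G_n}(B_R)$ with the conditional expectation of the intensity over the invariant $\sigma$-algebra, not with the constant $\iota_{\mu}$, and the mass $\hat{\eta}_{\mu}(\{i\tfrac{n-1}{2}\})$ is precisely the variance of that conditional intensity; subtracting the global term $\iota_{\mu}^2 m_{G_n}$ in Definition \ref{DefAutocorrelation} does not remove it. Concretely, for a mixture of two invariant Poisson processes with distinct intensities $\iota_1\neq\iota_2$ one finds (up to positive normalizing constants) $\eta_{\mu}=\tfrac{\iota_1+\iota_2}{2}\,\delta_e+\tfrac{(\iota_1-\iota_2)^2}{4}\,m_{G_n}$, and since $\omega^{(n)}_{i\frac{n-1}{2}}\equiv 1$, i.e. $\hat{\varphi}(i\tfrac{n-1}{2})=m_{G_n}(\varphi)$, this produces a strictly positive atom at the trivial spherical function. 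So your ``ergodic decomposition plus mean ergodic theorem'' step cannot close in the stated generality; an ergodicity (or deterministic-intensity) input is indispensable, and your sketch neither states nor uses one.

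The temperedness argument also does not go through as described. The weight $(1+\ell(g))^{-a}\,\omega_0^{(n)}(g)^{-\frac{n+1}{2}}$ grows like $e^{\frac{n^2-1}{4}\ell(g)}$ (since $\omega_0^{(n)}(a_r)\asymp(1+r)e^{-\frac{n-1}{2}r}$), whereas $\chi_{B_R}^**\chi_{B_R}/m_{G_n}(B_R)$ is bounded by $1$; more decisively, $\eta_{\mu}$ is a \emph{signed} measure containing $-\iota_{\mu}^2 m_{G_n}$, and the weight is not integrable against $m_{G_n}$ for any $a$ (the radial integrand behaves like $(1+r)^{-a-\frac{n+1}{2}}e^{(\frac{n^2-1}{4}+n-1)r}$). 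Hence the asserted finiteness is a cancellation statement between the two-point correlation and the Haar term, which no pointwise-domination/positivity scheme of the type you propose can detect, and local square-integrability alone does not supply the needed quantitative control. Likewise, for the bound on $\hat{\eta}_{\mu}$ you need a test function whose spherical transform is bounded \emph{below} by a power $(1+|\lambda|)^{-b}$ on $\Omega_n^+$ (or the heat-kernel argument of Proposition \ref{PropHyperbolicBergFrost} with $\tau\asymp L^{-2}$, which itself presupposes the first temperedness bound via Lemma \ref{LemmaDaviesMandouvalos}); merely testing against a Schwartz-type function with polynomially decaying transform gives no growth bound. As written, these steps are assertions rather than proofs, and they are exactly the points the authors defer to their forthcoming general treatment.
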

Since the spherical function $\omega_0^{(n)}$ decays exponentially, the following family of functions are integrable with respect to the autocorrelation measure.
\begin{corollary}
\label{CorollarySuperExponentiallyDecayingFunctionIntegrableAutocorrelation}
If $\varphi : G_n \rightarrow \C$ is a measurable function such that
$$ |\varphi(g)|\e^{-\alpha d(g.o, o)} \longrightarrow 0 $$
as $g \rightarrow +\infty$ for every $\alpha > 0$, then $\varphi$ is integrable with respect to $\eta_{\mu}$.
\end{corollary}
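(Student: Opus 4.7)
The plan is to reduce the claim to the temperedness bound from Lemma \ref{LemmaHyperbolicTemperedDiffraction}, which supplies an integer $a \in \Z_{\geq 0}$ such that
\begin{align*}
\int_{G_n} (1+\ell(g))^{-a} \, \omega_0^{(n)}(g)^{-\frac{n+1}{2}} \, d|\eta_\mu|(g) < +\infty .
\end{align*}
It thus suffices to produce a constant $C > 0$ with $|\varphi(g)| \leq C (1+\ell(g))^{-a} \omega_0^{(n)}(g)^{-(n+1)/2}$ for all $g \in G_n$, after which the desired integrability is immediate.

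To extract the exponential growth rate of the right-hand side I would pass $\lambda \to 0^+$ in Proposition \ref{PropSphericalAverage}(3); using $\sinh(\lambda r)/\lambda \to r$ and continuity of $\lambda \mapsto \omega_{i\lambda}^{(n)}(a_r)$, this yields a constant $c_n > 0$ with $\omega_0^{(n)}(a_r) \geq c_n \, r \sinh(r)^{-(n-1)/2}$ for $r \geq 1$. Raising to the power $-(n+1)/2$, there exists $C' > 0$ such that
\begin{align*}
(1+\ell(g))^{-a} \, \omega_0^{(n)}(g)^{-\frac{n+1}{2}} \;\geq\; C' \, (1+\ell(g))^{-a-\frac{n+1}{2}} \, \e^{\kappa \ell(g)} , \qquad \ell(g) \geq 1 ,
\end{align*}
where $\kappa = (n-1)(n+1)/4 > 0$. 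On the compact region $\{\ell(g) \leq 1\}$ the weight is bounded below by a positive constant, since $\omega_0^{(n)} \leq 1$ on $G_n$ by Lemma \ref{LemmaPropertiesofSphericalFunctions}(2).

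Finally, I would fix any $\alpha \in (0, \kappa)$ and extract from the hypothesis $|\varphi(g)| \e^{-\alpha \ell(g)} \to 0$ a global bound $|\varphi(g)| \leq M_\alpha \, \e^{\alpha \ell(g)}$ on all of $G_n$ (absorbing the local behaviour on the compact set $\{\ell(g) \leq 1\}$ into $M_\alpha$). Comparing to the previous step, the pointwise ratio of $|\varphi|$ to the weight is dominated on $\{\ell(g) \geq 1\}$ by
\begin{align*}
\frac{M_\alpha}{C'} \, (1+\ell(g))^{a+\frac{n+1}{2}} \, \e^{(\alpha - \kappa)\ell(g)} ,
\end{align*}
which is a polynomial times a decaying exponential and hence uniformly bounded, and it is bounded on $\{\ell(g) \leq 1\}$ by compactness. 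This furnishes the required pointwise domination and the conclusion follows. The only nontrivial ingredient is the sharp exponential lower bound on the Harish-Chandra $\Xi$-function $\omega_0^{(n)}$, but this is packaged for us by Proposition \ref{PropSphericalAverage}(3), so no substantial obstacle arises.
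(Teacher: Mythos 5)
Your overall reduction is the right one (and the one the paper intends): use the temperedness bound of Lemma \ref{LemmaHyperbolicTemperedDiffraction} and dominate $|\varphi|$ pointwise by the weight $(1+\ell(g))^{-a}\,\omega_0^{(n)}(g)^{-\frac{n+1}{2}}$. But the key step where you produce the exponential lower bound on this weight is derived incorrectly. Proposition \ref{PropSphericalAverage}(3) (and its $\lambda \to 0^+$ limit) gives a \emph{lower} bound $\omega_0^{(n)}(a_r) \geq c_n\, r\,\sinh(r)^{-\frac{n-1}{2}}$. Raising an inequality to the negative power $-\frac{n+1}{2}$ reverses it, so from this you may only conclude
\begin{align*}
(1+\ell(g))^{-a}\,\omega_0^{(n)}(g)^{-\frac{n+1}{2}} \;\leq\; C'\,(1+\ell(g))^{-a}\,\ell(g)^{-\frac{n+1}{2}}\,\e^{\kappa \ell(g)} ,
\end{align*}
an \emph{upper} bound on the weight, which is useless for the domination $|\varphi| \leq C\,w$: that requires a lower bound on $w$, i.e. an \emph{upper} bound on $\omega_0^{(n)}$ of the form $\omega_0^{(n)}(a_r) \ll_n (1+r)\e^{-\frac{n-1}{2}r}$ (exponential decay of the Harish-Chandra $\Xi$-function). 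This is exactly the fact the paper appeals to in the sentence preceding the corollary, and nothing in your argument establishes it; Proposition \ref{PropSphericalAverage}(3) points in the opposite direction. The conclusion you wrote is true, but the logic deriving it is reversed, so as it stands the proof has a genuine gap.

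The missing estimate is easy to supply from Lemma \ref{LemmaSphericalFunctionIntegralFormula}: for $n \geq 3$, bound $(\cosh(r)-\cosh(s))^{\frac{n-3}{2}} \leq \cosh(r)^{\frac{n-3}{2}}$ to get $\omega_0^{(n)}(a_r) \leq a_n\, r\, \cosh(r)^{\frac{n-3}{2}}\sinh(r)^{2-n} \ll_n (1+r)\e^{-\frac{n-1}{2}r}$ for $r \geq 1$ (the case $n=2$ needs a slightly different treatment, or one quotes the classical estimate $\omega_0^{(n)}(a_r) \asymp (1+r)\e^{-\frac{n-1}{2}r}$). With this in hand, your comparison goes through: any exponential decay rate for $\omega_0^{(n)}$ makes the weight grow exponentially, which dominates the subexponential growth of $\varphi$. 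One further small point: the hypothesis only controls $\varphi$ near infinity, so your ``global'' bound $|\varphi(g)| \leq M_\alpha \e^{\alpha\ell(g)}$ implicitly assumes $\varphi$ is bounded on the compact set where the limit gives no information; this should either be stated or absorbed by noting the intended applications are to continuous (indeed smooth) functions such as the heat kernel.
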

In fact, $\eta_{\mu}$ extends to a continuous linear functional on a certain \emph{Harish-Chandra $L^p$-space} of smooth functions that are rapidly decaying with respect to a power of $\omega_0$, but for our purposes we will not need this here. 

We define the \emph{principal diffraction measure} $\hat{\eta}_{\mu}^{(p)}$ and the \emph{complementary diffraction measure $\hat{\eta}_{\mu}^{(c)}$} to be the restrictions of $\hat{\eta}_{\mu}$ to $(0, +\infty)$ and $i[0, \frac{n-1}{2})$ respectively, so that
\begin{align*}
\int_{\Omega_n^+} \hat{\varphi}(\lambda) d\hat{\eta}_{\mu}(\lambda) = \int_0^{\infty} \hat{\varphi}(\lambda) d\hat{\eta}_{\mu}^{(p)}(\lambda) + \int_0^{\frac{n-1}{2}} \hat{\varphi}(i\lambda) d\hat{\eta}_{\mu}^{(c)}(\lambda) \, , \quad \varphi \in \Ccinfty(G_n, K_n) \, . 
\end{align*}
Both measures $\hat{\eta}_{\mu}^{(p)}, \hat{\eta}_{\mu}^{(c)}$ are positive Radon measures, and $\hat{\eta}_{\mu}^{(c)}$ is necessarily a finite measure. As in the Euclidean case, we can extend the defining identity
\begin{align*}
\Var_{\mu}(Sf) = \eta_{\mu}(\varphi_f^* * \varphi_f) = \int_{0}^{\infty} |\hat{\varphi}_f(\lambda)|^2 d\hat{\eta}_{\mu}^{(p)}(\lambda) + \int_0^{\frac{n-1}{2}} |\hat{\varphi}_f(i\lambda)|^2 d\hat{\eta}_{\mu}^{(c)}(\lambda) 
\end{align*}
to all radial functions $f \in \Borelbndinfty(\H^n)$, in particular the indicator functions $f = \chi_{B_r}$, $r > 0$.
\begin{lemma}
Let $\mu$ be an invariant locally square-integrable random measure on $\H^n$ and $\varphi \in \Borelbndinfty(G_n, K_n)$. Then
\begin{align*}
\eta_{\mu}(\varphi^* * \varphi) = \int_{\Omega_n^+} |\hat{\varphi}(\lambda)|^2 d\hat{\eta}_{\mu}(\lambda) \, .
\end{align*}
In particular, $\hat{\varphi} \in L^2(\Omega_n^+, \hat{\eta}_{\mu})$.
\end{lemma}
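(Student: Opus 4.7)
The strategy is to mimic the proof of Lemma~\ref{LemmaExtensionOfDiffractionFormulaToMeasurableFunctions} in the Euclidean setting by approximating $\varphi$ with smooth compactly supported bi-$K_n$-invariant functions, using convolution against a Dirac sequence in place of Gaussian mollification. Fix a symmetric Dirac sequence $(\beta_\varepsilon)_{\varepsilon > 0}$ in $\Ccinfty(G_n, K_n)$ with $\beta_\varepsilon \geq 0$, $\beta_\varepsilon^* = \beta_\varepsilon$, $m_{G_n}(\beta_\varepsilon) = 1$, and $\supp(\beta_\varepsilon)$ shrinking to $\{e\}$ as $\varepsilon \to 0^+$, and set $\varphi_\varepsilon = \varphi * \beta_\varepsilon$. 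Since $\varphi \in \Borelbndinfty(G_n, K_n)$ has bounded support and $\beta_\varepsilon$ is smooth and compactly supported, both $\varphi_\varepsilon$ and $\varphi_\varepsilon^* * \varphi_\varepsilon$ belong to $\Ccinfty(G_n, K_n)$. The defining identity for $\hat{\eta}_\mu$ from Lemma~\ref{LemmaHyperbolicTemperedDiffraction} therefore applies and, combined with $\hat{\varphi_\varepsilon^* * \varphi_\varepsilon} = |\hat{\varphi}|^2 |\hat{\beta}_\varepsilon|^2$ on $\Omega_n^+ \subset \R \cup i\R$, gives
\begin{align*}
\eta_\mu(\varphi_\varepsilon^* * \varphi_\varepsilon) = \int_{\Omega_n^+} |\hat{\varphi}(\lambda)|^2 \, |\hat{\beta}_\varepsilon(\lambda)|^2 \, d\hat{\eta}_\mu(\lambda).
\end{align*}

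On the group side, I would invoke the Gelfand pair commutativity (Lemma~\ref{LemmaGelfandsTrick}) for bi-$K_n$-invariant measures to rewrite $\varphi_\varepsilon^* * \varphi_\varepsilon = (\varphi^* * \varphi) * (\beta_\varepsilon * \beta_\varepsilon)$. Since $\varphi \in L^2(G_n)$, the function $\varphi^* * \varphi$ is continuous and compactly supported on $G_n$, and convolution against the approximate identity $\beta_\varepsilon * \beta_\varepsilon$ converges uniformly to $\varphi^* * \varphi$ while keeping supports inside a fixed compact neighborhood of $\supp(\varphi^* * \varphi)$. The Radonness of $\eta_\mu$ then yields $\eta_\mu(\varphi_\varepsilon^* * \varphi_\varepsilon) \to \eta_\mu(\varphi^* * \varphi)$, and this limit equals $\Var_\mu(S(\varphi \circ \varsigma)) < \infty$ by Lemma~\ref{LemmaAutocorrelationRelationtoCovariance} and local square-integrability of $\mu$.

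On the spectral side, the crucial input is that $|\omega_\lambda^{(n)}(g)| \leq 1$ for every $\lambda \in \Omega_n^+$, which is the content of Lemma~\ref{LemmaPropertiesofSphericalFunctions}(2) (including the boundary point $\lambda = i(n-1)/2$, where $\omega_\lambda \equiv 1$). Consequently, $|\hat{\beta}_\varepsilon(\lambda)| \leq m_{G_n}(\beta_\varepsilon) = 1$ uniformly on $\Omega_n^+$, and since $\omega_\lambda$ is continuous at $g = e$ with $\omega_\lambda(e) = 1$, the concentration of $\beta_\varepsilon$ at $e$ gives $\hat{\beta}_\varepsilon(\lambda) \to 1$ pointwise on $\Omega_n^+$. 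Applying Fatou's lemma to the non-negative integrands $|\hat{\varphi}|^2 |\hat{\beta}_\varepsilon|^2$ yields
\begin{align*}
\int_{\Omega_n^+} |\hat{\varphi}(\lambda)|^2 \, d\hat{\eta}_\mu(\lambda) \leq \liminf_{\varepsilon \to 0^+} \eta_\mu(\varphi_\varepsilon^* * \varphi_\varepsilon) = \eta_\mu(\varphi^* * \varphi) < \infty,
\end{align*}
which establishes $\hat{\varphi} \in L^2(\Omega_n^+, \hat{\eta}_\mu)$. Dominated convergence with dominator $|\hat{\varphi}|^2 \in L^1(\hat{\eta}_\mu)$ then upgrades this inequality to the desired equality. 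The main technical point is the uniform bound $|\hat{\beta}_\varepsilon(\lambda)| \leq 1$ across all of $\Omega_n^+$, which depends essentially on the complementary parameters being confined to the strip $i[0, (n-1)/2]$ where spherical functions remain bounded by $1$; outside this strip the argument would fail since spherical functions grow exponentially.
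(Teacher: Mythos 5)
Your argument is correct. It has the same overall skeleton as the paper's proof — regularize $\varphi$, apply the defining identity of $\hat{\eta}_{\mu}$ to the regularized function, and pass to the limit on both the group and the spectral side — but the regularization and the limit mechanism differ in a way worth noting. The paper simply repeats the Euclidean proof of Lemma \ref{LemmaExtensionOfDiffractionFormulaToMeasurableFunctions} with $\beta_{\varepsilon}$ the bi-$K_n$-invariant heat-type function with $\hat{\beta}_{\varepsilon}(\lambda) = \e^{-\varepsilon^2(\lambda^2 + (\frac{n-1}{2})^2)}$; since $\lambda^2 + (\frac{n-1}{2})^2 \geq 0$ on all of $\Omega_n^+$ (including the complementary strip), this multiplier is bounded by $1$ and increases monotonically to $1$ as $\varepsilon \rightarrow 0^+$, so the spectral limit follows from monotone convergence in one stroke, with no separate integrability step; the cost is that $\varphi_{\varepsilon} = \beta_{\varepsilon} * \varphi$ is no longer compactly supported, so passing the limit through $\eta_{\mu}$ implicitly uses the temperedness of $\eta_{\mu}$ (the Harish-Chandra--Schwartz extension alluded to after Corollary \ref{CorollarySuperExponentiallyDecayingFunctionIntegrableAutocorrelation}), since Lemma \ref{LemmaHyperbolicTemperedDiffraction} is stated only for $\Ccinfty(G_n, K_n)$. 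Your compactly supported Dirac sequence avoids exactly this point: $\varphi_{\varepsilon}^* * \varphi_{\varepsilon}$ stays in $\Ccinfty(G_n, K_n)$, so the defining identity applies verbatim, and the group-side limit is an elementary uniform-convergence statement on a fixed compactum for the signed Radon measure $\eta_{\mu}$ (your use of Lemma \ref{LemmaGelfandsTrick} to rewrite $\varphi_{\varepsilon}^* * \varphi_{\varepsilon} = (\varphi^* * \varphi) * (\beta_{\varepsilon} * \beta_{\varepsilon})$ is fine, though one could also argue the uniform convergence directly). The price you pay is the two-step Fatou-then-dominated-convergence argument, together with the explicit appeal to $|\omega_{\lambda}^{(n)}| \leq 1$ on $\Omega_n^+$ from Lemma \ref{LemmaPropertiesofSphericalFunctions} and the pointwise convergence $\hat{\beta}_{\varepsilon}(\lambda) \rightarrow 1$; the paper's choice encodes the same boundedness through the sign of $\lambda^2 + (\frac{n-1}{2})^2$ on the strip. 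Both proofs hinge on the same key fact that the mollifier's spherical transform is uniformly bounded by $1$ on the principal and complementary parameters, and both are complete; yours is marginally more self-contained relative to what is actually stated in Lemma \ref{LemmaHyperbolicTemperedDiffraction}.
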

\begin{proof}
Same proof as that of Lemma \ref{LemmaExtensionOfDiffractionFormulaToMeasurableFunctions} by letting $\beta_{\varepsilon}$ be the bi-$K_n$-invariant rapidly decaying smooth function on $G_n$ satisfying $\hat{\beta}_{\varepsilon}(\lambda) = \e^{-\varepsilon^2(\lambda^2 + (\frac{n-1}{2})^2)}$.
\end{proof}
\begin{example}[Diffraction of the $m_{\H^n}$-Poisson point process]
Recall that the autocorrelation measure $\eta_{\Poi}$ of the $m_{\H^n}$-Poisson point process is evaluation at the identity $e \in G_n$ when considered as a linear functional on $\Borelbndinfty(G_n, K_n)$. Therefore, by the Plancherel formula for the spherical transform,
\begin{align*}
\eta_{\Poi}(\varphi^* * \varphi) = (\varphi^* * \varphi)(e) = \int_{G_n} |\varphi(g)|^2 dm_{G_n}(g) = \int_0^{\infty} |\hat{\varphi}(\lambda)|^2 \frac{d\lambda}{|c_n(\lambda)|^2} \, . 
\end{align*}
Thus the Poisson diffraction measure is 
\begin{align*} 
d\hat{\eta}_{\Poi}^{(p)}(\lambda) = |c_n(\lambda)|^{-2} d\lambda \quad \mbox{ and } \quad d\hat{\eta}^{(c)}_{\Poi}(\lambda) = 0\, .
\end{align*}
\end{example}
We will next prove the following analogue of Lemma \ref{LemmaEuclideanBergFrost}.
\begin{proposition}
\label{PropHyperbolicBergFrost}
Let $\mu$ be a locally square-integrable invariant random measure on $\H^n$. Then the principal part $\hat{\eta}_{\mu}^{(p)}$ of the diffraction measure $\hat{\eta}_{\mu}$ of $\mu$ satisfies 
\begin{align*}
\hat{\eta}_{\mu}^{(p)}([0, L]) \ll_{n, \mu} L^n  
\end{align*}
for all sufficiently large $L > 0$.
\end{proposition}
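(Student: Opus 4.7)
The plan is to adapt the Euclidean Berg--Frost argument (Lemma \ref{LemmaEuclideanBergFrost}) to $\H^n$, with the heat kernel on $\H^n$ playing the role of the concentrated, spectrally transparent Gaussian bump. Let $h_\tau$ denote the heat kernel on $\H^n$ viewed as a bi-$K_n$-invariant function on $G_n$; its spherical transform is $\hat{h}_\tau(\lambda) = \e^{-\tau(\lambda^2 + \rho_n^2)}$ with $\rho_n = (n-1)/2$. Setting $\tau = L^{-2}$ with $L \geq \rho_n$, one has $\hat{h}_\tau(\lambda) \geq \e^{-2}$ uniformly on $[0, L]$, and $\hat{h}_\tau$ remains strictly positive on the complementary stratum $i[0, \rho_n]$.

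Since $h_\tau$ is smooth with super-exponential (Gaussian) decay, Corollary \ref{CorollarySuperExponentiallyDecayingFunctionIntegrableAutocorrelation} together with a standard approximation argument extends the spectral duality of Lemma \ref{LemmaHyperbolicTemperedDiffraction} from $\Ccinfty(G_n, K_n)$ to $\varphi = h_\tau$. Positivity of $\hat{h}_\tau$ on all of $\Omega_n^+$ then yields
\[
\e^{-2} \hat{\eta}_\mu^{(p)}([0, L]) \leq \int_{\Omega_n^+} \hat{h}_\tau \, d\hat{\eta}_\mu = \eta_\mu(h_\tau),
\]
so the task reduces to showing $\eta_\mu(h_\tau) \ll_{n, \mu} \tau^{-n/2}$.

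To bound $\eta_\mu(h_\tau)$ I would first establish a local translation-boundedness estimate $|\eta_\mu|(B_1) \leq C_\mu$. Choosing the truncation $b = \chi_{B_1(o)}/\Vol_{\H^n}(B_1(o))$ in Definition \ref{DefAutocorrelation} and applying Cauchy--Schwarz to $\bE_\mu(p(B_1(o)) \cdot p(B_2(o)))$, which is finite by local square-integrability, gives this bound and simultaneously the global polynomial-in-volume estimate $|\eta_\mu|(B_R) \ll_\mu \e^{2(n-1)R}$ for $R > 1$. Combining this with the sharp Gaussian heat-kernel bound $h_\tau(a_t) \ll_n \tau^{-n/2} \e^{-t^2/(5\tau)}$ for bounded $t, \tau$ (Anker--Ostellari \cite{Anker2003TheHK}), and decomposing $G_n$ into the unit ball and its exterior shells $A_k = \bigl\{g \in G_n : k-1 \leq \ell(g) < k\bigr\}$, one estimates
\[
\eta_\mu(h_\tau) \leq \int h_\tau \, d|\eta_\mu| \leq \sup_{B_1} h_\tau \cdot |\eta_\mu|(B_1) + \sum_{k \geq 2} \sup_{A_k} h_\tau \cdot |\eta_\mu|(A_k).
\]
The first term is $\ll \tau^{-n/2} C_\mu$, while the tail is bounded by $\tau^{-n/2} \sum_{k \geq 2} \e^{-(k-1)^2/(5\tau) + 2(n-1)k}$; for small $\tau$ the Gaussian exponent $-1/\tau$ dominates the exponential volume growth, making the tail super-polynomially small in $\tau^{-1}$. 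Inserting $\tau = L^{-2}$ completes the proof.

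The main obstacle is extracting the correct $L^n$ scaling rather than the $L^{2n}$ one obtains from a naive argument. Using only $\eta_\mu(h_\tau) \leq \|h_\tau\|_\infty \cdot \bE_\mu(p(B_2)^2)^{1/2}$-type bounds produces $\tau^{-n}$; one must pair the sharp pointwise Gaussian concentration of $h_\tau$ with the translation-bounded structure of $\eta_\mu$ near the identity, the latter coming from a careful unpacking of Definition \ref{DefAutocorrelation} via Cauchy--Schwarz, in order to recover the desired scaling matching the Poisson diffraction $\hat{\eta}_{\Poi}^{(p)}([0, L]) \asymp L^n$.
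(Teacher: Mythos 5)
Your proposal is correct in substance and its core mechanism coincides with the paper's proof: both reduce the spectral count to testing the autocorrelation against the heat kernel at time $\tau = L^{-2}$ (using positivity of $\hat{h}_\tau$ on all of $\Omega_n^+$ and its uniform lower bound on $[0,L]$, together with the extension of the duality in Lemma \ref{LemmaHyperbolicTemperedDiffraction} beyond $\Ccinfty$), and both extract the $L^n = \tau^{-n/2}$ scaling from the Gaussian heat-kernel upper bound. Where you diverge is the final integrability step. The paper replaces $\eta_\mu$ by the positive measure $\eta_\mu^+ = \eta_\mu + \iota_\mu m_{G_n}$, dominates $h_{L^{-2}}$ by $C L^n (2+\ell(g))^{\frac{n-1}{2}}\e^{-\ell(g)^2/4}$ for $L>1$ using Lemma \ref{LemmaDaviesMandouvalos}, and finishes by temperedness via Corollary \ref{CorollarySuperExponentiallyDecayingFunctionIntegrableAutocorrelation}; you instead prove an explicit volume-growth bound $|\eta_\mu|(B_R) \ll_\mu \e^{2(n-1)R}$ directly from local square-integrability and invariance, and then beat it with the Gaussian factor in a shell-by-shell summation. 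Your route is somewhat more self-contained at that step (it does not lean on the temperedness estimate beyond the spectral duality itself), at the cost of redoing by hand what the temperedness lemma already encodes.

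Two points should be tightened. First, Cauchy--Schwarz applied to $\bE_\mu(p(B_1(o))\,p(B_2(o)))$ only yields the local bound $|\eta_\mu|(B_1) \leq C_\mu$; the global estimate $|\eta_\mu|(B_R) \ll_\mu \e^{2(n-1)R}$ additionally requires a quantitative second-moment bound such as $\bE_\mu(p(B_R)^2) \ll_{\mu,n} \Vol_{\H^n}(B_R)^2$, obtained by covering $B_R(o)$ by $\asymp \e^{(n-1)R}$ unit balls and using invariance of $\mu$ together with Cauchy--Schwarz on each pair (this in fact gives the stronger bound $|\eta_\mu|(B_R)\ll_\mu \e^{(n-1)R}$). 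Second, you state the Gaussian bound $h_\tau(a_t) \ll_n \tau^{-n/2}\e^{-t^2/(5\tau)}$ ``for bounded $t,\tau$'', but your shell sum runs over unbounded $t$; fortunately the Davies--Mandouvalos estimate (Lemma \ref{LemmaDaviesMandouvalos}, or \cite{Anker2003TheHK}) gives exactly this bound uniformly for all $t \geq 0$ once $\tau \leq 1$, since the polynomial prefactor and the factor $\e^{-\frac{n-1}{2}t}$ are absorbed into the Gaussian, so the argument does go through as intended.
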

\begin{remark}
For the $m_{\H^n}$-Poisson point process on $\H^n$ we have that $d\hat{\eta}^{(c)}_{\Poi}(\lambda) = 0$ and $d\hat{\eta}^{(p)}_{\Poi}(\lambda) = |c_n(\lambda)|^{-2} d\lambda$, and the $c$-function has the asymptotic bounds
\begin{align}
\label{EqCFunctionAsymptotics}
c_n(\lambda) =  \rho_n 2^{\frac{n - 2}{2}} \frac{\Gamma(\frac{n}{2})\Gamma(i\lambda)}{\Gamma(\frac{n-1}{2}) \Gamma(i \lambda + \frac{n-1}{2})} \asymp \begin{cases} \lambda^{-\frac{n - 1}{2}} &\mbox{ if } \lambda \gg 0 \\ \lambda^{-1} &\mbox{ if } \lambda \approx 0 \, . \end{cases}
\end{align}
For sufficiently large $L > 0$ we then have
\begin{align*}
\hat{\eta}_{\mu}^{(p)}([0, L]) = \int_0^{L} \frac{d\lambda}{|c_n(\lambda)|^2} \asymp \int_0^L \lambda^{n-1} d\lambda \asymp L^n \, .  
\end{align*}
The upper bound in Proposition \ref{PropHyperbolicBergFrost} is thus, up to constant, a strict upper bound in general and which is achieved by the invariant Poisson point process. 
\end{remark}
In order to prove Proposition \ref{PropHyperbolicBergFrost} we make use of the \emph{heat kernel} $h_{\tau}$ on $G_n$ with $\tau > 0$. It is the bi-$K_n$-invariant function
\begin{align*}
h_{\tau}(g) = \int_0^{\infty} \e^{-\tau((\frac{n-1}{2})^2 + \lambda^2)} \omega_{\lambda}(g) \frac{d\lambda}{|c_n(\lambda)|^2} \, . 
\end{align*}
Equivalently, $\hat{h}_{\tau}(\lambda) = \e^{-\tau((\frac{n-1}{2})^2 + \lambda^2)}$. For the heat kernel in even dimensions $n$, there is no closed formula in terms of elementary functions, but it will be sufficient for us to consider the following uniform bounds in \cite[Theorem 3.1, p.186]{Davies1988HeatKB}.
\begin{lemma}[Davies-Mandouvalos]
\label{LemmaDaviesMandouvalos}
The heat kernel satisfies
\begin{align*}
h_{\tau}(a_r) \asymp \tau^{-\frac{n}{2}} (1 + \tau + r)^{\frac{n-3}{2}}(1 + r) \e^{-(\frac{n-1}{2})^2\tau - \frac{n-1}{2}r - \frac{1}{4\tau} r^2} 
\end{align*}
uniformly in $r \in [0, +\infty)$ and $\tau \in (0, +\infty)$.
\end{lemma}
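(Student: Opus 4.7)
The plan is to prove the estimate by deriving an explicit closed form for $h_\tau$ from spherical Fourier inversion and analyzing it by hand. Writing $\rho = \frac{n-1}{2}$, the starting point is
\begin{align*}
h_{\tau}(a_r) = \int_0^{\infty} \e^{-\tau(\rho^2 + \lambda^2)} \omega_{\lambda}^{(n)}(a_r) \frac{d\lambda}{|c_n(\lambda)|^2} \, ,
\end{align*}
and the main tool is the shift operator $D = -(\sinh r)^{-1} \partial_r$, which by Lemma \ref{LemmaSphericalFunctionIntegralFormula} sends $\omega_\lambda^{(n)}(a_r)$ to a constant multiple of $\omega_\lambda^{(n+2)}(a_r)$. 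Applied inside the inversion integral, $D$ relates $h_\tau^{(n+2)}$ and $h_\tau^{(n)}$ up to an elementary factor $\e^{-(\rho_{n+2}^2 - \rho_n^2)\tau}$ absorbing the shift in $\rho$, so iterating downward reduces to the base dimensions $n \in \{1, 3\}$ where $h_\tau$ is elementary.

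In odd dimension $n = 2m + 1$ this produces the classical Millson--Gangolli formula
\begin{align*}
h_{\tau}^{(2m+1)}(a_r) = c_m \, \tau^{-1/2} \e^{-m^2 \tau} D^m \e^{-r^2/(4\tau)} \, ,
\end{align*}
with base case $h_{\tau}^{(3)}(a_r) = (4\pi \tau)^{-3/2} (r/\sinh r) \e^{-\tau - r^2/(4\tau)}$ and the elementary comparison $r/\sinh r \asymp (1+r)\e^{-r}$ directly confirming the claimed bound when $m = 1$. In general, writing $D^m \e^{-r^2/(4\tau)} = Q_m(r, \tau)\, \e^{-r^2/(4\tau)}$, one has the recursion $Q_{m+1} = (\sinh r)^{-1}\bigl(r\,Q_m/(2\tau) - Q_m'\bigr)$, and I would prove by induction on $m$ the uniform two-sided estimate
\begin{align*}
Q_m(r, \tau) \asymp \tau^{-m}(1 + \tau + r)^{m-1}(1 + r)\,\e^{-mr}
\end{align*}
by splitting the regimes $r \lesssim \sqrt{\tau}$ and $r \gtrsim \sqrt{\tau}$.

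For even $n = 2m$ I would apply a Weyl-type fractional descent
\begin{align*}
h_{\tau}^{(2m)}(a_r) = c_m' \int_r^{\infty} \frac{\sinh s}{\sqrt{\cosh s - \cosh r}} \, h_{\tau}^{(2m+1)}(a_s) \, ds \, ,
\end{align*}
reducing to the odd case just treated. The integrand has an integrable square-root singularity at $s = r$, and the Gaussian factor from $h_\tau^{(2m+1)}$ concentrates it on the window $s - r \lesssim \sqrt{\tau}$, so a standard Laplace-type estimate yields the bound with the correct polynomial prefactor $(1 + \tau + r)^{(n-3)/2}(1+r)$.

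The main obstacle is securing matching \emph{lower} bounds on $Q_m$: each application of $D$ to a Gaussian produces a sum of terms with opposing signs whose dominant contributions swap between the two regimes $r \lesssim \sqrt{\tau}$ and $r \gtrsim \sqrt{\tau}$, so one must track the cancellations carefully enough to exclude spurious zeros of $Q_m$ and to match the transitional shape of the polynomial prefactor $(1 + \tau + r)^{(n-3)/2}$. A similar uniformity issue appears in the Weyl descent as $\tau \to 0^+$, where the saddle-point asymptotics must remain valid arbitrarily close to the square-root singularity at $s = r$.
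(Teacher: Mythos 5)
The paper does not prove this lemma: it is imported verbatim from Davies--Mandouvalos \cite[Theorem 3.1, p.186]{Davies1988HeatKB}, so any proof you give is by construction a different route from the paper's. Your strategy --- the Millson--Gangolli shift-operator formula $h_\tau^{(2m+1)}(a_r) = c_m\,\tau^{-1/2}\e^{-m^2\tau}D^m\e^{-r^2/(4\tau)}$ in odd dimensions followed by an Abel-type descent to even dimensions --- is essentially the classical argument behind the cited result, and your bookkeeping is consistent with the statement: $\tau^{-1/2-m}=\tau^{-n/2}$, $\e^{-mr}=\e^{-\frac{n-1}{2}r}$, $(1+\tau+r)^{m-1}=(1+\tau+r)^{\frac{n-3}{2}}$, and the base case reduces to $r/\sinh r \asymp (1+r)\e^{-r}$ in $\H^3$.

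There are two concrete gaps. First, the induction on $Q_m$ does not close as stated: the recursion $Q_{m+1}=(\sinh r)^{-1}\bigl(rQ_m/(2\tau)-Q_m'\bigr)$ requires control of $Q_m'$, which the induction hypothesis $Q_m \asymp \tau^{-m}(1+\tau+r)^{m-1}(1+r)\e^{-mr}$ does not supply. You must either strengthen the hypothesis to a statement about the explicit algebraic form of $Q_m$ (a polynomial in $r/\tau$, $\coth r$, $1/\sinh r$ with sign-controlled coefficients) or argue the derivative term separately; note that global positivity of $Q_m$ is free, since $h_\tau>0$ and the prefactors are positive, but the matching \emph{lower} bound with the transitional factor $(1+\tau+r)^{m-1}$ is exactly where the published proofs do real work, as you anticipate. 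Second, your odd-to-even descent formula omits the exponential shift: since $\rho_{2m+1}^2-\rho_{2m}^2 = m-\tfrac14 \neq 0$, the correct relation carries a factor $\e^{(m-\frac14)\tau}$ (the analogue of the factor you did include in the two-step descent), and without it the prefactor $\e^{-(\frac{n-1}{2})^2\tau}$ in the conclusion comes out wrong in even dimensions. With those repairs the plan is viable, but it amounts to reproving the cited theorem; for the purposes of this paper the citation suffices.
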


In particular, the heat kernel $h_{\tau}$ is tempered and hence integrable with respect to the autocorrelation measure $\eta_{\mu}$ of a locally square-integrable random measure $\mu$ on $\H^n$ by Lemma \ref{LemmaHyperbolicTemperedDiffraction}. 
\begin{proof}[Proof of Proposition \ref{PropHyperbolicBergFrost}]
Denote by $\chi_{[0, L]}$ the indicator function of the interval $[0, L]$ in $ \R_{\geq 0}$ and bound it from above by $\chi_{[0, L]}(\lambda) \leq \e^{1 - \lambda^2 / L^2}$, so that 
\begin{align*}
\hat{\eta}_{\mu}^{(p)}([0, L]) \leq \int_0^{\infty} \e^{1 - \frac{\lambda^2}{L^2}} d\hat{\eta}_{\mu}^{(p)}(\lambda) &\leq \int_{\Omega_n^+} \e^{1 - \frac{\lambda^2}{L^2}} d\hat{\eta}_{\mu}(\lambda) \\%
&= \e^{1 + (\frac{n-1}{2L})^2} \int_{\Omega_n^+} \e^{- \frac{1}{L^2}((\frac{n-1}{2})^2 + \lambda^2)} d\hat{\eta}_{\mu}(\lambda) \, . 
\end{align*}
Note that the latter integral is finite since $\hat{\eta}_{\mu}^{(p)}$ is tempered by Lemma \ref{LemmaHyperbolicTemperedDiffraction}. Since 
$$ \hat{h}_{\tau}(\lambda) = \e^{- \frac{1}{L^2}((\frac{n-1}{2})^2 + \lambda^2)} $$
then from the definition of the diffraction measure $\hat{\eta}_{\mu}$ we see that
\begin{align*}
\int_{\Omega_n^+} \e^{- \frac{1}{L^2}((\frac{n-1}{2})^2 + \lambda^2)} d\hat{\eta}_{\mu}(\lambda) = \int_{G_n} h_{L^{-2}}(g) d\eta_{\mu}(g) \, , 
\end{align*}
so that $\hat{\eta}_{\mu}^{(p)}([0, L]) \leq \e^{1 + (\frac{n-1}{2L})^2} \eta_{\mu}(h_{L^{-2}})$. Consider the Radon measure $\eta_{\mu}^+ = \eta_{\mu} + \iota_{\mu} m_{G_n}$, which is a positive tempered measure satisfying $\eta_{\mu}(h_{\tau}) \leq \eta_{\mu}^+(h_{\tau})$ for all $\tau > 0$. By Lemma \ref{LemmaDaviesMandouvalos}, we find in particular that there is a constant $C > 0$ such that
\begin{align*}
0 < h_{L^{-2}}(a_r) \leq C L^n (1 + L^{-2} + r)^{\frac{n-3}{2}} (1 + r) \e^{- \frac{(Lr)^2}{4}} \leq C L^n (2 + r)^{\frac{n-1}{2}} \e^{-\frac{r^2}{4}}
\end{align*}
for all $L > 1$ and all $r \in [0, +\infty)$. We moreover bound $\e^{1 + (\frac{n-1}{2L})^2} \leq \e^{1 + (\frac{n-1}{2})^2}$ for $L > 1$, so that the final bound becomes 
\begin{align*}
\hat{\eta}_{\mu}^{(p)}([0, L]) \leq \e^{1 + (\frac{n-1}{2})^2} C L^n \int_{G_n} (2 + \ell(g))^{\frac{n-1}{2}} \e^{-\frac{\ell(g)^2}{4}} d\eta_{\mu}^+(g) \, .  
\end{align*}
Since $\eta_{\mu}^+$ is tempered, the latter integral is finite by Corollary \ref{CorollarySuperExponentiallyDecayingFunctionIntegrableAutocorrelation} and thus $\hat{\eta}_{\mu}^{(p)}([0, L]) \ll_{n, \mu} L^n$. 
\end{proof}

\begin{remark}
It should be noted that, there are uniform estimates of the heat kernel by Anker and Ostellari in the Main Theorem of \cite{Anker2003TheHK}, and using those the above proof can be approached in the same way for general non-compact symmetric spaces $X = G/K$ with $G$ a non-compact connected real semisimple Lie group with finite center and maximal compact subgroup $K < G$. 
\end{remark}

\section{A hyperbolic analouge of Beck's theorem}
\label{A hyperbolic analouge of Beck's theorem}

We prove Theorem \ref{Theorem2} using the following hyperbolic analogue of Theorem \ref{ThmBeck}. 
\begin{theorem}
\label{TheoremHyperbolicBeck}
Let $\mu$ be an invariant locally square-integrable random measure on $\H^n$ with diffraction measure $\hat{\eta}_{\mu}$. The following holds: 
\begin{enumerate}
\item if $\hat{\eta}_{\mu}^{(c)}([0, \frac{n-1}{2})) > 0$, then for every $R_o > 1$ there is a constant $C = C(n, R_o) > 0$ such that 
\begin{align*}
\frac{\NVmu(R)}{\cosh(R)^{n-1}} \geq C \int_{0}^{\frac{n-1}{2}} \frac{\sinh^2(R\lambda)}{ \lambda^2 } d\hat{\eta}_{\mu}^{(c)}(\lambda) \, , \quad \forall \, R \geq R_o \, . 
\end{align*}
\item if $\hat{\eta}_{\mu}^{(p)}((0, +\infty)) > 0$ then for every $\lambda_o > 0$ there is $R_o = R_o(\lambda_o) > 0$ and a constant $C(n, \lambda_o) > 0$ such that 
\begin{align*}
\frac{1}{R} \int_0^R \frac{\NVmu(r)}{\cosh(r)^{n-1}} dr \geq C \int_{\lambda_o}^{\infty} |c_{n + 2}(\lambda)|^2 d\hat{\eta}_{\mu}^{(p)}(\lambda) \, , \quad \forall \, R \geq R_o \, .  
\end{align*} 
\end{enumerate}
\end{theorem}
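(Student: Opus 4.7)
The starting point for both parts is the spectral formula combined with equation \eqref{EqHyperbolicFTIndincatorFunction}, which gives
\begin{align*}
\NVmu(r) = b_n^2 \sinh(r)^{2n}\Bigl[\int_0^{\infty}|\omega_{\lambda}^{(n+2)}(a_r)|^2\,d\hat{\eta}_{\mu}^{(p)}(\lambda) + \int_0^{(n-1)/2}|\omega_{i\lambda}^{(n+2)}(a_r)|^2\,d\hat{\eta}_{\mu}^{(c)}(\lambda)\Bigr].
\end{align*}
For (1), I would discard the nonnegative principal contribution. Since $[0,(n-1)/2]\subset [0,(n+1)/2]$, item (3) of Proposition \ref{PropSphericalAverage} applied in dimension $n+2$ yields a constant $C_1=C_1(n)>0$ with $|\omega_{i\lambda}^{(n+2)}(a_R)|^2 \geq C_1 \sinh^2(\lambda R)/(\lambda^2 \sinh(R)^{n+1})$ for every $R>1$ and $\lambda \in [0,(n-1)/2]$. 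Inserting this bound gives
\begin{align*}
\NVmu(R) \geq b_n^2 C_1 \sinh(R)^{n-1}\int_0^{(n-1)/2}\frac{\sinh^2(\lambda R)}{\lambda^2}\,d\hat{\eta}_{\mu}^{(c)}(\lambda),
\end{align*}
and dividing by $\cosh(R)^{n-1}$ produces a factor $\tanh(R)^{n-1} \geq \tanh(R_o)^{n-1}>0$ for all $R\geq R_o>1$, which is the claimed inequality.

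For (2), I would apply Fubini (justified by the nonnegativity of all integrands) to write
\begin{align*}
\frac{1}{R}\int_0^R \frac{\NVmu(r)}{\cosh(r)^{n-1}}\,dr = b_n^2 \int_{\Omega_n^+} I_R(\lambda)\,d\hat{\eta}_{\mu}(\lambda), \qquad I_R(\lambda) := \frac{1}{R}\int_0^R \frac{\sinh(r)^{2n}\,|\omega_{\lambda}^{(n+2)}(a_r)|^2}{\cosh(r)^{n-1}}\,dr,
\end{align*}
and use the identity $\sinh(r)^{2n}/\cosh(r)^{n-1}=\sinh(r)^{n+1}\tanh(r)^{n-1}$. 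Choosing $r_o$ so that $\tanh(r_o)^{n-1}\geq 1/2$ and splitting the integral yields
\begin{align*}
I_R(\lambda) \geq \frac{1}{2R}\int_0^R \sinh(r)^{n+1}|\omega_{\lambda}^{(n+2)}(a_r)|^2\,dr - \frac{1}{2R}\int_0^{r_o}\sinh(r)^{n+1}|\omega_{\lambda}^{(n+2)}(a_r)|^2\,dr.
\end{align*}
The second term is $O(1/R)$ uniformly in $\lambda\geq 0$ by the estimate $|\omega_{\lambda}^{(n+2)}|\leq 1$ (Lemma \ref{LemmaPropertiesofSphericalFunctions}(2)--(3)). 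Item (2) of Proposition \ref{PropSphericalAverage} applied in dimension $n+2$ states that the first term converges uniformly in $\lambda\geq\lambda_o$ to $2^{n}|\rB(i\lambda,(n+1)/2)|^2/\rB(1/2,(n+1)/2)^2 = \rho_{n+2}^{-2}|c_{n+2}(\lambda)|^2$ (by definition of the Harish-Chandra $c$-function). Hence there exists $R_o = R_o(\lambda_o)$ such that $I_R(\lambda)\geq C_2|c_{n+2}(\lambda)|^2$ uniformly in $\lambda\geq\lambda_o$ and $R\geq R_o$, for some $C_2=C_2(n)>0$. Restricting the outer integration to $[\lambda_o,\infty)$ and discarding the rest by positivity yields the claimed inequality; the right-hand side is finite by the asymptotic $|c_{n+2}(\lambda)|^2\asymp\lambda^{-(n+1)}$ from \eqref{EqCFunctionAsymptotics} combined with the growth bound $\hat{\eta}_{\mu}^{(p)}([0,L])\ll L^n$ from Proposition \ref{PropHyperbolicBergFrost}.

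The main technical obstacle is controlling $I_R(\lambda)$ uniformly in $\lambda$ when converting between $\sinh(r)^{2n}/\cosh(r)^{n-1}$ and $\sinh(r)^{n+1}$; the uniformity asserted in Proposition \ref{PropSphericalAverage}(2) is essential here, and the $\tanh(r)^{n-1}$ correction must be absorbed without sacrificing that uniformity, which is achieved by the simple lower-truncation argument above. A secondary concern is verifying that restricting to $\lambda\geq\lambda_o$ is harmless once one truly needs $\lambda=0$ to appear; but since the Harish-Chandra $\Xi$-function is explicitly removed from $\supp(\hat{\eta}_{\mu}^{(p)})$ in the definition of the diffraction measure, this causes no difficulty.
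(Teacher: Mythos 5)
Your part (1) is exactly the paper's argument: discard the principal contribution, apply item (3) of Proposition \ref{PropSphericalAverage} in dimension $n+2$, and absorb the factor $\tanh(R)^{n-1}\geq\tanh(R_o)^{n-1}$. Your part (2) also follows the paper's skeleton (Fubini, then Proposition \ref{PropSphericalAverage}), and your truncation of the $\tanh(r)^{n-1}$ factor at a level $r_o$ with $\tanh(r_o)^{n-1}\geq 1/2$ is a harmless variant of the paper's estimate $1-\tanh(r)^{n-1}\leq n\e^{-r}$.

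The final step of your part (2), however, has a genuine gap. From the uniform \emph{additive} convergence of $\frac{1}{R}\int_0^R\sinh(r)^{n+1}|\omega_{\lambda}^{(n+2)}(a_r)|^2\,dr$ to $\rho_{n+2}^{-2}|c_{n+2}(\lambda)|^2$ on $\lambda\geq\lambda_o$ you infer the pointwise \emph{multiplicative} bound $I_R(\lambda)\geq C_2|c_{n+2}(\lambda)|^2$ for all $\lambda\geq\lambda_o$ and $R\geq R_o(\lambda_o)$. This does not follow: by \eqref{EqCFunctionAsymptotics} the limit function decays like $\lambda^{-(n+1)}$ as $\lambda\to+\infty$, so a uniformly small additive error, together with the $O(1/R)$ term you subtract for the range $[0,r_o]$ (whose size does not shrink as $\lambda$ grows), swamps $|c_{n+2}(\lambda)|^2$ for large $\lambda$ at any fixed $R$; hence the displayed inequality with $C_2$ and $R_o$ independent of $\lambda$ is not justified by Proposition \ref{PropSphericalAverage}(2), and proving it would require relative (not additive) error control in the expansion of Proposition \ref{PropTrigonometricExpansionofSphericalFunctions}, which you do not supply. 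The paper avoids this issue by never seeking a pointwise bound: it keeps the inner average $A_R(\lambda)=\frac{1}{R}\int_0^R|\omega_{\lambda}^{(n+2)}(a_r)|^2\tanh(r)^{n-1}\sinh(r)^{n+1}\,dr$ inside the $\lambda$-integral, dominates it by $a_{n+2}^2\lambda^{-2}$ via item (1) of Proposition \ref{PropSphericalAverage} (which is $\hat{\eta}_{\mu}^{(p)}$-integrable on $[\lambda_o,\infty)$ by Proposition \ref{PropHyperbolicBergFrost}, cf.\ Remark \ref{RemarkHyperbolicDiffractionIntegrability}), and applies dominated convergence to conclude that $\frac{1}{R}\int_0^R\NVmu(r)\cosh(r)^{-(n-1)}\,dr$ converges to $\frac{2^{n+1}}{n^2\rho_{n+2}^2}\int_{\lambda_o}^{\infty}|c_{n+2}(\lambda)|^2\,d\hat{\eta}_{\mu}^{(p)}(\lambda)$, from which the stated lower bound holds for all sufficiently large $R$ (with the caveat, implicit in the paper, that the threshold then a priori depends on $\mu$ as well). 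You should replace your pointwise claim with this integrated dominated-convergence argument.
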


\begin{proof}
By definition of the diffraction measure $\hat{\eta}_{\mu}$ and Equation \eqref{EqHyperbolicFTIndincatorFunction} in Example \ref{ExHyperbolicIndicatorFT},
\begin{align*}
\frac{\NVmu(r)}{\cosh(r)^{n-1}} &= \int_0^{\infty} \frac{|\hat{\chi}_{B_r}(\lambda)|^2}{\cosh(r)^{n-1}} d\hat{\eta}_{\mu}^{(p)}(\lambda) + \int_0^{\frac{n-1}{2}} \frac{|\hat{\chi}_{B_r}(i\lambda)|^2}{\cosh(r)^{n-1}} d\hat{\eta}_{\mu}^{(c)}(\lambda) \\
&= \frac{2^{n+1}}{n^2} \tanh(r)^{n-1} \sinh(r)^{n+1} \Big( \int_0^{\infty} |\omega_{\lambda}^{(n+2)}(a_r)|^2 d\hat{\eta}_{\mu}^{(p)}(\lambda) \\
&\qquad\qquad\qquad\qquad\qquad\qquad\quad + \int_0^{\frac{n-1}{2}} |\omega_{i\lambda}^{(n+2)}(a_r)|^2 d\hat{\eta}_{\mu}^{(c)}(\lambda) \Big) \, . 
\end{align*}
To prove (1), we use item (3) in Proposition \ref{PropSphericalAverage} to find a constant $C_o = C_o(n) > 0$ for $R_o > 1$ such that 
\begin{align*}
\omega_{i \lambda}^{(n + 2)}(a_R) \geq C_o \frac{\sinh(\lambda R)}{\lambda \sinh(R)^{\frac{n+1}{2}}} \, \quad \forall \, R \geq R_o \, . 
\end{align*}
Then 
\begin{align*}
\frac{\NVmu(R)}{\cosh(R)^{n-1}} &\geq \frac{2^{n+1}}{n^2} \tanh(R)^{n-1} \sinh(R)^{n+1} \int_0^{\frac{n-1}{2}} |\omega_{i\lambda}^{(n+2)}(a_R)|^2 d\hat{\eta}_{\mu}^{(c)}(\lambda) \\
&\geq \frac{2^{n+1}}{n^2} C_o \tanh(R_o)^{n-1} \int_0^{\frac{n-1}{2}} \frac{\sinh^2(R\lambda)}{\lambda^2} d\hat{\eta}_{\mu}^{(c)}(\lambda) \, . 
\end{align*}
If we let $C(n, R_o) =  2^{n+1} n^{-2} C_o(n) \tanh(R_o)^{n-1}$ then we are done. 

For (2), we assume without loss of generality that $\hat{\eta}_{\mu}^{(c)} = 0$. An application of Fubini yields 
\begin{align*}
\frac{1}{R} \int_0^R \frac{\NVmu(r)}{\cosh(r)^{n-1}} dr &\geq \frac{2^{n+1}}{n^2} \int_{\lambda_o}^{\infty} \Big( \frac{1}{R} \int_{0}^R |\omega_{\lambda}^{(n+2)}(a_r)|^2 \tanh(r)^{n-1} \sinh(r)^{n+1} dr \Big) d\hat{\eta}_{\mu}^{(p)}(\lambda) \, . 
\end{align*}
We claim that  
\begin{align*}
\lim_{R \rightarrow +\infty}  \frac{1}{R} \int_{0}^R |\omega_{\lambda}^{(n+2)}(a_r)|^2 \tanh(r)^{n-1} \sinh(r)^{n+1} dr = \rho_{n + 2}^{-2} |c_{n+2}(\lambda)|^2  
\end{align*}
uniformly for all $\lambda \geq \lambda_o$, which will finish the proof by dominated convergence. To compute this limit, write
\begin{align*}
\int_0^R |\omega_{\lambda}^{(n+2)}(a_r)|^2 \tanh(r)^{n-1} &\sinh(r)^{n+1} dr = \int_0^R |\omega_{\lambda}^{(n+2)}(a_r)|^2 \sinh(r)^{n+1} dr \\
&\quad + \int_0^R |\omega_{\lambda}^{(n+2)}(a_r)|^2 (\tanh(r)^{n-1} - 1) \sinh(r)^{n+1} dr \, . 
\end{align*}
By item (1) in Proposition \ref{PropSphericalAverage}, $\omega_{\lambda}^{(n + 2)}(a_r) \leq \lambda^{-1} \sinh(r)^{-\frac{n+1}{2}}$, and since $1 - \tanh(r)^{n-1} \leq n \e^{-r}$ we get 
\begin{align*}
\frac{1}{R} \int_0^R |\omega_{\lambda}^{(n+2)}(a_r)|^2 |\tanh(r)^{n-1} - 1| \sinh(r)^{n+1} dr \leq \frac{n}{R\lambda^2} \int_0^R \e^{-r} dr \leq \frac{n}{R\lambda^2} \underset{R \rightarrow +\infty}{\longrightarrow}  0  \, . 
\end{align*}
By item (2) in Proposition \ref{PropSphericalAverage} we finally have that
\begin{align*}
 \frac{1}{R} \int_{0}^R |\omega_{\lambda}^{(n+2)}(a_r)|^2 \tanh(r)^{n-1} \sinh(r)^{n+1} dr =  &\, \frac{1}{R} \int_{0}^R |\omega_{\lambda}^{(n+2)}(a_r)|^2 \sinh(r)^{n+1} dr \\
\underset{R \rightarrow +\infty}{\longrightarrow}  &\, 2^{n}   \frac{ |\rB(i\lambda, \frac{n+1}{2})|^2}{\rB(\frac{1}{2}, \frac{n+1}{2})^2} = \rho_{n+2}^{-2}|c_{n+2}(\lambda)|^2 \, . 
\end{align*}
\end{proof}
\begin{remark}
\label{RemarkHyperbolicDiffractionIntegrability}
The lower bounds in Theorem \ref{TheoremHyperbolicBeck} are finite for fixed $R > 0$. For the complementary diffraction bound the statement is trivial, and for the principal diffraction bound we write  
\begin{align*}
\int_{\lambda_o}^{\infty} |c_{n + 2}(\lambda)|^2 d\hat{\eta}_{\mu}^{(p)}(\lambda) \ll_n \int_{\lambda_o}^{\infty} \hat{\eta}_{\mu}^{(p)}([0, u]) \frac{d}{du} |c_{n + 2}(u)|^2 du  \, . 
\end{align*}
Using the standard asymptotic
\begin{align*}
\frac{\Gamma(iu)}{\Gamma(iu + \frac{n-1}{2})} \sim (iu)^{-\frac{n - 1}{2}}
\end{align*}
as $u \rightarrow  +\infty$ one can see that $\frac{d}{du}|c_{n + 2}(u)|^2 \ll_{n, \lambda_o} u^{-(n + 2)}$ for large $u \geq \lambda_o$, and since $\hat{\eta}_{\mu}^{(p)}$ satisfies $\hat{\eta}_{\mu}^{(p)}([0, L]) \ll_{n,\mu} L^n$ for sufficiently large $L > 0$ by Lemma \ref{PropHyperbolicBergFrost}, then
\begin{align*}
\int_{\lambda_o}^{\infty} |c_{n + 2}(\lambda)|^2 d\hat{\eta}_{\mu}^{(p)}(\lambda) \ll_{n, \lambda_o} \int_{\lambda_o}^{\infty} \frac{\hat{\eta}_{\mu}^{(p)}([0, u])}{u^{n + 2}} du  \ll_{n, \mu, \lambda_o} \int_{\lambda_o}^{\infty} \frac{du}{u^{2}} = \lambda_o^{-1} < + \infty \, . 
\end{align*}
Thus the right hand side in item (2) of Theorem \ref{TheoremHyperbolicBeck} is indeed finite.
\end{remark}
\begin{proof}[Proof of Theorem \ref{Theorem2}]
The first statement of the Theorem follows from item (2) of Theorem \ref{TheoremHyperbolicBeck},
\begin{align*}
\limsup_{R \rightarrow +\infty} \frac{\NVmu(R)}{\cosh(R)^{n-1}} \geq \limsup_{R \rightarrow +\infty} \frac{1}{R} \int_0^R \frac{\NVmu(r)}{\cosh(r)^{n-1}} dr \geq C \int_0^{\infty} |c_{n + 2}(\lambda)|^2 d\hat{\eta}_{\mu}^{(p)}(\lambda)\, . 
\end{align*}
Since $|c_{n + 2}(\lambda)|^2 > 0$ for all $\lambda > 0$ then the right hand side is positive, possibly infinite.

Item (1) of Theorem \ref{Theorem2} follows from item (1) of Theorem \ref{TheoremHyperbolicBeck}, since 
\begin{align*}
\liminf_{R \rightarrow +\infty} \frac{\NVmu(R)}{R^2\cosh(R)^{n-1}} \geq \liminf_{R \rightarrow +\infty} \frac{C}{R^2} \int_0^{\frac{n-1}{2}} \frac{\sinh^2(R\lambda)}{\lambda^2} d\hat{\eta}^{(c)}_{\mu}(\lambda) \geq C \hat{\eta}^{(c)}_{\mu}(\{ 0 \}) > 0 \, . 
\end{align*}
Lastly, for any $\delta \in (0, 1]$ with $\hat{\eta}_{\mu}^{(c)}([\delta\frac{n-1}{2}, \frac{n-1}{2})) > 0$ we have that
\begin{align*}
\liminf_{R \rightarrow +\infty} \frac{\NVmu(R)}{\cosh(R)^{(n-1)(1 + \delta)}} &\geq \liminf_{R \rightarrow +\infty} \frac{C}{\cosh(R)^{(n-1)\delta}} \int_{\delta\frac{n-1}{2}}^{\frac{n-1}{2}} \frac{\sinh^2(R\lambda)}{\lambda^2} d \hat{\eta}_{\mu}^{(c)}(\lambda) \\
&\geq C \hat{\eta}_{\mu}^{(c)}\Big(\Big[\delta \frac{n-1}{2}, \frac{n-1}{2}\Big)\Big) \liminf_{R \rightarrow \infty} \frac{\sinh^2(R\delta \frac{n-1}{2})}{\delta^2 \cosh(R)^{(n-1)\delta}} \\
&= C 2^{(n-1)\delta - 2} \delta^{-2} \hat{\eta}_{\mu}^{(c)}\Big(\Big[\delta \frac{n-1}{2}, \frac{n-1}{2}\Big)\Big) > 0 \, . 
\end{align*}
Thus item (2) of Theorem \ref{Theorem2} follows.
\end{proof}

\section{Spectral hyperuniformity and stealth}
\label{Spectral hyperuniformity and stealth}
We define spectral hyperuniformity of an invariant locally square-integrable random measure on $\H^n$ in Subsection \ref{Spectral hyperuniformity} and define the relative notion of stealth in Subsection \ref{Stealthy random measures}.

\subsection{Spectral hyperuniformity}
\label{Spectral hyperuniformity}
By Theorem \ref{Theorem2}, we know that random measures $\mu$ on $\H^n$ are never geometrically hyperuniform in the sense that 
\begin{align}
\limsup_{R \rightarrow +\infty} \frac{\NVmu(R)}{\cosh(R)^{n-1}} > 0 \, ,
\end{align}
and if $\mu$ admits non-trivial complementary spectrum, then this upper limit is infinite. A useful definition of spectral hyperuniformity would then necessarily have to differ from this geometric definition. Having understood the large scale behaviour of diffraction measures, one is inclined to study the small scale/local behaviour of the diffraction measure in order to obtain information on the large scale behaviour of the number variance. We take the stance that one should interpret spectral hyperuniformity as comparing the diffraction measure of a given random measure to the diffraction measure of the invariant Poisson point process around $\lambda = 0$. For the definition we shall use that
\begin{align*}
\hat{\eta}_{\Poi}((0, \varepsilon]) = \int_0^{\varepsilon} \frac{d\lambda}{|c_n(\lambda)|^2} \asymp \int_0^{\varepsilon} \lambda^2 d\lambda \asymp \varepsilon^3
\end{align*}
for sufficiently small $\varepsilon > 0$. Note that the exponent $3$ is the same in all dimensions $n$. 

\begin{definition}[Hyperbolic spectral hyperuniformity]
A random measure $\mu$ on $\H^n$ is \emph{spectrally hyperuniform} if $\hat{\eta}^{(c)}_{\mu} = 0$ and 
\begin{align*}
\limsup_{\varepsilon \rightarrow 0^+}\frac{\hat{\eta}_{\mu}^{(p)}((0, \varepsilon])}{\varepsilon^3} = 0 \, . 
\end{align*}
\end{definition}
\begin{remark}
There are many examples of point processes $\mu$ on $\H^n$ such that $\hat{\eta}_{\mu}^{(c)} \neq 0$, even for many lattices, including cocompact ones. These will be hyperfluctuating in the sense of Theorem \ref{Theorem2} and not spectrally hyperuniform according to our definition. 
\end{remark}
Having defined spectral hyperuniformity, we have a point of reference for defining more rigid properties of random measures on $\H^n$.

\subsection{Stealthy random measures}
\label{Stealthy random measures}
\begin{definition}[Stealthy random measure on $\H^n$]
A random measure $\mu$ on $\H^n$ is \emph{stealthy} if $\hat{\eta}_{\mu}^{(c)} = 0$ and there is a $\lambda_o > 0$ such that $\hat{\eta}_{\mu}^{(p)}((0, \lambda_o)) = 0$.
\end{definition}
It is clear from this definition that every stealthy random measure on $\H^n$ is spectrally hyperuniform. As mentioned in the introduction, in \cite[Section 1.2., Eq.B]{Jenni1984UeberDE} Jenni provides a lattice $\Gamma < G_2 \cong \SL_2(\R)$ such that the random lattice orbit $\mu_{\Gamma}$ on $\H^2$ is a stealthy point process, in particular spectrally hyperuniform.

An almost immediate consequence of Theorem \ref{TheoremHyperbolicBeck} is that we can compute the average asymptotic number variance of a given stealthy random measure in terms of the diffraction measure. 
\begin{corollary}
Let $\mu$ be a stealthy random measure on $\H^n$. Then 
\begin{align*}
\lim_{R \rightarrow +\infty} \frac{1}{R} \int_0^R \frac{\NVmu(r)}{\cosh(r)^{n-1}} dr =  \frac{2^{n+1}}{n^2 \rho_{n + 2}^2} \int_{0}^{\infty} |c_{n + 2}(\lambda)|^2 d\hat{\eta}_{\mu}^{(p)}(\lambda) \, , 
\end{align*}
which is strictly positive. 
\end{corollary}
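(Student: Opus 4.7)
The plan is to combine the diffraction formula for the number variance with the pointwise asymptotic that was already established in the proof of Theorem \ref{TheoremHyperbolicBeck}(2), and to upgrade the inequality there to an equality by an application of dominated convergence that is made possible precisely by stealth.

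Starting from the explicit identity $\hat{\chi}_{B_r}(\lambda) = b_n \sinh(r)^n \omega_{\lambda}^{(n+2)}(a_r)$ from Example \ref{ExHyperbolicIndicatorFT} and the stealth hypothesis $\hat{\eta}_{\mu}^{(c)} = 0$, $\supp(\hat{\eta}_{\mu}^{(p)}) \subset [\lambda_o, +\infty)$, the expression for $\NVmu(r)$ derived at the start of the proof of Theorem \ref{TheoremHyperbolicBeck} reduces to
\begin{align*}
\frac{\NVmu(r)}{\cosh(r)^{n-1}} = \frac{2^{n+1}}{n^2}\, \tanh(r)^{n-1} \sinh(r)^{n+1} \int_{\lambda_o}^{\infty} |\omega_{\lambda}^{(n+2)}(a_r)|^2 \, d\hat{\eta}_{\mu}^{(p)}(\lambda).
\end{align*}
Averaging in $r$ and applying Fubini, I would write $\frac{1}{R}\int_0^R \frac{\NVmu(r)}{\cosh(r)^{n-1}} dr = \frac{2^{n+1}}{n^2} \int_{\lambda_o}^{\infty} F_R(\lambda)\, d\hat{\eta}_{\mu}^{(p)}(\lambda)$, where $F_R(\lambda) = \frac{1}{R} \int_0^R |\omega_{\lambda}^{(n+2)}(a_r)|^2 \tanh(r)^{n-1}\sinh(r)^{n+1}\, dr$.

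Next, I would invoke the computation already carried out in the proof of Theorem \ref{TheoremHyperbolicBeck}(2): combining the bound $|\omega_{\lambda}^{(n+2)}(a_r)| \leq a_{n+2} \lambda^{-1} \sinh(r)^{-(n+1)/2}$ from Proposition \ref{PropSphericalAverage}(1) (to control the $(\tanh^{n-1} - 1)$ correction) with the limit in Proposition \ref{PropSphericalAverage}(2), one obtains $F_R(\lambda) \to \rho_{n+2}^{-2} |c_{n+2}(\lambda)|^2$ as $R \to +\infty$, uniformly in $\lambda \geq \lambda_o$. By this uniform convergence, there exists $R_* > 0$ such that $F_R(\lambda) \leq 2 \rho_{n+2}^{-2} |c_{n+2}(\lambda)|^2$ for all $R \geq R_*$ and all $\lambda \geq \lambda_o$. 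Since $\lambda_o > 0$ and $|c_{n+2}(\lambda)|^2 \ll_n \lambda^{-(n+1)}$ for large $\lambda$, Proposition \ref{PropHyperbolicBergFrost} combined with the integration-by-parts estimate recorded in Remark \ref{RemarkHyperbolicDiffractionIntegrability} shows that $|c_{n+2}|^2 \in L^1([\lambda_o, +\infty), \hat{\eta}_{\mu}^{(p)})$. Dominated convergence then yields the desired equality, and strict positivity follows from $|c_{n+2}(\lambda)|^2 > 0$ for all $\lambda > 0$ together with the non-triviality of $\hat{\eta}_{\mu}^{(p)}$.

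The main obstacle is the justification of the dominated convergence step: the natural pointwise bound $F_R(\lambda) \leq a_{n+2}^2 \lambda^{-2}$ from Proposition \ref{PropSphericalAverage}(1) alone is \emph{not} $\hat{\eta}_{\mu}^{(p)}$-integrable near infinity (because $\hat{\eta}_{\mu}^{(p)}([0,L]) \asymp L^n$ in the worst case, as the Poisson example shows), so one really needs the sharper Plancherel-type decay $|c_{n+2}(\lambda)|^2 \asymp \lambda^{-(n+1)}$ to close the estimate. The stealth assumption $\lambda_o > 0$ is exactly what removes the $\lambda \to 0^+$ singularity of $|c_{n+2}|^2$ and makes this global dominating function available.
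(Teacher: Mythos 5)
Your proposal is correct and follows essentially the same route as the paper: stealth restricts the diffraction to $[\lambda_o,+\infty)$, Fubini reduces the averaged number variance to $\int_{\lambda_o}^\infty F_R(\lambda)\, d\hat{\eta}_{\mu}^{(p)}(\lambda)$, the uniform limit $F_R(\lambda)\to\rho_{n+2}^{-2}|c_{n+2}(\lambda)|^2$ from the proof of Theorem \ref{TheoremHyperbolicBeck}(2) is invoked, and dominated convergence closes the argument using the integrability of $|c_{n+2}|^2$ from Remark \ref{RemarkHyperbolicDiffractionIntegrability}. Your explicit construction of the dominating function from the uniform convergence is exactly the step the paper leaves implicit, so there is nothing to add.
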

\begin{proof}
If $\mu$ is stealthy there is a $\lambda_o > 0$ such that the diffraction measure $\hat{\eta}_{\mu}$ is supported on $[\lambda_o, +\infty) \subset \Omega_n^+$. Thus 
\begin{align*}
\frac{1}{R} \int_0^R \frac{\NVmu(r)}{\cosh(r)^{n-1}} dr &= \frac{2^{n+1}}{n^2} \int_{\lambda_o}^{\infty} \Big( \frac{1}{R} \int_{0}^R |\omega_{\lambda}^{(n+2)}(a_r)|^2 \tanh(r)^{n-1} \sinh(r)^{n+1} dr \Big) d\hat{\eta}_{\mu}^{(p)}(\lambda) \, .
\end{align*}
As shown in the proof of item (2) of Theorem \ref{TheoremHyperbolicBeck}, we have 
\begin{align*}
\lim_{R \rightarrow +\infty} \frac{1}{R} \int_{0}^R |\omega_{\lambda}^{(n+2)}(a_r)|^2 \tanh(r)^{n-1} \sinh(r)^{n+1} dr = \rho_{n + 2}^{-2} |c_{n+2}(\lambda)|^2
\end{align*}
uniformly in $\lambda \geq \lambda_o$, so by dominated convergence we're done.
\end{proof}

\section{Random perturbed lattice orbits in hyperbolic spaces}
\label{Random perturbed lattice orbits in hyperbolic spaces}

In Subsection \ref{A criterion for local square-integrability} we provide examples of non-compact perturbations of random lattice orbits that are locally square-integrable. In Subsection \ref{A formula for the diffraction measure} we compute the diffraction measure of random perturbed lattice orbits that are locally square-integrable and lastly we show that random perturbed lattice orbits in $\H^n$ are not spectrally hyperuniform in Subsection \ref{Random perturbed lattices on Hn are not spectrally hyperuniform}.

\subsection{A criterion for local square-integrability}
\label{A criterion for local square-integrability}

We first extend Lemma \ref{LemmaPerturbedLatticeisLocallySquareIntegrable} to include right-$K_n$-invariant perturbations $\nu \in \Prob(G_n)$ that are absolutely continuous with respect to the Haar measure $m_{G_n}$ with sufficent exponential decay of the Radon-Nikodym derivative at infinity. To do this we make use of the fact that the critical exponent of a lattice $\Gamma < G_n$ is that of the volume growth. The following Lemma is a special case of Theorem C in \cite{AlbaquerquePattersonSullivan}, which includes higher rank analogues as well.
\begin{lemma}
\label{LemmaHyperbolicLatticeCriticalExponent}
Let $\Gamma < G_n$ be a lattice. Then for every $\varepsilon > 0$,
\begin{align*}
\sum_{\gamma \in \Gamma} \e^{-(n - 1 + \varepsilon)d(\gamma.o, o)} < +\infty \, . 
\end{align*}
\end{lemma}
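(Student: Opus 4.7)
The plan is to reduce the claim to a straightforward counting estimate on the number of lattice points in hyperbolic balls, which grows at most like $\e^{(n-1)R}$, and then to conclude with a geometric-series argument where the extra $\varepsilon$ in the exponent provides the decay.

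First I would use the uniform discreteness of $\Gamma < G_n$ established in Example \ref{ExampleRandomLatticeOrbits}: pick a symmetric open precompact neighbourhood $U$ of the identity such that $\gamma_1 U \cap \gamma_2 U = \varnothing$ whenever $\gamma_1 \neq \gamma_2$ in $\Gamma$. Setting $D = \sup_{u \in U} d(u.o,o) < \infty$, for every $\gamma$ with $d(\gamma.o,o) \leq R$ the whole set $\gamma U.o$ sits inside the metric ball $B_{R+D}(o) \subset \H^n$. Since the translates $\gamma U$ are pairwise disjoint and each carries $G_n$-volume $m_{G_n}(U) > 0$, a standard packing argument gives
\begin{align*}
\#\{ \gamma \in \Gamma : d(\gamma.o, o) \leq R \} \;\leq\; \frac{m_{G_n}(B_{R+D})}{m_{G_n}(U)} \;\ll_{n,\Gamma}\; \cosh(R + D)^{n-1} \;\ll_{n,\Gamma}\; \e^{(n-1)R},
\end{align*}
where the volume growth estimate is the one from Equation \eqref{EqHyperbolicVolumeGrowth}.

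Next I would decompose the series according to integer shells $\{\gamma \in \Gamma : k \leq d(\gamma.o,o) < k + 1\}$ for $k \in \Z_{\geq 0}$. On such a shell each term is bounded by $\e^{-(n-1+\varepsilon)k}$, and the cardinality of the shell is at most the total lattice-point count in $B_{k+1}(o)$, which by the previous paragraph is $\ll_{n,\Gamma} \e^{(n-1)(k+1)}$. Combining these bounds gives
\begin{align*}
\sum_{\gamma \in \Gamma} \e^{-(n - 1 + \varepsilon)d(\gamma.o, o)} \;\ll_{n,\Gamma}\; \sum_{k = 0}^{\infty} \e^{(n-1)(k+1)} \cdot \e^{-(n-1+\varepsilon)k} \;=\; \e^{n-1} \sum_{k=0}^{\infty} \e^{-\varepsilon k} \;<\; +\infty,
\end{align*}
which is exactly the claim.

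No step is genuinely hard: the only thing to be careful about is cleanly separating the roles of $m_{G_n}$ and $m_{\H^n}$ (they differ by a finite constant via the orbit map) when invoking the volume growth formula. The citation to Albuquerque--Patterson--Sullivan is there to cover the more delicate fact that the \emph{critical exponent} of $\Gamma$ is exactly $n-1$, i.e.\ the series diverges at the threshold $\varepsilon = 0$; but for the one-sided statement we need here the elementary packing argument above is sufficient.
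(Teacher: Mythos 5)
Your argument is correct, but it is not the paper's route: the paper offers no proof at all for this lemma, simply invoking it as a special case of Theorem C of the cited Patterson--Sullivan-type result \cite{AlbaquerquePattersonSullivan}, which identifies the critical exponent of a lattice (and its higher-rank analogues) with the volume growth exponent. What you do instead is prove the one-sided statement directly: uniform discreteness (exactly as recorded in Example \ref{ExampleRandomLatticeOrbits}) gives pairwise disjoint translates $\gamma U$ of positive Haar measure, all contained in the Cartan ball $\ell^{-1}([0,R+D])$ whenever $d(\gamma.o,o)\leq R$, so the orbital counting function is $\ll_{n,\Gamma}\e^{(n-1)R}$ by comparing Haar measures; the shell decomposition then converges because the extra $\varepsilon$ beats this exponential count. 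All steps check out — the packing happens in $G_n$, so the finite stabilizer $\Gamma_o$ causes no multiplicity issue, and the passage between $m_{G_n}$ of the Cartan ball and the volume growth in Equation \eqref{EqHyperbolicVolumeGrowth} is indeed only a fixed constant. The trade-off is as you describe: your argument is elementary, self-contained, and in fact works for any discrete subgroup, but it only yields convergence strictly above the exponent $n-1$; the citation buys the sharper fact that $n-1$ is exactly the critical exponent (divergence at the threshold), together with the higher-rank generalizations, none of which are needed for the lemma as stated.
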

\begin{lemma}
\label{LemmaHyperbolicPerturbedLatticeSquareIntegrabilityCriterion}
Let $\beta : G_n \rightarrow [0, +\infty)$ be a right-$K_n$-invariant measurable function with $\int \beta \, dm_{G_n} = 1$ and assume that there are $C > 0$ and $\varepsilon > 0$ such that
\begin{align*}
\beta(g) \leq C \e^{-2(n - 1 + \varepsilon) d(g.o, o)}
\end{align*}
for all $g \in G_n$. Then for $d\nu(g) = \beta(g) dm_{G_n}(g)$ and any lattice $\Gamma < G_n$, the random perturbed lattice orbit $\mu_{\Gamma, \nu}$ in $\H^n$ is locally square-integrable.
\end{lemma}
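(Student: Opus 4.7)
The plan is to repeat the scheme in the proof of Lemma \ref{LemmaPerturbedLatticeisLocallySquareIntegrable}, which reduces local square-integrability of $\mu_{\Gamma, \nu}$ to the finiteness of the integral
\begin{align*}
I_{\Gamma, \nu} = \int_{G_n/\Gamma} \Big(\sum_{\gamma \in \Gamma} \nu(\gamma^{-1} g^{-1} Q)\Big)^2 dm_{G_n/\Gamma}(g\Gamma),
\end{align*}
where $Q = \varsigma(B) K_n$ for a fixed bounded Borel set $B \subset \H^n$. In the two cases treated there (either $\Gamma$ cocompact or $\supp(\nu)$ compact) this was handled by enlarging $Q$ to a pre-compact subset of $G_n$. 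Here I would instead use the pointwise exponential decay of $\beta$ to transfer decay from $\beta$ onto the lattice orbit itself.

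Set $\alpha = 2(n-1+\varepsilon)$ and fix $R_o > 0$ with $Q \subset B_{R_o}$ in $G_n$. If $h \in \gamma^{-1} g^{-1} Q$ then $g\gamma h \in Q$, so the reverse triangle inequality yields $d(h.o, o) \geq d(g\gamma.o, o) - R_o$. Combined with the hypothesis $\beta(h) \leq C e^{-\alpha d(h.o, o)}$, this gives the pointwise estimate
\begin{align*}
\nu(\gamma^{-1}g^{-1}Q) \leq C e^{\alpha R_o} m_{G_n}(Q)\, e^{-\alpha d(g\gamma.o, o)}
\end{align*}
uniformly in $g$ and $\gamma$. Up to constants, it therefore suffices to bound
\begin{align*}
J_{\Gamma} = \int_{G_n / \Gamma} \Big( \sum_{\gamma \in \Gamma} e^{-\alpha d(g\gamma.o, o)} \Big)^2 dm_{G_n/\Gamma}(g\Gamma),
\end{align*}
and a standard unfolding via the substitution $g \mapsto g\gamma_1^{-1}$, using unimodularity of $G_n$, rewrites this as
\begin{align*}
J_{\Gamma} = \frac{1}{\covol(\Gamma)} \sum_{\gamma \in \Gamma} \int_{G_n} e^{-\alpha (d(g.o, o) + d(g\gamma.o, o))} dm_{G_n}(g).
\end{align*}

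For each $\gamma$ I would apply the triangle inequality $d(g.o, o) + d(g\gamma.o, o) \geq d(\gamma.o, o) = \ell(\gamma)$ to peel off a factor $e^{-\frac{\alpha}{2}\ell(\gamma)}$ from half of the exponent. The residual integral $\int_{G_n} e^{-\frac{\alpha}{2}(d(g.o,o) + d(g\gamma.o,o))} dm_{G_n}(g)$ is bounded by $\int_{G_n} e^{-\alpha d(g.o,o)} dm_{G_n}(g)$ via Cauchy-Schwarz and right translation invariance, and this last quantity is finite in Cartan coordinates since $\alpha > n-1$ dominates the exponential volume growth of hyperbolic balls. Summing over $\gamma$ then produces $J_\Gamma \ll \sum_{\gamma \in \Gamma} e^{-(n-1+\varepsilon) d(\gamma.o, o)}$, which converges by Lemma \ref{LemmaHyperbolicLatticeCriticalExponent}. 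The only delicate point is the triangle-inequality splitting: the hypothesis on $\beta$ has been tuned to exactly twice the critical exponent of $\Gamma$, precisely so that one half can be absorbed into the single-variable integral while the other half is paired against the lattice series.
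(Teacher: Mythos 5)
Your proposal is correct, and its skeleton matches the paper's: reduce to the finiteness of $I_{\Gamma,\nu}$ exactly as in Lemma \ref{LemmaPerturbedLatticeisLocallySquareIntegrable}, split the decay exponent $2(n-1+\varepsilon)$ into two halves, pay one half to the lattice sum via Lemma \ref{LemmaHyperbolicLatticeCriticalExponent} and the other half to an integral over $G_n$ that converges because $n-1+\varepsilon$ beats the volume growth $\sinh(t)^{n-1}$. Where you differ is in the intermediate bookkeeping: the paper first proves a second-correlation formula (Lemma \ref{LemmaHyperbolicPerturbedLatticeSecondCorrelationFormula}) expressing $I_{\Gamma,\nu}$ through $m_{G_n}(Qh_1^{-1}\gamma h_2\cap Q)$, keeps both $\nu$-integrations, changes variables so that the decay hypothesis hits $\beta(\gamma^{-1}h_1h_2)$, and then uses two triangle-inequality estimates; you instead dominate $\nu(\gamma^{-1}g^{-1}Q)$ pointwise by $C\e^{\alpha R_o}m_{G_n}(Q)\,\e^{-\alpha d(g\gamma.o,o)}$ (the reverse triangle inequality step is valid since $d(h.o,o)=d(g\gamma h.o,g\gamma.o)\geq d(g\gamma.o,o)-R_o$ whenever $g\gamma h\in Q$), unfold the resulting pure lattice expression to $\frac{1}{\covol(\Gamma)}\sum_{\gamma}\int_{G_n}\e^{-\alpha(d(g.o,o)+d(g\gamma.o,o))}dm_{G_n}(g)$, and finish with one triangle inequality plus Cauchy--Schwarz and right-invariance. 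All of these steps check out (positivity makes the Tonelli/unfolding harmless, and unimodularity gives the needed right-translation invariance), so your route is a slightly more direct argument that bypasses the paper's auxiliary correlation lemma; what it gives up is only the explicit formula for $I_{\Gamma,\nu}$, which the paper isolates but does not need elsewhere.
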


By the same argument in the proof in Lemma \ref{LemmaPerturbedLatticeisLocallySquareIntegrable}, it suffices to bound the expression in Equation \ref{EqLemma2point5SecondTerm}, that is,
\begin{align*}
I_{\Gamma, \nu} := \int_{G_n/\Gamma} \Big( \sum_{\gamma \in \Gamma} \nu(\gamma^{-1}g^{-1}Q) \Big)^2 dm_{G_n/\Gamma}(g\Gamma)  \, .  
\end{align*}
\begin{lemma}
\label{LemmaHyperbolicPerturbedLatticeSecondCorrelationFormula}
Let $Q \subset G_n$ be a pre-compact measurable set and let $\nu \in \Prob(G_n)$. Then 
\begin{align*}
I_{\Gamma, \nu} = \frac{1}{\covol(\Gamma)} \sum_{\gamma \in \Gamma} \int_{G_n} \int_{G_n} m_{G_n}(Qh_1^{-1} \gamma h_2 \cap Q) d\nu(h_1) d\nu(h_2) \, .  
\end{align*}
\end{lemma}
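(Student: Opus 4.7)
The strategy is to expand the square, write each $\nu$-mass as an integral over $G_n$, and then use an unfolding identity together with the unimodularity of $G_n$ to collapse the double sum into a single sum over $\Gamma$ indexed by $\delta = \gamma_2^{-1}\gamma_1$. Since every integrand in sight is non-negative, Tonelli's theorem handles all interchanges uniformly, whether or not the eventual integral is finite.

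Concretely, I would first expand
\begin{align*}
I_{\Gamma,\nu} = \int_{G_n/\Gamma}\sum_{\gamma_1,\gamma_2 \in \Gamma}\int_{G_n}\int_{G_n} \chi_Q(g\gamma_1 h_1)\chi_Q(g\gamma_2 h_2)\,d\nu(h_1)d\nu(h_2)\,dm_{G_n/\Gamma}(g\Gamma)\, ,
\end{align*}
using $\nu(\gamma^{-1}g^{-1}Q) = \int_{G_n}\chi_Q(g\gamma h)\,d\nu(h)$. Reindexing via $\delta := \gamma_2^{-1}\gamma_1 \in \Gamma$ rewrites the double sum as $\sum_{\delta,\gamma_2 \in \Gamma}\chi_Q(g\gamma_2 \delta h_1)\chi_Q(g\gamma_2 h_2)$, so that the dependence on $\gamma_2$ is through $g\gamma_2$. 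For each fixed $\delta$, $h_1$, $h_2$ the integrand is bounded and its $\gamma_2$-translates have locally finite overlap (since $\Gamma$ is uniformly discrete and $Q$ is pre-compact), so the standard unfolding identity
\begin{align*}
\int_{G_n/\Gamma}\sum_{\gamma_2 \in \Gamma} F(g\gamma_2)\,dm_{G_n/\Gamma}(g\Gamma) = \frac{1}{\covol(\Gamma)}\int_{G_n} F(g)\,dm_{G_n}(g)
\end{align*}
applies term-by-term in $\delta$.

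It remains to evaluate the inner $G_n$-integral. By unimodularity, the substitution $g \mapsto gh_2^{-1}$ gives
\begin{align*}
\int_{G_n} \chi_Q(g\delta h_1)\chi_Q(gh_2)\,dm_{G_n}(g) = \int_{G_n}\chi_Q(gh_2^{-1}\delta h_1)\chi_Q(g)\,dm_{G_n}(g) = m_{G_n}\bigl(Q \cap Q h_1^{-1}\delta^{-1} h_2\bigr)\, .
\end{align*}
Relabelling $\gamma = \delta^{-1}$ and noting $Q \cap Qh_1^{-1}\gamma h_2 = Qh_1^{-1}\gamma h_2 \cap Q$ yields the claimed identity.

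The only real subtlety is verifying that the unfolding can legitimately be applied with the outer $\delta$-sum left in place; this is where one has to be careful about measurability and absolute convergence. Because the integrands are non-negative and the $g$-integrations are over a precompact set after restricting $g$ to a fundamental domain, Tonelli delivers every interchange without incident, and the same argument handles both the case where $I_{\Gamma,\nu} < +\infty$ and where the right-hand side diverges.
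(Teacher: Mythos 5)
Your proposal is correct and follows essentially the same route as the paper: expand the square, use Tonelli (everything is non-negative), unfold the sum over $\Gamma$ against the $G_n/\Gamma$-integral, and identify the resulting convolution-type integral as $m_{G_n}(Qh_1^{-1}\gamma h_2 \cap Q)$ via unimodularity. The only cosmetic difference is that the paper cites the second-moment computation for random lattice orbits from Example \ref{ExampleLatticeAutocorrelation} instead of redoing the unfolding inline as you do with the reindexing $\delta = \gamma_2^{-1}\gamma_1$; your remarks about precompactness of the fundamental domain and finite overlap are unnecessary, since non-negativity alone justifies all interchanges.
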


\begin{proof}
By Fubini,
\begin{align*}
\int_{G_n/\Gamma} \Big( &\sum_{\gamma \in \Gamma} \nu(\gamma^{-1}g^{-1}Q) \Big)^2 dm_{G_n/\Gamma}(g\Gamma) = \\
&= \int_{G_n} \int_{G_n} \Big( \int_{G_n/\Gamma} |g \Gamma h_1 \cap Q| |g \Gamma h_2 \cap Q| dm_{G_n/\Gamma}(g\Gamma) \Big) d\nu(h_1) d\nu(h_2) \, . 
\end{align*}
From the computation for the second moment of a random lattice in example \ref{ExampleLatticeAutocorrelation}, we have that
\begin{align*}
\int_{G_n/\Gamma} |g \Gamma h_1 \cap Q| |g \Gamma h_2 \cap Q| dm_{G_n/\Gamma}(g\Gamma) &= \frac{1}{\covol(\Gamma)} \sum_{\gamma \in \Gamma} (\chi^*_{Qh_1^{-1}} * \chi_{Qh_2^{-1}})(\gamma) \\
&= \frac{1}{\covol(\Gamma)} \sum_{\gamma \in \Gamma} m_{G_n}(Qh_1^{-1} \gamma h_2 \cap Q) \, , 
\end{align*}
and Fubini then gives us that
\begin{align*}
\int_{G_n/\Gamma} \Big( \sum_{\gamma \in \Gamma} &\nu(\gamma^{-1}g^{-1}Q) \Big)^2 dm_{G_n/\Gamma}(g\Gamma) = \\
&\frac{1}{\covol(\Gamma)}  \sum_{\gamma \in \Gamma} \int_{G_n} \int_{G_n} m_{G_n}(Qh_1^{-1} \gamma h_2 \cap Q) d\nu(h_1) d\nu(h_2) \, .  
\end{align*}
\end{proof}
\begin{proof}[Proof of Lemma \ref{LemmaHyperbolicPerturbedLatticeSquareIntegrabilityCriterion}]
For convenience of notation we set $\alpha_{\varepsilon} = n - 1 + \varepsilon$. Let $Q = \pi^{-1}(B) = \varsigma(B)K$, so that $Q \subset G_n$ is pre-compact, measurable and $Q.o = B$. By Lemma \ref{LemmaHyperbolicPerturbedLatticeSecondCorrelationFormula}, it suffices to show that
\begin{align*}
\sum_{\gamma \in \Gamma} \int_{G_n} \int_{G_n} m_{G_n}(Qh_1^{-1} \gamma h_2 \cap Q) \beta(h_1) \beta(h_2) dm_{G_n}(h_1) dm_{G_n}(h_2)
\end{align*}
is finite. We first rewrite this expression as
\begin{align*}
\int_{G_n} \int_{G_n} &m_{G_n}(Qh_1^{-1} \gamma h_2 \cap Q) \beta(h_1) \beta(h_2) dm_{G_n}(h_1) dm_{G_n}(h_2) = \\
&= \int_{G_n} \Big( \int_{G_n} m_{G_n}(Qh_2 \cap Q) \beta(\gamma^{-1}h_1h_2) dm_{G_n}(h_2)\Big) \beta(h_1) dm_{G_n}(h_1) \, . 
\end{align*}
If $m_{G_n}(Qh_2 \cap Q) > 0$ then $h_2 \in Q^{-1}Q$, and using the assumed upper bound on $\beta$ we get that
\begin{align*}
\int_{G_n} m_{G_n}(Qh_2 \cap Q) &\beta(\gamma^{-1}h_1h_2) dm_{G_n}(h_2) \leq \\
&\leq C \int_{G_n} m_{G_n}(Qh_2 \cap Q) \e^{-2\alpha_{\varepsilon} d(h_2.o, h_1^{-1}\gamma.o)} dm_{G_n}(h_2) \\
&\leq C m_{G_n}(Q) \e^{-2\alpha_{\varepsilon}d(Q^{-1}Q.o, h_1^{-1}\gamma.o)} \, , 
\end{align*}
where $d(Q^{-1}Q.o, h_1^{-1}\gamma.o)$ denotes the minimal distance between $h_1^{-1}\gamma.o$ and $Q^{-1}Q.o$ in $\H^n$. Since $Q^{-1}Q$ is pre-compact, we use the reverse triangle inequality to find that 
$$\e^{-2\alpha_{\varepsilon}d(Q^{-1}Q.o, h_1^{-1}\gamma.o)} \leq \sup_{q \in Q^{-1}Q} \e^{2\alpha_{\varepsilon}d(q.o, o)} \e^{-2\alpha_{\varepsilon}d(h_1.o, \gamma.o)}$$
for all $h_1 \in G, \gamma \in \Gamma$. Thus it remains to prove finiteness of 
\begin{align*}
\sum_{\gamma \in \Gamma} \int_{G_n} \e^{-2\alpha_{\varepsilon}d(h_1.o, \gamma.o)} &\beta(h_1) dm_{G_n}(h_1) \leq C \sum_{\gamma \in \Gamma} \int_{G_n} \e^{-2\alpha_{\varepsilon}(d(h_1.o, \gamma.o) + d(h_1.o, o))} dm_{G_n}(h_1) \, . 
\end{align*}
To see this this we use the triangle inequality to find that
\begin{align*}
\e^{-2\alpha_{\varepsilon}(d(h_1.o, \gamma.o) + d(h_1.o, o))} &= \e^{-\alpha_{\varepsilon}(d(h_1.o, \gamma.o) + d(h_1.o, o))} \e^{-\alpha_{\varepsilon}(d(h_1.o, \gamma.o) + d(h_1.o, o))} \\
&\leq \e^{-\alpha_{\varepsilon}d(\gamma.o, o)} \e^{-\alpha_{\varepsilon}d(h_1.o, o)} \, . 
\end{align*}
Finally,
\begin{align*}
\sum_{\gamma \in \Gamma} \int_{G_n} \e^{-2\alpha_{\varepsilon}(d(h_1.o, \gamma.o) + d(h_1.o, o))} dm_{G_n}(h_1) \leq \sum_{\gamma \in \Gamma} \e^{-\alpha_{\varepsilon}d(\gamma.o, o)} \int_{G_n} \e^{-\alpha_{\varepsilon}d(h_1.o, o)} dm_{G_n}(h_1) \, ,
\end{align*}
which is finite by Lemma \ref{LemmaHyperbolicLatticeCriticalExponent}.
\end{proof}

\subsection{A formula for the diffraction measure}
\label{A formula for the diffraction measure}

Let $\Gamma < G_n$ be a lattice and $\nu \in \Prob(G_n)$ be right-$K_n$-invariant and absolutely continuous with respect to the Haar measure satisfying the conditions in Lemma \ref{LemmaHyperbolicPerturbedLatticeSquareIntegrabilityCriterion}, so that the random perturbed lattice $\mu_{\Gamma, \nu}$ is locally square-integrable. Then from the computations in example \ref{ExamplePerturbedLatticeAutocorrelation}, the autocorrelation measure of the random perturbed lattice orbit $\mu_{\Gamma, \nu}$ is 
\begin{align*}
\eta_{\Gamma, \nu} = |\Gamma_o|^2 \check{\nu} * \eta_{\Gamma} * \nu + \frac{1}{\covol(\Gamma)}(\delta_e - \check{\nu} * \nu)  
\end{align*}
as a linear functional on $\Borelbndinfty(G_n, K_n)$, where $\Gamma_o = \Stab_{\Gamma}(o) = \Gamma \cap K_n$. If we let $\hat{\eta}_{\Gamma}$ denote the diffraction measure of the random lattice orbit $\mu_{\Gamma}$ and $\hat{\eta}_{\Gamma, \nu}$ that of $\mu_{\Gamma, \nu}$, then
\begin{align*}
\int_{\Omega_n^+} \hat{\varphi}(\lambda) d\hat{\eta}_{\Gamma, \nu}(\lambda) = |\Gamma_o|^2 \int_{\Omega_n^+} \hat{\varphi}_{\nu}(\lambda) d\hat{\eta}_{\Gamma}(\lambda) + \frac{1}{\covol(\Gamma)} \int_0^{\infty} \big(\hat{\varphi}(\lambda) - \hat{\varphi}_{\nu}(\lambda)\big) \frac{d\lambda}{|c_n(\lambda)|^2} 
\end{align*}
for all $\varphi \in \Borelbndinfty(G_n, K_n)$, where $\varphi_{\nu} = \nu * \varphi * \check{\nu}$. If $\nu$ happens to be bi-$K_n$-invariant, then by Lemma \ref{LemmaGelfandsTrick} we can write
\begin{align*}
\eta_{\Gamma, \nu} = |\Gamma_o|^2 \check{\nu} * \nu * \eta_{\Gamma} + \frac{1}{\covol(\Gamma)}(\delta_e - \check{\nu} * \nu)  \, . 
\end{align*}
Considering the spherical transform of $\nu$ given by
$$ \hat{\nu}(\lambda) = \int_{G_n} \omega_{\lambda}(g) d\nu(g) $$
we can write the diffraction measure $\hat{\eta}_{\Gamma, \nu}$ as 
\begin{align}
\label{EqHyperbolicPerturbedLatticeDiffraction}
d\hat{\eta}_{\Gamma, \nu}(\lambda) = |\Gamma_o|^2 |\hat{\nu}(\lambda)|^2 d\hat{\eta}_{\Gamma}(\lambda) + \frac{1}{\covol(\Gamma)} (1 - |\hat{\nu}(\lambda)|^2) \frac{d\lambda}{|c_n(\lambda)|^2} \, .
\end{align}

\subsection{Random perturbed lattice orbits in $\H^n$ are not spectrally hyperuniform}
\label{Random perturbed lattices on Hn are not spectrally hyperuniform}
Let $\Gamma < G_n$ be a lattice and $\nu \in \Prob(G_n)$ a bi-$K_n$-invariant probability measure that is absolutely continuous with respect to the Haar measure $m_{G_n}$, satisfying the exponential decay condition in Lemma \ref{LemmaHyperbolicPerturbedLatticeSquareIntegrabilityCriterion}.  
\begin{proof}[Proof of Proposition \ref{PropHyperbolicLatticePerturbationDestroysHyperuniformity}]
From the formula for the diffraction measure $\hat{\eta}_{\Gamma, \nu}$ in Equation \ref{EqHyperbolicPerturbedLatticeDiffraction} we cut off the first term to get the lower bound
\begin{align*}
\limsup_{\varepsilon \rightarrow 0^+} \frac{\hat{\eta}_{\Gamma, \nu}((0, \varepsilon])}{\varepsilon^3} &\geq \frac{1}{\covol(\Gamma)} \limsup_{\varepsilon \rightarrow 0^+} \frac{1}{\varepsilon^3} \int_0^{\varepsilon} (1 - |\hat{\nu}(\lambda)|^2)\frac{d\lambda}{|c_n(\lambda)|^2}  \, . 
\end{align*}
By item (3) of Lemma \ref{LemmaPropertiesofSphericalFunctions} we have that
\begin{align*}
\hat{\nu}(\lambda) = \int_{G_n} \omega_{\lambda}(g) d\nu(g) \leq \int_{G_n} \omega_{0}(g) d\nu(g) = \hat{\nu}(0) \, , 
\end{align*}
so
\begin{align*}
\limsup_{\varepsilon \rightarrow 0^+} \frac{\hat{\eta}_{\Gamma, \nu}((0, \varepsilon])}{\varepsilon^3} &\geq \frac{1}{\covol(\Gamma)} (1 - |\hat{\nu}(0)|^2)\limsup_{\varepsilon \rightarrow 0^+} \frac{1}{\varepsilon^3} \int_0^{\varepsilon} \frac{d\lambda}{|c_n(\lambda)|^2}  \, . 
\end{align*}
Since $|c_n(\lambda)|^{-2} \asymp \lambda^{2}$ when $\lambda \rightarrow 0^+$ then there is a constant $c_o > 0$ such that $|c(\lambda)|^{-2} \geq c_o \lambda^2$ for all sufficiently small $\lambda \geq 0$, so that
$$ \limsup_{\varepsilon \rightarrow 0^+} \frac{1}{\varepsilon^3} \int_0^{\varepsilon} \frac{d\lambda}{|c_n(\lambda)|^2} \geq \frac{c_o}{3} \, .  $$
Moreover, by item (2) in Lemma \ref{LemmaPropertiesofSphericalFunctions}, $\omega_0(g) < 1 $ for all $g \neq e$, and since $m_{G_n}(K_n) = 0$ then $\nu(K_n) = 0$ by absolute continuity, so
\begin{align*}
\hat{\nu}(0) = \int_{G_n} \omega_0(g) d\nu(g) < \nu(G_n) = 1  \, . 
\end{align*}
Gathering these lower bounds,
\begin{align*}
\limsup_{\varepsilon \rightarrow 0^+} \frac{\hat{\eta}_{\Gamma, \nu}((0, \varepsilon])}{\varepsilon^3} \geq \frac{c_o}{3 \,  \covol(\Gamma)} (1 - |\hat{\nu}(0)|^2) > 0 \, . 
\end{align*}
\end{proof}
\begin{remark}
Although we have not introduced diffraction measures for random measures on other homogeneous spaces, we do so in upcoming work for general commutative metric spaces $X = G/K$ with $K$ compact. This includes semisimple Lie groups $G$ with finite center and maximal compact subgroup $K$, and there the same argument as in the above proof holds, so no perturbation of a stealthy lattice yields a hyperuniform point process.  
\end{remark}

As a consequence of Proposition \ref{PropHyperbolicLatticePerturbationDestroysHyperuniformity}, we prove that the number variance of a perturbed lattice orbit grows as fast as the volume along \emph{all} subsequences of radii.
\begin{corollary}
Let $\Gamma < G_n$ be a lattice and $\nu \in \Prob(G_n)$ be bi-$K_n$-invariant as in Lemma \ref{LemmaHyperbolicPerturbedLatticeSquareIntegrabilityCriterion}. Then
\begin{align*}
\liminf_{R \rightarrow +\infty} \frac{\NV_{\mu_{\Gamma, \nu}}(R)}{\Vol_{\H^n}(B_R)} > 0 \, . 
\end{align*}
\begin{proof}
By the formula for the diffraction measure of $\mu_{\Gamma, \nu}$ in equation \ref{EqHyperbolicPerturbedLatticeDiffraction} we can bound the number variance form below using the spherical Plancherel formula as
\begin{align*}
\NV_{\mu_{\Gamma, \nu}}(R) &\geq  \frac{1}{\covol(\Gamma)} \int_0^{\infty} |\hat{\chi}_{B_R}(\lambda)|^2 (1 - |\hat{\nu}(\lambda)|^2) \frac{d\lambda}{|c_n(\lambda)|^2} \\
&\geq \frac{1}{\covol(\Gamma)}(1 - |\hat{\nu}(0)|^2) \int_0^{\infty} |\hat{\chi}_{B_R}(\lambda)|^2 \frac{d\lambda}{|c_n(\lambda)|^2} \\
&=  \frac{1}{\covol(\Gamma)} (1 - |\hat{\nu}(0)|^2) \int_G |\chi_{B_R}(g)|^2 dm_{G_n}(g) \\
&= \frac{\Gamma(\frac{n}{2})}{\covol(\Gamma) 2\pi^{\frac{n}{2}}} (1 - |\hat{\nu}(0)|^2) \Vol_{\H^n}(B_R) \, . 
\end{align*}
Since $\hat{\nu}(0) < 1$ the right hand side is strictly positive.
\end{proof}
\end{corollary}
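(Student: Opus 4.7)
The plan is to extract a volume-order lower bound on $\NV_{\mu_{\Gamma, \nu}}(R)$ directly from the explicit diffraction formula \eqref{EqHyperbolicPerturbedLatticeDiffraction}. First, I would write
\begin{align*}
\NV_{\mu_{\Gamma, \nu}}(R) = \int_0^{\infty} |\hat{\chi}_{B_R}(\lambda)|^2 \, d\hat{\eta}_{\Gamma, \nu}^{(p)}(\lambda) + \int_0^{\frac{n-1}{2}} |\hat{\chi}_{B_R}(i\lambda)|^2 \, d\hat{\eta}_{\Gamma, \nu}^{(c)}(\lambda),
\end{align*}
and discard the complementary contribution and the first (lattice) term of the principal piece, both of which are non-negative. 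What remains is the ``noise'' part $\covol(\Gamma)^{-1}(1 - |\hat{\nu}(\lambda)|^2)|c_n(\lambda)|^{-2} d\lambda$, which is the piece we expect to behave like a Poisson contribution.

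Next, I would replace $1 - |\hat{\nu}(\lambda)|^2$ by the uniform lower bound $1 - \hat{\nu}(0)^2$, using item (3) of Lemma \ref{LemmaPropertiesofSphericalFunctions} ($|\omega_{\lambda}^{(n)}(g)| \leq \omega_0^{(n)}(g)$ for $\lambda \geq 0$), and then argue that $\hat{\nu}(0) < 1$ strictly. This last point follows exactly as in the proof of Proposition \ref{PropHyperbolicLatticePerturbationDestroysHyperuniformity}: $\omega_0^{(n)}(g) < 1$ for $g \neq e$ by item (2) of Lemma \ref{LemmaPropertiesofSphericalFunctions}, and $\nu(\{e\}) \leq \nu(K_n) = 0$ because $\nu$ is absolutely continuous with respect to $m_{G_n}$ and $m_{G_n}(K_n) = 0$.

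The main step is then to recognize the leftover integral as a spherical Plancherel pairing. Applying the spherical Plancherel formula to the radial indicator $\chi_{B_R}$ gives
\begin{align*}
\int_0^{\infty} |\hat{\chi}_{B_R}(\lambda)|^2 \, \frac{d\lambda}{|c_n(\lambda)|^2} = \int_{G_n} \chi_{B_R}(g)\, dm_{G_n}(g) = \frac{\Gamma(n/2)}{2\pi^{n/2}} \Vol_{\H^n}(B_R),
\end{align*}
so that
\begin{align*}
\NV_{\mu_{\Gamma, \nu}}(R) \geq \frac{\Gamma(n/2)(1 - \hat{\nu}(0)^2)}{2\pi^{n/2} \, \covol(\Gamma)} \, \Vol_{\H^n}(B_R),
\end{align*}
which yields the claim with constant independent of $R$.

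There is no serious obstacle. The only mild subtlety is checking that the Plancherel identity applies to $\chi_{B_R} \in \Borelbndinfty(\H^n)$ rather than to a Schwartz-type function; this follows because $\chi_{B_R}$ is compactly supported and bounded, so it lies in $L^2(G_n, K_n)$ and its spherical transform is given by the explicit formula from Example \ref{ExHyperbolicIndicatorFT}. Notably, the bound we obtain is uniform in $R$, which is strictly stronger than a mere $\liminf$ statement, mirroring the standard fact that independent perturbations inject a volume-order Poissonian fluctuation that cannot be cancelled by the rigid lattice skeleton.
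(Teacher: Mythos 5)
Your proposal is correct and follows essentially the same route as the paper: drop the complementary and lattice contributions in the diffraction formula \eqref{EqHyperbolicPerturbedLatticeDiffraction}, bound $1-|\hat{\nu}(\lambda)|^2$ below by $1-|\hat{\nu}(0)|^2$ via Lemma \ref{LemmaPropertiesofSphericalFunctions}, and convert the remaining integral into $\Vol_{\H^n}(B_R)$ by the spherical Plancherel formula, with $\hat{\nu}(0)<1$ exactly as in Proposition \ref{PropHyperbolicLatticePerturbationDestroysHyperuniformity}. The uniform-in-$R$ bound you note is also what the paper's computation actually delivers.
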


\printbibliography

{\small\textsc{Department of Mathematics, Chalmers and University of Gothenburg, Gothenburg, Sweden \\
 \textit{Email address}: {\tt micbjo@chalmers.se}}}

{\small\textsc{Department of Mathematics, Chalmers and University of Gothenburg, Gothenburg, Sweden \\
 \textit{Email address}: {\tt bylehn@chalmers.se}}}

\end{document}